\newtheorem{thm}{Theorem}[section]
\newtheorem{cor}[thm]{Corollary}
\newtheorem{lemma}[thm]{Lemma}
\newtheorem{prop}[thm]{Proposition}
\newtheorem{proposition}[thm]{Proposition}
\newtheorem{defn}[thm]{Definition}
\newtheorem{definition}[thm]{Definition}
\newtheorem{remark}[thm]{Remark}
\newtheorem{example}[thm]{Example}
\def\bM {{\mathbb M}}  
 \def\Q {{\mathbb Q}}
\def\N {{\mathbb N}}
\def\qed{{\hfill $\Box$ \bigskip}}
\def\N {{\mathbb N}}
\def\R {{\mathbb R}}
\def\EE{{\mathbb E}}
\def\P{{\mathbb P}}
\newcommand{\F}{\mathcal{F}}
\def\E{{\mathcal E}}
\def\P{{\mathbb P}}
\numberwithin{equation}{section}
\begin{document}
\setcounter{secnumdepth}{5}
\setcounter{tocdepth}{5}
\noindent
{{\Large\bf On singular stochastic differential equations and Dirichlet forms}\footnote{This research was supported by NRF-DFG Collaborative Research program and Basic Science Research Program 
through the National Research Foundation of Korea (NRF-2012K2A5A6047864 and NRF-2012R1A1A2006987).}}

\bigskip
\noindent
{\bf Jiyong Shin},
{\bf Gerald Trutnau}
\\

\noindent
{\small{\bf Abstract.} This survey paper is a structured concise summary of four of our recent papers on the stochastic regularity of diffusions that are associated to regular strongly local (but not necessarily symmetric) Dirichlet forms. Here by stochastic regularity we mean the question whether a diffusion associated to a Dirichlet form as mentioned above can be started and identified as a solution to an explicit stochastic differential equation for explicitly given starting points. Beyond the stochastic regularity, we consider its applications to strong existence and pathwise uniqueness of singular stochastic differential equations.\\ 

\noindent
{2010 {\it Mathematics Subject Classification}: Primary 31C25, 60J60, 60J35, 47D07;  Secondary 31C15, 60H20, 60J55.}\\

\noindent 
{Key words: Dirichlet forms, non-symmetric Dirichlet forms, absolute continuity condition,  strong Feller property, Muckenhoupt weights, 2-admissible weights, transition functions,  strong solutions, pathwise uniqueness, non-explosion criteria, Feller processes, singular diffusion processes, distorted Brownian motion, skew Brownian motion, reflected Brownian motion, permeable membranes, multi-dimensional local time.}

\tableofcontents

\section{Introduction}
This survey paper is a summary of the main results of \cite{ShTr13b, ShTr13a, RoShTr, ShTr15}, which we present  systematically in concise structured form. Throughout, we consider a (non-)symmetric, strongly local, regular Dirichlet form  $\mathcal E$ on $L^{2}(E, m)$ where $E$ is a locally compact separable metric space and $m$ is a positive Radon measure on $(E, \mathcal{B}(E))$ with full support. We further assume that the symmetric part of $\mathcal E$ admits a carr\'e du champ. Our main concerns are the construction of a Hunt process associated to $\mathcal E$ that starts from as much as possible explicitly specified points in $E$ and subsequently the identification of the corresponding stochastic differential equation (hereafter SDE) for any of these starting points. Step by step we present methods to arrive at the identification of the corresponding SDE.\\
The first step is to find a pointwise heat kernel, i.e. the existence of a heat kernel $p_t(x,y)$ for all $x,y \in E$, associated with $\mathcal E$, and in the sequel to construct a Hunt process with the help of  the transition function 
of $p_t(\cdot,\cdot)$.  By association of $p_t(\cdot,\cdot)$ with $\mathcal E$, we mean that the $L^{2}(E, m)$-semigroup 
 of $\mathcal E$ coincides $m$-a.e. with the transition function of $p_t(\cdot,\cdot)$, i.e. the transition function of $p_t(\cdot,\cdot)$ induces an $L^{2}(E, m)$-semigroup that coincides with the one of $\mathcal E$. In accordance with the symmetric case we call this association Fukushima's absolute continuity condition. We explain two ways to obtain a pointwise heat kernel. In the symmetric case we adopt the method of \cite{St3} to obtain its existence. If the four conditions of Definition \ref{p;dcpo} are satisfied, then $p_t(x,y)$ exists, is locally H\"older in $(0,\infty)\times E \times E$ and satisfies the heat kernel estimate of Theorem \ref{t;2.7she}.  Moreover, the transition function is strong Feller (cf. Proposition \ref{p;strongfl}(i)). In the general, possibly non-symmetric case,  we consider the non-symmetric Dirichlet form given by the closure of  the bilinear form in (\ref{df}) below on $L^2(\R^d,m)$, $m:=\rho\,dx$, where the conditions on $A$, $\rho$, $B$ are formulated in {\bf(A1)}-{\bf(A3)} of Subsection \ref{ss;ermj}. Here, as a toy example we only consider the case where $A=id$, the case where $A$ is not the identity matrix can be treated similarly. 
We may then apply known elliptic regularity results from \cite[Theorem 5.1]{BGS} and  \cite[Theorem 1.7.4]{BKRS} (see Propositions \ref{p;gdegep2.3} and \ref{t;morrey2.4} below) and follow the main lines of \cite{AKR} to find a pointwise heat kernel. \\
The next step is to construct a Hunt process with given pointwise heat kernel $p_t(\cdot,\cdot)$. This construction is in general different from the construction of a Hunt process via the canonical scheme from a regular Dirichlet form which has only unique distributions for quasi-every starting point.  A well-known method to obtain a pointwise Hunt process is to show that the transition function of $p_t(\cdot,\cdot)$ induces a Feller semigroup. Here the conditions of Lemma \ref{t;Feller} appear to be the right ones 
in our framework since one can use the continuity of the heat kernel and estimates for it to verify these (see for instance Remark \ref{Fellerapplication} below). Another method to obtain a Hunt process with given pointwise heat kernel $p_t(\cdot,\cdot)$ is the Dirichlet form method. It is a refinement of the construction scheme introduced in \cite[Section 4]{AKR} to the case of  symmetric Dirichlet forms on a locally compact separable metric space that admit a carr\'e du champ. The method applies to certain non-symmetric cases as well, for instance to our toy example.\\
Once having constructed a Hunt process from the pointwise heat kernel, we aim at identifying it as a pointwise weak solution to a SDE. We explain two methods for its
identification. The first one is the well-known strict Fukushima decomposition (see Proposition \ref{p;LSFD} here, or \cite{fuku93} from where it originates, or in the monograph \cite[Theorem 5.5.5]{FOT}) and it applies in the symmetric case. Proposition \ref{p;LSFD} requires estimates on potentials coming from supersmooth measures that appear 
in the integration by parts formulas and in the energy for the Dirichlet form applied to the coordinate projections. Here Proposition \ref{p;smooth} in combination with Lemma \ref{l;miz} appear as very useful and make it possible to apply Fukushima's strict decomposition to a wide range of situations as we demonstrate by concrete examples in Subsections \ref{sdbm1} and  \ref{s;2admine}. However, in some cases the global estimate on the resolvent density which is obtained by taking the Laplace transform of the global estimate on the transition kernel density from Theorem \ref{t;2.7she} may not lead to satisfactory results as explained right after Lemma \ref{l;miz}. 
For these cases, we use a localization procedure that stems from \cite[Section 5]{ShTr13a}), but we formulate it here in more details and in a more general frame. It applies on open or closed subsets $E$ of the $d$-dimensional Euclidean space and involves part Dirichlet forms, Nash type inequalities (hence better local Gaussian heat kernel estimates) on a nice exhaustion up to a capacity zero set of $E$ by an increasing sequence of relatively compact open sets. In this localization procedure, described right after the paragraph that follows Lemma \ref{l;miz}, the strict Fukushima decomposition is applied only locally. The second method for the identification of the SDE is classical direct stochastic calculus. It is used in Subsection \ref{s;scftioft}. The drift corresponding to the coordinate projections is determined locally through the generator applied to smooth functions with compact support and the quadratic variation of the corresponding local martingale part 
can for instance be determined as in Proposition \ref{thm3.2}. For details we refer to the mentioned subsection.\\
Section \ref{s;strongun} is devoted to applications of stochastic regularity to strong existence and pathwise uniqueness of SDEs. We show that the weak solutions constructed in Subsections \ref{s;scftioft} and \ref{s;2admine} coincide with 
the strong and pathwise unique solutions that were constructed by probabilistic means up to their explosion times in \cite[Theorem 2.1]{KR} and \cite[Theorem 1.1]{Zh}. Thus if $\mathcal E$ is conservative and symmetric or if the corresponding transition function is strong Feller in the non-symmetric case, then the weak solutions obtained by stochastic regularity are non-explosive for any starting point (cf. \cite[Theorem 4.5.4(iv)]{FOT} and Remark \ref{r;connoex}). In particular, analytic conservativeness criteria that cover the whole framework of this paper can be found in \cite{GTr2016}. In this way, we provide new analytic non-explosion criteria for the strong and pathwise unique solutions of  \cite[Theorem 2.1]{KR} and \cite[Theorem 1.1]{Zh} which differ from the probabilistic non-explosion criteria presented in these papers.

\section{Preliminaries and construction of a Hunt process satisfying Fukushima's absolute continuity condition}\label{s2;sfdoh}
\subsection{Notations}
As a first general remark on our notations, we shall follow the monographs \cite{FOT}, \cite{o}, \cite{O13} and \cite{MR}. Thus should there be a notation that is not defined here, it can be found in these references.\\
For a locally compact separable metric space $(E,d)$ with Borel  $\sigma$-algebra $\mathcal{B}(E)$ we denote the set of all $\mathcal{B}(E)$-measurable $f : E \rightarrow \R$ which are bounded, or nonnegative by $\mathcal{B}_b(E)$, $\mathcal{B}^{+}(E)$ respectively.  The usual $L^q$-spaces  $L^q(E, \mu)$, $q \in[1,\infty]$ are equipped with $L^{q}$-norm $\| \cdot \|_{L^q(E, \mu)}$ with respect to the  measure $\mu$ on $E$  and $L^{q}_{loc}(E,\mu) := \{ f \,|\; f \cdot 1_{U} \in L^q(E, \mu),\,\forall U \subset E, U \text{ relatively compact open} \}$, where $1_A$ denotes the indicator function of a set $A \subset E$. 
If $\mathcal{A}$ is a set of functions $f : E \to \R$, we define $\mathcal{A}_0 : = \{f \in \mathcal{A} \ | $ supp($f$) : = supp($|f| dm$) is compact in $E \}$ and $\mathcal{A}_b$ : = $\mathcal{A} \cap \mathcal{B}_b(E)$. The inner product on $L^2(E, \mu)$ is denoted by $(\cdot,\cdot)_{L^2(E, \mu)}$. As usual, we also denote the set of continuous functions on $E$, the set of continuous bounded functions on $E$, the set of compactly supported continuous functions in $E$ by $C(E)$, $C_b(E)$, $C_0(E)$, respectively. The space of continuous functions on $E$ which vanish at infinity is denoted by $C_{\infty}(E)$. For $A \subset E$ let $\overline{A}$ denote the closure of $A$ in $E$, $A^c:=E\setminus A$.  We write  $B_{r}(y): = \{ x \in E \ | \ d(x,y) < r  \}$, $r>0$, $y \in E$.\\
Let $\nabla f : = ( \partial_{1} f, \dots , \partial_{d} f )$  and  $\Delta f : = \sum_{j=1}^{d} \partial_{jj} f$ where $\partial_j f$ is the $j$-th weak partial derivative of $f$ on $\R^d$ and $\partial_{ij} f := \partial_{i}(\partial_{j} f) $, $i,j=1, \dots, d$.  As usual $dx$ denotes Lebesgue measure on $\R^d$ and $\delta_x$ is the Dirac measure at $x$. Let $U \subset \R^d$, $d \ge 2$ be an open set. The Sobolev space $H^{1,q}(U, dx)$, $q \ge 1$ is defined to be the set of all functions $f \in L^{q}(U, dx)$ such that $\partial_{j} f \in L^{q}(U, dx)$, $j=1, \dots, d$, and 
$H^{1,q}_{loc}(U, dx) : =  \{ f  \,|\;  f \cdot \varphi \in H^{1,q}(U, dx),\,\forall \varphi \in  C_0^{\infty}(U)\}$. 
Here $C_0^{\infty}(U)$ denotes the set of all infinitely differentiable functions with compact support in $U$. We denote the set of all locally H\"{o}lder continuous functions of order $1-\alpha$ on $U$ on $U$ by $C^{1-\alpha}_{loc}(U)$, $0<\alpha<1$. For any $F \subset \R^d$, $F$ closed, let $C_0^{\infty}(F) : = \{f:F \to \R \ | \ \exists g \in C_0^{\infty}(\R^d), g|_F = f   \}$. If $F$ is compact, we also write $C^{\infty}(F)$ instead of $C_0^{\infty}(F)$. We equip $\R^d$ with the Euclidean norm $\| \cdot \|$ and the corresponding inner product $\langle \cdot, \cdot \rangle$. Let  $f^j (x):= x_j$, $j=1,\dots, d$, $x=(x_1,...,x_d) \in \R^d$, be the coordinate projections.

\subsection{The conditions {\bf (H1)} and {\bf (H2)}}\label{su;h1h2}
Throughout this paper, we consider a possibly non-symmetric, strongly local, regular Dirichlet form $(\E,D(\E))$   on $L^{2}(E, m)$ where $E$ is a locally compact separable metric space and $m$ is a positive Radon measure on $(E, \mathcal{B}(E))$ with full support on $E$ (see \cite{FOT}, \cite{o}, \cite{O13} and \cite{MR}).  As usual we define $\E_1(f,g) := \E(f,g) + (f,g)_{L^{2}(E , \, m)}$ for $f,g \in D(\E)$ and  $\| \,f\, \|_{D(\mathcal{E})} : = \E_1(f,f)^{1/2},  \;   f \in D(\E)$. Let  $(T_t)_{t > 0}$ (resp. $(\hat{T}_t)_{t > 0}$) and $(G_{\alpha})_{\alpha > 0}$ (resp. $(\hat{G}_{\alpha})_{\alpha > 0}$ ) be the strongly continuous contraction $L^2(E, m)$-semigroup (resp. cosemigroup) and resolvent (resp. coresolvent) associated to $(\E,D(\E))$ and $(L,D(L))$ (resp. $(\hat{L},D(\hat{L}))$) be the corresponding generator (resp. cogenerator) (see \cite[Diagram 3, p. 39]{MR}).  Then $(T_t)_{t>0}$ (resp. $(\hat{T}_t)_{t>0}$) and $(G_{\alpha})_{\alpha > 0}$ (resp. $(\hat{G}_{\alpha})_{\alpha > 0}$) are sub-Markovian (cf. \cite[I. Section 4]{MR}). Here an operator $S$ is called sub-Markovian if $0 \le f \le 1$ implies $0 \le Sf \le 1$. Then $(T_t)_{t >0}$ (resp. $(G_{\alpha})_{\alpha > 0}$) restricted to $L^1(E,m) \cap L^{\infty}(E,m)$ can be extended to strongly continuous contraction semigroups (resp. contraction resolvents) on all $L^r(E,m)$, $r \in [1,\infty)$. We denote the corresponding operator families again by $(T_t)_{t > 0}$ and $(G_{\alpha})_{\alpha > 0}$ and let $(L_r, D(L_r))$ be the corresponding generator on $L^r(E,m)$. Furthermore by \cite[I. Corollary 2.21]{MR}, it holds that $(T_t)_{t>0}$ is analytic on $L^2(E,m)$ and then by Stein interpolation (cf. e.g. \cite[Lecture 10, Theorem 10.8]{AV}) $(T_t)_{t>0}$ is also an analytic semigroup on $L^r(E,m)$ for all $r \in (2, \infty)$. Moreover, $(T_t)_{t>0}$ can be defined as a semigroup of contractions on $L^{\infty}(E,m)$, which is in general not strongly continuous. We denote the corresponding semigroup again by $(T_t)_{t>0}$. \\ \\
We consider the condition
\begin{itemize}
\item[{\bf (H1)}] There exists a $\mathcal{B}\left ( (0,\infty)\right )\otimes\mathcal{B}(E) \otimes \mathcal{B}(E)$ measurable non-negative map $p_{t}(x,y)$ such that
\begin{equation}\label{P1density}
P_t f(x) := \int_{E} p_t(x,y)\, f(y) \, m(dy) \,, \; t>0, \ \ x \in E,  \ \ f \in \mathcal{B}_b(E),
\end{equation}
is a (temporally homogeneous) sub-Markovian transition function (see \cite[Section 1.2]{CW}) and an $m$-version of $T_t f$ if $f  \in  L^2(E , m)_b$.
\end{itemize}
Here $p_{t}(x,y)$ is called the transition kernel density or heat kernel. Taking the Laplace transform of $p_{\cdot}(x, y)$, we see that $\bf{(H1)}$ implies that there exists a $\mathcal{B}(E) \otimes \mathcal{B}(E)$ measurable non-negative map $r_{\alpha}(x,y)$ such that
\[
R_{\alpha} f(x) := \int_{E} r_{\alpha}(x,y)\, f(y) \, m(dy) \,, \; \alpha>0, \; x \in E, f \in \mathcal{B}_b(E),
\]
is an $m$-version of $G_{\alpha} f$ if $f  \in  L^2(E, m)_b$. Here $r_{\alpha}(x,y)$ is called the resolvent kernel density.
For a signed Radon measure $\mu$ on $E$, let us define
\[
R_{\alpha} \mu (x) = \int_{E} r_{\alpha}(x,y) \, \mu(dy) \, , \;\; \alpha>0, \;\;x \in E,
\]
whenever this makes sense. Throughout this paper, we set $P_0 : = id$.\\ \\
Furthermore, assuming that $\bf{(H1)}$ holds, we can consider the condition
\begin{itemize}
\item[{\bf (H2)}] There exists a Hunt process with transition function $(P_t)_{t \ge 0}$.
\end{itemize}
We recall that $\bf{(H2)}$ means that there exists a Hunt process
\[
\bM = (\Omega , \mathcal{F}, (\mathcal{F}_t)_{t\geq0}, \zeta ,(X_t)_{t\geq0} , (\P_x)_{x \in E_{\Delta} }),
\]
with state space $E$ and the lifetime $\zeta:=\inf\{ t \ge 0 \ | \ X_t=\Delta\}$ such that $P_t(x,B) : = P_t 1_B (x) = \P_x(X_t \in B)$ for any $x \in E$, $B \in \mathcal{B}(E)$, $t \ge 0$ (cf. \cite{FOT}). Here, $\Delta$ is the cemetery point and as usual any function $f : E \rightarrow \R$ is extended to $\{\Delta\}$ by setting $f(\Delta):=0$. $E_{\Delta}: = E \cup \{\Delta\}$ is the one-point compactification if $E$ is not already compact, if $E$ is compact then $\Delta$ is added to $E$ as an isolated point. \\
By \cite[V. 2.12 (ii)]{MR}, it follows that $(\E, D(\E))$ is strictly quasi-regular. Then, by \cite[V.2.13]{MR} there exists a Hunt process 
\begin{equation}\label{HuntDF}
\tilde{\bM} = (\tilde{\Omega}, \tilde{\F}, (\tilde{\F})_{t \ge 0}, \tilde{\zeta}, (\tilde{X}_t)_{t \ge 0}, (\tilde{\P}_x)_{x \in E \cup \{ \Delta \} })
\end{equation}
(strictly properly) associated with $(\E, D(\E))$. It is here important to note that the transition function of $\tilde{\bM}$ will in general satisfy (\ref{P1density}) only for $m$-a.e. $x\in E$ (or quasi-every $x\in E$), even if $(T_t)_{t \ge 0}$ is strong Feller, i.e. $T_t f$ has a continuous $m$-version for any $f \in \mathcal{B}_b(E)$, because the Hunt process $\tilde{\bM}$ 
is unique only for quasi-every (hence in particular $m$-a.e) starting point (see for instance \cite[Theorems 4.2.8 and A.2.8.]{FOT}). Therefore a Hunt process as in $(\bf{H2})$ has to be explicitly constructed from the transition function in $\bf{(H1)}$. This will be done in Subsection \ref{sec2.4} below.
\begin{defn}\label{r;absolute}
If $\bf{(H1)}$ and $\bf{(H2)}$ hold, then we say that $\bM$ satisfies the {\it absolute continuity condition} (cf. \cite[(4.2.9)]{FOT} and also \cite[Theorem 3.5.4 (ii)]{O13}).
\end{defn}

\begin{remark}\label{r;connoex}
Let $\bM$ satisfy the absolute continuity condition. Suppose $(\E,D(\E))$ is conservative and $(P_t)_{t \ge 0}$ is strong Feller, i.e. for $t>0$ we have $P_t(\mathcal{B}_b(E)) \subset C_b(E)$. Then, since $m$ has full support, one can easily see that 
\[
\P_x(\zeta =\infty) =1, \quad \forall x \in E .
\]
\end{remark}

\subsection{The existence of a transition kernel density}\label{etradkd}

In this subsection, we illustrate two methods to find a transition kernel density as in $\bf{(H1)}$. The first method is from \cite{St3}. The second method depends on elliptic regularity results. In the symmetric case in \cite{BGS} it has been shown in a nice way how to obtain $\bf{(H1)}$ (and more) starting from an embedding of $D(L_p)$ for some $p>1$ into the space of continuous functions on compact subsets of $E$. The latter is naturally implied by elliptic regularity results via Sobolev embedding. Instead of formalizing the results of \cite{BGS} to the non-symmetric case in Subsection \ref{ss;ermj} right  after Proposition \ref{t;morrey2.4} below,  we follow a toy example that we continue in course of the subsequent sections. 

\subsubsection{Symmetric Dirichlet forms represented by a carr\'e du champ}\label{s2;stmg}

Throughout this subsection, we assume that $(\E,D(\E))$  is symmetric. Then 
$(\E,D(\E))$ can be written as 
\[
\E(f,g) = \frac{1}{2} \int_{E} \,d \mu_{\langle f,g\rangle}, \quad f,g \in D(\E),
\]
where $\mu_{\langle\cdot,\cdot \rangle}$ is a positive symmetric bilinear form on $D(\E) \times D(\E)$ with values in the signed Radon measures on $E$, called energy measures.
The positive measure $\mu_{\langle f,f\rangle}$ can be defined via the formula
\[
\int_{E} \phi \, d\mu_{\langle f,f\rangle} = 2\E (f,\phi f) - \E(f^2,\phi),
\]
for every $f \in D(\E)_b$ and every $\phi \in D(\E) \cap C_0(E)$. Let $D(\E)_{loc}$ be the set of all measurable functions $f$ on $E$ for which on every relatively compact open set $G \subset E$ there exists a function $g \in D(\E)$ with $f=g$ $m$-a.e on $G$. By an approximation argument we can extend the quadratic form $f \mapsto \mu_{\langle f,f\rangle}$ to $D(\E)_{loc} = \big\{ f \in L^2_{loc}(E, m) \, | \,\mu_{\langle f,f\rangle} \ \text{is a Radon}\big.$ $\big. \text{measure} \big\}$.  By polarization we then obtain for $f,g \in D(\E)_{loc}$ a signed Radon measure
\[
 \mu_{\langle f,g\rangle}= \frac{1}{4} (  \mu_{\langle f+g,f+g\rangle}- \mu_{\langle f-g,f-g\rangle} ).
\]
For these properties of energy measures we refer to \cite{FOT}, \cite[Proposition 1.4.1]{Le}, and \cite{Mo} (cf. \cite[Appendix]{St3}). In this article, whenever $\E$ is symmetric, we will assume that it admits a carr\'e du champ
\[
\Gamma: D(\E) \times D(\E) \to L^1(E,m)
\]
as in \cite[Definition 4.1.2]{BH}. This means 
\[
\mu_{\langle f,g\rangle}=\Gamma(f,g)\,dm
\]
i.e.  $\mu_{\langle f,g\rangle}$ is absolutely continuous with respect to $m$ with density 
$\Gamma(f,g)$ for any $f,g\in D(\E)$. The energy measures $\mu_{\langle f,f\rangle}$ or equivalently the carr\'e du champ operator, define in an intrinsic way a pseudo metric $\gamma$ on $E$ by
\[
\gamma(x,y) = \sup \Big\{f(x) - f(y) \ | \ f  \in D(\E)_{loc} \cap C(\R^d), \  \Gamma(f,f)   \le   1 \, m\text{.a.e. on } E  \Big\}, 
\]
(cf. \cite{BM}).  We define the balls with respect to the intrinsic metric by 
\[
\tilde{B}_r(x) = \{ y \in E \ | \ \gamma(x,y) < r \}, \quad x \in E, \quad r>0.
\]
\begin{defn}\label{p;dcpo}
\begin{itemize}
\item[(i)]$(\E,D(\E))$ is called strongly regular if $\gamma(\cdot,\cdot)$ is a metric on $E$ whose topology coincides with the original one.
\item[(ii)]
We say the completeness property holds, if for all balls $\tilde{B}_{2r}(x)\subset E$,  $x \in E$, $r>0$, the closed balls $\overline{\tilde{B}_r(x)}$ are complete (or equivalently, compact).
\item[(iii)]
We say the doubling property holds if there exists a constant N= N(E) such that for all balls $\tilde{B}_{2r}(x) \subset E$
\[
m (\tilde{B}_{2r}(x)) \le 2^N  m (\tilde{B}_{r}(x)).
\] 
\item[(iv)] We say the (scaled) weak Poincar\'e inequality holds, if there exists a constant $C_p= C_p(E)$ such that for all balls $\tilde{B}_{2r}(x) \subset E$
\[
\int_{\tilde{B}_r(x)} |f - \tilde{f}_{x,r} |^2 \, dm \le C_p \ r^2 \int_{\tilde{B}_{2r}(x)} \ \Gamma(f,f)\,dm, \quad \forall f \in D(\E),
\]
where  $\tilde{f}_{x,r} = \frac{1}{m (\tilde{B}_r(x))}\int_{\tilde{B}_r(x)} f \, dm$.
\end{itemize}
\end{defn}
Suppose $(\E,D(\E))$ satisfies the properties (i)-(iv) of Definition \ref{p;dcpo}. Then by \cite[p. 286 A)]{St3} with $Y=E$, there exists a jointly continuous transition kernel density  $p_{t}(x,y)$, locally H\"older continuous in $(t,x,y)\in (0,\infty)\times E \times E$ (see \cite[Proposition 3.1)]{St3}), such that
\begin{equation}\label{e;tdst3}
P_t f(x) := \int_{E} p_t(x,y)f(y) \, m(dy), \ \  t>0, \ x,y \in E, \ f\in \mathcal{B}_b(E)
\end{equation}
is an $m$-version of $T_t f$ if $f  \in  L^2(E, m)_b$. In particular, condition $\bf{(H1)}$ is satisfied. Furthermore, we obtain from \cite[Corollary 4.2)]{St3} the following estimate of transition kernel density:
\begin{thm}\label{t;2.7she}
Suppose $(\E,D(\E))$ satisfies the properties (i)-(iv) of Definition \ref{p;dcpo}. Then, given any $\varepsilon>0$, for all points $x,y \in E$ and all $t > 0$
\begin{equation}\label{e;hkeos}
p_t(x,y) \le C\frac{1}{\sqrt{m(\tilde{B}_{\sqrt{t}}(x))}} \cdot  \frac{1}{\sqrt{m(\tilde{B}_{\sqrt{t}}(y))}}  \cdot \exp \left( - \frac{\gamma^2(x,y)}{(4+\varepsilon)t} \right),
\end{equation}
where $C$ is a constant depending only on $N=N(E)$ and $C_p=C_p(E)$.
\end{thm}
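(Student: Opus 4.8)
I would follow the classical route from local Poincar\'e plus volume doubling to Gaussian upper bounds, in the Dirichlet-form formulation to which hypotheses (i)--(iv) are tailored; this is precisely \cite[Corollary 4.2]{St3}, so what follows is only a sketch of the mechanism. The argument has three stages: first upgrade the geometric data into a scaled Sobolev/Nash inequality on intrinsic balls; next run a Moser/Nash iteration to get the on-diagonal bound $p_t(x,x)\le C/m(\tilde B_{\sqrt t}(x))$; finally insert an exponential weight (Davies' method) to produce the off-diagonal Gaussian factor with exponent $4+\varepsilon$.

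\emph{Stage 1.} By (i), $\gamma$ is a genuine metric inducing the original topology, and by (ii) closed $\gamma$-balls whose concentric double lies in $E$ are compact, so localized forms behave well and finite chains of balls can be formed. Combining the doubling inequality (iii) with the scaled weak Poincar\'e inequality (iv) via the standard Jerison-type chaining argument, I would derive, for every ball $\tilde B_r(x)\subset E$, a scaled Neumann--Sobolev (equivalently Nash) inequality of the form
\[
\|f-\tilde f_{x,r}\|_{L^2(\tilde B_r(x))}^{2+4/\nu}\le \frac{C\,r^2}{m(\tilde B_r(x))^{2/\nu}}\Big(\int_{\tilde B_{2r}(x)}\Gamma(f,f)\,dm\Big)\,\|f-\tilde f_{x,r}\|_{L^1(\tilde B_r(x))}^{4/\nu},\quad f\in D(\E),
\]
with dimensional exponent $\nu>2$ determined by $N$ and constant $C=C(N,C_p)$.

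\emph{Stage 2.} Feeding this into a Moser/Nash iteration for the (localized) semigroup of $\E$ --- strong locality and the carr\'e du champ $\Gamma$ being exactly what makes the energy bookkeeping with cut-off functions and the Leibniz rule work --- yields, for all $x,y\in E$, $t>0$,
\[
p_t(x,y)\le \frac{C}{\sqrt{m(\tilde B_{\sqrt t}(x))}\,\sqrt{m(\tilde B_{\sqrt t}(y))}},
\]
where doubling is used to interpolate between $p_t(x,x)$, $p_t(y,y)$ and the geometric mean of volumes and to absorb the loss from iterating over balls of comparable radius; again $C=C(N,C_p)$.

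\emph{Stage 3 (the crux).} For fixed $y\in E$, a bounded $\psi$ with $\Gamma(\psi,\psi)\le 1$ $m$-a.e.\ (the admissible class defining $\gamma$), and $\alpha\in\R$, I would study the conjugated semigroup $f\mapsto e^{-\alpha\psi}T_t(e^{\alpha\psi}f)$: differentiating $t\mapsto\|e^{-\alpha\psi}T_t(e^{\alpha\psi}f)\|_{L^2(E,m)}^2$ and using strong locality with the chain rule for $\Gamma$, the perturbation is dominated by $\alpha^2\|e^{-\alpha\psi}T_t(e^{\alpha\psi}f)\|_{L^2(E,m)}^2$ precisely because $\Gamma(\psi,\psi)\le 1$, and Gronwall gives the Gaffney-type bound $e^{\alpha^2 t}$ for the conjugated semigroup on $L^2$. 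Combining this with Stage 2 through $p_t(x,y)=\int_E p_{t/2}(x,z)p_{t/2}(z,y)\,m(dz)$ and a Cauchy--Schwarz against $e^{\alpha\psi}$, then optimizing over $\alpha$ and over $\psi$ approximating $\gamma(\cdot,y)$ (legitimate by (i)), produces $\exp(-\gamma^2(x,y)/(4t))$ in the idealized computation. The loss $4\mapsto 4+\varepsilon$ is the familiar price of splitting the time interval and patching the on-diagonal bound --- which is only scale invariant up to the doubling constant --- with the exponential weight; carrying out this patching so as to keep $C$ dependent only on $N$ and $C_p$ is the main obstacle, and reaching the sharp constant $4$ would require a finer integrated maximum principle \`a la Grigor'yan, which is not needed here. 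Since this program is carried out in full in \cite{St3}, in the paper we simply invoke \cite[Corollary 4.2]{St3}.
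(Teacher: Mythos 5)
Your proposal matches the paper's handling exactly: Theorem \ref{t;2.7she} is stated as a direct consequence of \cite[Corollary 4.2]{St3}, with no further argument supplied in the text. Your three-stage sketch of the mechanism behind that citation (Poincar\'e plus doubling yielding a scaled Nash inequality, Moser/Nash iteration for the on-diagonal bound, and a Davies--Gaffney perturbation using functions $\psi$ with $\Gamma(\psi,\psi)\le 1$ to produce the off-diagonal Gaussian factor with exponent $4+\varepsilon$) is a correct summary of what the cited result of \cite{St3} establishes.
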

Using Theorem \ref{t;2.7she}, exactly as in \cite[Proposition 3.3]{ShTr13a}, we can show: 
\begin{proposition}\label{p;strongfl}
Suppose $(\E,D(\E))$ satisfies the properties (i)-(iv) of Definition \ref{p;dcpo}. Then:
\begin{itemize} 
\item[(i)] $(P_t)_{t \ge 0}$ and $(R_{\alpha})_{\alpha > 0}$) are strong Feller.
\item[(ii)] $(\textbf{H1})$ and \textbf{(H2)}$^{\prime}$(iii), (iv) from Subsection \ref{2.4.2.1} below hold for $(P_t)_{t \ge 0}$.
\item[(iii)] Suppose $E= \R^d$ and $C^{-1}\|x-y\|\le \gamma(x,y)\le C \|x-y\|$ for some constant $C\ge 1$ and any $x,y\in \R^d$. Then $P_t (L^1(\R^d,m)_0) \subset C_{\infty} (\R^d)$.
\end{itemize}
\end{proposition}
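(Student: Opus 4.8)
The plan is to deduce all three parts from the Gaussian upper bound (\ref{e;hkeos}) of Theorem \ref{t;2.7she} together with the joint (local H\"older) continuity of $(t,x,y)\mapsto p_t(x,y)$, following \cite[Proposition 3.3]{ShTr13a}. The recurring technical ingredient is control of the volume function $x\mapsto m(\tilde B_r(x))$, so I would first record two facts. By the doubling property (Definition \ref{p;dcpo}(iii)), the completeness property (Definition \ref{p;dcpo}(ii)) and the full support of $m$, for each fixed $r>0$ the function $x\mapsto m(\tilde B_r(x))$ is bounded above and bounded away from $0$ on every compact subset of $E$. Moreover, iterating doubling along a chain of concentric balls joining $x$ to a fixed reference point $x_0$ yields $m(\tilde B_{\sqrt t}(x_0))\le C_t\,(1+\gamma(x,x_0))^{N}\, m(\tilde B_{\sqrt t}(x))$, so that $x\mapsto 1/\sqrt{m(\tilde B_{\sqrt t}(x))}$ grows at most polynomially in $\gamma(x,x_0)$.

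For part (i), boundedness of $P_t f$, $f\in\mathcal{B}_b(E)$, is immediate from sub-Markovianity, so the issue is continuity. Fixing $t>0$ and a compact neighbourhood $K$ of a given point, with $x_0\in K$, I would use (\ref{e;hkeos}), the uniform lower bound on $m(\tilde B_{\sqrt t}(z))$ for $z\in K$, and the triangle inequality $\gamma(z,y)\ge\gamma(x_0,y)-\mathrm{diam}_\gamma(K)$ to bound $\sup_{z\in K}p_t(z,y)$ by $C_{K,t}\,m(\tilde B_{\sqrt t}(y))^{-1/2}\exp(-c\,\gamma^2(x_0,y)/t)$ once $\gamma(x_0,y)$ is large. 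That this majorant is $m$-integrable is then checked by decomposing $E$ into the dyadic annuli $\{2^k\sqrt t\le\gamma(x_0,y)<2^{k+1}\sqrt t\}$: on each annulus doubling bounds both its $m$-measure and $m(\tilde B_{\sqrt t}(y))^{-1/2}$ by powers of $2^{Nk}$ times $m(\tilde B_{\sqrt t}(x_0))^{\pm1/2}$, while the Gaussian factor contributes $\exp(-c\,2^{2k})$, so the sum over $k$ converges. Dominated convergence together with the joint continuity of $p_t$ then gives continuity of $P_tf$ on $K$, hence on $E$, and with $\|P_tf\|_\infty\le\|f\|_\infty$ we get $P_tf\in C_b(E)$. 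For the resolvent one writes $R_\alpha f=\int_0^\infty e^{-\alpha t}P_tf\,dt$ (Fubini on the kernel densities) and applies dominated convergence in $t$, using $\|P_tf\|_\infty\le\|f\|_\infty$ and the continuity of each $P_tf$.

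Part (ii) is then short: \textbf{(H1)} has already been established before Theorem \ref{t;2.7she}, and the remaining conditions \textbf{(H2)}$^{\prime}$(iii),(iv) of Subsection \ref{2.4.2.1} --- regularity and strong-Feller type properties of $(P_t)$ --- follow from the joint continuity of $p_t$, the estimate (\ref{e;hkeos}) and part (i), exactly as in \cite[Proposition 3.3]{ShTr13a}. For part (iii), assume $E=\R^d$ and $C^{-1}\|x-y\|\le\gamma(x,y)\le C\|x-y\|$, and take $f\in L^1(\R^d,m)_0$ with compact support $S$. Continuity of $P_tf$ follows as in (i), now simply because the continuous kernel $(x,y)\mapsto p_t(x,y)$ is bounded on $K\times S$ for any compact $K$ and $f\in L^1(m)$. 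For decay at infinity, for $y\in S$ and $\|x\|\to\infty$ one has $\gamma(x,y)\ge C^{-1}\|x-y\|\ge c\|x\|$ eventually, so (\ref{e;hkeos}) and the polynomial lower bound on $m(\tilde B_{\sqrt t}(x))$ from the first paragraph give $p_t(x,y)\le C_{t,S}\,(1+\|x\|)^{N/2}e^{-c\|x\|^2/t}$ uniformly in $y\in S$; since $\int_S|f|\,dm<\infty$, letting $\|x\|\to\infty$ yields $P_tf(x)\to0$, so $P_tf\in C_\infty(\R^d)$.

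I expect the one genuinely delicate point to be the uniform-domination step in the proof of (i): upgrading the \emph{pointwise} Gaussian bound (\ref{e;hkeos}) to a single $m$-integrable majorant valid simultaneously for all $z$ in a fixed compact set. This is precisely where the doubling property enters essentially --- through the dyadic annular decomposition and the volume comparison of the first paragraph --- whereas everything else reduces to sub-Markovianity, Fubini, and routine applications of dominated convergence.
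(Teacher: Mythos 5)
Your argument is correct and is essentially the argument the paper itself invokes: the paper gives no details, deferring entirely to \cite[Proposition 3.3]{ShTr13a}, and that proof proceeds exactly as you describe, combining the Gaussian bound \eqref{e;hkeos} with volume comparisons from the doubling property to produce an $m$-integrable majorant uniform over compacts, then concluding by joint continuity of the kernel and dominated convergence (with the Laplace-transform representation handling the resolvent and the metric comparability handling the decay at infinity in (iii)). Your identification of the uniform-domination step as the only delicate point, and of doubling as the ingredient that makes it work, is accurate.
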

According to \cite[Theorem 4]{St1}, \cite[Theorems 3.1(i),(ii), and 3.6]{St5}, we have the following conservativeness criterion: 
\begin{thm}\label{t;cstconsv}
 Suppose $(\E,D(\E))$ satisfies properties (i) and (ii) of Definition \ref{p;dcpo} and that $\gamma(x,y)<\infty$ for all $x,y\in E$. Then $(\E,D(\E))$ is conservative, if
\[
\int_{1}^{\infty} \frac{r}{\log \big (m (\tilde{B}_{r}(x_0)  \big )} \, dr =\infty,
\]
where $x_0 \in E$ is arbitrary but fixed.
\end{thm}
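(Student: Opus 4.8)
The plan is to argue by the classical reduction of conservativeness to a Liouville property for $w_t:=1-P_t1$, combined with a weighted energy estimate of Davies--Gaffney--Grigor'yan type, iterated over concentric intrinsic annuli whose radii are calibrated to the integral test.

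First I would set up the reduction and the geometry. By the semigroup property (and sub-Markovianity) it suffices to prove $P_{t_0}1=1$ $m$-a.e.\ for a single $t_0>0$; equivalently, that $w_t:=1-P_t1$, which satisfies $0\le w_t\le 1$ and $w_0=1-P_01=0$, vanishes for some $t_0>0$. Using strong locality (so that $\mathcal{E}(1,\varphi)=0$ for $\varphi$ of compact support) one checks that $w_t\in D(\mathcal{E})_{loc}\cap L^\infty(E,m)$ and that $t\mapsto w_t$ is a bounded weak solution of the heat equation, i.e.\ $\frac{d}{dt}(w_t,\varphi)_{L^2(E,m)}=-\mathcal{E}(w_t,\varphi)$ for all compactly supported $\varphi\in D(\mathcal{E})$. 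On the geometric side, property (i) of Definition \ref{p;dcpo} makes $\gamma$ a metric inducing the topology of $E$, and then the intrinsic distance $\rho(\cdot):=\gamma(x_0,\cdot)$ lies in $D(\mathcal{E})_{loc}\cap C(E)$ with $\Gamma(\rho,\rho)\le 1$ $m$-a.e.\ (being intrinsically $1$-Lipschitz); property (ii), together with the local compactness of $E$ and the fact that $\gamma$ is a length metric, yields by a Hopf--Rinow-type argument that $(E,\gamma)$ is proper, so every ball $\tilde B_r(x_0)$ is relatively compact, while $\gamma(x,y)<\infty$ for all $x,y$ ensures $\bigcup_{r>0}\tilde B_r(x_0)=E$. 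In particular, radial Lipschitz cut-offs $\chi=\eta\circ\rho$ with prescribed support in intrinsic balls lie in $D(\mathcal{E})$ with compact support and satisfy $\Gamma(\chi,\chi)\le(\eta'\circ\rho)^2$ $m$-a.e.

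Next comes the core estimate. Fix $D>0$ and the space--time weight $\phi(t,x)=-\rho(x)^2/(2(D-t))$, $0\le t<D$, which by $\Gamma(\rho,\rho)\le 1$ satisfies $\partial_t\phi+\tfrac14\Gamma(\phi,\phi)\le 0$ $m$-a.e. Testing the weak heat equation for $w$ against $\chi^2e^{\phi}w$, using the Leibniz rule for $\Gamma$ and pointwise Cauchy--Schwarz, and discarding the resulting non-positive principal term, one obtains for every radial cut-off $\chi$ and every $t<D$
\[
\frac{d}{dt}\int_E\chi^2e^{\phi}w_t^2\,dm\;\le\;C\int_E w_t^2\,e^{\phi}\,\Gamma(\chi,\chi)\,dm .
\]
Integrating in $t$ from $0$, where $w_0=0$, and taking $\chi$ equal to $1$ on $\tilde B_r(x_0)$, to $0$ off $\tilde B_{2r}(x_0)$, with $\Gamma(\chi,\chi)\le r^{-2}$ on the annulus, the right-hand side is dominated — using $0\le w\le 1$ and $e^{\phi}\le e^{-r^2/(2(D-t))}$ there — by a constant multiple of $t\,r^{-2}e^{-r^2/(2D)}\,m(\tilde B_{2r}(x_0))$. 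Choosing $D$ small enough that $1/(2D)$ exceeds $\limsup_{r\to\infty}r^{-2}\log m(\tilde B_r(x_0))$ (possible precisely when the latter is finite), this bound tends to $0$ as $r\to\infty$; since $\int_{\tilde B_r(x_0)}e^{\phi}w_t^2\,dm\le C t\,r^{-2}e^{-r^2/(2D)}m(\tilde B_{2r}(x_0))$ for each $r$ and the left side is non-decreasing in $r$ with limit $\int_E e^{\phi}w_t^2\,dm$, we get $\int_E e^{\phi}w_t^2\,dm=0$, hence $w_t\equiv 0$ for all $t<D$, and then $w_t\equiv 0$ for all $t$ (since $w_{t_0}\equiv 0$ for one $t_0$ forces $w_t\equiv 0$ for every $t$), i.e.\ conservativeness. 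This already settles at most $e^{Cr^2}$ volume growth. For the full range, where $r^{-2}\log m(\tilde B_r(x_0))$ need not be bounded, one replaces the homogeneous Gaussian weight by one with a radius-dependent temperature and iterates the estimate over shells $\tilde B_{r_{k+1}}(x_0)\setminus\tilde B_{r_k}(x_0)$: following Grigor'yan's scheme as carried out for strongly local Dirichlet forms in \cite{St1, St5}, the radii $r_k\uparrow\infty$ are chosen from the profile $r\mapsto r/\log m(\tilde B_r(x_0))$ so that the ``time'' gained on the $k$-th shell sums to a divergent series precisely when $\int_1^\infty r\,dr/\log m(\tilde B_r(x_0))=\infty$, which propagates $w_t\equiv 0$ to all radii and all times.

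I expect this last step to be the main obstacle: the reduction to $w_{t_0}\equiv 0$, the weak heat equation for $w$, and the basic weighted energy inequality are routine, but designing the shell radii (equivalently the inhomogeneous weight) so that the iterated error estimate telescopes to a finite bound exactly under $\int_1^\infty r\,dr/\log m(\tilde B_r(x_0))=\infty$ is the sharp and genuinely delicate point, and is the real content of the theorem. If $\mathcal{E}$ is non-symmetric, the antisymmetric first-order part of the form contributes an extra term to $\mathcal{E}(w_t,\chi^2e^{\phi}w_t)$ which, as in \cite{St5}, is of lower order and is absorbed by the same Cauchy--Schwarz step — all the data entering the statement, namely $\Gamma$, $\gamma$ and the volume, belonging to the symmetric part.
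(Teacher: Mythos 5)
The paper gives no proof of Theorem~\ref{t;cstconsv}: it is recorded as an immediate consequence of Sturm's conservativeness criteria \cite[Theorem 4]{St1} and \cite[Theorems 3.1(i),(ii) and 3.6]{St5}. Your sketch reproduces, in outline, exactly the Grigor'yan-type argument carried out in those references for strongly local Dirichlet forms --- the reduction to $1-P_t1\equiv 0$, the Davies--Gaffney weighted $L^2$-energy estimate built on the Rademacher-type fact $\Gamma(\gamma(x_0,\cdot),\gamma(x_0,\cdot))\le 1$ coming from strong regularity, and the iteration over intrinsic annuli calibrated to $\int_1^\infty r\,dr/\log m(\tilde{B}_r(x_0))=\infty$ --- so you follow the same route as the paper's cited source, correctly flagging the shell-iteration step (and, implicitly, the $L^2$-approximation needed to treat $P_t1$ when $1\notin L^2(E,m)$) as what is delegated to \cite{St1,St5}.
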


\subsubsection{Using elliptic regularity}\label{ss;ermj}
We consider the following conditions {\bf (A1)}-{\bf (A3)} in dimension $d\ge 2$:
\begin{itemize}
\item[{\bf (A1)}]
$\rho = \xi^2$, $\xi \in H^{1,2}_{loc}(\R^d, dx)$,  $\rho > 0 \ \ dx$-a.e. and 
\[
\frac{\| \nabla \rho \|}{\rho} \in L^{p}_{loc} (\R^d, m), \quad m := \rho dx,
\]
$p:=d + \varepsilon$ for some $\varepsilon >0$, $a_{ij}=a_{ji}\in H^{1,p}_{loc}(\R^d,dx)$, $1\le i,j\le d$ and the matrix $A= (a_{ij})_{1 \le i,j \le d}$ is locally strictly elliptic $dx$-a.e. on $\R^d$, i.e. for each compact set $K \subset \R^d$, there exists some $\kappa_K > 0$ such that  $\kappa_K \| \xi \|^2 \le  \langle A(x) \xi, \xi \rangle$, $\forall \xi \in \R^d$,  $dx$-a.e. $x\in K$. 
\end{itemize}
By {\bf (A1)} the symmetric positive definite bilinear form
\[
\E^0(f,g) : = \frac{1}{2} \int_{\R^d} \langle A \nabla f ,  \nabla g \rangle \, dm , \quad f, g \in C_0^{\infty}(\R^d)
\]
is closable in $L^2(\R^d,m)$ and its closure $(\E^0,D(\E^0))$ is a symmetric, strongly local, regular Dirichlet form. We further assume
\begin{itemize}
\item[{\bf (A2)}] $B : \R^d \to \R^d, \ \|B\| \in L_{loc}^{p}(\R^d, m)$ where $p$ is the same as in {\bf (A1)} and
\[
\int_{\R^d} \langle B,\nabla f \rangle \, dm = 0, \quad \forall f \in C_0^{\infty}(\R^d),
\]
\end{itemize}
and
\begin{itemize}
\item[{\bf (A3)}]
\[
\left| \int_{\R^d} \langle B,\nabla f \rangle \ g \ \rho \ dx \right| \le c_0 \ \E^0_1 (f,f)^{1/2} \ \E^0_1 (g,g)^{1/2}, \quad \forall f,g \in C_0^{\infty}(\R^d),
\]
where $c_0$ is some constant (independent of $f$ and $g$).
\end{itemize}
Next, we consider the non-symmetric bilinear form 
\begin{equation}\label{df}
\E(f,g) : = \frac{1}{2} \int_{\R^d} \langle A \nabla f , \nabla g \rangle \, dm - \int_{\R^d} \langle B,\nabla f \rangle \ g \  dm, \quad
 f, g \in C_0^{\infty}(\R^d)
\end{equation}
in $L^2(\R^d,m)$. 
Then by {\bf (A1)}-{\bf (A3)} $(\E, C_0^{\infty}(\R^d))$ is closable in $L^2(\R^d,m)$ and the closure $(\E,D(\E))$ is a non-symmetric Dirichlet form (cf. \cite[II. 2.d)]{MR}), which is strongly local and regular. \\
We now state the elliptic regularity result  \cite[Theorem 5.1] {BGS}, which is based on results of \cite{BKR1}, \cite{BKR2}, but improves them. \cite[Theorem 5.1] {BGS} is formulated for general open subsets $U\subset \R^d$, but we shall only be concerned with $U=\R^d$.
\begin{prop}\label{p;gdegep2.3}
Let $d \ge 2$ and $\mu$ a locally finite (signed) Borel measure on $\R^d$ that is absolutely continuous with respect to the Lebesgue measure $dx$ on $\R^d$. Suppose $A= (a_{ij})_{1 \le i,j \le d}$ is as in {\bf (A1)}. Let either $h_i$, $c \in L_{loc}^p(\R^d,dx)$ or $h_i$, $c \in L^p_{loc}(\R^d, \mu)$ and let $f \in L^p_{loc}(\R^d,dx)$. Assume that 
\[
\int_{\R^d} \Big( \sum_{i,j = 1}^{d} a_{ij} \partial_{ij} \varphi + \sum_{i=1}^{d} h_i \partial_i \varphi + c \varphi  \Big) \  d\mu = \int_{\R^d}  \varphi  f \, dx, \quad \forall \varphi \in C_0^{\infty} (\R^d),
\]
where $h_i$, $c$ are locally $\mu$-integrable. Then $\mu$ has a density in $H^{1,p}_{loc}(\R^d)$ that is locally H\"older continuous.
\end{prop}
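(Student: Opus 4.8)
The plan is to reduce the statement to the two cited regularity results, \cite{BKR1} and \cite{BKR2}, by first handling the case where the coefficients $h_i,c$ are only in $L^p_{loc}(\R^d,\mu)$ and then reducing the general case to it. First I would observe that since $\mu$ is absolutely continuous with respect to $dx$, we may write $\mu = \rho_0\,dx$ for some $\rho_0 \in L^1_{loc}(\R^d,dx)$, so the defining identity becomes
\[
\int_{\R^d}\Big(\sum_{i,j}a_{ij}\partial_{ij}\varphi + \sum_i h_i\partial_i\varphi + c\varphi\Big)\rho_0\,dx = \int_{\R^d}\varphi f\,dx,\quad\forall\varphi\in C_0^\infty(\R^d),
\]
which exhibits $\rho_0$ as a (very) weak solution of the elliptic equation $L^*(\rho_0\,dx) = f\,dx$ in the sense of measures, where $L$ is the non-divergence-form operator with top-order coefficients $a_{ij}$ and lower-order data $h_i,c$. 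The key point is that the coefficients $a_{ij}$ are in $H^{1,p}_{loc}$ with $p=d+\varepsilon>d$, hence (by Sobolev embedding) continuous, and that the same integrability exponent governs $h_i$ and $c$; this is precisely the regime in which the a priori estimates of \cite{BKR1},\cite{BKR2} apply.

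The main steps, in order, would be: (1) localize — fix a relatively compact open ball $U'\Subset\R^d$ and a cutoff $\psi\in C_0^\infty(\R^d)$ with $\psi\equiv 1$ on $U'$; the measure $\psi\,d\mu$ then satisfies a perturbed equation of the same type on all of $\R^d$ with the added terms coming from $\nabla\psi$ and $\Delta\psi$ (these are bounded with compact support, hence harmless for the local integrability hypotheses). (2) Rewrite the equation in divergence form by transferring one derivative from $\varphi$ onto $a_{ij}$: since $a_{ij}\in H^{1,p}_{loc}$, we have $\sum_{i,j}a_{ij}\partial_{ij}\varphi = \sum_j\partial_j\big(\sum_i a_{ij}\partial_i\varphi\big) - \sum_{i,j}(\partial_j a_{ij})\partial_i\varphi$, which absorbs the second-order part into a divergence-form leading term plus a first-order term whose coefficient $\sum_j\partial_j a_{ij}\in L^p_{loc}$. (3) Apply the density and local regularity statements of \cite{BKR1},\cite{BKR2} (as improved in \cite[Theorem 5.1]{BGS}) to conclude that $\rho_0$ has a version in $H^{1,p}_{loc}(\R^d)$. (4) Since $p>d$, Morrey's embedding $H^{1,p}_{loc}\hookrightarrow C^{1-d/p}_{loc}$ gives local Hölder continuity of this version, completing the proof.

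The two cases in the hypothesis — $h_i,c\in L^p_{loc}(dx)$ versus $h_i,c\in L^p_{loc}(\mu)$ — should be unified by noting that once $\rho_0$ is shown to be locally bounded (which follows a posteriori once continuity is established, but can also be bootstrapped: a first application with the $L^p_{loc}(\mu)$ hypothesis yields $\rho_0\in H^{1,p}_{loc}$, hence $\rho_0$ locally bounded, hence the two integrability conditions on the lower-order coefficients become equivalent on the relevant compact sets). I expect the genuine obstacle to be step (3): verifying that the hypotheses of the cited theorems of \cite{BKR1},\cite{BKR2} are met verbatim — in particular that the "very weak" formulation against $C_0^\infty$ test functions used here matches the formulation there, and that the local ellipticity constant $\kappa_K$ together with the $H^{1,p}_{loc}$-regularity of $A$ supplies exactly the structural input those a priori estimates demand. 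Everything else (localization, the divergence-form rewriting, Morrey embedding) is routine. Since the statement is quoted from \cite[Theorem 5.1]{BGS}, the cleanest route is in fact to invoke that theorem directly after the localization in step (1); the sketch above is the argument underlying it.
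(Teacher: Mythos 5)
The paper does not prove this proposition: it is introduced verbatim as a restatement of \cite[Theorem 5.1]{BGS} (which improves on \cite{BKR1}, \cite{BKR2}) and is used as a black box. Your closing observation---that the cleanest route is to invoke that theorem directly after localizing---is exactly what the paper does, so the two approaches coincide; the four-step sketch you give beforehand is a plausible summary of the machinery behind the cited result, but it is material the paper never develops (and your step (2), the pointwise divergence-form rewriting, would still need to be carried out inside the integral against the density of $\mu$, which is more delicate than stated).
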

Additionally, we restate Morrey's estimate in our setting  (see \cite[Theorem 1.7.4 ]{BKRS}).
\begin{prop}\label{t;morrey2.4}
Assume $p > d \ge 2$. Let V be a bounded domain in $\R^d$ and $b : V \to \R^d$ and $c,e : V \to \R$ such that
\[
h_i \in L^p (V, dx) \ \  \text{and} \ \ c,e \in L^q(V,dx) \ \ \text{for} \ \ q:=\frac{dp}{d+p}>1.
\]
Let $a_{ij} = a_{ji}$, $a_{ij} \in H^{1,p}_{loc}(\R^d,dx) $ for all $1 \le i,j \le d$ and $\kappa^{-1} \ \| \xi \|^2 \le \langle A(x) \xi,\xi \rangle \le \kappa \| \xi \|^2$, $\forall \xi \in \R^d$, $x \in V$ for some $\kappa \ge 1$. Assume that $u \in H^{1,p}(V)$ is a solution of 
\[
\int_{V} \sum_{i=1}^d \Big( \partial_i \varphi \Big( \sum_{j=1}^d a_{ij} \partial_j u + h_i u  \Big)  \Big) + \varphi (cu + e) \, dx =0, \quad \forall \varphi \in C_0^{\infty}(V),
\]
Then for every domain $V'$ with $V' \subset \overline{V'} \subset V$, we obtain the estimate
\[
\| u \|_{H^{1,p}(V')} \le c (\| e \|_{L^q(V,dx)} + \| u \|_{L^1(V,dx)}),
\]
where  $c < \infty$ is some constant independent of $e$ and $u$.
\end{prop}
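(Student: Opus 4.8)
The plan is to read the integral identity as the assertion that $u$ is, in the weak sense on $V$, a solution of the divergence-form equation
\[
\mathrm{div}(A\nabla u + hu) = cu + e, \qquad h := (h_1,\dots,h_d),
\]
and then to apply the classical interior $L^p$-theory for such equations, which is exactly the setting of \cite[Theorem 1.7.4]{BKRS}. I would first record that, since $p>d$, the Sobolev embedding $H^{1,p}_{loc}(\R^d)\hookrightarrow C^{1-d/p}_{loc}(\R^d)$ makes the entries $a_{ij}$ locally H\"older continuous; together with the two-sided bound $\kappa^{-1}\|\xi\|^2\le\langle A(x)\xi,\xi\rangle\le\kappa\|\xi\|^2$ this puts the principal part $\mathrm{div}(A\nabla\cdot)$ into the scope of Calder\'on--Zygmund $L^p$-estimates. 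The strategy is then two-fold: first bound $u$ in $L^\infty_{loc}(V)$, so that the lower-order contributions $hu$ and $cu$ can be moved to the right-hand side, and only afterwards invoke the interior $W^{1,p}$-estimate.

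Concretely, I would fix domains $V'\Subset V''\Subset V'''\Subset V$ and proceed in three steps. \emph{Step 1 (local boundedness).} The integrability exponents are precisely the De Giorgi--Nash--Moser thresholds: $q=\tfrac{dp}{d+p}>\tfrac d2$ is equivalent to $p>d$, so, since $\|h_i\|_{L^p(V)}$, $\|c\|_{L^q(V)}$, $\|e\|_{L^q(V)}$ are finite, the local boundedness estimate applies to the above equation (in the version that allows the first-order coefficient in $L^p$ with $p>d$ and the zero-order coefficient and forcing term in $L^q$ with $q>\tfrac d2$, independently) and yields
\[
\|u\|_{L^\infty(V''')}\le C(\|u\|_{L^1(V)}+\|e\|_{L^q(V)}),
\]
with $C$ depending only on $d$, $p$, $\kappa$, the modulus of continuity of $A$, $\|h\|_{L^p(V)}$, $\|c\|_{L^q(V)}$ and on $V'\Subset V'''\Subset V$; the $L^1$-norm of $u$ on the right (rather than $L^2$) is produced by the standard iteration that estimates $\|u\|_{L^s}$ for every $s>0$. \emph{Step 2 (controlling the right-hand side).} Once $u\in L^\infty(V''')$, one has $hu\in L^p(V''')$ and $cu+e\in L^q(V''')$, with norms bounded by $\|u\|_{L^\infty(V''')}+\|e\|_{L^q(V)}$ up to constants; and because $\tfrac1q=\tfrac1p+\tfrac1d$, the Sobolev inequality gives the continuous embedding $L^q(V''')\hookrightarrow H^{-1,p}(V''')$ (the dual of $H^{1,p'}_0(V''')$ with $\tfrac1{p'}+\tfrac1p=1$), so $cu+e$ is controlled in $H^{-1,p}(V''')$ as well.

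\emph{Step 3 (interior $W^{1,p}$-estimate and conclusion).} Rewriting the equation as $\mathrm{div}(A\nabla u)=(cu+e)-\mathrm{div}(hu)$, whose right-hand side lies in $H^{-1,p}(V''')$ by Step 2, the interior $L^p$-estimate for divergence-form operators with continuous leading coefficients (this is \cite[Theorem 1.7.4]{BKRS}) gives
\[
\|u\|_{H^{1,p}(V'')}\le C(\|hu\|_{L^p(V''')}+\|cu+e\|_{H^{-1,p}(V''')}+\|u\|_{L^p(V''')}).
\]
Since $V$ is bounded, $\|u\|_{L^p(V''')}\le|V|^{1/p}\|u\|_{L^\infty(V''')}$, so plugging Steps 1--2 into the last display bounds its entire right-hand side by $C(\|u\|_{L^1(V)}+\|e\|_{L^q(V)})$; as $V'\subset V''$ this is already the claimed estimate, and for a general $V'$ with $\overline{V'}\subset V$ one covers $\overline{V'}$ by finitely many balls compactly contained in $V$ and sums, the number of balls entering only the constant. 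I expect the only genuinely delicate point to be Step 1 --- the local boundedness bound with the $L^1$-norm of $u$ on the right and with coefficients merely in $L^p$ and $L^q$; but the exponents $p>d$ and $q=\tfrac{dp}{d+p}>\tfrac d2$ are exactly those for which Moser iteration (or Stampacchia truncation) closes, so this is available from the standard theory. Everything else is localization, the two Sobolev embeddings used above, and Calder\'on--Zygmund; indeed, if one takes \cite[Theorem 1.7.4]{BKRS} already in the present form, the whole argument reduces to verifying its hypotheses, the only substantive point being the continuity of $A$, read off from $a_{ij}\in H^{1,p}_{loc}(\R^d)$ via $p>d$.
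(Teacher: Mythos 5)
The paper gives no proof of this proposition: the sentence introducing it reads ``we restate Morrey's estimate in our setting (see \cite[Theorem 1.7.4]{BKRS})'', so Proposition~\ref{t;morrey2.4} is imported verbatim from \cite{BKRS} and used as a black box. Your closing remark --- that the statement reduces to verifying the hypotheses of \cite[Theorem 1.7.4]{BKRS}, with the only substantive point being the continuity of $A$ via $a_{ij}\in H^{1,p}_{loc}$ and Sobolev embedding for $p>d$ --- is therefore exactly the paper's own ``proof'', and that part of your answer is correct and matches the paper.

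The rest of your proposal is a self-contained reproof that the paper does not attempt, so it is a genuinely different (more elementary, more verbose) route. For what it is worth, the outline is sound: the exponent identifications are right ($q=\tfrac{dp}{d+p}>\tfrac d2$ is equivalent to $p>d$, and $\tfrac1q=\tfrac1p+\tfrac1d$ gives the local embedding $L^q\hookrightarrow H^{-1,p}$); local boundedness with an $L^1$-norm of $u$ on the right is available from De~Giorgi/Moser/Stampacchia precisely because $h\in L^p$ with $p>d$ and $c,e\in L^q$ with $q>\tfrac d2$; and once $u\in L^\infty_{loc}$ the equation can be rewritten as $\mathrm{div}(A\nabla u)=(cu+e)-\mathrm{div}(hu)$ with data in $L^q+\mathrm{div}\,L^p\subset H^{-1,p}_{loc}$, after which the Calder\'on--Zygmund interior estimate for continuous-coefficient divergence-form operators gives the $H^{1,p}$ bound. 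What the paper's approach buys is brevity and avoiding the need to reprove the local boundedness step carefully (your Step 1 is, as you note, the only genuinely delicate part); what your reproof buys is transparency about which hypotheses ($p>d$, the specific value of $q$, the H\"older continuity of $A$) are doing which job. Both are acceptable, but the paper treats this as a cited input rather than something to be proven.
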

The elliptic regularity results of Propositions \ref{p;gdegep2.3} and \ref{t;morrey2.4} have been applied in the symmetric case, i.e. $B\equiv 0$  in \cite{BGS} and in particular  ${\bf (H1)}$,  ${\bf (H2)}$, up to the solution of a corresponding martingale problem have been derived in this situation. We refer the interested reader to the mentioned article. Propositions \ref{p;gdegep2.3} and \ref{t;morrey2.4} and the elliptic regularity results of  \cite{BKR1}, \cite{BKR2}, 
can also be applied in the non-symmetric case. This has been done in case $A=(a_{i,j})_{1\le i,j\le d}$ is the identity matrix in \cite{RoShTr} and we will consider this case as a toy example that we will continue throughout this article.  
From now on up to the end of this subsection, we shall hence assume that
\begin{equation}\label{aidass}
a_{ij}=\delta_{ij},\ 1\le i,j \le d,
\end{equation}
where $\delta_{ij}\in \{0,1\}$ is the Kronecker symbol, i.e. $A$ is the identity matrix.\\
Since by {\bf (A1)}, {\bf (A2)}, $\left\| \frac{\nabla \rho}{2 \rho} \right\|$, $\|B\| \in L^{p}_{loc}(\R^d,m)$, we get  $C_0^{\infty}(\R^d) \subset D(L_r)$ for any $r \in [1,p]$ and
\begin{equation}\label{operatorrepresentation}
L_r u = \frac{1}{2}\Delta u +  \langle \frac{\nabla \rho}{2 \rho}+B, \nabla u \rangle, \quad u \in C_0^{\infty}(\R^d), \quad r \in [1,p].
\end{equation}
In particular 
$$
\int_{\R^d} Lu \,dm=0, \quad u \in C_0^{\infty}(\R^d).
$$
Thus by Proposition \ref{p;gdegep2.3} and Sobolev embedding:
\begin{cor}\label{cor1.2} 
$\rho$ is in $H^{1,p}_{loc}(\R^d,dx)$ and has hence a continuous $dx$-version in $C_{loc}^{1-d/p}(\R^d)$.
\end{cor}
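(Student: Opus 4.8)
The plan is to read off, from the identity $\int_{\R^d} Lu\,dm = 0$ for $u\in C_0^\infty(\R^d)$ recorded just above, that the measure $\mu := m = \rho\,dx$ solves an elliptic equation of exactly the type treated in Proposition \ref{p;gdegep2.3}, and then to combine that proposition with Sobolev embedding.

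First I would insert the operator representation (\ref{operatorrepresentation}) — valid because $C_0^\infty(\R^d)\subset D(L_r)$ for $r\in[1,p]$ by {\bf (A1)}, {\bf (A2)} — into $\int_{\R^d} Lu\,dm=0$. Under the standing assumption (\ref{aidass}) this reads
\[
\int_{\R^d}\Big(\tfrac12\Delta u + \big\langle \tfrac{\nabla\rho}{2\rho}+B,\nabla u\big\rangle\Big)\,d\mu = 0,\qquad \forall u\in C_0^\infty(\R^d),
\]
which is precisely the hypothesis of Proposition \ref{p;gdegep2.3} with $a_{ij}=\tfrac12\delta_{ij}$, $h_i=\big(\tfrac{\nabla\rho}{2\rho}+B\big)_i$, $c\equiv 0$ and $f\equiv 0$.

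Next I would verify the assumptions of Proposition \ref{p;gdegep2.3}. The matrix $(\tfrac12\delta_{ij})$ is constant, hence in $H^{1,p}_{loc}(\R^d,dx)$ and uniformly strictly elliptic, so it is an admissible choice of $A$ in the sense of {\bf (A1)}. By {\bf (A1)} and {\bf (A2)} we have $\big\|\tfrac{\nabla\rho}{2\rho}\big\|,\ \|B\|\in L^p_{loc}(\R^d,m)$, hence $h_i\in L^p_{loc}(\R^d,\mu)$ and in particular $h_i$ is locally $\mu$-integrable; $c\equiv 0$ trivially meets all requirements and $f\equiv 0\in L^p_{loc}(\R^d,dx)$; finally $\mu=m$ is a locally finite Borel measure absolutely continuous with respect to $dx$ by {\bf (A1)}. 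Proposition \ref{p;gdegep2.3} therefore gives that $\mu$ has a density in $H^{1,p}_{loc}(\R^d)$ which is locally Hölder continuous. Since the $dx$-density of $\mu=m$ is $\rho$, this says exactly that $\rho$ (identified as usual up to a $dx$-null set) lies in $H^{1,p}_{loc}(\R^d,dx)$.

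For the second assertion I would invoke the Sobolev–Morrey embedding: since $p=d+\varepsilon>d$, on any ball $B_r(y)$ one has $H^{1,p}(B_r(y))\hookrightarrow C^{1-d/p}(\overline{B_r(y)})$, so the $H^{1,p}_{loc}$-version of $\rho$ supplied above is locally Hölder continuous of order $1-d/p$; that is, $\rho$ has a continuous $dx$-version in $C^{1-d/p}_{loc}(\R^d)$. There is no real analytic obstacle in this argument: the entire content is to recognize $\int_{\R^d} Lu\,dm=0$ as an instance of the elliptic equation in Proposition \ref{p;gdegep2.3}. The only points that need a little care are that the constant matrix $\tfrac12\,\mathrm{id}$ still conforms to the template {\bf (A1)} imposes on $A$, and that $h_i=\tfrac{\nabla\rho}{2\rho}+B$ enjoys the required local $L^p(\mu)$-integrability — which is exactly what {\bf (A1)} and {\bf (A2)} were set up to guarantee.
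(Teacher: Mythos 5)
Your proof is correct and follows exactly the route the paper indicates: it recognizes $\int_{\R^d} Lu\,dm=0$ (with $L$ given by the operator representation \eqref{operatorrepresentation}) as an instance of the elliptic identity in Proposition~\ref{p;gdegep2.3} with $a_{ij}=\tfrac12\delta_{ij}$, $h_i=\big(\tfrac{\nabla\rho}{2\rho}+B\big)_i$, $c=f=0$, and $\mu=m=\rho\,dx$, then invokes Sobolev--Morrey embedding. The paper's own proof is just the terse remark ``Thus by Proposition~\ref{p;gdegep2.3} and Sobolev embedding,'' so your argument is simply the fully written-out version of the same approach, with the hypotheses of Proposition~\ref{p;gdegep2.3} correctly checked via {\bf (A1)}, {\bf (A2)}.
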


We shall always consider the continuous $dx$-version of $\rho$  and denote it also by $\rho$. Under the assumptions {\bf (A1)}-{\bf (A3)} we apply  Proposition \ref{p;gdegep2.3} and Proposition \ref{t;morrey2.4} to 
\[
\int (\alpha - \hat{L})u \ G_{\alpha} g \ \rho \ dx = \int u \ g \ \rho \ dx, \quad \forall u \in C_0^{\infty}(\R^d),
\]
where
\[
\hat{L} u = \frac{1}{2} \Delta u +  \langle \frac{\nabla \rho}{2 \rho} - B, \nabla u \rangle. 
\]
Doing this, we get (cf. \cite{RoShTr}):
\begin{cor}\label{cor1.3}
\begin{itemize} Let $\alpha > 0$, $t>0$, and $r \in [p,\infty)$. Then:
\item[(i)] For $g \in L^r(\R^d, m)$, we have
\[
\rho \ G_{\alpha} g \in H_{loc}^{1,p}(\R^d,dx)
\]
and for any open balls $B'\subset \overline{B'}\subset B \subset \overline{B}  \subset \{\rho > 0\}$ there exists $c_{B,\alpha} \in (0,\infty)$, independent of $g$, such that 
\begin{equation}\label{conresol}
\| \ \rho \ G_{\alpha} g \ \|_{H^{1,p}(B',dx)} \le c_{B,\alpha} \ \Big(\|G_{\alpha} g \|_{L^1(B,m)} + \| g \|_{L^{p}(B,m)} \Big).
\end{equation}
\item[(ii)] For $u \in D(L_r)$, we have 
\[
\rho \ T_t u \in H^{1,p}_{loc} (\R^d, dx)
\]
and for any open balls $B'\subset \overline{B'}\subset B \subset \overline{B}  \subset \{ \rho > 0\}$ there exists $c_B \in (0,\infty)$ (independent of $u$ and $t$) such that
\begin{eqnarray}\label{eq1.4}
\| \rho \ T_t u\|_{H^{1,p}(B',dx)} &\le& c_B \left( \| T_t u \|_{L^1(B,m)} + \| T_t (1-L_r) u  \|_{L^p(B,m)}   \notag    \right)\\
&\le& c_B \left( m(B)^{\frac{r-1}{r}} \| u \|_{L^r (\R^d,m) }  + m(B)^{\frac{r-p}{rp}} \| (1-L_r) u \|_{L^r(\R^d,m)}    \right).
\end{eqnarray}

\item[(iii)] Let $f \in L^r(\R^d,m)$. Then the above statements still hold with \eqref{eq1.4} replaced by
\[
\|\rho \ T_t f\|_{H^{1,p}(B',dx)} \le \tilde{c}_B \ (1+ t^{-1}) \| f \|_{L^r(\R^d,m)},
\]
where $\tilde{c}_B \in (0,\infty)$ (independent of $f$, $t$).
\end{itemize}
\end{cor}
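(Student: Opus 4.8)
The plan is to establish part (i) in full and then read off (ii) and (iii) from it. The object to regularize is $w:=\rho\,G_\alpha g$, which is exactly the $dx$-density of the locally finite measure $\mu:=(G_\alpha g)\,m$, and I will obtain $w\in H^{1,p}_{loc}$ together with \eqref{conresol} by using the two elliptic regularity results in succession — first in non-divergence form, then in divergence form. Concretely, I start from the identity displayed just before the statement,
\[
\int_{\R^d}(\alpha-\hat L)u\;G_\alpha g\;\rho\,dx=\int_{\R^d}u\,g\,\rho\,dx,\qquad u\in C_0^\infty(\R^d),
\]
insert $\hat Lu=\tfrac12\Delta u+\langle\tfrac{\nabla\rho}{2\rho}-B,\nabla u\rangle$, and — after multiplying by $-2$ — rewrite it as
\[
\int_{\R^d}\Big(\Delta u+\big\langle\tfrac{\nabla\rho}{\rho}-2B,\nabla u\big\rangle-2\alpha u\Big)\,d\mu=-2\int_{\R^d}u\,(g\rho)\,dx,\qquad u\in C_0^\infty(\R^d),
\]
which is precisely the hypothesis of Proposition \ref{p;gdegep2.3} with $A=\mathrm{id}$, drift coefficients $h_i=\tfrac{\partial_i\rho}{\rho}-2B_i$, zero-order coefficient $c=-2\alpha$, and right-hand side $-2g\rho$.

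I would then fix balls $B'\subset\overline{B'}\subset B\subset\overline B\subset\{\rho>0\}$ and note that, by Corollary \ref{cor1.2}, the continuous version of $\rho$ is bounded between two positive constants on the compact set $\overline B$. This turns the standing hypotheses $\tfrac{\|\nabla\rho\|}{\rho},\|B\|\in L^p_{loc}(\R^d,m)$ into $h_i\in L^p(B,dx)$, and $g\in L^r(\R^d,m)$ with $r\ge p$ into $g\rho\in L^p(B,dx)$ (by Hölder's inequality, since $m(B)<\infty$); moreover $h_i,c$ are locally $\mu$-integrable because $G_\alpha g\in L^r_{loc}\subset L^{p'}_{loc}$ while $\rho$ is locally bounded. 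Proposition \ref{p;gdegep2.3}, applied on $B$ (its source, \cite{BGS}, allows general open domains), then yields $w=\rho\,G_\alpha g\in H^{1,p}_{loc}$ on $\{\rho>0\}$, with a locally Hölder continuous version. Knowing now $w\in H^{1,1}_{loc}$, I may integrate by parts the $\Delta u$ term in the same identity to pass to the \emph{divergence} form
\[
\tfrac12\int\langle\nabla u,\nabla w\rangle\,dx+\int\big\langle B-\tfrac{\nabla\rho}{2\rho},\nabla u\big\rangle w\,dx+\alpha\int uw\,dx-\int u\,(g\rho)\,dx=0,\qquad u\in C_0^\infty(B),
\]
which is the equation of Proposition \ref{t;morrey2.4} with $a_{ij}=\tfrac12\delta_{ij}$, $h_i=B_i-\tfrac{\partial_i\rho}{2\rho}\in L^p(B,dx)$, $c=\alpha$, and $e=-g\rho\in L^q(B,dx)$ for $q=\tfrac{dp}{d+p}<p\le r$. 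Proposition \ref{t;morrey2.4} (after shrinking $B$ once more so that its hypothesis $w\in H^{1,p}(V)$ is met) gives $\|w\|_{H^{1,p}(B',dx)}\le c\big(\|g\rho\|_{L^q(B,dx)}+\|w\|_{L^1(B,dx)}\big)$; bounding $\|g\rho\|_{L^q(B,dx)}\le C\|g\|_{L^p(B,m)}$ and $\|w\|_{L^1(B,dx)}\le C\|G_\alpha g\|_{L^1(B,m)}$ via the two-sided bound on $\rho$ produces exactly \eqref{conresol}, the constant depending on $B$, on $\alpha$ through $c$, and on the local $L^p$-norms of $\tfrac{\|\nabla\rho\|}{\rho}$ and $\|B\|$.

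Parts (ii) and (iii) are reductions to (i). For (ii), given $u\in D(L_r)$ I set $f:=(1-L_r)u\in L^r(\R^d,m)$; since $T_t$ preserves $D(L_r)$ and commutes with $L_r$ there, $(1-L_r)T_tu=T_t(1-L_r)u=T_tf$, hence $T_tu=G_1(T_tf)$ with $T_tf\in L^r(\R^d,m)$, and part (i) with $g=T_tf$, $\alpha=1$ gives $\rho\,T_tu\in H^{1,p}_{loc}$ and the first line of \eqref{eq1.4}; the second line follows from Hölder's inequality on $B$ and the $L^r$-contractivity of $T_t$, using $\tfrac1p-\tfrac1r=\tfrac{r-p}{rp}$. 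For (iii), given $f\in L^r(\R^d,m)$ and $t>0$ I write $T_tf=T_{t/2}(T_{t/2}f)$ with $T_{t/2}f\in D(L_r)$ and apply (ii) to $u=T_{t/2}f$ with time $t/2$: then $\|T_{t/2}f\|_{L^r}\le\|f\|_{L^r}$ while $\|(1-L_r)T_{t/2}f\|_{L^r}\le\big(1+\tfrac{2M}{t}\big)\|f\|_{L^r}$, where $M$ is the analyticity constant of $(T_t)_{t>0}$ on $L^r(\R^d,m)$ — available since $r\ge p>2$, as recalled in Subsection \ref{su;h1h2} — and absorbing the two powers of $m(B)$ into a single constant yields the bound $\tilde c_B(1+t^{-1})\|f\|_{L^r(\R^d,m)}$.

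The one genuinely delicate point is the chain in (i): recognizing that $w=\rho\,G_\alpha g$ simultaneously solves a \emph{non-divergence}-form problem tested against $d\mu$ and a \emph{divergence}-form problem tested against $dx$, that one must first harvest $H^{1,p}_{loc}$-regularity from the former via Proposition \ref{p;gdegep2.3} before the integration by parts into the latter (required for the Morrey estimate) is even legitimate, and that all integrability hypotheses have to be checked only after passing from $L^p_{loc}(m)$ to $L^p_{loc}(dx)$ — which is exactly why everything is localized to balls compactly contained in $\{\rho>0\}$, where Corollary \ref{cor1.2} supplies the two-sided bound on $\rho$. Once (i) is available, (ii) and (iii) are essentially bookkeeping, the only extra input being the standard smoothing estimate $\|L_rT_t\|_{L^r\to L^r}\le C/t$ for the analytic semigroup.
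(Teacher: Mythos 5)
Your proof follows exactly the route the paper indicates (the paper gives no details beyond ``apply Propositions \ref{p;gdegep2.3} and \ref{t;morrey2.4} to the resolvent identity, cf.\ \cite{RoShTr}''): you pass from the weak resolvent identity to the non-divergence-form hypothesis of Proposition \ref{p;gdegep2.3} for the measure $(G_{\alpha}g)\,m$, obtain $H^{1,p}_{loc}$-regularity of its density, integrate by parts into divergence form, and apply the Morrey estimate on balls compactly contained in $\{\rho>0\}$, where Corollary \ref{cor1.2} makes $\rho$ two-sidedly bounded and turns the $L^p_{loc}(m)$-hypotheses into $L^p(B,dx)$-hypotheses; the reductions of (ii) and (iii) via $T_tu=G_1\big(T_t(1-L_r)u\big)$ and the analyticity bound $\|L_rT_{t/2}\|_{L^r\to L^r}\le 2M/t$ are likewise the intended ones, and all the bookkeeping (H\"older exponents, dependence of the constants) checks out. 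The only point where your argument delivers slightly less than the literal statement is the membership $\rho\,G_{\alpha}g\in H^{1,p}_{loc}(\R^d,dx)$ on \emph{all} of $\R^d$: your localization yields it only on $\{\rho>0\}$, since near $\{\rho=0\}$ the coefficients $h_i$ are only known to lie in $L^p_{loc}(dx)$ after dividing out a positive lower bound for $\rho$. For bounded $g$ one can instead apply Proposition \ref{p;gdegep2.3} globally through its alternative hypothesis $h_i,c\in L^p_{loc}(\R^d,\mu)$ (then $\|G_{\alpha}g\|_{\infty}\le\|g\|_{\infty}/\alpha$ and {\bf (A1)}, {\bf (A2)} give $h_i\in L^p_{loc}(\R^d,m)$), and pass to general $g$ by approximation; in any case the estimate \eqref{conresol} and the regularity on $E=\{\rho>0\}$, which you do establish, are all that is used in the sequel.
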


\begin{remark}\label{rem1.5}
By \eqref{eq1.4} and Sobolev imbedding, for $r \in [p, \infty)$, $R>0$ the set
\[
\{T_t u  \ | \ t>0, \ u \in D(L_r), \ \|u\|_{L^r(\R^d,m)} + \|L_r u\|_{L^r(\R^d, m)} \le R \} 
\]
is equicontinuous on $\{\rho>0\}$.
\end{remark} 

Now by Corollaries \ref{cor1.2}, \ref{cor1.3}, and Remark \ref{rem1.5}, exactly as in \cite[section 3]{AKR} (cf. \cite{RoShTr}), we obtain:
\begin{thm}\label{t;2.6tdsr}
\begin{itemize}
\item[(i)] There exists a transition kernel density $p_t(\cdot,\cdot)$ on the open set 
$$
E: = \{\rho > 0 \}
$$
such that
\[
P_t f(x) : = \int_{\R^d} f(y) p_t(x,y) \, m(dy), \quad x \in E, \ t > 0
\]
is a (temporally homogeneous) sub-Markovian transition function and an $m$-version of $T_t f $ for any $f \in  \cup_{r \ge p} L^r(\R^d,m)$. 
\item[(ii)] $(P_t)_{t>0}$ is a semigroup of kernels on $E$ which is $L^r(\R^d,m)$-strong Feller for all $r \in [p,\infty)$, i.e.
\[
P_t f \in C(E), \quad \forall f \in \cup_{r \ge p} L^r(\R^d,m), \quad \forall t > 0.
\]
\item[(iii)]
\[
\lim_{t \to 0} P_{t+s} f(x) = P_s f(x), \quad \forall s \ge 0, \ x \in E, \ f \in C_0^{\infty}(\R^d).
\]
\item[(iv)]
$(P_t)_{t > 0}$ is a measurable semigroup on $E$, i.e. for $f \in \mathcal{B}^+ (\R^d)$ the map $(t,x) \mapsto P_t f(x)$ is $\mathcal{B}([0,\infty) \times E)$-measurable.

\end{itemize}
\begin{itemize}
\item[(v)] There exists a  resolvent kernel density $r_{\alpha}(\cdot, \cdot)$ defined on $E$ such that
 \[
R_{\alpha} f(x) : = \int f(y)\, r_{\alpha} (x,y) \, m(dy),  \quad x \in E, \ \alpha > 0,
\]
satisfies $\ R_{\alpha} f = G_{\alpha} f  \ \  m\text{-a.e for any} \  f \in \cup_{r \ge p}  L^r(\R^d,m)$ and $\alpha R_{\alpha}1 (x) \le 1$.
\item[(vi)] $(R_{\alpha})_{\alpha> 0}$ is a resolvent of kernels on $E$ and $(R_{\alpha})_{\alpha> 0}$ is  $L^r(\R^d,m)$-strong Feller for all $r \in [p,\infty)$, i.e.  $R_{\alpha} f \in C_b (E)$ for all $f \in \mathcal{B}_b(\R^d)$, and $R_{\alpha} f \in C (E)$ for all $f \in \cup_{r \ge p} L^r(\R^d,m)$.
\item[(vii)] Let $\alpha > 0$. Then for all $f \in \mathcal{B}_b(\R^d ) \cup \mathcal{B}^{+} (\R^d)$ and all $x \in E$
\[
R_{\alpha} f (x) = \int_0^{\infty} e^{- \alpha t} P_t f (x) \ dt.
\]
\item[(viii)] For all $u \in C_0^{\infty}(\R^d)$
\[
\lim_{\alpha \to \infty} \alpha R_{\alpha} u(x) = u(x) \quad \forall x \in E.
\]
\end{itemize}
\end{thm}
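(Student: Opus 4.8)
I would simply carry out the construction of \cite[Section~3]{AKR} in the present weighted, possibly non-symmetric setting: that scheme rests on three analytic inputs — a local regularity/Sobolev bound for the resolvent, the analogous bound for the semigroup, and an equicontinuity estimate — and these are supplied here by Corollary~\ref{cor1.2}, Corollary~\ref{cor1.3} and Remark~\ref{rem1.5}. So the plan is to reduce everything to those statements and to the bookkeeping of \cite{AKR}, \cite{RoShTr}.

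First I would fix $r\in[p,\infty)$ and use that $(T_t)_{t>0}$ is analytic on $L^r(\R^d,m)$: hence $T_tf\in D(L_r)$ for every $t>0$ and every $f\in L^r(\R^d,m)$, with $\|T_tf\|_{L^r(\R^d,m)}+\|L_rT_tf\|_{L^r(\R^d,m)}<\infty$. Corollary~\ref{cor1.3}(ii) then gives $\rho\,T_tf\in H^{1,p}_{loc}(\R^d,dx)$, and since $p>d$ the Sobolev--Morrey embedding together with Corollary~\ref{cor1.2} (which provides a continuous, strictly positive version of $\rho$ on the open set $E=\{\rho>0\}$) shows that $T_tf$ has a continuous $m$-version on $E$; this version is unique because $m=\rho\,dx$ charges every nonempty open subset of $E$. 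Denoting it $P_tf$, the positivity preservation and the bound $P_t1\le 1$ are inherited from the $L^2$/$L^r$ sub-Markovian properties by passing to continuous versions. This already yields (i) (that $P_tf$ is an $m$-version of $T_tf$ for $f\in\bigcup_{r\ge p}L^r$) and (ii). For the kernel, I would fix $x\in E$, choose a ball $B'\ni x$, and combine Corollary~\ref{cor1.3}(iii) with the Sobolev embedding to get $|P_tf(x)|\le c_{B',t}\,\|f\|_{L^r(\R^d,m)}$; thus $f\mapsto P_tf(x)$ is a bounded linear functional on $L^r(\R^d,m)$, so by Riesz representation there is $p_t(x,\cdot)\in L^{r'}(\R^d,m)$ with $p_t(x,\cdot)\ge 0$ $m$-a.e.\ and $P_tf(x)=\int p_t(x,y)f(y)\,m(dy)$. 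A jointly measurable version of $p_t(x,y)$ is obtained by running this over a countable dense set of times and functions and using the continuity of $x\mapsto P_tf(x)$ on $E$ and a monotone-class argument, exactly as in \cite{AKR}, \cite{RoShTr}; this gives (iv). Chapman--Kolmogorov, i.e. $P_{t+s}=P_tP_s$, holds $m$-a.e.\ from $T_{t+s}=T_tT_s$ on $L^r$ and then pointwise on $E$ since both sides are continuous there (using $P_sf\in D(L_r)$ again for the right-hand side); integrating against $e^{-\alpha t}$ gives (vii) and defines $r_\alpha(x,y):=\int_0^\infty e^{-\alpha t}p_t(x,y)\,dt$, from which (v)--(vi) follow, the strong Feller properties of $(R_\alpha)_{\alpha>0}$ and $R_\alpha f\in C_b(E)$ for bounded $f$ coming from Corollary~\ref{cor1.3}(i) and Sobolev embedding, and $\alpha R_\alpha 1\le 1$ from $P_t1\le1$.

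For the strong-continuity statements (iii) and (viii), I would take $u\in C_0^\infty(\R^d)\subset D(L_r)$, so that $T_tu\to u$ in $L^r(\R^d,m)$ as $t\downarrow 0$; by Remark~\ref{rem1.5} the family $\{T_tu:0<t\le1\}$ is equicontinuous on $E$, and an equicontinuous family that converges in $L^r$ converges locally uniformly on $E$ to the same continuous limit, hence $P_tu\to u$ pointwise on $E$. The same argument applied to $T_{t+s}u\to T_su$ for fixed $s>0$ gives (iii). Then (viii) follows from (iii), (vii) and dominated convergence, writing $\alpha R_\alpha u(x)=\int_0^\infty e^{-t}P_{t/\alpha}u(x)\,dt$ and using $|P_{t/\alpha}u|\le\|u\|_\infty$.

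The main obstacle I anticipate is not analytic but structural: upgrading the $m$-a.e.\ identities (the $L^r$ semigroup law, positivity, the density representation) to genuinely pointwise statements on all of $E$, consistently across all $t$ and all $x$, so as to produce a single jointly measurable kernel $p_t(x,y)$ for which $(P_t)_{t\ge0}$ is an honest sub-Markovian transition function satisfying Chapman--Kolmogorov everywhere. This is precisely the measurability-and-selection bookkeeping carried out in \cite[Section~3]{AKR}; the analytic content — interior regularity, Sobolev embedding, and equicontinuity — is already packaged in Corollaries~\ref{cor1.2} and \ref{cor1.3} and Remark~\ref{rem1.5}, so once those are in place the remaining work is routine.
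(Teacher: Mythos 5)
Your proposal is correct and follows essentially the same route as the paper, whose proof of Theorem \ref{t;2.6tdsr} consists precisely of invoking Corollaries \ref{cor1.2}, \ref{cor1.3} and Remark \ref{rem1.5} and then carrying out the construction of \cite[Section 3]{AKR} (cf. \cite{RoShTr}). The details you supply — analyticity of $(T_t)_{t>0}$ on $L^r$ to get $T_tf\in D(L_r)$, Sobolev embedding on $E=\{\rho>0\}$ for continuous versions, Riesz representation for the kernel, and equicontinuity plus $L^r$-convergence for the pointwise limits in (iii) and (viii) — are exactly the ingredients of that scheme.
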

Note that applying  Corollary \ref{cor1.3}, we obtain in Theorem \ref{t;2.6tdsr} a locally H\"older continuous $m$-version $P_t f$  of $T_t f$  only on $E=\{\rho >0\}$, because the product $\rho T_t f$ has a locally H\"older continuous $m$-version and $\rho$ is H\"older continuous. The same holds for the locally  H\"older continuous $m$-version $R_{\alpha}f$ of $G_{\alpha} f$.\\
In order to show that condition ${\bf (H1)}$ holds (on $E=\{\rho>0\}$) we still need some preparations. Consider the strict capacity Cap$_{\E}$ of the non-symmetric Dirichlet form $(\E,D(\E))$ as defined in \cite[V.2.1]{MR} and \cite[Definition 1]{Tr5}, i.e. 
\[
\text{Cap}_{\E} =  \text{cap}_{1,\hat{G}_1\varphi}
\]
for some fixed $\varphi \in L^1(\R^d,m) \cap \mathcal{B}_b(\R^d)$, $0 < \varphi \le 1$. Let Cap be the capacity related to the symmetric Dirichlet form ($\E^0,D(\E^0)$) as defined in \cite[Section 2.1]{FOT}. It is known from  \cite[Theorem 2]{fuku85} that Cap$(\{\rho=0\})=0$. Then the following has been shown in \cite[Lemma 2.10]{RoShTr}:   
\begin{lemma}\label{lem2.8}
Let $N \subset \R^d$. Then 
\[
\emph{Cap}(N) = 0  \Rightarrow \emph{Cap}_{\E}(N) = 0.
\]
In particular $\emph{Cap}_{\E}(\{\rho = 0\})=0$.
\end{lemma}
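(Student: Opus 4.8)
\emph{Proof plan.} The plan is to compare the strict capacity $\mathrm{Cap}_\E$ with the symmetric capacity $\mathrm{Cap}$ first on open sets and then to pass to arbitrary sets by outer regularity. Three preliminary facts do the work. First, the symmetric part of $(\E,D(\E))$ is exactly $(\E^0,D(\E^0))$: for $f\in C_0^\infty(\R^d)$ one has $\int_{\R^d}\langle B,\nabla f\rangle f\,dm=\tfrac12\int_{\R^d}\langle B,\nabla(f^2)\rangle\,dm=0$ by {\bf (A2)} applied to $f^2\in C_0^\infty(\R^d)$, hence $\E(f,f)=\E^0(f,f)$; since $D(\E)$ and $D(\E^0)$ are both the closure of $C_0^\infty(\R^d)$ in the norm $f\mapsto\E^0_1(f,f)^{1/2}$, it follows that $D(\E)=D(\E^0)$ with $\E_1(f,f)=\E^0_1(f,f)$ on this common space. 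Second, {\bf (A3)} together with the Cauchy--Schwarz inequality for $\E^0_1$ gives the weak sector condition $|\E_1(f,g)|\le(1+c_0)\,\E^0_1(f,f)^{1/2}\E^0_1(g,g)^{1/2}$ for all $f,g\in D(\E)$, so $\E_1$ is a bounded, coercive bilinear form on the Hilbert space $(D(\E),\E^0_1(\cdot,\cdot)^{1/2})$. Third, writing $h:=\hat G_1\varphi$ for the reference function in $\mathrm{Cap}_\E=\mathrm{cap}_{1,h}$, the sub-Markovian property of the coresolvent gives $0<h\le\hat G_11\le1$ $m$-a.e., and $h\in D(\hat\E)=D(\E)$ since $\varphi\in L^2(\R^d,m)$.

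Next I would estimate $\mathrm{Cap}_\E(U)$ for open $U$. If $\mathrm{Cap}(U)=\infty$ there is nothing to prove, so let $u\in D(\E^0)$ with $u\ge1$ $m$-a.e.\ on $U$; replacing $u$ by the unit contraction $(0\vee u)\wedge1$, which does not increase $\E^0_1$, I may assume $0\le u\le1$ and $u=1$ $m$-a.e.\ on $U$, hence $u\ge h$ $m$-a.e.\ on $U$, i.e.\ $u$ lies in the closed convex set $\mathcal L_{U,h}:=\{v\in D(\E):v\ge h\ m\text{-a.e.\ on }U\}$. By the sector condition and coercivity there is (Stampacchia's variational inequality, cf.\ \cite{MR}) a unique $e_U\in\mathcal L_{U,h}$ with $\E_1(e_U,e_U-v)\le0$ for all $v\in\mathcal L_{U,h}$, and $\mathrm{cap}_{1,h}(U)$ equals the squared $D(\E)$-norm of $e_U$, which here is $\E^0_1(e_U,e_U)$. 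Taking $v=u$ and using the sector condition, $\E^0_1(e_U,e_U)=\E_1(e_U,e_U)\le\E_1(e_U,u)\le(1+c_0)\,\E^0_1(e_U,e_U)^{1/2}\E^0_1(u,u)^{1/2}$, so $\mathrm{cap}_{1,h}(U)\le(1+c_0)^2\,\E^0_1(u,u)$; minimizing over admissible $u$ yields $\mathrm{Cap}_\E(U)\le(1+c_0)^2\,\mathrm{Cap}(U)$ for every open $U\subset\R^d$.

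Finally, for arbitrary $N\subset\R^d$, outer regularity of both capacities gives $\mathrm{Cap}_\E(N)=\inf\{\mathrm{Cap}_\E(U):U\supset N\text{ open}\}\le(1+c_0)^2\,\inf\{\mathrm{Cap}(U):U\supset N\text{ open}\}=(1+c_0)^2\,\mathrm{Cap}(N)$, so $\mathrm{Cap}(N)=0$ forces $\mathrm{Cap}_\E(N)=0$; the final assertion then follows since $\mathrm{Cap}(\{\rho=0\})=0$ by \cite[Theorem 2]{fuku85}. I expect the only genuinely delicate point to be the bookkeeping around the precise definition of the strict capacity $\mathrm{cap}_{1,h}$ from \cite[V.2.1]{MR} and \cite{Tr5} (existence and variational characterization of the equilibrium potential $e_U$, and the identification of $\mathrm{cap}_{1,h}(U)$ with $\E^0_1(e_U,e_U)$ in the real non-symmetric setting) together with the observation $h\le1$ that makes symmetric-capacity competitors admissible; no hard analysis is involved, and one could alternatively just quote the comparison between the strict and the symmetric $1$-capacity already recorded in \cite[V.2]{MR} and \cite{Tr5}.
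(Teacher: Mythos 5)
The paper gives no proof of this lemma; it defers to \cite[Lemma 2.10]{RoShTr}, so I compare your argument with what that reference and the surrounding setup require. Your strategy is the right one and the substantive steps are correct: {\bf (A2)} applied to $f^2$ gives $\E(f,f)=\E^0(f,f)$ on $C_0^{\infty}(\R^d)$ and hence $D(\E)=D(\E^0)$ with identical norms, {\bf (A3)} gives the weak sector condition, the variational inequality for the reduced function of an open set $U$ combined with the sector condition yields $\E_1^0(e_U,e_U)\le(1+c_0)^2\,\E_1^0(u,u)$ for every competitor $u$ of the symmetric capacity, and outer regularity (which holds by definition for both capacities) transfers the comparison from open sets to arbitrary sets.

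The one point you should fix is the reading of $\mathrm{cap}_{1,\hat{G}_1\varphi}$. In \cite[V.2.1]{MR} and \cite[Definition 1]{Tr5} the first subscript is the obstacle, namely the constant function $1$ (this is what makes the capacity ``strict''), and the second subscript $\hat{G}_1\varphi$ is the co-potential against which the reduced function is paired: for open $U$ with $\mathcal{L}_U:=\{v\in D(\E)\,:\,v\ge 1\ m\text{-a.e. on }U\}\neq\emptyset$ one sets $\mathrm{cap}_{1,\hat{G}_1\varphi}(U)=\E_1(e_U,\hat{G}_1\varphi)=\int e_U\varphi\,dm$, not $\E_1(e_U,e_U)$; the pairing is used precisely because the energy of $e_U$ need not be finite for large $U$. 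This does not damage your proof: any $u\in D(\E^0)$ with $0\le u\le 1$ and $u=1$ $m$-a.e. on $U$ lies in $\mathcal{L}_U$ (so you do not even need the observation $\hat{G}_1\varphi\le 1$), your estimate still gives $\E_1^0(e_U,e_U)\le(1+c_0)^2\,\E_1^0(u,u)$, and then $\mathrm{cap}_{1,\hat{G}_1\varphi}(U)=\int e_U\varphi\,dm\le\|\varphi\|_{L^2(\R^d,m)}\,\E_1^0(e_U,e_U)^{1/2}$, since $\varphi\in L^1(\R^d,m)\cap\mathcal{B}_b(\R^d)\subset L^2(\R^d,m)$; this tends to $0$ along a minimizing sequence of open neighbourhoods of $N$. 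With that one-line correction the argument is complete, and it is the same comparison argument that \cite[Lemma 2.10]{RoShTr} carries out; your closing suggestion to ``just quote'' a recorded comparison is the only part I would drop, since the comparison between the strict capacity of the non-symmetric form and the capacity of $(\E^0,D(\E^0))$ is exactly the content of the lemma and is not available off the shelf.
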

The result of Lemma \ref{lem2.8} is intuitively clear by \cite[Lemma 2.2.7(ii)]{FOT} and condition {\bf (A2)}. In particular it implies that for the Hunt process (\ref{HuntDF}) it holds $\tilde{\P}_x(\tilde \sigma_{E^c}<\infty)=0$ for $m$-a.e. $x\in \R^d$ (actually for $\E$-q.-e. $x$, see \cite[IV. Proposition 5.30]{MR}), where $\tilde \sigma_{E^c}:=\inf\{t>0\,|\, \tilde{X}_t \in E^c\}$.\\
Let $(\E^{E},D(\E^{E}))$ denote the part Dirichlet form on $E$ of $(\E,D(\E))$ given by (\ref{df}) with $A=(a_{ij})_{1 \le i,j \le d}$ satisfying (\ref{aidass}). Let $(T_{t}^{E})_{t>0}$ denote its $L^2(E,m)$-semigroup.
By \cite[Theorem 3.5.7]{O13} the part process $(\tilde{X}_t^E)_{t \ge 0}$ of the Hunt process  (\ref{HuntDF}) is associated to $(\E^{E},D(\E^{E}))$. Hence for any $f \in \mathcal{B}_{b}(E)_0$ and $m$-a.e. $x \in E$
\begin{eqnarray}\label{partid}
&&T_{t}^{E} f(x) =  \tilde{\EE}_x [f(\tilde{X}^{E}_t), \; t < \tilde\sigma_{E^{c}} ] = \tilde{\EE}_x [f(\tilde{X}_t), \; t < \sigma_{E^{c}} ]
= \tilde{\EE}_x [f(\tilde{X}_t)] = T_{t}f(x)\nonumber \\
&&=  \int_E f(y) \, p_{t}(x, y)\,m(dy),
\end{eqnarray}
where the second equality follows from the definition of part process, the third since Cap$_{\E}(E^c)=0$ and the last since $f$ is in particular in $L^{p}(E, m)$. Extending 
$$
P_t f(x) : = \int_{E} f(y) p_t(x,y) \, m(dy), \quad x \in E, \ t > 0
$$
to $f\in L^1(E, p_t(x,\cdot) \,dm)\supset L^2(E,m)_b$, we see that condition $(\bf{H1})$ holds for the part Dirichlet form $(\E^{E},D(\E^{E}))$.

\begin{remark}
By using elliptic regularity results we do not necessarily obtain condition $\bf{(H1)}$ for the original Dirichlet form $(\E,D(\E))$ on $\R^d$. Instead, we have to exclude a capacity zero set from the state space. The new state space $E=\R^d\setminus \{\rho =0\}$ will then be an invariant set for the corresponding stochastic process that will be constructed and identified below in subsections \ref{sec2.4} (see Theorem \ref{existhunt}) and \ref{s;nsdbm}, i.e. we will be able to start and identify the corresponding SDE for every point in $E$ and the process will remain in $E$ until its lifetime. This is in contrast to the construction via the Feller method which can allow entrance boundaries, i.e. capacity zero sets from which the process can be started and identified but to which it will never return. As an example, we mention for instance  Proposition \ref{p;3.8}(i). Here we can start and identify the SDE (\ref{equast}) for every starting point $x\in \R^d$, $\alpha \in (-d+1,1)$, but by \cite[Example 3.3.2]{FOT} Cap$(\{0\})=0$, if and only if $\alpha\in [-d+2,d)$. Thus in the situation of Proposition \ref{p;3.8}(i), $0$ is an entrance boundary for any $\alpha\in [-d+2,1)$.

\end{remark}

\subsection{Construction of a Hunt process with given transition kernel density}\label{sec2.4}

In this subsection, we illustrate two methods to obtain $\bM$ as in Definition \ref{r;absolute} starting from assumption $\bf{(H1)}$. Concerning the second method in Subsection \ref{2.4.2.2}, we continue our toy example from Subsection \ref{ss;ermj} to explain the non-symmetric case. 

\subsubsection{The Feller method}\label{s;fema2.2}

Assuming $\bf{(H1)}$, a Hunt process as in $\bf{(H2)}$ can be constructed by means of a Feller semigroup. For the definition of Feller semigroup, we refer to \cite[Section 2.2]{CW}.
\begin{remark}\label{r;feller}
Under $\bf{(H1)}$, $(P_t)_{t \ge 0}$ is a Feller semigroup, if 
\begin{itemize}
\item[(i)] $\forall f \in C_{\infty}(E)$, $\lim_{t \to 0} P_t f= f$ uniformly on $E$,
\item[(ii)] $P_t C_{\infty}(E) \subset C_{\infty}(E)$ for each $t > 0$.
\end{itemize}
\end{remark}
It is well known that the condition of uniform convergence in Remark \ref{r;feller} (i) can be relaxed to pointwise convergence (see for instance \cite[Section 2.2 Exercise 4.]{CW}). 
The conditions of Remark \ref{r;feller} can be further relaxed to the conditions of the following lemma which are suitable for us. 
\begin{lemma}\label{t;Feller}
Suppose $\bf{(H1)}$ and that
\begin{itemize}
\item[(i)] $\lim_{t \to 0} P_t f(x) = f(x)$ for each $x \in E$ and $f \in C_0(E)$,
\item[(ii)] $P_t C_0(E) \subset C_{\infty}(E)$ for each $t>0$.
\end{itemize} 
Then $(P_t)_{t \ge 0}$ is a Feller semigroup. In particular $\bf{(H2)}$ holds  (cf. \cite[(9.4) Theorem]{BlGe}).
\end{lemma}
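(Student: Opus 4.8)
The plan is to verify the two defining conditions of a Feller semigroup (the two conditions in Remark~\ref{r;feller}, in their weakened pointwise form) from the hypotheses (i) and (ii) of the lemma, and then to invoke the classical Feller-process construction theorem, e.g. \cite[(9.4) Theorem]{BlGe}. The only genuine gap between the hypotheses and Remark~\ref{r;feller} is that we are given the strong-continuity-type property (i) and the mapping property (ii) on $C_0(E)$, whereas the Feller property is usually stated on $C_\infty(E)$; so the first task is a density/approximation argument to upgrade from $C_0(E)$ to $C_\infty(E)$.

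First I would record that under $\mathbf{(H1)}$ the family $(P_t)_{t\ge 0}$ is a sub-Markovian transition function, so each $P_t$ is a positivity-preserving linear contraction on $\mathcal{B}_b(E)$, and $P_{t+s}=P_tP_s$ with $P_0=\mathrm{id}$. Next I would prove $P_tC_\infty(E)\subset C_\infty(E)$: given $f\in C_\infty(E)$, choose $f_n\in C_0(E)$ with $\|f_n-f\|_\infty\to 0$; then $P_tf_n\in C_\infty(E)$ by hypothesis (ii), and $\|P_tf_n-P_tf\|_\infty\le\|f_n-f\|_\infty\to 0$ by contractivity, so $P_tf$ is a uniform limit of functions in the (sup-norm closed) space $C_\infty(E)$, hence lies in $C_\infty(E)$. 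The same $3\varepsilon$-argument gives strong continuity at $0$ on $C_\infty(E)$: for $f\in C_\infty(E)$ and $f_n\in C_0(E)$ approximating $f$ uniformly, $|P_tf(x)-f(x)|\le\|P_t(f-f_n)\|_\infty+|P_tf_n(x)-f_n(x)|+\|f_n-f\|_\infty$, and the middle term tends to $0$ as $t\to0$ for each fixed $x$ by hypothesis (i); thus $\lim_{t\to0}P_tf(x)=f(x)$ pointwise on $E$. By the remark recalled in the text (pointwise convergence suffices, cf.\ \cite[Section 2.2, Exercise~4]{CW}), this pointwise strong continuity self-improves to the uniform strong continuity required in the definition of a Feller semigroup; together with the mapping property just established, this shows $(P_t)_{t\ge0}$ is a Feller semigroup.

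Finally, with $(P_t)_{t\ge0}$ a Feller semigroup, the standard construction of a Feller process applies: there exists a Hunt process (a normal, right-continuous, quasi-left-continuous strong Markov process with state space $E$, cemetery $\Delta$, and cadlag paths) whose transition function is exactly $(P_t)_{t\ge0}$, which is the content of \cite[(9.4) Theorem]{BlGe}. This is precisely the assertion $\mathbf{(H2)}$, so the lemma follows.

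The main obstacle is the passage from $C_0(E)$ to $C_\infty(E)$, i.e.\ making sure the approximation argument is legitimate: one needs that $C_0(E)$ is dense in $C_\infty(E)$ in the sup-norm (true since $E$ is locally compact separable metric, hence $\sigma$-compact, so one can multiply by a compactly supported cutoff that is $1$ on a large compact set) and that $C_\infty(E)$ is sup-norm closed (true, being the closure of $C_0(E)$). Beyond that, one should double-check the bookkeeping that the weakened hypotheses (i)--(ii) of the lemma really do feed into the hypotheses of the cited Feller-process existence theorem — in particular that sub-Markovianity (rather than conservativeness) is handled by the usual one-point compactification/cemetery convention, which is already built into the statement of $\mathbf{(H2)}$ in the excerpt. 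The rest is routine.
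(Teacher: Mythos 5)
Your proof is correct and follows exactly the route the paper implicitly has in mind: the text states Lemma~\ref{t;Feller} without a written proof, only the citations to \cite[Section 2.2, Exercise 4]{CW} (for the pointwise-to-uniform self-improvement) and \cite[(9.4) Theorem]{BlGe} (for the existence of the Hunt process from a Feller semigroup), and your argument supplies exactly the missing step, namely upgrading hypotheses (i) and (ii) from $C_0(E)$ to $C_\infty(E)$ via density of $C_0(E)$ in $C_\infty(E)$ and sub-Markovian contractivity of $P_t$. The only item worth spelling out, which you correctly flag, is the density of $C_0(E)$ in $C_\infty(E)$ (which holds since $E$ is locally compact separable metric, hence $\sigma$-compact, and one multiplies by a Urysohn cutoff equal to $1$ on $\{|f|\ge\varepsilon\}$).
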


\begin{remark}\label{Fellerapplication}
One can use heat kernel estimates for $p_t(x,y)$ to check the assumptions of Lemma \ref{t;Feller}(i), (ii) (see \cite[proofs of Proposition 3.3(iii) and Lemma 3.6(i)]{ShTr13a} and the corresponding statement here in Lemma \ref{l;Feller}(i) in Section \ref{s;3.1sfdm} below).
\end{remark}

\subsubsection{The Dirichlet form method}\label{s;dm2.3}
The second method to obtain a Hunt process as in Definition \ref{r;absolute} starting from assumption $(\bf{H1})$ is by a method that we call the Dirichlet form method. 

\paragraph{The symmetric case}\label{2.4.2.1}
Throughout this subsection let $(\E,D(\E))$ be symmetric.  We assume further that $\E$ admits a carr\'e du champ
$\Gamma: D(\E) \times D(\E) \to L^1(E,m)$ as in Subsection \ref{s2;stmg}. \\
Consider the condition
\begin{itemize}
\item[{\bf (H2)$^{\prime}$}] We can find $\{ u_n \ | \ n \ge 1 \} \subset D(L) \cap C_0(E)$ satisfying:
\begin{itemize}
\item[(i)] For all $\varepsilon \in \Q \cap (0,1)$ and
$y \in D$, where $D$ is any given countable dense set in $E$, there exists $n \in \N$ such that $u_n (z) \ge 1$, for all $z \in \overline{B_{\frac{\varepsilon}{4}}(y)}$ and $u_n \equiv 0$ on $E \setminus B_{\frac{\varepsilon}{2}}(y)$.
\item[(ii)] $R_1\big( [(1 -L) u_n]^+ \big)$, $R_1\big( [(1 -L) u_n]^- \big)$, $R_1 \big( [(1-L_1)u_n^2]^+ \big)$, $R_1 \big( [(1-L_1)u_n^2]^- \big)$ are continuous on $E$ for all $n \ge 1$.
\item[(iii)] $R_1 C_0(E) \subset C(E)$.
\item[(iv)] For any $f \in C_0(E)$ and $x \in E$, the map $t \mapsto P_t f(x)$ is right-continuous on $(0,\infty)$.
\end{itemize}
\end{itemize}
Note that $L_1 u_n^2$, $n\ge 1$, in $({\bf H2})'(ii)$ is well-defined, since $D(L_1)_b$ is an algebra by \cite[I. Theorem 4.2.1]{BH} and $D(L)_{0,b}\subset D(L_1)_b$ by \cite[Lemma 2.5(ii)]{ShTr13a}. \\
The following is the main result of \cite[Section 2.1.2]{ShTr13a}. It is a refinement of the results obtained in a concrete symmetric situation in \cite[Section 4]{AKR}  to the case of strongly local regular symmetric Dirichlet forms that admit a carr\'e du champ:
\begin{proposition}\label{l;2.10}
Assume \textbf{(H1)} holds. Then $\textbf{(H2)}^{\prime}$ implies \textbf{(H2)}.
\end{proposition}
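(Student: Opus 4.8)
The plan is to realize $(P_t)_{t\ge0}$ from \textbf{(H1)} as the transition function of a Hunt process by ``regularizing'' the Hunt process $\tilde{\bM}$ of (\ref{HuntDF}), which is strictly properly associated with $(\E,D(\E))$ by \cite[V.2.13]{MR}. I would proceed in four steps: (a) compare the kernel resolvent $(R_\alpha)_{\alpha>0}$ from \textbf{(H1)} with the resolvent $(\tilde R_\alpha)_{\alpha>0}$ of $\tilde{\bM}$ off an $\E$-exceptional set; (b) remove a properly exceptional set to obtain an invariant Borel set $E_0\subset E$ on which $\tilde{\bM}$ restricts to a Hunt process with transition function $P_t|_{E_0}$; (c) use that $P_t(x,\cdot)$ is $m$-a.e.\ carried by $E_0$ to glue in, by an entrance-law construction, the remaining ($m$-negligible) starting points of $E$; (d) verify the Hunt axioms for the resulting family $(\P_x)_{x\in E}$.

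For step (a), fix a sup-norm dense sequence $\{f_k\}\subset C_0(E)$. For each $k$ and each rational $\alpha>0$ the functions $\tilde R_\alpha f_k$ and $R_\alpha f_k$ are both $m$-versions of $G_\alpha f_k$; the first is $\E$-quasi-continuous and the second is continuous on $E$ by \textbf{(H2)}$^{\prime}$(iii), so they coincide outside an $\E$-exceptional set. Taking the union of these sets over $k$ and rational $\alpha$, passing to all $\alpha>0$ by the resolvent equation and then to all $f\in\mathcal B_b(E)$ by approximation, one obtains an $\E$-exceptional set $N_0$ with $\tilde R_\alpha f=R_\alpha f$ on $E\setminus N_0$ for all $\alpha>0$, $f\in\mathcal B_b(E)$. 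Condition \textbf{(H2)}$^{\prime}$(ii) enters here to guarantee that $R_1$ applied to $(1-L)u_n$ and to $(1-L_1)u_n^2$ is finite and continuous on $E$, which upgrades the $m$-a.e.\ identities $R_1((1-L)u_n)=u_n$ and $R_1((1-L_1)u_n^2)=u_n^2$ (valid since $u_n\in D(L)$, $u_n^2\in D(L_1)$) to pointwise identities on all of $E$.

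For step (b), enlarge $N_0$ to a properly exceptional set $N$ for $\tilde{\bM}$; then $E_0:=E\setminus N$ is $\tilde{\bM}$-invariant, $\tilde{\bM}_0:=\tilde{\bM}|_{E_0}$ is a Hunt process on $E_0$ with resolvent $R_\alpha|_{E_0}$, and for $x\in E_0$, $f\in C_0(E)$ both $t\mapsto\tilde\EE_x[f(\tilde X_t)]$ (right-continuous by right-continuity of paths and continuity of $f$) and $t\mapsto P_tf(x)$ (right-continuous by \textbf{(H2)}$^{\prime}$(iv)) have Laplace transform $R_\alpha f(x)$, so by uniqueness of the Laplace transform $\tilde{\bM}_0$ has transition function $P_t|_{E_0}$. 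For step (c), since $N$ is $\E$-exceptional it is $m$-null, hence $P_t(x,N)=\int_N p_t(x,y)\,m(dy)=0$ and $P_t(x,\cdot)$ is carried by $E_0$ for every $x\in E$, $t>0$. For $x\in N\cap E$ I define $\P_x$ on the space of right-continuous paths with left limits in $E_\Delta$ by $X_0=x$ and, for $0<t_1<\cdots<t_n$, $\P_x(X_{t_1}\in B_1,\dots,X_{t_n}\in B_n)=\int\cdots\int P_{t_1}(x,dy_1)P_{t_2-t_1}(y_1,dy_2)\cdots 1_{B_n}(y_n)$; the Chapman--Kolmogorov equations for $(P_t)$ make this consistent, and because for $t>0$ the path lies in $E_0$ where $\tilde{\bM}_0$ already provides right-continuous trajectories with left limits, $\P_x$ is carried by such paths. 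To see $X_{0+}=x$, one computes $\EE_x[(u_n(X_t)-u_n(x))^2]=P_tu_n^2(x)-2u_n(x)P_tu_n(x)+u_n(x)^2\to0$ as $t\to0$, using the limits $\lim_{t\to0}P_tu_n(x)=u_n(x)$ and $\lim_{t\to0}P_tu_n^2(x)=u_n^2(x)$ which follow from the pointwise resolvent identities of step (a) together with \textbf{(H1)}; hence $u_n(X_t)\to u_n(x)$ $\P_x$-a.s.\ along a subsequence, and the point-separation property \textbf{(H2)}$^{\prime}$(i) of the $u_n$ together with right-continuity of paths forces $X_{0+}=x=X_0$, so in particular $\P_x(X_0=x)=1$.

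Finally, for step (d) one checks that $\bM=(\Omega,\F,(\F_t)_{t\ge0},\zeta,(X_t)_{t\ge0},(\P_x)_{x\in E_\Delta})$ is a Hunt process: the Markov property is Chapman--Kolmogorov, while the strong Markov property, normality, right-continuity of paths with left limits, and quasi-left-continuity hold on $E_0$ for $\tilde{\bM}_0$ and transfer to a general starting point $x\in N\cap E$ and a stopping time $T$ by splitting into $\{T=0\}$ (trivial, as $X_0=x$) and $\{T>0\}$ (where $X_T\in E_0$ and the entrance decomposition of $\P_x$ reduces the assertion to the corresponding property of $\tilde{\bM}_0$); the same splitting handles quasi-left-continuity along increasing sequences of stopping times. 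The main obstacle I anticipate is this steps (c)--(d) block: carrying out the entrance construction at the $m$-negligible bad set and checking \emph{all} the Hunt axioms --- especially the strong Markov property and quasi-left-continuity at the initial time --- for the glued family, together with the pointwise limits $\lim_{t\to0}P_tu_n(x)=u_n(x)$ and $\lim_{t\to0}P_tu_n^2(x)=u_n^2(x)$ needed to identify $X_{0+}=x$; this is exactly where \textbf{(H2)}$^{\prime}$(i),(ii) are indispensable, whereas the resolvent comparison in (a) and the Laplace-transform identification in (b) are routine once \textbf{(H2)}$^{\prime}$(iii),(iv) are available.
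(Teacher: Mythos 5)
Your overall strategy --- reusing the Dirichlet-form process $\tilde{\bM}$ of (\ref{HuntDF}) on the complement of a properly exceptional set and performing a from-scratch construction only at the remaining $m$-negligible starting points --- is a legitimate variant of the construction the paper refers to (\cite[Section 2.1.2]{ShTr13a}, refining \cite[Section 4]{AKR}, where the process is built for \emph{every} starting point directly from $(P_t)_{t\ge 0}$ by Kolmogorov extension and then regularized). Steps (a) and (b) are sound, and your use of \textbf{(H2)}$^{\prime}$(ii) together with the full support of $m$ to upgrade $R_1((1-L)u_n)=u_n$ and $R_1((1-L_1)u_n^2)=u_n^2$ to identities holding everywhere on $E$ is exactly the right preparation.

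The genuine gap is in step (c), at the one point where the whole construction is delicate: proving $\lim_{t\downarrow 0}X_t=x$ $\P_x$-a.s.\ for $x$ in the bad set $N$. Your $L^2$-computation only yields $u_n(X_{t_k})\to u_n(x)$ a.s.\ along \emph{some} sequence $t_k\downarrow 0$, and you then invoke ``right-continuity of paths'' to conclude --- but right-continuity at $t=0$ under $\P_x$ for $x\in N$ is precisely what has to be proved; the path regularity you have established holds only on $(0,\infty)$, and a path that is continuous on $(0,\infty)$ may still fail to converge as $t\downarrow 0$ while agreeing with $x$ along the sequence $t_k$ (excursions accumulating at $0$ are not excluded by convergence along a subsequence). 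To close this you need the standard supermartingale regularization, which is exactly what the separate treatment of positive and negative parts in \textbf{(H2)}$^{\prime}$(ii) is designed for: $v_n^{\pm}:=R_1\big([(1-L)u_n]^{\pm}\big)$ are nonnegative, continuous, and --- by \textbf{(H1)}, the Laplace-transform representation of $R_1$ and the pointwise identities --- $1$-excessive for $(P_t)_{t\ge 0}$ in the pointwise sense, so that $e^{-t}v_n^{\pm}(X_t)$ is a nonnegative supermartingale under every $\P_x$. Hence $\lim_{t\downarrow 0,\ t\in\Q}u_n(X_t)$ exists a.s., your subsequence argument identifies the limit as $u_n(x)$, and only then do the separation property \textbf{(H2)}$^{\prime}$(i) and the regularity on $(0,\infty)$ force $X_{0+}=x$ (and exclude $X_{0+}=\Delta$, since $u_n\ge 1$ on a neighbourhood of $x$). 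The same supermartingale bounds are what make the Kolmogorov-extension measure in step (c) actually live on a path space of right-continuous trajectories, which you assert rather than prove; the verification of the strong Markov property near $T=0$ for $x\in N$ in step (d) also rests on this and is glossed, but becomes routine once right-continuity at $0$ is in place.
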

By an obvious application of Proposition \ref{l;2.10}, we obtain:
\begin{remark}
If $(T_t)_{t \ge 0}$ is strong Feller and $\textbf{(H2)}^{\prime}$(i)-(ii) and $\textbf{(H2)}^{\prime}$(iv)
hold for the corresponding transition function $(P_t)_{t \ge 0}$ and resolvent $(R_{\alpha})_{\alpha>0}$, 
then (\textbf{H1}) and (\textbf{H2}) hold.
\end{remark}

\paragraph{The non-symmetric case}\label{2.4.2.2}
This subsection is a continuation of Subsection \ref{ss;ermj} where we considered the Dirichlet form $(\E, D(\E))$ of (\ref{df}) with $A=(\delta_{ij})$.  We consider again the strict capacity Cap$_{\E}$ and the Hunt process (\ref{HuntDF}). Due to the properties of smooth measures with respect to Cap$_{\E}$ in \cite[Section 3]{Tr5} it is possible to consider the work \cite{Tr2} with cap$_{\varphi}$ (as defined in \cite{Tr2}) replaced by Cap$_{\E}$. In particular \cite[Theorem 3.10 and Proposition 4.2]{Tr2} apply with respect to the strict capacity Cap$_{\E}$ and therefore the paths of $\tilde{\bM}$ are continuous $\tilde{\P}_x$-a.s. for strictly $\E$-q.e. $x \in \R^d$ on the one-point-compactification $\R^d_{\Delta}$ of $\R^d$ with $\Delta$ as point at infinity.
We may hence assume that 
\begin{equation}\label{contipath}
\tilde{\Omega} = \{\omega = (\omega (t))_{t \ge 0} \in C([0,\infty),\R^d_{\Delta}) \ | \ \omega(t) = \Delta \quad \forall t \ge \tilde{\zeta}(\omega) \}
\end{equation}
and
\[
\tilde{X}_t(\omega) = \omega(t), \quad t \ge 0.
\]
Since $\text{Cap}_{\E}(\{\rho=0\})=0$ by Lemma \ref{lem2.8}, we obatain (cf. \cite{RoShTr}):
\begin{lemma}\label{nestconti}
Let $(F_k)_{k \ge 1}$ be an increasing sequence of compact subsets of $E: = \{\rho > 0\} $ with $\cup_{k \ge 1} F_k = E$ and such that $F_k \subset   \mathring{F}_{k+1}$, $k \ge 1$(here $\mathring{F}$ denotes the interior of $F$). Then
\[
\tilde{\P}_x (\tilde{\Omega}_0) = 1 \ \text{for strictly} \ \E \text{-q.e.} \ x \in E,
\]
where 
\[
\tilde{\Omega}_0 : = \tilde{\Omega} \cap \{\omega \ | \ \omega(0) \in E \cup \{\Delta\} \ \text{and} \ \lim_{k \to \infty} \sigma_{E \setminus F_k}(\omega) \ge \zeta (\omega) \}.
\]
\end{lemma}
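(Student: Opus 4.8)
The plan is to deduce Lemma \ref{nestconti} from the two facts already placed at our disposal: (a) the continuity of the paths of $\tilde{\bM}$ on the one-point compactification $\R^d_\Delta$ for strictly $\E$-q.e.\ starting point, which allows us to realize $\tilde{\bM}$ on the canonical path space $\tilde{\Omega}$ of \eqref{contipath}; and (b) the capacity estimate $\text{Cap}_\E(\{\rho=0\})=0$ from Lemma \ref{lem2.8}, together with the general principle (\cite[IV.\ Proposition 5.30]{MR}, already cited right after Lemma \ref{lem2.8}) that a set of zero strict capacity is $\tilde{\bM}$-properly exceptional, hence not hit before the lifetime for strictly $\E$-q.e.\ starting point. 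First I would fix the exceptional set: by (a) there is a properly exceptional $N_1$ with $\tilde{\P}_x(\tilde\Omega)=1$ for $x\in E\setminus N_1$, and by (b) there is a properly exceptional $N_2\supset\{\rho=0\}$ with $\tilde{\P}_x(\tilde\sigma_{E^c}<\tilde\zeta)=0$ (equivalently $\tilde\sigma_{E^c}\ge\tilde\zeta$ $\tilde{\P}_x$-a.s.) for $x\in E\setminus N_2$; set $N:=N_1\cup N_2$, still properly exceptional, hence $\text{Cap}_\E(N)=0$.

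The core of the argument is then a purely sample-path statement: on the event $\tilde\Omega\cap\{\tilde\sigma_{E^c}\ge\tilde\zeta\}$ one has $\lim_{k\to\infty}\sigma_{E\setminus F_k}\ge\tilde\zeta$. To see this, work with a fixed $\omega$ in this event. Since $(F_k)$ is increasing with $F_k\subset\mathring F_{k+1}$, the times $\sigma_{E\setminus F_k}(\omega)$ are non-decreasing in $k$, so the limit $\tau(\omega):=\lim_k\sigma_{E\setminus F_k}(\omega)$ exists in $[0,\infty]$; I must show $\tau(\omega)\ge\tilde\zeta(\omega)$. Suppose not, so $\tau(\omega)<\tilde\zeta(\omega)$, in particular $\tau(\omega)<\infty$ and $\tilde X_{\tau(\omega)}(\omega)\neq\Delta$. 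By continuity of $t\mapsto\tilde X_t(\omega)$ on $[0,\tilde\zeta(\omega))$, the image $\tilde X([0,\tau(\omega)])(\omega)$ is a compact subset $K$ of $\R^d$; moreover, because $\omega$ lies in $\{\tilde\sigma_{E^c}\ge\tilde\zeta\}$ and $\tau(\omega)<\tilde\zeta(\omega)$, the whole path up to and including time $\tau(\omega)$ stays in $E=\{\rho>0\}$, so $K\subset E$. Since $\bigcup_k\mathring F_{k}\supset\bigcup_k F_k=E\supset K$ is an open cover of the compact set $K$, there is $k_0$ with $K\subset\mathring F_{k_0}$; but then $\tilde X_s(\omega)\in\mathring F_{k_0}\subset F_{k_0}$ for all $s\le\tau(\omega)$, which forces $\sigma_{E\setminus F_{k_0}}(\omega)>\tau(\omega)$. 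This contradicts $\sigma_{E\setminus F_{k_0}}(\omega)\le\tau(\omega)$, which holds by definition of $\tau$ as the non-decreasing limit. (A small case to dispatch separately: if $\tilde X_{\tau(\omega)}(\omega)$ lies on the boundary behaviour, i.e.\ the supremum of the image is reached only in the limit — but here $\tau(\omega)<\tilde\zeta(\omega)$ and the path is continuous on $[0,\tilde\zeta(\omega))$, so $s\mapsto\tilde X_s(\omega)$ is genuinely continuous up to $\tau(\omega)$ and the compactness argument applies verbatim.) Hence $\tau(\omega)\ge\tilde\zeta(\omega)$ on the event, as claimed.

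Combining the two ingredients: for $x\in E\setminus N$ we have $\tilde{\P}_x(\tilde\Omega)=1$ and $\tilde{\P}_x(\tilde\sigma_{E^c}\ge\tilde\zeta)=1$, hence $\tilde{\P}_x(\tilde\Omega\cap\{\tilde\sigma_{E^c}\ge\tilde\zeta\})=1$, and by the sample-path inclusion just proved this event is contained in $\{\omega(0)\in E\cup\{\Delta\}\}\cap\{\lim_k\sigma_{E\setminus F_k}\ge\tilde\zeta\}=\tilde\Omega_0$ (the condition $\omega(0)\in E\cup\{\Delta\}$ is automatic for $x\in E$ from $\tilde X_0=x$). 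Therefore $\tilde{\P}_x(\tilde\Omega_0)=1$ for all $x\in E\setminus N$, i.e.\ for strictly $\E$-q.e.\ $x\in E$, since $\text{Cap}_\E(N)=0$. The main obstacle, and the only non-formal point, is the passage from Lemma \ref{lem2.8} to the statement that $\{\rho=0\}$ is \emph{properly exceptional} (not merely of zero capacity) for $\tilde{\bM}$, and that the process started outside a properly exceptional set does not charge it before the lifetime; this is where one invokes \cite[IV.\ Proposition 5.30]{MR} in the non-symmetric setting and the strict-capacity machinery of \cite[Section 3]{Tr5}, exactly as was already indicated in the paragraph preceding Lemma \ref{lem2.8}. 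Everything else is the elementary compactness argument above.
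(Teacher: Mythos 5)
Your proposal is correct and follows essentially the route the paper indicates (it only sketches the proof by citing \cite{RoShTr}): path continuity on $\R^d_\Delta$ for strictly $\E$-q.e.\ starting point, the fact that $\{\rho=0\}$ is not hit thanks to $\mathrm{Cap}_\E(\{\rho=0\})=0$ and \cite[IV.~Proposition 5.30]{MR}, and then the deterministic compactness argument that a continuous path staying in $E$ up to a time $\tau<\tilde\zeta$ has compact image contained in some $\mathring F_{k_0}$. The only point worth polishing is the final step: staying in $F_{k_0}$ on $[0,\tau]$ alone gives $\sigma_{E\setminus F_{k_0}}\ge\tau$, and the strict inequality $\sigma_{E\setminus F_{k_0}}>\tau$ needed for the contradiction follows because the image lies in the \emph{open} set $\mathring F_{k_0}$ and the path is continuous at $\tau$, which you have available.
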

Given the transition function $(P_t)_{t \ge 0}$  we can construct $\bM$ with continuous sample paths in $E_{\Delta}$ following the line of arguments in \cite[Section 4]{AKR} using in particular Lemma \ref{nestconti} and our further previous preparations (cf. \cite{RoShTr} for details).
\begin{thm}\label{existhunt}
There exists a Hunt process
\[
\bM =  (\Omega, \F, (\F_t)_{t \ge 0}, \zeta, (X_t)_{t \ge 0}, (\P_x)_{x \in E_{\Delta}}   )
\]
with state space $E$, having the transition function $(P_t)_{t \ge 0}$ as transition semigroup. In particular ${\bf (H2)}$ holds and $\bM$ satisfies the absolute continuity condition with respect to the part Dirichlet form $(\E^{E},D(\E^{E}))$, since by (\ref{partid})
\[
T_t^E f = P_t f \quad m\text{-a.e.} \ \forall t>0, \ f \in L^2(E,m)_b.
\]
Moreover, $\bM$ has continuous sample paths in the one point compactification $E_{\Delta}$ of $E$ with the cemetery $\Delta$ as point at infinity.
\end{thm}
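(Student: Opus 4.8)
The plan is to follow, step by step, the construction scheme of \cite[Section 4]{AKR}, now applied to the (possibly non-symmetric) transition function $(P_t)_{t\ge 0}$ produced in Theorem \ref{t;2.6tdsr}. First I would make $\Delta$ absorbing, extend $(P_t)$ to a conservative transition function on $E_\Delta$, and use Kolmogorov's extension theorem to obtain, on the product space $(E_\Delta)^{[0,\infty)}$ with coordinate process $(X^0_t)_{t\ge 0}$, a normal Markov family $(\P^0_x)_{x\in E_\Delta}$ whose transition function is $(P_t)_{t\ge 0}$; at this stage only the simple Markov property is available and the paths are a priori merely measurable.

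The heart of the matter --- and the step I expect to be the main obstacle --- is to produce, for \emph{every} $x\in E=\{\rho>0\}$ and not merely for $m$-a.e.\ $x$, a modification with continuous sample paths in $E_\Delta$ that moreover stays in $E$ up to its lifetime. Here I would bring in the Dirichlet form Hunt process $\tilde{\bM}$ of (\ref{HuntDF}): realized on the continuous path space (\ref{contipath}) it has, by Lemma \ref{lem2.8} together with Lemma \ref{nestconti}, continuous paths that remain in $E$ before the lifetime for strictly $\E$-q.e.\ (in particular $m$-a.e.)\ $x$, and by (\ref{partid}) its finite-dimensional distributions agree with those of $(\P^0_x)$ for $m$-a.e.\ $x\in E$; hence $\P^0_x$ is carried by continuous, $E$-valued-before-lifetime paths for $m$-a.e.\ $x$. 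For arbitrary $x\in E$, a small-time shift together with the Markov property gives, for bounded measurable functionals $\Phi$ of the path,
\[
\EE^0_x\big[\Phi\big((X^0_{t+\varepsilon})_{t\ge 0}\big)\big]=\int_E p_\varepsilon(x,y)\,\EE^0_y\big[\Phi\big((X^0_{t})_{t\ge 0}\big)\big]\,m(dy)+\Big(1-\!\int_E p_\varepsilon(x,y)\,m(dy)\Big)\EE^0_\Delta[\Phi],
\]
so that, the $m$-a.e.\ $\P^0_y$ (and trivially $\P^0_\Delta$) being carried by continuous paths, the shifted process $(X^0_{t+\varepsilon})_{t\ge 0}$ admits a continuous $\P^0_x$-modification; letting $\varepsilon\downarrow 0$ along a sequence yields a $\P^0_x$-a.s.\ continuous modification of $(X^0_t)_{t>0}$ on $(0,\infty)$.

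Continuity and normality at $t=0$ then come from Theorem \ref{t;2.6tdsr}(iii): since $X^0_{0+}:=\lim_{t\downarrow 0}X^0_t$ exists $\P^0_x$-a.s., bounded convergence gives $\EE^0_x[u(X^0_{0+})]=\lim_{t\downarrow 0}P_t u(x)=u(x)$ for all $u\in C_0^\infty(\R^d)$, so $X^0_{0+}$ has law $\delta_x$ and equals $x$ $\P^0_x$-a.s. Transporting this modification to the canonical space $\Omega=\{\omega\in C([0,\infty),E_\Delta)\mid\omega(t)=\Delta\ \forall t\ge\zeta(\omega)\}$, with $X_t(\omega)=\omega(t)$, $\zeta(\omega)=\inf\{t\ge 0:\omega(t)=\Delta\}$, $(\F_t)_{t\ge 0}$ the right-continuous $\P_x$-completed natural filtration and $\P_x$ the image of $\P^0_x$, gives a normal Markov process with continuous sample paths in $E_\Delta$ and transition function $(P_t)_{t\ge 0}$.

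It then remains to upgrade to the strong Markov property and to obtain quasi-left-continuity. For the former I would use that $(R_\alpha)_{\alpha>0}$ is strong Feller on $E$ (Theorem \ref{t;2.6tdsr}(vi)), so that $t\mapsto R_\alpha f(X_t)$ is right-continuous for every $f\in\mathcal{B}_b(\R^d)$; combined with the simple Markov property, optional sampling and uniqueness of Laplace transforms (the standard monotone-class argument of \cite[Section 4]{AKR}, cf.\ \cite{RoShTr}), this yields the strong Markov property relative to $(\F_t)_{t\ge 0}$. A strong Markov process with continuous sample paths in the one-point compactification $E_\Delta$ and right-continuous augmented filtration is automatically quasi-left-continuous, so $\bM$ is a Hunt process and ${\bf (H2)}$ holds. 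The absolute continuity condition of Definition \ref{r;absolute} with respect to the part Dirichlet form $(\E^{E},D(\E^{E}))$ is then immediate from (\ref{partid}), which states precisely that $T_t^E f=P_t f$ $m$-a.e.\ for all $t>0$ and $f\in L^2(E,m)_b$, and the continuity of the sample paths in $E_\Delta$ has been built in by construction. The genuinely delicate points are thus the uniform-in-$x$ path regularization and the control that the process never reaches $\{\rho=0\}$ before its lifetime --- both resting on the elliptic-regularity-based (Hölder) continuity of the kernels in Theorem \ref{t;2.6tdsr} together with Lemmas \ref{lem2.8} and \ref{nestconti}.
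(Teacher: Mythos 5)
Your proposal is correct and follows essentially the same route as the paper, which itself only cites the construction scheme of \cite[Section 4]{AKR} together with Lemma \ref{nestconti} and the preparations of Subsection \ref{ss;ermj} (with details in \cite{RoShTr}): Kolmogorov extension, transfer of path continuity from the Dirichlet-form Hunt process to every starting point in $E$ via the Markov property, normality at $t=0$ from Theorem \ref{t;2.6tdsr}(iii), the strong Markov property from the strong Feller resolvent, and quasi-left-continuity from continuity of the paths in $E_\Delta$. The only step you gloss over --- that ``the path is continuous'' must be replaced by a measurable event on a countable dense time set before the displayed Markov identity can be applied --- is the standard device used in \cite{AKR} and does not affect the argument.
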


\section{Pointwise weak existence of singular SDEs associated to Dirichlet forms}\label{s;3po}
Once having constructed (for instance through the steps $\bf{(H1)}$ and $\bf{(H2)}$) a Hunt process $\bM$ that satisfies the absolute continuity condition with respect to $(\E,D(\E))$, we want to identify the corresponding stochastic differential equation. 
We present two ways for the identification. The first is the well-known strict Fukushima decomposition. The second is realized by direct stochastic calculus.
\subsection{The strict Fukushima decomposition}\label{s;3.1sfdm}
This subsection refers to the monograph \cite{FOT}, hence some of its standard notations may be adopted below without definition. Throughout this subsection, we assume that $(\E,D(\E))$ is symmetric and that $\bf{(H1)}$ and $\bf{(H2)}$ hold.\\
In some cases, we will apply the strict Fukushima decomposition (i.e. Proposition \ref{p;LSFD} below) on an open subset $B \subset E$. Therefore, we need first to state some definitions and properties in a local setting. 
\begin{definition}
Let $B$ be an open set in $E$. For $x \in B, t \ge 0, \alpha>0$ and $p \in [1,\infty)$  let
\begin{itemize}
\item $\sigma_{B^c} := \inf\{t>0 \,|\;X_t \in B^c\}$, $D_{B^c} := \inf\{t\ge0 \,|\;X_t \in B^c\}$,
\item $P^{B}_{t}f(x) : = \EE_x [f(X_t) ; t<\sigma_{B^c} ]\; , \; f \in \mathcal{B}_{b}(B)$,
\item $R^{B}_{\alpha}f(x) : = \EE_x \Big[\int_{0}^{\sigma_{B^c}} e^{-\alpha s} f(X_s) \, ds \Big]  \; , \; f \in \mathcal{B}_{b}(B)$ ,
\item $D(\E^{B}): = \{u \in D(\E) \, | \; u=0 \,\,  \mathcal{E}\text{-q.e} \; on  \; B^c \}$.
\item $\E^{B} : = \mathcal{E} \, |_{D(\E^{B})\times D(\E^{B})}$.
\item $L^2(B \, ,m): = \{u \in L^2(E , m) \,|\; u=0, \; m \text{-a.e. on} \; B^c\}$.
\item $||f||^p_{p,B}: = \int_{B} |f|^p \; dm$. 
\item $||f||_{\infty,B}:= \inf \Big\{c>0 \,|\; \int_{B}  1_{\{ \,|f|>c \, \} } \, dm = 0 \Big\}$.
\item $\E^{B}_1(f,g) : = \E^{B}(f,g) + \int_{B} f g \; dm,  \;\;     f,g \in D(\E^{B})$.
\item $\| \,f\, \|_{D(\mathcal{E}^{B})} : = \E^{B}_1(f,f)^{1/2},  \;\;     f \in D(\E^{B})$.
\end{itemize}
\end{definition}
$(\E^{B},D(\E^{B}))$ is called the part Dirichlet form of $(\E, D(\E))$ on $B$. It is a regular Dirichlet form on $L^2(B, m)$ (cf. \cite[Section 4.4]{FOT}).
Let  $(T^{B}_t)_{t > 0}$ and $(G^{B}_{\alpha})_{\alpha > 0}$ be the $L^2(B, m)$-semigroup and resolvent associated to $(\E^{B},D(\E^{B}))$.
Then $P^{B}_{t} f, \;  R^{B}_{\alpha}f$ is an $m$-version of $T^{B}_t f,  G^{B}_{\alpha}f$, respectively for any $f \in L^2(B,m)_b$. Since $P_t^{B} 1_{A}(x) \le P_t 1_{A}(x)$ for any $ A\in \mathcal{B}(B)$, $x \in B$ and $m$ has full support on $E$, $A \mapsto P_t^{B} 1_{A}(x), \; A \in \mathcal{B}(B)$ is absolutely continuous with respect to $1_{B} \cdot m$. Hence there exists a (measurable) transition kernel density $p^{B}_{t}(x, y)$, $x,y \in B$, such that
\begin{equation}\label{abspart}
P_t^{B} f(x) = \int_{B} p_t^{B} (x,y) \,  f(y) \, m(dy) ,\; t>0 \;, \;\; x\in B
\end{equation}
for $f  \in  \mathcal{B}_{b}(B)$. Correspondingly, there exists a (measurable) resolvent kernel density $r_{\alpha}^{B}(x,y)$, such that
\[
R_{\alpha}^{B} f(x) = \int_{B} r_{\alpha}^{B} (x,y) \,  f(y) \, m(dy) \,, \;\; \alpha>0, \;\;x \in B
\]
for $f  \in \mathcal{B}_{b}(B)$.
For a signed Radon measure $\mu$ on $B$, let us define
\[
R_{\alpha}^{B} \mu (x) = \int_{B} r_{\alpha}^{B} (x,y) \, \mu(dy) \, , \;\; \alpha>0, \;\;x \in B
\]
whenever this makes sense.
The process defined by
\[
X^{B}_t(\omega)=
\begin{cases}
X_t(\omega), \;\;\;\; 0\le t < D_{B^c} (\omega) \\
\Delta, \;\;\;\; t \ge D_{B^c} (\omega)
\end{cases}
\]
is called the part process corresponding to $\E^{B}$ and is denoted by $\bM|_{B}$.  $\bM|_{B}$ is a Hunt process on $B$ (see \cite[p.174 and Theorem A.2.10]{FOT}).  In particular, by (\ref{abspart})
$\bM|_{B}$ satisfies the absolute continuity condition on $B$.\\ 
A positive Radon measure $\mu$ on $B$ is said to be of finite energy integral if
\[
\int_{B} |f(x)|\, \mu (dx) \leq C \sqrt{\E^{B}_1(f,f)}, \; f\in D(\E^{B}) \cap C_0(B),
\]
where $C$ is some constant independent of $f$. A positive Radon measure $\mu$ on $B$ is of finite energy integral (on $B$) if and only if there exists a unique function $U_{1}^{B} \, \mu\in D(\E^{B} )$ such that
\[
\E^{B}_{1}(U_{1}^{B} \, \mu, f) = \int_{B} f(x) \, \mu(dx),
\]
for all $f \in D(\E^{B}) \cap C_0(B)$. $U_{1}^{B} \, \mu$ is called $1$-potential of $\mu$. In particular, $R_{1}^{B} \mu$ is a version of $U_{1}^{B} \mu$ (see e.g. \cite[Exercise 4.2.2]{FOT}). The measures of finite energy integral are denoted by $S_0^{B}$. We further define the supersmooth measures $S_{00}^{B} : = \{\mu\in S_0^{B} \, | \; \mu(B)<\infty, \|U_{1}^{B} \mu\|_{\infty, B}<\infty \}$.
A positive Borel measure $\mu$ on $B$ is said to be smooth in the strict sense if there exists a sequence $(E_k)_{k\ge 1}$ of Borel sets increasing to $B$ such that $1_{E_{k}} \cdot \mu \in S_{00}^{B}$ for each $k$ and
\[
\P_{x} ( \lim_{k \rightarrow \infty} \sigma_{ B \setminus E_{k} }  \ge \zeta ) =1 \;, \;\; \forall x \in B.
\]
The totality of the smooth measures in the strict sense is denoted by $S_{1}^{B}$ (see \cite{FOT}). If  $\mu \in S_{1}^{B}$,
then there exists a unique $A \in A_{c,1}^{+, B}$ with $\mu = \mu_{A}$, i.e. $\mu$ is the Revuz measure of $A$ (see \cite[Theorem 5.1.7]{FOT}), such that
\[
\EE_x \Big[ \int_{0}^{\infty} e^{-t} \, d A_{t}  \Big] = R_1 \mu_{A}(x) \, , \;\; \forall x \in B.
\]
Here, $A_{c,1}^{+,B}$ denotes the positive continuous additive functionals on $B$ in the strict sense. If $B = E$, we omit the superscript $B$ and simply write $U_{1}, S_0, S_{00}, S_{1}$, and $A_{c,1}^{+}$.\\
For later purpose we state some auxiliary result (see \cite[Lemma 2.12]{ShTr13a}).
\begin{lemma}\label{l;sumadd}
For $k \in \mathbb{Z}$, let $ \mu_{A^{k}}, \mu_{A} \in S_{1}^{B}$ be the Revuz measures associated with $A^{k}, A \in A_{c,1}^{+, B}$, respectively.
Suppose that $ \mu_{A} = \sum_{k \in \mathbb{Z}}  \mu_{A^{k}}$. Then $A= \sum_{k \in \mathbb{Z}} A^{k}$.
\end{lemma}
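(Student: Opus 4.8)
The plan is to consider $\tilde A_t:=\sum_{k\in\mathbb Z}A^k_t$ — a nondecreasing $[0,\infty]$-valued process, since every $A^k$ is nondecreasing — to prove that $\tilde A$ is again a positive continuous additive functional in the strict sense on $B$ whose Revuz measure is $\mu_A$, and then to invoke the uniqueness in the strict Revuz correspondence \cite[Theorem 5.1.7]{FOT} recalled above, which forces $\tilde A=A$, i.e. $A=\sum_{k\in\mathbb Z}A^k$.

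First I would handle the finite partial sums $A^{(N)}_t:=\sum_{|k|\le N}A^k_t$. Each $A^{(N)}$ belongs to $A_{c,1}^{+,B}$ (a finite sum of elements of $A_{c,1}^{+,B}$ is continuous, finite, additive and vanishes at $0$) and its Revuz measure is $\mu^{(N)}:=\sum_{|k|\le N}\mu_{A^k}$ by the elementary additivity of the Revuz correspondence under finite sums. Since $\mu^{(N)}\le\mu_A$, the tail $\nu^{(N)}:=\mu_A-\mu^{(N)}=\sum_{|k|>N}\mu_{A^k}$ is a positive Borel measure dominated by $\mu_A\in S_1^B$; reusing the exhausting sequence $(E_\ell)_{\ell\ge1}$ that witnesses $\mu_A\in S_1^B$ and the fact that membership in $S_0^B$ and $S_{00}^B$ is inherited under domination (smaller total mass and smaller $1$-potential), one checks that $1_{E_\ell}\nu^{(N)}\in S_{00}^B$ for every $\ell$ while the path condition $\P_x(\lim_\ell\sigma_{B\setminus E_\ell}\ge\zeta)=1$ is unchanged, so $\nu^{(N)}\in S_1^B$. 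Hence \cite[Theorem 5.1.7]{FOT} produces $C^{(N)}\in A_{c,1}^{+,B}$ with Revuz measure $\nu^{(N)}$, and then $A^{(N)}+C^{(N)}\in A_{c,1}^{+,B}$ has Revuz measure $\mu^{(N)}+\nu^{(N)}=\mu_A$, so the uniqueness part of \cite[Theorem 5.1.7]{FOT} gives $A^{(N)}+C^{(N)}=A$. In particular $A^{(N)}_t\le A_t$ for all $t\ge0$, $\P_x$-a.s. for every $x\in B$.

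Letting $N\to\infty$ in this comparison shows $\tilde A_t\le A_t<\infty$, so $\tilde A$ takes finite values at every time, and the tail $\tilde A_t-A^{(N)}_t=\sum_{|k|>N}A^k_t$ is nondecreasing in $t$, whence $\sup_{t\le T}(\tilde A_t-A^{(N)}_t)=\tilde A_T-A^{(N)}_T\to0$; thus $A^{(N)}\to\tilde A$ uniformly on compact time intervals $\P_x$-a.s. and $\tilde A$ is continuous. Together with $\tilde A_0=0$ and the additivity $\tilde A_{t+s}=\tilde A_t+\tilde A_s\circ\theta_t$ (rearrangement of a nonnegative series) this gives $\tilde A\in A_{c,1}^{+,B}$. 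Its Revuz measure equals $\sum_{k\in\mathbb Z}\mu_{A^k}=\mu_A$: since $A^{(N)}\uparrow\tilde A$ within $A_{c,1}^{+,B}$, the Revuz correspondence is continuous along the monotone limit, as one reads off from $\EE_x[\int_0^\infty e^{-t}\,d\tilde A_t]=\lim_N\EE_x[\int_0^\infty e^{-t}\,dA^{(N)}_t]=\lim_N R_1\mu^{(N)}(x)=R_1\mu_A(x)$ (monotone convergence in $N$ and in the measure) combined with injectivity of the correspondence. As $A$ has Revuz measure $\mu_A$ as well, the uniqueness in \cite[Theorem 5.1.7]{FOT} yields $A=\tilde A=\sum_{k\in\mathbb Z}A^k$, as claimed.

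I expect the main difficulty to be the verification that the countable sum $\tilde A$ is genuinely an element of $A_{c,1}^{+,B}$, i.e. finite at every time and continuous, rather than merely an additive functional that could blow up or jump. Finiteness is handed to us by the comparison $\tilde A\le A$, so the real work is the previous step — keeping the tail measures $\nu^{(N)}$ inside the strict class $S_1^B$, which rests on the hereditariness of $S_0^B$, $S_{00}^B$ and of the exhaustion condition under domination — whereas continuity of $\tilde A$ comes essentially for free, because $\sum_{|k|>N}A^k_t$ is monotone in $t$, so pointwise convergence of $A^{(N)}$ to a finite limit upgrades automatically to uniform convergence on compacts.
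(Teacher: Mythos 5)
Your argument is correct, and it is essentially the standard one; the survey itself gives no proof of this lemma (it only cites \cite[Lemma 2.12]{ShTr13a}), and the proof there proceeds along the same lines: compare the finite partial sums with $A$ via the uniqueness of the strict Revuz correspondence, then pass to the monotone limit. All the key points check out in the symmetric setting of this subsection: $S_0^B$ and $S_{00}^B$ are hereditary under domination (the tail $1_{E_\ell}\nu^{(N)}$ is a finite, hence Radon, measure with $R_1^B(1_{E_\ell}\nu^{(N)})\le R_1^B(1_{E_\ell}\mu_A)$ pointwise), the exhaustion $(E_\ell)$ for $\mu_A$ serves for every $\nu^{(N)}$, and your Dini-type observation that the tails $\sum_{|k|>N}A^k_t$ are nondecreasing in $t$ correctly upgrades pointwise to locally uniform convergence, giving continuity of $\tilde A$. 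The only point you gloss over is the usual bookkeeping that the countably many $\P_x$-null sets (defining sets of the $A^{(N)}$, $C^{(N)}$ and the events $\{A^{(N)}+C^{(N)}=A\}$) can be assembled into a single shift-invariant defining set so that $\tilde A$ is a PCAF \emph{in the strict sense}; this is routine but worth a sentence in a full write-up.
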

Now we restate the strict Fukushima decomposition for continuous functions (cf. \cite[Theorem 5.5.5]{FOT}) which holds under our present assumptions on $(\E,D(\E))$:
\begin{proposition}\label{p;LSFD}
Suppose that a function $f$ satisfies the following conditions: \\
(i) $f \in D(\E)_{b,loc}$, $f$ is continuous on $E$\\
(ii) $1_G \cdot \mu_{\langle M^{[f]}\rangle} \in S_{00}$  for any relatively compact open set $G \subset E$\\
(iii) $\exists \nu = \nu^{(1)}-\nu^{(2)}$ with $1_{G} \cdot \nu^{(1)}, 1_{G} \cdot \nu^{(2)} \in S_{00}$ for any relatively compact open set $G \subset E$ and
\[
\E(f,g)= \int_E g\,d\nu, \quad  \forall g\in \mathscr{C},
\]
for some special standard core $\mathscr{C}$ of $\E$.

Let $A^{(1)}, A^{(2)}$ and $B$ be the positive continuous additive functional in the strict sense with Revuz measures $\nu^{(1)}, \nu^{(2)}$ and $\mu_{<M^{[f]}>}$, respectively.\\
Then,
\[
f(X_t)-f(X_0) = M_t^{[f]} + N_t^{[f]}, \quad \P_x\text{-a.s.} \quad \forall x \in E.
\]
Here,
\[
N^{[f]} = -A^{(1)} + A^{(2)}, \quad \P_x\text{-a.s.} \quad \forall x \in E,
\]
and $M^{[f]}$ is a local additive functional in the strict sense such that, for any relatively compact open set $G \in E$,
\begin{eqnarray*}
\mathbf{E}_x(M_{t \wedge \sigma_{G^c}}^{[f]})&=&0, \quad  \forall x \in G,\\
\mathbf{E}_x((M_{t \wedge \sigma_{G^c}}^{[f]})^2)&=& \mathbf{E}_x(B_{t \wedge \sigma_{G^c}}), \quad  \forall x \in G.
\end{eqnarray*}
\end{proposition}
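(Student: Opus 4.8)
The plan is to obtain this statement by localizing the ordinary strict Fukushima decomposition \cite[Theorem 5.5.1]{FOT} (originating in \cite{fuku93}) along an exhaustion of $E$; indeed the result is essentially \cite[Theorem 5.5.5]{FOT} read off in our setting, so the real work is to see that conditions (i)--(iii), together with the standing hypotheses (symmetry of $\E$, a carr\'e du champ, \textbf{(H1)}, \textbf{(H2)}), are exactly what the strict decomposition needs. Note first that, being continuous on $E$, $f$ is its own finely continuous $m$-version, and condition (ii) should be read as a condition on the energy measure $\mu_{\langle f\rangle}$ of $f$, which is well defined because $f\in D(\E)_{b,loc}\subset D(\E)_{loc}$ and which will afterwards be identified with the Revuz measure of $\langle M^{[f]}\rangle$.

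First I would fix an increasing sequence $(G_n)_{n\ge 1}$ of relatively compact open subsets of $E$ with $\overline{G_n}\subset G_{n+1}$ and $\bigcup_n G_n=E$; since $\bM$ is a Hunt process, $\sigma_{G_n^c}\uparrow\zeta$ $\P_x$-a.s.\ for every $x\in E$, and the absolute continuity condition is what lets one upgrade the quasi-everywhere identities below to identities valid at every starting point along the nest $(\overline{G_n})_n$. By (i) I pick, for each $n$, a function $f_n\in D(\E)_b$ with $f_n=f$ $m$-a.e.\ on $G_{n+1}$, and apply \cite[Theorem 5.5.1]{FOT} to $f_n$ to get $f_n(X_t)-f_n(X_0)=M_t^{[f_n]}+N_t^{[f_n]}$, $\P_x$-a.s.\ for all $x\in E$. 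By strong locality of $\E$ and the carr\'e du champ, the energy measure of $f_n$ agrees with $\mu_{\langle f\rangle}$ on $G_{n+1}$, so by (ii) and the local property of the Revuz correspondence $\langle M^{[f_n]}\rangle_t=B_t$ for $t<\sigma_{G_n^c}$, where $B\in A_{c,1}^{+}$ is the positive continuous additive functional in the strict sense of $\mu_{\langle f\rangle}$ (which lies in $S_1$, being the increasing limit of its restrictions $1_{G_n}\mu_{\langle f\rangle}\in S_{00}$ along the nest). Likewise, for $g\in\mathscr{C}$ with support in $G_{n+1}$ strong locality gives $\E(f_n,g)=\E(f,g)=\int_E g\,d\nu$; since a special standard core supplies enough test functions localized to $G_n$ to pin down $\nu$ there and since $1_{G_n}\nu^{(1)},1_{G_n}\nu^{(2)}\in S_{00}$, the strict Revuz correspondence together with \cite[Theorem 5.5.4]{FOT} yields $N_t^{[f_n]}=-A_t^{(1)}+A_t^{(2)}$ for $t<\sigma_{G_n^c}$, with $A^{(1)},A^{(2)}\in A_{c,1}^{+}$ the positive continuous additive functionals in the strict sense of $\nu^{(1)},\nu^{(2)}\in S_1$.

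It then remains to patch. For $m>n$ one has $f_m=f_n$ $m$-a.e.\ on $G_{n+1}$, hence $\mu_{\langle f_m-f_n\rangle}(G_n)=0$ and $\E(f_m-f_n,g)=0$ for every $g$ supported in $G_n$; by the local property (uniqueness up to $\sigma_{G_n^c}$) of the strict Fukushima decomposition, $M_t^{[f_m]}=M_t^{[f_n]}$ and $N_t^{[f_m]}=N_t^{[f_n]}$ for $t<\sigma_{G_n^c}$. Thus $M_t^{[f]}:=M_t^{[f_n]}$ for $t<\sigma_{G_n^c}$ (any $n$ with $x\in G_n$) is an unambiguously defined local additive functional in the strict sense on $E$, and since $f_n(X_t)=f(X_t)$ on $\{t<\sigma_{G_n^c}\}$ and $\sigma_{G_n^c}\uparrow\zeta$, I obtain $f(X_t)-f(X_0)=M_t^{[f]}+N_t^{[f]}$ with $N^{[f]}=-A^{(1)}+A^{(2)}$, $\P_x$-a.s.\ for all $x\in E$. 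Finally, the two moment identities for $M^{[f]}$ on $[0,\sigma_{G^c}]$ with $G=G_n$ transfer verbatim from the ordinary decomposition of $f_n$, the second being $\mathbf{E}_x\big((M_{t\wedge\sigma_{G^c}}^{[f]})^2\big)=\mathbf{E}_x\big(B_{t\wedge\sigma_{G^c}}\big)$ by the identification $\langle M^{[f]}\rangle=B$ on $[0,\sigma_{G^c})$.

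The hard part is not the $M+N$ splitting but the uniform-in-$x$, strict-sense bookkeeping: one must guarantee that $A^{(1)},A^{(2)},B$ are genuine elements of $A_{c,1}^{+}$ — in particular satisfying $\P_x(\lim_k\sigma_{E\setminus E_k}\ge\zeta)=1$ for \emph{every} $x\in E$ along a suitable exhausting sequence $(E_k)$ — which is precisely why (ii) and (iii) demand local membership in $S_{00}$ rather than merely in $S_0$, and why the absolute continuity condition must be invoked to pass from quasi-everywhere to everywhere. The consistent gluing of the local pieces $M^{[f_n]}$ into a single local additive functional in the strict sense, the identity $\lim_n\sigma_{G_n^c}=\zeta$, and the verification that a special standard core localizes well enough to determine $\nu$ and $N^{[f_n]}$ on each $G_n$ are the remaining points that require care.
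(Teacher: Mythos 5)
The paper offers no proof of this proposition: it is quoted as \cite[Theorem 5.5.5]{FOT} (originating in \cite{fuku93}) and simply asserted to hold under the standing assumptions, so the only meaningful comparison is with the proof in \cite{FOT}, on which your sketch is clearly modelled. Your overall architecture --- exhaust $E$ by relatively compact open sets $G_n$ with $\overline{G_n}\subset G_{n+1}$, establish the decomposition locally, and glue along $\sigma_{G_n^c}$ --- is the right one.

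One step, however, does not work as written. You take an arbitrary $f_n\in D(\E)_b$ with $f_n=f$ $m$-a.e.\ on $G_{n+1}$ and apply the global strict decomposition \cite[Theorem 5.5.1]{FOT} to $f_n$ on all of $E$. That theorem requires \emph{global} hypotheses: $\mu_{\langle f_n\rangle}\in S_{00}$ and a representation $\E(f_n,g)=\int g\,d\nu_n$ with $\nu_n^{(1)},\nu_n^{(2)}\in S_{00}$ on the whole space. Conditions (ii) and (iii) only control $f$ on relatively compact sets; outside $G_{n+1}$ the extension $f_n$ is arbitrary, so neither its energy measure nor a representing measure for $\E(f_n,\cdot)$ need exist in $S_{00}$, and a cutoff $f_n=\chi_n f$ does not repair this because the terms $\mu_{\langle f,\chi_n\rangle}$ and $f^2\mu_{\langle \chi_n\rangle}$ are not controlled by (ii). The correct localization (and the one in \cite{FOT}) goes through the part process $\bM|_{G_{n+1}}$ and the part Dirichlet form $(\E^{G_{n+1}},D(\E^{G_{n+1}}))$ introduced at the start of this subsection: there the restricted measures $1_{G_{n+1}}\cdot\mu_{\langle f\rangle}$ and $1_{G_{n+1}}\cdot\nu^{(i)}$ do satisfy the global hypotheses of the non-local theorem relative to $S_{00}^{G_{n+1}}$ (using $R_1^{G_{n+1}}\mu\le R_1\mu$ to compare potentials), and the gluing then proceeds essentially as you describe. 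A second, smaller point: $\lim_n\sigma_{G_n^c}\ge\zeta$ $\P_x$-a.s.\ for \emph{every} $x\in E$ is not automatic for a Hunt process --- it holds quasi-everywhere, and upgrading it to every starting point is precisely the content of statements such as Lemmas \ref{pointwisenest}, \ref{l;limit} and \ref{l;limit2}; you correctly flag this as delicate at the end, but assert it without justification at the outset.
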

Applied to concrete situations Proposition \ref{p;LSFD}(iii) will serve for the identification of the drift of the corresponding SDE, since $\nu$ can be interpreted as $- Lf \, dm$ at least if $f\in D(L)$, so that $N_t^{[f]}=\int_0^t Lf(X_s)ds$ since $h\,dm$ is uniquely associated to $(\int_0^t h(X_s)ds)_{t\ge 0}$ via the Revuz correspondence for any $h\in L^1_{loc}(E,m)$. But of course Proposition \ref{p;LSFD}(iii) identifies the drift also if it is not absolutely continuous with respect to  $ds$, for instance if it is a local time .\\
In order to handle quadratic variations and general drifts described through signed supersmooth measures by verifying Proposition \ref{p;LSFD}(ii) and (iii), we use the following key observation:
\begin{proposition}\label{p;smooth}
Let $\mu$ be a positive Radon measure on $E$,  $G \subset E$ some relatively compact open set, 
and $r_1^G \in C(E)$. Suppose that 
\begin{equation}\label{p;smoothb}
\int_G r_1(\cdot,y) \, \mu(dy) \le r_1^G
\end{equation}
$m$-a.e. on $E$ and that additionally at least one of the following conditions is satisfied:
\begin{itemize}
\item[(i)] $\int_G r_1(\cdot,y)\, \mu(dy)\in D(\E)$, i.e. $1_{G} \cdot \mu \in S_{0}$,
\item[(ii)] (\ref{p;smoothb}) holds $\mu$-a.e. on $G$,
\item[(iii)] $\int_G r_1(\cdot,y)\, \mu(dy)\in L^1(G,\mu)$, i.e.  $R_1 (1_{G} \cdot \mu) \in L^1(G,\mu)$. 
\end{itemize}
Then $1_{G} \cdot \mu \in S_{00}$. In particular, if this holds for any relatively compact open set $G$, then $\mu \in S_{1}$ with respect to a sequence of open sets $ (E_{k})_{k \ge 1}$.
\end{proposition}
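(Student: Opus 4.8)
The plan is to verify the defining conditions of the class $S_{00}$, namely that $1_G\cdot\mu$ is a measure of finite energy integral on $E$ (so $1_G\cdot\mu\in S_0$), that $(1_G\cdot\mu)(E)<\infty$, and that $\|U_1(1_G\cdot\mu)\|_\infty<\infty$. Since $G$ is relatively compact and $\mu$ is Radon, $(1_G\cdot\mu)(E)=\mu(G)<\infty$ is immediate. The substance is the other two conditions, and the hypothesis \eqref{p;smoothb} together with $r_1^G\in C(E)$ (hence locally bounded, and bounded on the compact closure $\overline G$, but we will need a genuine global bound — see below) is exactly what feeds the uniform bound on the $1$-potential. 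I would first recall that $R_1(1_G\cdot\mu)(x)=\int_G r_1(x,y)\,\mu(dy)$ is an $m$-version of $U_1(1_G\cdot\mu)$ whenever the latter exists, so \eqref{p;smoothb} reads $R_1(1_G\cdot\mu)\le r_1^G$ $m$-a.e., giving $\|U_1(1_G\cdot\mu)\|_\infty\le\|r_1^G\|_\infty<\infty$ once we know $1_G\cdot\mu\in S_0$.

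So the heart of the matter is showing $1_G\cdot\mu\in S_0$, i.e.\ that $\mu$ restricted to $G$ has finite energy integral, under \emph{any one} of (i), (ii), (iii). Case (i) is a tautology by the characterization of $S_0$ recalled just before Lemma~\ref{l;sumadd}: $1_G\cdot\mu\in S_0$ iff $R_1(1_G\cdot\mu)=\int_G r_1(\cdot,y)\,\mu(dy)\in D(\E)$. For cases (ii) and (iii) I would use the standard approximation scheme for finite energy measures: approximate $\mu$ from below by an increasing sequence $\mu_n$ of measures known to be in $S_{00}$ (e.g.\ $\mu_n := 1_{G_n}\cdot\mu$ truncated appropriately, where $G_n$ exhausts $G$ and one caps the density, or more robustly $\mu_n$ obtained by the time-change / $\alpha$-potential truncation $\mu_n:=(\mu\wedge n\,m)$ restricted suitably so that $R_1\mu_n$ is bounded). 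For such $\mu_n$ one has $U_1\mu_n\in D(\E)$ and the key energy identity $\E_1(U_1\mu_n,U_1\mu_n)=\int_G U_1\mu_n\,d\mu_n\le\int_G R_1(1_G\cdot\mu)\,d\mu_n$. Under (iii), $R_1(1_G\cdot\mu)\in L^1(G,\mu)$ and monotone convergence bounds the right-hand side uniformly in $n$; under (ii), \eqref{p;smoothb} holds $\mu$-a.e.\ on $G$ so the right-hand side is $\le\int_G r_1^G\,d\mu_n\le\|r_1^G\|_\infty\,\mu(G)<\infty$, again uniform in $n$. Hence $\sup_n\|U_1\mu_n\|_{D(\E)}<\infty$, so a subsequence converges weakly in $D(\E)$; identifying the limit with $R_1(1_G\cdot\mu)$ (using $R_1\mu_n\uparrow R_1(1_G\cdot\mu)$ $m$-a.e.\ by monotone convergence and standard lower-semicontinuity / Banach–Saks arguments) yields $R_1(1_G\cdot\mu)\in D(\E)$, i.e.\ $1_G\cdot\mu\in S_0$.

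Having placed $1_G\cdot\mu$ in $S_{00}$, the final assertion — that if this holds for \emph{every} relatively compact open $G$ then $\mu\in S_1$ — follows by choosing any exhaustion of $E$ by relatively compact open sets $E_k\uparrow E$ with $\overline{E_k}\subset E_{k+1}$ (possible since $E$ is locally compact and $\sigma$-compact): then $1_{E_k}\cdot\mu\in S_{00}$ for each $k$, and the required condition $\P_x(\lim_{k\to\infty}\sigma_{E\setminus E_k}\ge\zeta)=1$ for all $x\in E$ holds because $\bigcup_k E_k=E$ and $\bM$ has no jumps to infinity from within $E$ before $\zeta$ — more precisely, the increasing limit of the exit times of an exhaustion is $\ge\zeta$ by the definition of the lifetime together with the quasi-left-continuity / Hunt property of $\bM$ (this is the standard criterion for smoothness in the strict sense, cf.\ \cite[Theorem 5.1.7]{FOT} and the surrounding discussion).

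\medskip
\noindent
The step I expect to be the main obstacle is the weak-compactness / limit-identification argument in cases (ii) and (iii): one must produce the approximating sequence $\mu_n\uparrow 1_G\cdot\mu$ with $\mu_n\in S_{00}$, control $\|U_1\mu_n\|_{D(\E)}$ uniformly via the energy identity, and then pass to the limit to conclude $R_1(1_G\cdot\mu)\in D(\E)$ rather than merely in $L^2$. The bound $\|r_1^G\|_\infty<\infty$ is used crucially here; if $r_1^G$ were only locally bounded one would need to localize further, which is why the statement insists $r_1^G\in C(E)$ and (implicitly) globally bounded, or one works on the relatively compact $G$ where continuity already gives boundedness. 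Everything else — the measure-finiteness, the sup-norm bound on the potential, and the passage from "$S_{00}$ on every $G$" to "$S_1$" — is routine once $1_G\cdot\mu\in S_0$ is in hand.
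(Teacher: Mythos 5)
The paper's own proof is a one-liner: it notes that (ii) trivially implies (iii) and then defers everything to \cite[Proposition~2.13]{ShTr13a}. Your proposal reconstructs what that reference must contain, and the overall scheme you describe --- verify $\mu(G)<\infty$, prove $1_G\cdot\mu\in S_0$ by an increasing approximation $\mu_n\uparrow 1_G\cdot\mu$ with the energy identity $\E_1(U_1\mu_n,U_1\mu_n)=\int U_1\mu_n\,d\mu_n$ giving a uniform $D(\E)$-bound, then obtain the $L^\infty$ bound on $U_1(1_G\cdot\mu)$ from \eqref{p;smoothb} --- is the standard route and is essentially what one finds in \cite{ShTr13a}. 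One small remark on style: rather than running (ii) and (iii) in parallel, the paper reduces (ii) to (iii) outright, since $r_1^G$ is continuous, hence bounded on $\overline G$, so if \eqref{p;smoothb} holds $\mu$-a.e.\ on $G$ then $\int_G R_1(1_G\cdot\mu)\,d\mu\le \sup_{\overline G}r_1^G\cdot\mu(G)<\infty$. This makes (ii) a corollary of (iii) rather than a second case.

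The one genuine gap is the step you yourself flag near the end: passing from \eqref{p;smoothb} to $\|U_1(1_G\cdot\mu)\|_{\infty}<\infty$. The hypothesis only says $r_1^G\in C(E)$; if $E$ is not compact this does \emph{not} give $\|r_1^G\|_{\infty}<\infty$, so the inequality $\|U_1(1_G\cdot\mu)\|_{\infty}\le\|r_1^G\|_{\infty}$ is not available as stated. Your aside that ``one works on the relatively compact $G$ where continuity already gives boundedness'' does not close this, because $S_{00}$ requires the essential supremum of the potential over \emph{all} of $E$, not over $G$. The missing ingredient is the maximum principle for $1$-potentials (in the symmetric regular Dirichlet form setting, cf.\ \cite[Theorem~2.1.5]{FOT}): since $1_G\cdot\mu$ is supported in the compact set $\overline G$, once $1_G\cdot\mu\in S_0$ one has $U_1(1_G\cdot\mu)\le \sup_{\overline G}r_1^G$ quasi-everywhere on $\overline G$ (by \eqref{p;smoothb} and continuity of $r_1^G$), and the maximum principle then propagates this bound to all of $E$, giving $\|U_1(1_G\cdot\mu)\|_{\infty}\le\sup_{\overline G}r_1^G<\infty$ with no global boundedness assumption on $r_1^G$. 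With this inserted, the argument for $1_G\cdot\mu\in S_{00}$ is complete in all three cases, and the exhaustion argument you give for the passage to $\mu\in S_1$ (relying on quasi-left-continuity of the Hunt process to ensure $\lim_k\sigma_{E\setminus E_k}\ge\zeta$ pointwise) is correct.
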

\begin{proof}
Obviously (ii) implies (iii). The rest is just a reformulation of \cite[Proposition 2.13]{ShTr13a}.
\end{proof}
Next, we need to find a dominating continuous function $r_1^G$ as in (\ref{p;smoothb}). For this, let
\begin{equation}\label{rieszpot}
V_{\eta} g(x) : = \int_{\R^d} \frac{1}{\| x-y \|^{d-\eta}} \ g(y) \ dy, \quad x \in \R^d, \ \eta > 0,
\end{equation}
whenever it makes sense. The following result is from \cite[Chapter 4, Theorem 2.2]{Miz}.
\begin{lemma}\label{l;miz}
Let $\eta \in (0,d)$, $0<\eta - \frac{d}{p} <1$ and $g \in L^p(\R^d, dx)$ with  
\[
\int_{\R^d} (1+\|y\|)^{\eta -d} |g(y)| \ dy < \infty.
\]
Then $V_{\eta}g$ is H\"older continuous of order $\eta - \frac{d}{p}$.
\end{lemma}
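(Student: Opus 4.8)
The plan is to establish the two facts in turn: first that $V_{\eta}g(x)$ is finite for every $x\in\R^d$ (so that the statement is meaningful), and then the quantitative Hölder estimate, both by splitting the defining integral into a piece near the evaluation point(s) and a tail piece, and by playing the two hypotheses $\eta-\tfrac dp>0$ and $\eta-\tfrac dp<1$ against one another. Write $K(x,y):=\|x-y\|^{-(d-\eta)}$ and $p'=p/(p-1)$ (note $p>d/\eta$, so $p'<\infty$).

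For finiteness, I would split $\R^d=B_1(x)\cup B_1(x)^c$. On $B_1(x)$, Hölder's inequality gives $\int_{B_1(x)}K(x,y)|g(y)|\,dy\le\|g\|_{L^p}\bigl(\int_{B_1(x)}\|x-y\|^{-(d-\eta)p'}\,dy\bigr)^{1/p'}$, and the last integral is finite precisely because $(d-\eta)p'<d$, which is equivalent to $\eta>d/p$. On $B_1(x)^c$ one checks $1+\|y\|\le 2(1+\|x\|)\|x-y\|$ whenever $\|x-y\|\ge1$, hence $K(x,y)\le C_x(1+\|y\|)^{\eta-d}$, and the tail is controlled by the hypothesis $\int_{\R^d}(1+\|y\|)^{\eta-d}|g(y)|\,dy<\infty$. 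This gives $V_{\eta}g(x)<\infty$, with a bound locally uniform in $x$.

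For the Hölder estimate, fix $x_1,x_2$ with $\delta:=\|x_1-x_2\|\le1$ and decompose $\R^d$ into $A_1:=B_{2\delta}(x_1)$, $A_2:=\{y:2\delta<\|x_1-y\|\le1\}$, and $A_3:=\{y:\|x_1-y\|>1\}$. On $A_1$ I would bound $|K(x_1,y)-K(x_2,y)|\le K(x_1,y)+K(x_2,y)$ and estimate each term by Hölder over a ball of radius $4\delta$ about the respective point; the identity $\tfrac1{p'}\bigl(d-(d-\eta)p'\bigr)=\eta-\tfrac dp$ shows this contributes $\lesssim\|g\|_{L^p}\,\delta^{\eta-d/p}$. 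On $A_2\cup A_3$, where $\|x_1-y\|>2\delta$, the segment from $x_1$ to $x_2$ stays at distance $\ge\tfrac12\|x_1-y\|$ from $y$, so the mean value theorem yields $|K(x_1,y)-K(x_2,y)|\le C\,\delta\,\|x_1-y\|^{-(d-\eta+1)}$. On $A_2$ I apply Hölder with $g\in L^p$: the integral $\int_{2\delta<\|z\|\le1}\|z\|^{-(d-\eta+1)p'}\,dz$ is comparable to $\delta^{\,d-(d-\eta+1)p'}$ (the lower endpoint dominates exactly because $(d-\eta+1)p'>d$, which is equivalent to $\eta-\tfrac dp<1$), and tracking powers of $\delta$ again gives $\lesssim\|g\|_{L^p}\,\delta^{\eta-d/p}$. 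On $A_3$, since $\delta\le1\le\|x_1-y\|$ one crudely bounds $\|x_1-y\|^{-(d-\eta+1)}\le\|x_1-y\|^{-(d-\eta)}\le C_{x_1}(1+\|y\|)^{\eta-d}$ and uses the weighted integrability hypothesis once more, obtaining $\le C_{x_1}\,\delta\le C_{x_1}\,\delta^{\eta-d/p}$. Summing, $|V_{\eta}g(x_1)-V_{\eta}g(x_2)|\le C(x_1)\,\|x_1-x_2\|^{\eta-d/p}$, locally uniform in the base point.

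The main obstacle, and the reason the weighted hypothesis is imposed, is the tail region $A_3$: since $\eta>d/p$ the Riesz kernel $\|x-\cdot\|^{-(d-\eta)}$ is \emph{not} globally in $L^{p'}$, so neither the finiteness of $V_\eta g$ nor the far-field increment can be handled by Hölder against $g\in L^p$ alone, and one genuinely needs $\int(1+\|y\|)^{\eta-d}|g(y)|\,dy<\infty$. Conversely, the bound $\eta-\tfrac dp<1$ is exactly what forces the intermediate annulus $A_2$ to scale no worse than $\delta^{\eta-d/p}$. The only real content is this book-keeping of the two constraints and the verification that every regime carries the single exponent $\eta-d/p$; this is in essence the argument of \cite[Chapter 4, Theorem 2.2]{Miz}, to which we refer for the remaining details.
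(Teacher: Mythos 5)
The paper gives no proof of this lemma at all; it is cited directly from Mizuta \cite[Chapter 4, Theorem 2.2]{Miz}. Your argument supplies a complete self-contained proof along the standard lines for Riesz potentials, and the book-keeping is correct: the ball $A_1$ of radius $2\delta$ together with Hölder's inequality gives $\delta^{\eta-d/p}$ via the identity $\tfrac1{p'}\bigl(d-(d-\eta)p'\bigr)=\eta-\tfrac dp$; the intermediate annulus $A_2$ together with the mean-value bound and $(d-\eta+1)p'>d$ gives the same power; and the near-ball and far-field pieces in the finiteness argument are handled exactly as needed.

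One remark on the tail $A_3$. Your bound there introduces a constant $C_{x_1}$ through $1+\|y\|\le 2(1+\|x_1\|)\,\|x_1-y\|$, so you end up with only \emph{local} Hölder continuity. That is more than enough for the paper's application, which feeds the lemma into Proposition \ref{p;smooth} over relatively compact open sets $G$. But in fact the weighted hypothesis is not needed on $A_3$ for the increment: the mean-value estimate produces the kernel with exponent $d-\eta+1$, and the same inequality $(d-\eta+1)p'>d$ that you already used on $A_2$ shows that $\|\cdot\|^{-(d-\eta+1)}\in L^{p'}(\{\|z\|>1\})$; applying Hölder against $g\in L^p$ on $A_3$ then yields a bound $\lesssim\delta\,\|g\|_{L^p}\le\|g\|_{L^p}\,\delta^{\eta-d/p}$ with a constant independent of the base point. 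The hypothesis $\int(1+\|y\|)^{\eta-d}|g|\,dy<\infty$ is genuinely indispensable only for the \emph{pointwise finiteness} of $V_\eta g$, where the undifferentiated kernel has exponent $d-\eta$ and $(d-\eta)p'<d$ rules out Hölder at infinity. With this small adjustment your argument delivers the global Hölder continuity that the statement of the lemma formally asserts, rather than merely the local version.
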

In the case of existence of a nice resolvent kernel density estimate, we may find a continuous function $r_1^G$  as in Proposition \ref{p;smooth} by using Lemma \ref{l;miz}. Such a function is typically given as a linear combination of functions $V_{\eta} g$ (cf. for instance \cite[proof of Lemma 3.6(iii)]{ShTr13a}). However, in some cases such as Proposition \ref{p;3.8}(ii), Theorem \ref{t;3.9}(ii), Remark \ref{r;3.15}, Subsection \ref{s;3.113noref} and Subsection \ref{s;3122c}, the global resolvent density estimate (cf. e.g. Lemma \ref{l;Feller}(ii)) obtained from the heat kernel estimate in Theorem \ref{t;2.7she} is not sufficient for the application of Proposition \ref{p;smooth} via Lemma \ref{l;miz}. Or we simply do not know whether a global resolvent density estimate exists. In these cases, we use a method to obtain better Gaussian estimates for local resolvent kernel densities and combine it with a localization method. In this localization method Proposition \ref{p;LSFD} is applied locally on a nice exhaustion up to a capacity zero set of the state space which is typically Euclidean. Thus from now on up to the end of this subsection, we assume that $E\subset \R^d$.\\

Let $\varphi>0$ $dx$-a.e. on $E \subset \R^d$, $\varphi \in L^1_{loc}(E,dx)$ and $A=(a_{ij})_{1 \le i,j \le d}$ be a symmetric $d \times d$ matrix such that $a_{ij} \in L^1_{loc}(E, m)$ with $m:= \varphi dx$ and for $dx$-a.e. $x \in E$
\[
0 \le \langle A(x) \xi, \xi \rangle, \quad  \forall \xi \in \R^d.
\]
Suppose $(\E,D(\E))$ is conservative and given as the closure in $L^2(E,m)$ of $\frac{1}{2} \int_{E} \langle A \nabla f , \nabla g \rangle \,dm$, $f,g \in C^{\infty}_0(E)$, where $E$ is either closed or open. We assume that\\ 
\begin{itemize}
\item[{\bf (L)}] There exists an increasing sequence of bounded relatively open Lipschitz domains  $\{B_k\}_{ k \in \N} \subset E$,
with Cap$(E\setminus B)=0$, $B:=\cup_{k\ge 1}B_k$ and for any $k \ge 1$ there exists some constant $\kappa_{B_k} \ge 1$ such that  
$$
\kappa_{B_k}^{-1} \| \xi \|^2 \le  \langle A(x) \xi, \xi \rangle\le \kappa_{B_k} \| \xi \|^2, \forall \xi \in \R^d,  dx\text{-a.e. } x\in B_k, 
$$
and $\varphi(x)\in (d_k^{-1},d_k)$ for $dx$-a.e. $ x\in B_k$, where $d_k\nearrow \infty$ as $k\to \infty$.\\ 
\end{itemize}
Note that {\bf (L)} implies that 
$\E^{\overline{B}_k}(f,g) = \frac{1}{2} \int_{B_k} \langle A  \nabla f , \nabla g \rangle \,dm, \quad f,g \in C^{\infty}(\overline{B}_k)$, is closable in $L^2(\overline{B}_k,m) \equiv L^2(B_k,m)$, $k \ge 1$. We denote the closure by $(\E^{\overline{B}_k},D(\E^{\overline{B}_k}))$. \\
Given the strict global ellipticity of $A=(a_{ij})$ on each $B_k$ and that $\varphi$ is bounded above and below away from zero by a strictly positive constant on each $B_k$, we obtain exactly as in \cite[Lemma 5.4]{ShTr13a} the following lemma.
\begin{lemma}\label{T;NI1}(Nash type inequality)
Under {\bf (L)} it holds for any  $k \ge 1$:
\begin{itemize}
\item[(i)] If $d \ge 3$, then for $f \in D(\E^{\overline{B}_k})$
\begin{equation*}
\left\|f\right\|_{2,B_k}^{2 + \frac{4}{d}}\leq c_k \left[\E^{\overline{B}_k}(f,f) + \left\|f\right\|_{2,B_k}^2 \right]\left\|f\right\|_{1,B_k}^{\frac{4}{d}}.
\end{equation*}
\item[(ii)] If $d=2$, then for $f \in D(\E^{\overline{B}_k})$ and any $\delta>0$
\begin{equation*}
\left\|f\right\|_{2,B_k}^{2 + \frac{4}{d+\delta}} \le c_k \left[\E^{\overline{B}_k}(f,f) + \left\|f\right\|_{2,B_k}^2 \right]\left\|f\right\|_{1,B_k}^{\frac{4}{d+\delta}}.
\end{equation*}
Here $c_k >0 $ is a constant which goes to infinity as $k \rightarrow \infty$.
\end{itemize}
\end{lemma}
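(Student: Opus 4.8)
The plan is to reduce the weighted, variable-coefficient Nash inequality on $B_k$ to the classical Sobolev embedding for the bounded Lipschitz domain $B_k$ equipped with Lebesgue measure, and then to read off the Nash form by H\"older interpolation; all of the $k$-dependence (and, for $d=2$, the $\delta$-dependence) is collected into the single constant $c_k$ only at the very end. Put $\bar{d}:=d$ if $d\ge 3$ and $\bar{d}:=d+\delta$ if $d=2$, so that the target exponents are $2+\frac{4}{\bar{d}}$ and $\frac{4}{\bar{d}}$ in both cases. First I would exploit condition {\bf (L)}: on $B_k$ one has $\kappa_{B_k}^{-1}\|\xi\|^2\le\langle A(x)\xi,\xi\rangle\le\kappa_{B_k}\|\xi\|^2$ and $d_k^{-1}<\varphi(x)<d_k$ for $dx$-a.e. $x$, so for $q\in\{1,2\}$
\[
d_k^{-1/q}\,\|f\|_{L^q(B_k,dx)}\le\|f\|_{q,B_k}\le d_k^{1/q}\,\|f\|_{L^q(B_k,dx)},
\]
and, for $f\in C^\infty(\overline{B}_k)$,
\[
\E^{\overline{B}_k}(f,f)+\|f\|_{2,B_k}^2\ge\frac{1}{2}\kappa_{B_k}^{-1}d_k^{-1}\int_{B_k}\|\nabla f\|^2\,dx+d_k^{-1}\int_{B_k}|f|^2\,dx\ge c_k^{(1)}\|f\|_{H^{1,2}(B_k,dx)}^2,
\]
with $c_k^{(1)}:=\frac{1}{2}(\kappa_{B_k}d_k)^{-1}>0$. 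Since an $\E^{\overline{B}_k}_1$-Cauchy sequence drawn from $C^\infty(\overline{B}_k)$ is then Cauchy in $H^{1,2}(B_k,dx)$ and has the same $L^2(B_k,dx)$- and $L^2(B_k,m)$-limit, $D(\E^{\overline{B}_k})$ embeds continuously into $H^{1,2}(B_k,dx)$ and the last display persists for all $f\in D(\E^{\overline{B}_k})$.

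Next I would invoke the Sobolev embedding for the bounded Lipschitz domain $B_k$: $H^{1,2}(B_k,dx)\hookrightarrow L^{2^\sharp}(B_k,dx)$ with $2^\sharp:=\frac{2\bar{d}}{\bar{d}-2}$. For $d\ge 3$ this is the usual critical Sobolev embedding; for $d=2$ one has $\bar{d}-2=\delta>0$, $2^\sharp<\infty$, and the embedding holds because $H^{1,2}$ of a bounded planar Lipschitz domain embeds into every $L^q$, $q<\infty$. Write $C_k$ for the embedding constant and $\theta:=\frac{2}{\bar{d}+2}\in(0,1)$ for the weight determined by $\frac{1}{2}=\theta+\frac{1-\theta}{2^\sharp}$. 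By the interpolation inequality for $L^p$-norms on the finite measure space $(B_k,dx)$,
\[
\|f\|_{L^2(B_k,dx)}\le\|f\|_{L^1(B_k,dx)}^{\theta}\,\|f\|_{L^{2^\sharp}(B_k,dx)}^{1-\theta};
\]
inserting the Sobolev bound and raising to the power $\frac{\bar{d}+2}{\bar{d}}$ yields, since $2+\frac{4}{\bar{d}}=\frac{2(\bar{d}+2)}{\bar{d}}$,
\[
\|f\|_{L^2(B_k,dx)}^{2+\frac{4}{\bar{d}}}\le C_k^{2}\,\|f\|_{L^1(B_k,dx)}^{\frac{4}{\bar{d}}}\,\|f\|_{H^{1,2}(B_k,dx)}^{2}.
\]

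Finally I would substitute the comparisons of the first step back into this inequality: the left-hand side becomes a multiple of $\|f\|_{2,B_k}^{2+4/\bar{d}}$ and the right-hand side a multiple of $\|f\|_{1,B_k}^{4/\bar{d}}\big(\E^{\overline{B}_k}(f,f)+\|f\|_{2,B_k}^2\big)$, the overall constant $c_k$ being an explicit product of powers of $\kappa_{B_k}$, $d_k$ and $C_k$ (and, for $d=2$, of quantities depending on $\delta$). Since $d_k\nearrow\infty$ this $c_k$ already tends to $\infty$ with $k$; in any case we may enlarge it so that $c_k\nearrow\infty$. This proves the asserted inequality for $f\in C^\infty(\overline{B}_k)$, and hence for all $f\in D(\E^{\overline{B}_k})$ by density, because on the bounded domain $B_k$ both sides are continuous in $\|\cdot\|_{D(\E^{\overline{B}_k})}$ — using $\|f\|_{1,B_k}\le m(B_k)^{1/2}\|f\|_{2,B_k}$ for the right-hand side.

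The whole argument is a routine H\"older interpolation once two structural points are secured, and these are the only real work. The first is identifying the form domain: that closing $C^\infty(\overline{B}_k)$ in the coefficient-weighted graph norm genuinely lands inside $H^{1,2}(B_k,dx)$, which is exactly where the two-sided bounds in {\bf (L)} and the Lipschitz regularity of $B_k$ enter. The second is the $d=2$ case, where the borderline Sobolev embedding into $L^\infty$ fails and forces the passage to the effective dimension $\bar{d}=2+\delta$. Everything else is bookkeeping of constants, which is why the lemma follows \emph{exactly as in} \cite[Lemma 5.4]{ShTr13a}.
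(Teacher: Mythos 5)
Your argument is correct and is essentially the canonical Nash-via-Sobolev derivation that the paper itself relegates to a one-line citation of \cite[Lemma~5.4]{ShTr13a}: use the two-sided bounds in \textbf{(L)} to trade the weighted form and $L^q(m)$-norms for their Lebesgue counterparts on $B_k$ (where the domain, being bounded Lipschitz, supports the full Sobolev machinery), invoke $H^{1,2}(B_k,dx)\hookrightarrow L^{2\bar d/(\bar d-2)}(B_k,dx)$, interpolate $L^2$ between $L^1$ and $L^{2\bar d/(\bar d-2)}$, and translate back. The identification of $D(\E^{\overline{B}_k})$ as a subspace of $H^{1,2}(B_k,dx)$ (via the $\E_1$-Cauchy argument), the $d=2$ workaround by passing to the effective dimension $\bar d=2+\delta$, and the final density extension via $\|f\|_{1,B_k}\le m(B_k)^{1/2}\|f\|_{2,B_k}$ are all handled correctly.

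One small slip: after the $L^1$--$L^{2^\sharp}$ interpolation you say to raise to the power $\frac{\bar d+2}{\bar d}$, but to obtain $\|f\|_{L^2}^{2+4/\bar d}$ on the left one must raise to the power $\frac{2(\bar d+2)}{\bar d}$; the displayed inequality that follows is nevertheless the correct one (with exponents $\frac{4}{\bar d}$ on $\|f\|_{L^1}$ and $2$ on the Sobolev norm), so this is a typo in the bridging sentence rather than a gap in the argument.
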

By \cite[(3.25)]{CKS}, the Nash type inequalities imply (upper) Gaussian heat kernel estimates for the heat kernel $p^{\overline{B}_k}_{t}(x,y)$ associated to $(\E^{\overline{B}_k},D(\E^{\overline{B}_k}))$ which exists uniquely for $m$-a.e. $x,y \in \overline{B_k}$ (cf. \cite[Proposition 5.5]{ShTr13a}). Since $(\E^{B_k},D(\E^{B_k}))$ is the part Dirichlet form of $(\E^{\overline{B}_k},D(\E^{\overline{B}_k}))$, it is easy to see that
\[
p^{B_k}_{t}(x,y) \le p^{\overline{B}_k}_{t}(x,y) \quad \text{for} \ \, m\text{-a.e.} \ x,y \in B_k.
\]
Thus the heat kernel estimate also holds for $p^{B_k}_{t}(x,y)$. By taking the Laplace transform of $p_{\cdot}^{B_k} (x,y)$ and using the heat kernel estimate, we obtain the following resovent kernel density estimates (cf. \cite[Corollary 5.6]{ShTr13a}).
\begin{cor}\label{C;RDE1}
Under {\bf (L)} we have for $m$-a.e. $x, y \in B_k$
\begin{itemize}
\item[(i)] if $d \ge 3$, then
\[
r^{B_k}_{1} (x,y)  \le \frac{C_k}{\|x-y\|^{d-2}}.
\]
\item[(ii)] if $d=2$, then for any $\delta>0$
\[
r^{B_k}_{1} (x,y)  \le \frac{C_k}{\|x-y\|^{d+\delta-2}}.
\]
Here $C_k >0 $ is a constant which goes to infinity as $k \rightarrow \infty$.
\end{itemize}
\end{cor}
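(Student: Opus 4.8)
The plan is to deduce the resolvent bound from an upper Gaussian estimate for the heat kernel of the reflected part form $(\E^{\overline{B}_k},D(\E^{\overline{B}_k}))$ and then to take a Laplace transform. First I would feed the Nash type inequality of Lemma \ref{T;NI1} into the Carlen--Kusuoka--Stroock machinery \cite[(3.25)]{CKS}. Since that inequality is phrased with $\E^{\overline{B}_k}_1(f,f)=\E^{\overline{B}_k}(f,f)+\left\|f\right\|_{2,B_k}^2$, it is a Nash inequality of effective dimension $\nu$ (with $\nu=d$ if $d\ge 3$, and $\nu=d+\delta$ for arbitrary $\delta>0$ if $d=2$) for the Dirichlet form $\E^{\overline{B}_k}_1$, whose semigroup is $e^{-t}T^{\overline{B}_k}_t$. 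This yields the on-diagonal bound $\|e^{-t}T^{\overline{B}_k}_t\|_{L^1\to L^\infty}\le c_k t^{-\nu/2}$, and Davies' perturbation argument upgrades it to
\[
e^{-t}\,p^{\overline{B}_k}_t(x,y)\ \le\ C_k\, t^{-\nu/2}\exp\!\Big(-\tfrac{c_k\,\|x-y\|^2}{t}\Big),\qquad m\text{-a.e. }x,y\in \overline{B}_k,\ t>0,
\]
exactly as recorded in \cite[Proposition 5.5]{ShTr13a}. The passage from the intrinsic metric of $\E^{\overline{B}_k}$ to the Euclidean distance only costs constants here, because on $B_k$ condition {\bf (L)} provides the two-sided ellipticity $\kappa_{B_k}^{-1}\|\xi\|^2\le\langle A(x)\xi,\xi\rangle\le\kappa_{B_k}\|\xi\|^2$ (so $\Gamma(f,f)=\langle A\nabla f,\nabla f\rangle$ is comparable to $\|\nabla f\|^2$) and $\varphi\in(d_k^{-1},d_k)$.

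Next, since $(\E^{B_k},D(\E^{B_k}))$ is the part Dirichlet form of $(\E^{\overline{B}_k},D(\E^{\overline{B}_k}))$ on the open set $B_k$, monotonicity of the semigroups under passage to a part gives $p^{B_k}_t(x,y)\le p^{\overline{B}_k}_t(x,y)$ for $m$-a.e. $x,y\in B_k$, so the same Gaussian bound holds for $p^{B_k}_t$. Then I would simply integrate:
\[
r^{B_k}_1(x,y)=\int_0^\infty e^{-t}\,p^{B_k}_t(x,y)\,dt\ \le\ C_k\int_0^\infty t^{-\nu/2}\exp\!\Big(-\tfrac{c_k\|x-y\|^2}{t}\Big)\,dt,
\]
and after the substitution $t=\|x-y\|^2 s$ the right-hand side equals $C_k\,\|x-y\|^{2-\nu}\int_0^\infty s^{-\nu/2}e^{-c_k/s}\,ds$. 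The remaining integral is finite precisely when $\nu>2$: it converges at $s\to 0$ because of the exponential factor and at $s\to\infty$ because $s^{-\nu/2}$ is integrable there. Since $\nu=d\ge 3>2$ in the first case and $\nu=d+\delta=2+\delta>2$ in the second, we obtain $r^{B_k}_1(x,y)\le C_k\|x-y\|^{2-d}$ for $d\ge 3$ and $r^{B_k}_1(x,y)\le C_k\|x-y\|^{2-d-\delta}$ for $d=2$, which are the asserted estimates (the bound being of interest only for small $\|x-y\|$, as $B_k$ is bounded); tracking the constant back through Lemma \ref{T;NI1} shows $C_k\to\infty$ as $k\to\infty$.

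The only genuinely delicate point is the first step, i.e.\ invoking \cite{CKS} correctly: one must remember that the zero-order term $\left\|f\right\|_{2,B_k}^2$ in the Nash inequality means the resulting Gaussian bound is for the weighted kernel $e^{-t}p^{\overline{B}_k}_t$ rather than for $p^{\overline{B}_k}_t$ itself (which is harmless, and in fact convenient, since it is exactly this weighted kernel that appears upon forming the $1$-resolvent), and one must control the intrinsic distance of $\E^{\overline{B}_k}$ by the Euclidean distance using {\bf (L)}. Everything afterwards — the part-form comparison of heat kernels and the elementary Laplace-transform integral, with the integrability threshold $\nu>2$ being precisely what forces the arbitrarily small loss $\delta$ in the exponent when $d=2$ — is routine.
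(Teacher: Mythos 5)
Your proposal is correct and follows exactly the route the paper takes: Lemma \ref{T;NI1} fed into \cite[(3.25)]{CKS} to get the (weighted) Gaussian upper bound for $p^{\overline{B}_k}_t$, the part-form domination $p^{B_k}_t\le p^{\overline{B}_k}_t$, and then the Laplace transform, with the threshold $\nu>2$ explaining the $\delta$-loss when $d=2$. Your explicit remarks on the role of the zero-order term (so that the bound is really for $e^{-t}p^{\overline{B}_k}_t$) and on comparing the intrinsic and Euclidean metrics via {\bf (L)} are exactly the points the paper leaves implicit by citing \cite[Proposition 5.5 and Corollary 5.6]{ShTr13a}.
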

Note that  the  part Dirichlet forms 
$(\E^{B_k},D(\E^{B_k}))$ of $(\E,D(\E))$ on $B_k$, $k\ge 1$, as well as the part Dirichlet form 
$(\E^{B},D(\E^{B}))$ of $(\E,D(\E))$ on $B$, inherit the properties $\bf{(H1)}$ and $\bf{(H2)}$ from $(\E,D(\E))$ by considering the part processes. Moreover, since $(\E,D(\E))$ is conservative and  Cap$(E\setminus B)=0$, we can use (\ref{partid}) to see that its part Dirichlet form $(\E^{B},D(\E^{B}))$ on $B$ is also conservative. In particular $\P_x \big(\sigma_{B^c}= D_{B^c} =\infty) =1$ for any $x\in B$ and exactly as in \cite[Lemma 5.10]{ShTr13a}, we show:
\begin{lemma}\label{l;limit}
$\P_x \big(\lim_{k \rightarrow \infty} D_{B_{k}^c} =\lim_{k \rightarrow \infty} \sigma_{B_{k}^c} =\infty \big)=1$ for all $x \in B$.
\end{lemma}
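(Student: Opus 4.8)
The plan is to argue exactly as in \cite[Lemma 5.10]{ShTr13a}, combining the path continuity of $\bM$ with the conservativeness of the part Dirichlet form $(\E^B,D(\E^B))$ that was just recorded. First I would reduce the statement to showing that a single stopping time is $\P_x$-a.s. infinite. Fix $x\in B$ and pick $k_0$ with $x\in B_{k_0}$; since the $B_k$ increase, $x\in B_k$ for all $k\ge k_0$, and because each $B_k$ is open the path starting at $x$ lies strictly inside $B_k$ at time $0$, so $D_{B_k^c}=\sigma_{B_k^c}>0$ for $k\ge k_0$. Moreover $B_k\subset B_{k+1}$ gives $B_{k+1}^c\subset B_k^c$, hence $\sigma_{B_k^c}\le\sigma_{B_{k+1}^c}$, so that
\[
D_{B_{k_0}^c}=\sigma_{B_{k_0}^c}\le D_{B_{k_0+1}^c}=\sigma_{B_{k_0+1}^c}\le\cdots,
\]
and the two sequences $(D_{B_k^c})_k$ and $(\sigma_{B_k^c})_k$ increase to a common limit $\tau$. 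It therefore suffices to prove $\P_x(\tau<\infty)=0$ for every $x\in B$.

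Next I would argue by contradiction on the event $\{\tau<\infty\}$, using that $\bM$ is a diffusion, i.e. has continuous sample paths in $E_\Delta$ (strong locality of $(\E,D(\E))$). Recall that the lifetime of the part process $\bM|_B$ equals $D_{B^c}\wedge\zeta$; conservativeness of $(\E^B,D(\E^B))$ together with the absolute continuity condition satisfied by $\bM|_B$ yields $\P_x(D_{B^c}=\zeta=\infty)=1$ for every $x\in B$, which is the strengthening to every starting point of the analytic statement $T_t^B 1_B=1_B$, and which in particular gives both $\P_x(\sigma_{B^c}=\infty)=1$ (as recorded just before the lemma) and $\P_x(\zeta=\infty)=1$. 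Hence, $\P_x$-a.s. on $\{\tau<\infty\}$, the limit $X_\tau=\lim_{k\to\infty}X_{\sigma_{B_k^c}}$ exists by path continuity and lies in $E$ (not at $\Delta$, since $\zeta=\infty$). Because $E\setminus B_k$ is closed and the paths are continuous, $X_{\sigma_{B_k^c}}\in E\setminus B_k$ for every $k\ge k_0$. I claim $X_\tau\notin B$: if $X_\tau$ lay in some open set $B_j$, continuity would force $X_t\in B_j\subset B_k$ for all $k\ge j$ and all $t$ in a left neighbourhood of $\tau$; but $\sigma_{B_k^c}\uparrow\tau$, so for large $k$ the point $X_{\sigma_{B_k^c}}$ would be forced into $B_k$, contradicting $X_{\sigma_{B_k^c}}\in E\setminus B_k$ (the degenerate possibility $\sigma_{B_k^c}=\tau$ for some, hence all large, $k$ is excluded in the same way, since then $X_\tau\in\bigcap_{i\ge k}(E\setminus B_i)=E\setminus B$). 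Consequently $X_\tau\in E\setminus B=B^c$ with $0<\tau<\infty$, so that $\sigma_{B^c}\le\tau<\infty$ on $\{\tau<\infty\}$, which contradicts $\P_x(\sigma_{B^c}=\infty)=1$. Therefore $\P_x(\tau<\infty)=0$, i.e. $\P_x\big(\lim_{k\to\infty}D_{B_k^c}=\lim_{k\to\infty}\sigma_{B_k^c}=\infty\big)=1$ for all $x\in B$.

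The routine parts are the monotonicity bookkeeping and the topological argument that the limit point $X_\tau$ cannot sit inside the open exhausting set $B$. The step that really carries the proof — the analogue of the main point in \cite[Lemma 5.10]{ShTr13a} — is the passage from \emph{$\tau<\infty$} to \emph{the path actually reaches $B^c$ in finite time}: this relies on continuity of the sample paths and, crucially, on having excluded an escape to the cemetery while still in $B$. The latter is precisely what the conservativeness of the part form gives, but only after it is upgraded from an $m$-a.e. (or quasi-everywhere) statement to a genuinely pointwise one for every starting point in $B$, which is where the absolute continuity condition enters and is the subtle ingredient to handle with care.
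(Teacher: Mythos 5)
Your argument is correct and is, as far as one can tell, the same argument the paper invokes by reference to \cite[Lemma 5.10]{ShTr13a}: reduce via monotonicity to a single limiting time $\tau$, then use path continuity together with the two pointwise consequences of conservativeness recorded just before the lemma (namely $\P_x(\sigma_{B^c}=D_{B^c}=\infty)=1$ and $\P_x(\zeta=\infty)=1$ for all $x\in B$, the latter being exactly what forbids the path from slipping off to $\Delta$ before time $\tau$) to force $X_\tau\in B^c$ on $\{\tau<\infty\}$ and derive a contradiction. You also correctly flag that the genuinely delicate ingredient is the upgrade of the analytic conservativeness of $(\E^B,D(\E^B))$ to the for-every-$x$ statement via the absolute continuity condition inherited by the part process, and you handle the degenerate case $\sigma_{B_k^c}=\tau$ cleanly. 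No gaps.
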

We may then apply Proposition \ref{p;LSFD} to the part Dirichlet forms $(\E^{B_k},D(\E^{B_k}))$ by using the resolvent kernel density estimate of Corollary \ref{C;RDE1}, Proposition  \ref{p;smooth} and Lemma \ref{l;miz}. Suppose that 
this is possible and that for each $k\ge 1$, we get according to Proposition \ref{p;LSFD} for the coordinate projections $f^j$, $1\le j\le d$ (obviously continuous and in $D(\E^{B_k})_{b,loc}$ for any $k\ge 1$)
\begin{equation}\label{local1}
X_{t}^j = x_j + M_{t}^{[f^j],k} +  N^{[f^j],k}_t,\quad t < \sigma_{B_k^c},\  \ \P_x\text{-a.s. for all } x\in B_k,
\end{equation}
where $N^{[f^j],k} = -A^{(1),k} + A^{(2),k}$ and $A^{(1),k}, A^{(2),k}$  are the positive continuous additive functionals in the strict sense of $\bM|_{B_k}$ with Revuz measures $1_{B_k}\cdot \nu^{(1)}, 1_{B_k}\cdot \nu^{(2)}\in S_{00}^{B_k}$, 
$\nu^{(1)}, \nu^{(2)}$ being smooth measures on $B$ with respect to $(\E^{B},D(\E^{B}))$, $M^{[f^j],k}$ is a MAF in the strict sense of $\bM|_{B_k}$ with covariations $\langle M^{[f^i],k}, M^{[f^j],k}\rangle _{t\wedge \sigma_{B_k^c}}=\int_0^{t \wedge \sigma_{B_k^c}} a_{ij}(X_s)ds$, $1\le i,j\le d$. 
Suppose further that we can show consistency, in the sense that 
$A^{(i),k}_t =  A^{(i),k+1}_t, \; \forall t < \sigma_{B_{k}^{c}}$ $\P_{x}$-a.s. for all $x \in B_{k}$, $i=1,2$, and  
$M_{t}^{[f^j],k} =  M_{t}^{[f^j],k+1}, \; \forall t < \sigma_{B_{k}^{c}}$ $\P_{x}$-a.s. for all $x \in B_{k}$. Then $M_{t}^{[f^j]} :=\lim_{k\to \infty}  M_{t}^{[f^j],k}$ is a well-defined local MAF in the strict sense of $\bM|_{B}$ and $A^{(i)}_t := \lim_{k\to \infty} A^{(i),k}_t$,  $i=1,2$, are well-defined positive continuous additive functionals in the strict sense of  $\bM|_{B}$ with Revuz measures  $\nu^{(1)}, \nu^{(2)}$.  By letting $k\to \infty$ in (\ref{local1})
\begin{equation*}
X_{t}^j = x_j + M_{t}^{[f^j]} +  N^{[f^j]}_t,\quad t \ge 0,\  \ \P_x\text{-a.s. for all } x\in B,
\end{equation*}
with $N^{[f^j]}=-A^{(1)} + A^{(2)}$ and $\langle M^{[f^i]},M^{[f^j]}\rangle =\int_0^{\cdot} a_{ij}(X_s)ds$. \\
We will refer to this 
as {\it localization procedure}. For explicit examples where it is performed in detail, we refer to \cite[Section 5]{ShTr13a} and \cite[Section 3.2]{ShTr15} and also here below (see Proposition \ref{p;3.8}(ii), Theorem \ref{t;3.9}(ii), Remark \ref{r;3.15}, Subsection \ref{s;3.113noref} and Subsection \ref{s;3122c}), where we indicate at least why {\bf (L)} is satisfied. In the examples below, $A^{(1),k}, A^{(2),k}$, $k\ge 1$, that appear in the localization procedure are sometimes given as infinite sums of strict PCAF's and we additionally have to make use of Lemma \ref{l;sumadd}.

\subsubsection{Symmetric distorted Brownian motion}\label{sdbm1}
 We assume throughout this subsection that $E = \R^d$, with $d \ge 3$ 
(except in Lemma \ref{l;Feller}(vi),  Proposition \ref{p;3.8}(ii), and Remark \ref{r;3.15} where the state space is $\R^d\setminus \{0\}$ with $d\ge 2$). 
We consider a weight function $\psi$ in the Muckenhoupt $A_2$ class, in notation $\psi\in A_2$ (cf. \cite{Tu}). 
Precisely, we assume the following:\\
\begin{itemize}
\item[{\bf ($\alpha$)}] $\phi :\R^{d} \rightarrow [0,\infty)$  is a $\mathcal{B}(\R^d)$-measurable function and $\phi > 0$ $dx$-a.e.,
\item[{\bf ($\beta$)}] $\rho \phi \in  A_{2}, \;\;  \rho \in H^{1,1}_{loc}(\R^d, dx)$, $\rho > 0$ $dx$-a.e.,\\
\end{itemize}
and consider 
\begin{equation}\label{DF}
\E (f,g) : = \frac{1}{2}\int_{\R^d}\nabla f\cdot\nabla g \   dm, \quad f,g \in C_0^{\infty}(\R^d), \quad m := \rho \phi dx
\end{equation}
in $L^2(\R^d, m)$.

\begin{remark}\label{r;re1}
Let $\tilde{c} \ge 1$. If $\phi$ is measurable with $\tilde{c}^{-1} \le \phi \le \tilde{c}$ and $\rho \in A_{2}$, then $\rho \phi \in A_{2}$.
\end{remark}

Since $\rho \phi \in A_2$, we have $\frac{1}{\rho \phi}  \in L^1_{loc}(\R^d, dx)$, and the latter implies that  $\eqref{DF}$  is closable in $L^2(\R^d, m)$ (see \cite[II.2 a)]{MR}). The closure $(\E,D(\E))$ of $\eqref{DF}$ is a strongly local, regular, conservative, symmetric Dirichlet form (cf. e.g. \cite[p. 274]{St3}). Furthermore  $(\E,D(\E))$ satisfies properties (i)-(iv) of Definition \ref{p;dcpo} (cf. \cite[5.B]{St3}).  Therefore there exists a jointly continuous transition kernel density  $p_{t}(x,y)$ as stated in \eqref{e;tdst3} with $E=\R^d$. Note that in our case $\gamma (x,y) = \|x- y \|$, $x,y \in \R^d$. 
Moreover, Proposition \ref{p;strongfl} applies, so that $(P_t)_{t \ge 0}$ is in particular strong Feller. Next, we assume that\\
\begin{itemize}
\item[{\bf ($\gamma$)}] the transition function $(P_t)_{t \ge 0}$ satisfies (\textbf{H2}) with $E= \R^d$. \\
\end{itemize}
We will use the Feller method and the Dirichlet form method for some typical Muckenhoupt $A_2$ weights to verify {\bf ($\gamma$)} later.  Since $\rho \phi \in A_2$,
$(\E,D(\E))$ is conservative, i.e. $T_t 1(x)=1$ for $m$-a.e. $x\in \mathbb{R}^d$ and all $t>0$ (see \cite[Proposition 2.4]{ShTr13b}). It follows
\[
\P_{x}(\zeta = \infty)=1, \ \ \forall x \in \R^d,
\]
by \cite[Theorem 4.5.4(iv)]{FOT} (or Remark \ref{r;connoex} since $(P_t)_{t \ge 0}$ is strong Feller) and 
\[
\P_x \big(t \mapsto X_{t} \text{ is continuous on} \  [0, \infty)\big) =1, \ \ \forall x \in \R^d,
\]
by \cite[Theorem 4.5.4(ii)]{FOT}. In order to be explicit, we further assume the following integration by parts formula\\

\begin{itemize}
\item[{\bf (IBP)}]  for $ f  \in \{f^1, \dots,f^d\}$, $g \in C_{0}^{\infty}(\R^{d})$
\[
-\E (f , g) = \int_{\R^d} \left (\nabla f  \cdot  \frac{\nabla \rho  }{2\rho}  \right ) g \, dm
+ \int_{\R^{d}} g \, d\nu^{f},
\]
where $\nu^{f}= \sum_{k \in \mathbb{Z}} \nu_{k}^{f}$ and $\nu^{f},  \nu_{k}^{f}, k \in \mathbb{Z}$ are signed Radon measures (locally of bounded total variation).
\end{itemize}

For a signed Radon measure $\mu$ we denote by $\mu^+$ and $\mu^-$ the positive and negative parts in the Hahn decomposition for $\mu$, i.e. $\mu = \mu^+ -\mu^-$.
Additionally, we assume that\\
\begin{itemize}
\item[{\bf ($\delta$)}]  for any $G \subset \R^d$ relatively compact open, $k \in \mathbb{Z}$ and $f \in \{f^1, \dots, f^d\}$, we have that $1_G \cdot \nu^{f+}$, $1_G \cdot  \nu^{f-}$, $1_G  \cdot  \nu_k^{f+}$, $1_G \cdot  \nu_k^{f-}, 1_G \cdot  \frac{\|\nabla \rho\|}{\rho} m \in S_0$ and the corresponding 1-potentials are all bounded by continuous functions.
\end{itemize}

\begin{thm}\label{t;Ldec}
Suppose {\bf $(\alpha)-(\delta)$} and {\bf (IBP)}. Then
\begin{equation}\label{LSFD1}
X_{t} = x + W_{t} + \int_{0}^{t}\frac{\nabla \rho}{2 \,\rho}(X_s) \, ds + \sum_{k \in \mathbb{Z} } L_{t}^{k} \;,\;\; t  \ge 0,
\end{equation}
$\P_x$-a.s. for any $x \in \R^{d}$ where $W$ is a standard d-dimensional Brownian motion starting from zero, $L^{k} = (L^{1, k}, \dots, L^{d, k})$ and $L^{j,k}, j=1,\dots, d$, is the difference of positive continuous additive functionals of $X$  in the strict sense associated with Revuz measure $\nu_{k}^{f^{j}} = \nu_{k}^{f^{j},(1)} - \nu_{k}^{f^{j},(2)}$ defined in (IBP) (cf. \cite[Theorem 5.1.3]{FOT}).
\end{thm}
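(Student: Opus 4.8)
The plan is to apply the strict Fukushima decomposition, Proposition \ref{p;LSFD}, to each coordinate projection $f^j$, $1\le j\le d$, and then to read off the three terms of \eqref{LSFD1} from the three structural ingredients of that decomposition: the martingale part, the absolutely continuous drift carried by $\tfrac12\tfrac{\nabla\rho}{\rho}$, and the singular drift carried by $\nu^{f^j}$. So I first verify the hypotheses of Proposition \ref{p;LSFD} for $f=f^j$. Condition (i) is immediate: $f^j(x)=x_j$ is continuous and locally bounded, and on every relatively compact open set it coincides with $\chi f^j\in C_0^\infty(\R^d)\subset D(\E)$ for a suitable cut-off $\chi$, so $f^j\in D(\E)_{b,loc}$. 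For condition (ii), since $\E$ is given by \eqref{DF} the carré du champ is $\Gamma(u,v)=\nabla u\cdot\nabla v$, hence $\Gamma(f^i,f^j)=\delta_{ij}$ and in particular $\mu_{\langle M^{[f^j]}\rangle}=\mu_{\langle f^j,f^j\rangle}=m$; for a relatively compact open $G$ one has $R_1 1_G=R_1(1_G\cdot m)\le R_1 1\le 1$, which is continuous since $(R_\alpha)_{\alpha>0}$ is strong Feller (Proposition \ref{p;strongfl}) and lies in $L^1(G,m)$ since $m$ is Radon, so Proposition \ref{p;smooth}(iii) (with $\mu=m$, $r_1^G\equiv1$) gives $1_G\cdot m\in S_{00}$.

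For condition (iii), apply (IBP) with $f=f^j$ and $g$ ranging over the special standard core $C_0^\infty(\R^d)$ of $\E$; since $\nabla f^j\cdot\tfrac{\nabla\rho}{2\rho}=\tfrac{\partial_j\rho}{2\rho}$ this reads
\[
\E(f^j,g)=\int_{\R^d}g\,d\nu^{[f^j]},\qquad \nu^{[f^j]}:=-\frac{\partial_j\rho}{2\rho}\,m-\nu^{f^j}.
\]
I split $\nu^{[f^j]}=\nu^{(1)}-\nu^{(2)}$ with $\nu^{(1)}=\big(\tfrac{\partial_j\rho}{2\rho}\big)^{-}m+\nu^{f^j,-}$ and $\nu^{(2)}=\big(\tfrac{\partial_j\rho}{2\rho}\big)^{+}m+\nu^{f^j,+}$. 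Each $1_G$-restriction of the summands is in $S_0$ with $1$-potential dominated by a continuous function — for $\big(\tfrac{\partial_j\rho}{2\rho}\big)^{\pm}m\le\tfrac12\tfrac{\|\nabla\rho\|}{\rho}m$ and for $\nu^{f^j,\pm}$ this is part of assumption $(\delta)$ — so Proposition \ref{p;smooth}(i), together with the stability of $S_{00}$ under domination and finite sums, gives $1_G\cdot\nu^{(i)}\in S_{00}$ for every relatively compact open $G$, as required.

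Proposition \ref{p;LSFD} then yields, $\P_x$-a.s.\ for all $x\in\R^d$, $X^j_t-x_j=M^{[f^j]}_t+N^{[f^j]}_t$ with $N^{[f^j]}=-A^{(1),j}+A^{(2),j}$, where $A^{(i),j}$ is the strict PCAF with Revuz measure $\nu^{(i)}$. By the Revuz correspondence the (signed) additive functional with Revuz measure $h\,m$ is $\big(\int_0^t h(X_s)\,ds\big)_{t\ge0}$ and $\mu\mapsto A^\mu$ is linear, so the $-\tfrac{\partial_j\rho}{2\rho}m$ part of $\nu^{[f^j]}$ contributes $+\int_0^t\tfrac{\partial_j\rho}{2\rho}(X_s)\,ds$ to $N^{[f^j]}$, while the $-\nu^{f^j}=-\sum_k\nu^{f^j}_k$ part contributes $\sum_k L^{j,k}_t$, with $L^{j,k}$ the difference of the strict PCAF's of Revuz measures $\nu^{f^j,(1)}_k,\nu^{f^j,(2)}_k$; the interchange of the series with the additive functional is justified by Lemma \ref{l;sumadd} once one records that $\nu^{f^j}=\sum_k\nu^{f^j}_k$ as Revuz measures. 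For the martingale part, $\langle M^{[f^i]},M^{[f^j]}\rangle$ has Revuz measure $\Gamma(f^i,f^j)\,m=\delta_{ij}m$, i.e.\ $\langle M^{[f^i]},M^{[f^j]}\rangle_t=\delta_{ij}t$; since $(\E,D(\E))$ is conservative and $\bM$ has continuous paths on $\R^d$, along an exhaustion $G_n\nearrow\R^d$ one has $\sigma_{G_n^c}\nearrow\infty$, so the stopped martingale properties of Proposition \ref{p;LSFD} pass to the limit and $W:=(M^{[f^1]},\dots,M^{[f^d]})$ is a genuine continuous local martingale with bracket $\delta_{ij}t$, hence a standard $d$-dimensional Brownian motion starting at $0$ by Lévy's characterization. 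Stacking the scalar identities $X^j_t=x_j+M^{[f^j]}_t+\int_0^t\tfrac{\partial_j\rho}{2\rho}(X_s)\,ds+\sum_kL^{j,k}_t$ over $j$ gives \eqref{LSFD1}.

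The main obstacle is conditions (ii)–(iii) of Proposition \ref{p;LSFD}: one has to know that the energy measure $m$, the drift measure $\tfrac{\nabla\rho}{2\rho}m$, and the countably many singular pieces $\nu^{f^j,\pm}_k$ all have relatively-compact restrictions in $S_{00}$. This is exactly what assumption $(\delta)$ supplies, combined with Proposition \ref{p;smooth} to upgrade ``$S_0$ with bounded continuous $1$-potential'' to $S_{00}$ (and, for $m$ itself, with the strong Feller property and $R_1 1\le1$); the remaining care is the bookkeeping of the Hahn-type decomposition of $\nu^{[f^j]}$ and the invocation of Lemma \ref{l;sumadd} so that $\sum_k L^{j,k}$ is a bona fide strict additive functional. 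The passage from the local, stopped martingale identity to a global Brownian motion via conservativeness and path continuity is routine but should be spelled out.
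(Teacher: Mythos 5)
Your proposal is correct and follows essentially the same route as the paper, whose proof is precisely the combination of Proposition \ref{p;LSFD} applied to the coordinate projections, Proposition \ref{p;smooth} together with assumption {\bf $(\delta)$} to verify the $S_{00}$-conditions, Lemma \ref{l;sumadd} for the series of strict PCAF's, the Revuz correspondence \cite[Theorem 5.1.3]{FOT}, and the strong Feller property from Proposition \ref{p;strongfl}. Your write-up merely makes explicit the bookkeeping (the Hahn-type splitting of the drift measure, $\Gamma(f^i,f^j)=\delta_{ij}$, and L\'evy's characterization via conservativeness) that the paper leaves to the reader.
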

\begin{proof}
Given that $(\alpha)-(\delta)$ and (IBP) hold, the assertion follows from \cite[Theorem 5.1.3]{FOT}, Lemma \ref{l;sumadd}, and Propositions \ref{p;LSFD}, \ref{p;strongfl} and \ref{p;smooth}.
\end{proof}
\begin{remark}\label{remhkenonex}
The heat kernel estimate (\ref{e;hkeos}) is not explicit, since the volumes of the $m$-balls in it are unknown. Therefore its use is in a sense limited. While it was possible to obtain already good information about the transition function in Proposition \ref{p;strongfl}, the last ingredient to obtain \textbf{(H2)}$^{\prime}$(i), (ii) or the Feller property of the transition function is missing. Assuming an explicit estimate on the weight of $m$ is the main additional ingredient for the proof of the following lemma (cf. \cite[Lemma 3.6]{ShTr13a}). For other weights the proof of the lemma can serve as a toy model to show how the full information can be obtained.
\end{remark}
\begin{lemma}\label{l;Feller}
Let $\tilde{c}^{-1} \|x\|^{\alpha} \le \rho \phi(x) \le \tilde{c} \|x\|^{\alpha}$ for some $\alpha \in (-d,d)$, $\tilde{c} \ge 1$.
Then {\bf $(\alpha)$} and {\bf $(\beta)$} are satisfied and
\begin{itemize}
\item[(i)] $\lim_{t \downarrow 0}P_t f(x) = f(x)$, $\forall x \in \R^d$, $\forall f \in C_0(\R^d)$, i.e. (\textbf{H1}) and (\textbf{H2}) hold (cf. Proposition \ref{p;strongfl}(i),(iii) and Lemma \ref{t;Feller}) and $(P_t)_{t\ge 0}$ is a Feller semigroup.
\item[(ii)] Let $\Phi(x,y): = \frac{1}{\|x-y\|^{\alpha +d -2}}$ and $\Psi(x,y):= \frac{1}{\|x-y\|^{d -2} \|y\|^{\alpha}}$. Then 
\[
c^{-1} \left(\Phi(x,y) + \Psi(x,y)1_{\{ \alpha \in [0,d)\}} \right)  \le r_1(x,y) \le c \left( \Phi(x,y) + \Psi(x,y)1_{\{ \alpha \in (-d,0)\}}  \right).
\]
\item[(iii)] Let $\alpha \in (-d+1,2) $ and $G\subset \R^d$ any relatively compact open set. Suppose $1_G  \cdot f \ \| x\|^{\alpha} \in L^p(\R^d,dx)$, $p \ge 1$ with $0 < 2- \alpha - \frac{d}{p} <1$ and $1_G \cdot  f \in L^q(\R^d,dx)$ with $0 < 2- \frac{d}{q} <1$. Then $R_1(1_G \cdot  |f| m)$ is bounded everywhere (hence clearly also bounded $m$-a.e. on $\R^d$ and $R_1(1_G |f| m) \in L^1(G,|f|m)$) by the continuous function 
$\int_G |f(y)| \ \left(\Phi(\cdot,y) + \Psi(\cdot,y) \right) \, m(dy)$. In particular, Proposition \ref{p;smooth} applies and $1_G  \cdot  |f| m \in S_{00}$.
\item[(iv)] Let $\alpha \in (-d+1,2)$. Then $R_1\left(1_G \cdot  \frac{\|\nabla \rho\|}{\rho} m \right)$ is pointwise bounded by a continuous function for any relatively compact open set $G \subset \R^d$. In particular $1_G \cdot  \frac{\|\nabla \rho\|}{\rho} m \in S_{00}$ for any relatively compact open set $G \subset \R^d$.
\item[(v)] Let  $\alpha \in (-d+1,1)$. Let $D \subset \R^d$ be a bounded Lipschitz domain with surface measure $\sigma_{\partial D}$. Suppose that $\rho$ is bounded on $\partial D$ (more precisely the trace of $\rho$ on $\partial D$, which exists since $\rho \in H^{1,1}_{loc}(\R^d)$). Then $R_1(1_G \cdot \rho  \sigma_{\partial D})$ is pointwise bounded by a continuous function for any relatively compact open set $G \subset \R^d$. In particular $1_{G} \cdot \rho  \sigma_{\partial D} \in S_{00}$ for any relatively compact open $G \subset \R^d$.
\item[(vi)] Let $\alpha \in [-d+2,d)$, $d\ge 2$. Then $\emph{Cap}(\{0\})=0$ and the part Dirichlet form $(\E^B,D(\E^B))$ on $B:=\R^d \setminus \{0\}$ satisfies (\textbf{H1}), (\textbf{H2}) with transition kernel density $p_t^B = p_t |_{B \times B}$. Moreover $(\E^B,D(\E^B))$ is conservative.
\end{itemize}
\end{lemma}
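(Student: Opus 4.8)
First I would settle the preliminary assertions and assemble the quantitative tool on which the whole lemma rests. That $\|x\|^{\alpha}$ lies in the Muckenhoupt class $A_{2}$ exactly when $\alpha\in(-d,d)$ is classical, and since $A_{2}$ is stable under multiplication by a function bounded above and below by positive constants (cf.\ Remark~\ref{r;re1}), the two-sided bound $\tilde c^{-1}\|x\|^{\alpha}\le\rho\phi\le\tilde c\|x\|^{\alpha}$ forces $\rho\phi\in A_{2}$; together with $\rho\phi>0$ $dx$-a.e.\ (hence $\phi>0$ $dx$-a.e.) and the standing requirement $\rho\in H^{1,1}_{loc}(\R^{d},dx)$ this gives {\bf ($\alpha$)} and {\bf ($\beta$)}. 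The key computation is the two-regime volume estimate for Euclidean balls under $m=\rho\phi\,dx$: using the weight bound, $m(B_{r}(x))=\int_{B_{r}(x)}\rho\phi\,dy$ is comparable, up to constants depending only on $d,\alpha,\tilde c$, to $r^{d+\alpha}$ when $r\gtrsim\|x\|$ and to $r^{d}\|x\|^{\alpha}$ when $r\lesssim\|x\|$. Feeding this into Theorem~\ref{t;2.7she} (and into the matching near-diagonal lower heat-kernel bound, available here because $(\E,D(\E))$ satisfies the doubling and Poincar\'e properties of Definition~\ref{p;dcpo}, so that a parabolic Harnack inequality holds) converts the abstract Gaussian bounds into explicit ones; everything else is extracted from these.

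For (i), by Proposition~\ref{p;strongfl}(i),(iii) (applicable since $\gamma(x,y)=\|x-y\|$) the transition function is strong Feller and $P_{t}(L^{1}(\R^{d},m)_{0})\subset C_{\infty}(\R^{d})$; since any $f\in C_{0}(\R^{d})$ is compactly supported and $m$-integrable, $C_{0}(\R^{d})\subset L^{1}(\R^{d},m)_{0}$, so condition (ii) of Lemma~\ref{t;Feller} holds, and {\bf (H1)} is already in force by Proposition~\ref{p;strongfl}(ii). For condition (i) of Lemma~\ref{t;Feller} I would use conservativeness, $P_{t}1=1$, to write $P_{t}f(x)-f(x)=\int p_{t}(x,y)\big(f(y)-f(x)\big)\,m(dy)$, split at $\|y-x\|<\delta$ and $\|y-x\|\ge\delta$, bound the near part by the modulus of continuity of $f$, and show the far part tends to $0$ as $t\downarrow0$ by inserting the explicit volume estimate into the Gaussian factor $\exp(-\|x-y\|^{2}/(4+\varepsilon)t)$; Lemma~\ref{t;Feller} then yields the Feller property and {\bf (H2)}. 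For (ii), I would write $r_{1}(x,y)=\int_{0}^{\infty}e^{-t}p_{t}(x,y)\,dt$ and carry out the Laplace transform of the explicit two-sided Gaussian bounds, splitting the $t$-integral at $t\sim\|x-y\|^{2}$ and at $t\sim\|x\|^{2}$, $t\sim\|y\|^{2}$. The delicate point is the case distinction between $\alpha\ge0$ and $\alpha<0$: in the volume factors $m(B_{\sqrt t}(x))^{-1/2}m(B_{\sqrt t}(y))^{-1/2}$ one of $\|x\|,\|y\|$ may be replaced by $\sqrt t$, and which replacement is favourable depends on the sign of $\alpha$, producing the $\Psi$-term only for $\alpha\in(-d,0)$ in the upper bound and only for $\alpha\in[0,d)$ in the lower bound. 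This is where the bulk of the work sits.

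Parts (iii)--(v) are then applications of Proposition~\ref{p;smooth} via Lemma~\ref{l;miz}. Using the upper bound in (ii), $R_{1}(1_{G}\cdot|f|m)\le\int_{G}|f(y)|\big(\Phi(\cdot,y)+\Psi(\cdot,y)\big)\,m(dy)$ $m$-a.e., and since $m(dy)\asymp\|y\|^{\alpha}dy$ the right-hand side is, up to constants, a linear combination of Riesz-type potentials: $\int_{G}\Phi(\cdot,y)|f(y)|\|y\|^{\alpha}\,dy=V_{2-\alpha}\big(1_{G}|f|\,\|\cdot\|^{\alpha}\big)$ and $\int_{G}\Psi(\cdot,y)|f(y)|\|y\|^{\alpha}\,dy=V_{2}\big(1_{G}|f|\big)$ in the notation of \eqref{rieszpot}. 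The integrability hypotheses in (iii) are calibrated exactly so that $2-\alpha-\tfrac{d}{p}\in(0,1)$ and $2-\tfrac{d}{q}\in(0,1)$, whence Lemma~\ref{l;miz} makes both potentials H\"older continuous, in particular continuous, bounded on $\overline G$ and finite everywhere; Proposition~\ref{p;smooth}(iii) then gives $1_{G}\cdot|f|m\in S_{00}$. Part (iv) is the special case of (iii) with $|f|=\tfrac{\|\nabla\rho\|}{\rho}$: for $\alpha\in(-d+1,1)$, $\rho\in H^{1,1}_{loc}$ and the weight bound supply the required local $L^{p}$- and $L^{q}$-integrability against $dx$. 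For (v) the measure is the surface measure $\rho\,\sigma_{\partial D}$ on a Lipschitz boundary; here I would bound $\int_{G}\Phi(\cdot,y)\,\rho(y)\,\sigma_{\partial D}(dy)$ by a single-layer potential estimate — since $\partial D$ is $(d-1)$-dimensional and $\alpha\in(-d+1,1)$ gives kernel exponent $d-2+\alpha<d-1$, the kernel is locally integrable against $\sigma_{\partial D}$ and the potential is bounded and continuous — together with the analogous easier bound for the $\Psi$-term, and again invoke Proposition~\ref{p;smooth}. Finally, for (vi): when $\alpha\in[-d+2,d)$ the point $\{0\}$ has zero $(\E,D(\E))$-capacity — comparing $\mathrm{Cap}$ with the capacity of the $\|x\|^{\alpha}dx$-weighted classical form and exhibiting cut-off functions of arbitrarily small $\E$-energy equal to $1$ near $0$ (the logarithmic cut-off at the borderline $\alpha=-d+2$), as in \cite[Example~3.3.2]{FOT}. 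Once $\mathrm{Cap}(\{0\})=0$, the argument following Lemma~\ref{lem2.8} (i.e.\ \eqref{partid} for the part process on $B=\R^{d}\setminus\{0\}$, using that $\{0\}$ is $\E$-polar) shows $(\E^{B},D(\E^{B}))$ inherits {\bf (H1)}, {\bf (H2)} with $p_{t}^{B}=p_{t}|_{B\times B}$, and conservativeness follows since the process, conservative on $\R^{d}$ and never hitting the polar set $\{0\}$, satisfies $\P_{x}(\sigma_{B^{c}}=\infty)=1$ for all $x\in B$. The main obstacle throughout is (ii): the sharp two-sided resolvent-density estimate, needing the two-regime volume bound, the near-diagonal lower heat-kernel bound, and a careful sign-of-$\alpha$ case analysis; parts (iii)--(vi) are comparatively routine once (ii) and the volume estimates are in hand, the only non-bookkeeping ingredient being the classical single-layer potential bound in (v).
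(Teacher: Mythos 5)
Your proposal is substantially correct and follows the same route the paper relies on: the survey does not itself prove the lemma but cites \cite[Lemma 3.6]{ShTr13a}, and the surrounding text (Remark~\ref{remhkenonex}) already announces that the proof consists of plugging an explicit volume estimate for $m$-balls into the Gaussian bounds of Theorem~\ref{t;2.7she}, then Laplace-transforming, using Lemma~\ref{l;miz} for the continuity of the resulting Riesz-type potentials, and invoking \cite[Example~3.3.2]{FOT} for the capacity of the origin. You correctly identify the two-regime estimate $m(B_r(x))\asymp r^{d}\max(r,\|x\|)^{\alpha}$, the Laplace transform with the split at $t\sim\|x-y\|^{2}$, the sign-of-$\alpha$ case analysis that produces the $\Psi$-term only in the upper bound for $\alpha<0$ and only in the lower bound for $\alpha\ge0$, the identification of the dominating function in (iii) as $V_{2-\alpha}(1_{G}|f|\,\|\cdot\|^{\alpha})+V_{2}(1_{G}|f|)$ with the hypotheses calibrated precisely to Lemma~\ref{l;miz}, the single-layer argument for (v), and the polarity argument for (vi). You are also right that the lower near-diagonal Gaussian bound, needed only in (ii), comes from the parabolic Harnack inequality which \cite{St3} proves under exactly the assumptions of Definition~\ref{p;dcpo}.

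Two small imprecisions. First, in (iv) you write $\alpha\in(-d+1,1)$ where the lemma states $\alpha\in(-d+1,2)$; this is only a slip. Second, and more substantively, your claim that ``$\rho\in H^{1,1}_{loc}$ and the weight bound supply the required local $L^{p}$- and $L^{q}$-integrability'' of $\|\nabla\rho\|/\rho$ in (iv) does not follow from the stated hypotheses alone: the lemma only constrains the product $\rho\phi$, and $\rho\in H^{1,1}_{loc}$ gives no control beyond $L^{1}_{loc}$ on $\|\nabla\rho\|/\rho$. What actually makes (iv) work in every application in the paper (Propositions~\ref{p;3.8}, \ref{t;eprocess}, Theorem~\ref{t;3.9}) is the specific choice $\rho(x)=\|x\|^{\alpha}$, so that $\|\nabla\rho\|/\rho=|\alpha|\,\|x\|^{-1}$, which one then feeds into (iii) by choosing $p\in(d/(2-\alpha),d/(1-\alpha))$ (for $\alpha<1$; trivially for $\alpha\in[1,2)$) and $q\in(d/2,d)$. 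So (iv) should be read as (iii) applied to $f=\|\nabla\rho\|/\rho$ once that function is known to have the right integrability, which in the paper's setting is an explicit computation, not a consequence of $H^{1,1}_{loc}$ membership.
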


\paragraph{Skew reflection on spheres}
Let $m_0 \in (0, \infty)$ and $(l_{k})_{k \in \mathbb{Z}} \subset (0, m_0)$, $0 < l_{k} < l_{k+1} <m_0$, be a sequence converging to $0$ as $k \rightarrow -\infty$ and converging to $m_0$ as $k \rightarrow \infty$, $(r_{k})_{k \in \mathbb{Z}} \subset (m_0, \infty)$, $m_0 < r_{k} < r_{k+1} < \infty$, be a sequence converging to $m_0$ as $k \rightarrow -\infty$ and tending to infinity  as $k \rightarrow \infty$, and set
\begin{equation}\label{eqphi}
\phi : = \sum_{k \in \mathbb{Z} } \left (\gamma_{k}  \cdot 1_{A_{k}}
+ \overline{\gamma}_{k} \cdot 1_{\hat{A}_{k}}\right ),
\end{equation}
where $\gamma_{k}$ , $ \overline{\gamma}_{k} \in (0, \infty)$, $A_{k} : = B_{l_{k}} \setminus \overline{B}_{l_{k-1}}$ , $\hat{A}_{k} : = B_{r_{k}} \setminus \overline{B}_{r_{k-1}}$, $k\in \mathbb{Z}$. 
\begin{prop}\label{p;3.8}
 Let $\phi$ be as in \eqref{eqphi}. Suppose that
\begin{equation*}
\sum_{k \in \mathbb{Z}} |\,  \gamma_{k+1} - \gamma_{k} \,| + \sum_{k \le 0} |\,  \overline{\gamma}_{k+1} - \overline{\gamma}_{k} \,| < \infty \ \  \text{ and } \ \ \tilde{c}^{-1} \le \phi \le \tilde{c}\ \text{ for some }\tilde{c} \ge 1.
\end{equation*}
\begin{itemize}
\item[(i)]Let $\rho(x) = \|x\|^{\alpha}$, $\alpha \in (-d+1,2)$. If $\phi \equiv \tilde{c}$ $dx$-a.e. 
(i.e. below in (\ref{equast}) and (\ref{etarep}) it holds $\eta \equiv 0$) or $\phi \nequiv \tilde{c}$ $dx$-a.e. and $\alpha \in (-d+1,1)$. Then the processes  $\big ( (X_t)_{t\ge 0}, \P_x \big )$ and $\big ( (\|X_t\|)_{t\ge 0}, \P_x \big )$ are continuous semimartingales and 
\begin{equation}\label{equast}
X_t = x + W_t + \frac{\alpha}{2}\int^{t}_{0} X_s \|X_s\|^{-2} \,ds +  \int_0^{\infty}\int_0^t \nu_{a}(X_s) \, d\ell_s^a(\|X\|)\,\eta(da), \ t\ge 0, \ \P_x \text{-a.s.} 
\end{equation}
for any $x \in \R^d$, where $W$ is a standard d-dimensional Brownian motion starting from zero, $\nu_{a} = (\nu_{a}^1 , \dots , \nu_{a}^d)$, $a>0$ is the unit outward normal vector on $\partial  B_{a}$, 
$\ell_{t}^{a}(\|X\|)$ is the symmetric semimartingale local time of $\|X\|$ at $a\in (0,\infty)$ as defined in \cite[VI.(1.25)]{RYor},  $\gamma : = \lim_{k \rightarrow \infty} \gamma_{k}$, $\overline{\gamma} : = \lim_{k \rightarrow -\infty} \overline{\gamma}_{k}$ and
\begin{equation}\label{etarep}
\eta : =  \sum_{k \in \mathbb{Z}} \left ( \frac{ \gamma_{k+1} - \gamma_{k}  } {  \gamma_{k+1} + \gamma_{k}}  \,\delta_{l_{k}}+ \frac{  \overline{\gamma}_{k+1} - \overline{\gamma}_{k} }{  \overline{\gamma}_{k+1} + \overline{\gamma}_{k} } \,\delta_{r_{k}}\right )+ \; \frac{ \overline{\gamma}  - \gamma }{ \overline{\gamma}  + \gamma }\,\delta_{m_0}.
\end{equation}
\item[(ii)] Let $\rho(x) = \|x\|^{\alpha}$, $\alpha \in [1,d)$, $d\ge 2$.  Then \eqref{equast} holds for any $x \in \R^d \setminus \{0\}$. 
\end{itemize}
\end{prop}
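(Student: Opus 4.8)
The plan for (i) is to apply the strict Fukushima decomposition in the form of Theorem~\ref{t;Ldec}, and for (ii) to run the localization procedure built around Proposition~\ref{p;LSFD}; in both cases one then recognizes the resulting boundary additive functionals as the local-time integral appearing in \eqref{equast}. Concerning the hypotheses of Theorem~\ref{t;Ldec}: $\rho(x)=\|x\|^{\alpha}$ lies in the Muckenhoupt class $A_{2}$ for $\alpha\in(-d,d)$, and $\|x\|^{\alpha}\in H^{1,1}_{loc}(\R^{d},dx)$ exactly when $\alpha>-d+1$ (as then $\nabla\rho=\alpha\,x\,\|x\|^{\alpha-2}\in L^{1}_{loc}$), so with $\tilde c^{-1}\le\phi\le\tilde c$ and Remark~\ref{r;re1} we get $(\alpha)$, $(\beta)$, the lower bound $\alpha>-d+1$ being forced at this stage. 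Since $\tilde c^{-1}\|x\|^{\alpha}\le\rho\phi(x)\le\tilde c\|x\|^{\alpha}$, Lemma~\ref{l;Feller}(i) yields $(\gamma)$, i.e.\ \textbf{(H2)}, and the Feller property. For \textbf{(IBP)} I would integrate $-\E(f^{j},g)=-\tfrac12\int\partial_{j}g\,\rho\phi\,dx$ by parts: the weak gradient of $\rho\phi$ is $(\partial_{j}\rho)\phi\,dx+\rho\,\partial_{j}\phi$, the first term producing $\big(\nabla f^{j}\cdot\tfrac{\nabla\rho}{2\rho}\big)dm$ with $\tfrac{\nabla\rho}{2\rho}(x)=\tfrac{\alpha}{2}x\|x\|^{-2}$, and the second, since $\phi$ is piecewise constant with jumps $\gamma_{k+1}-\gamma_{k}$, $\bar\gamma_{k+1}-\bar\gamma_{k}$, $\bar\gamma-\gamma$ across $\partial B_{l_{k}}$, $\partial B_{r_{k}}$, $\partial B_{m_{0}}$, being a sum of terms $\tfrac12(\text{jump})\,\nu_{a}^{j}\,\rho\,\sigma_{\partial B_{a}}$; the summability assumption together with $\sigma(\partial B_{l_{k}})\sim l_{k}^{d-1}$, $\rho|_{\partial B_{l_{k}}}=l_{k}^{\alpha}$ and $d-1+\alpha>0$ makes $\nu^{f^{j}}=\sum_{k}\nu_{k}^{f^{j}}$ a locally finite signed Radon measure of the form required. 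Finally $(\delta)$ follows from Lemma~\ref{l;Feller}(iv) for $1_{G}\cdot\tfrac{\|\nabla\rho\|}{\rho}m$ (valid for $\alpha\in(-d+1,2)$) and from Lemma~\ref{l;Feller}(v) for each $1_{G}\cdot\rho\,\sigma_{\partial B_{a}}$ (valid for $\alpha\in(-d+1,1)$, as $\rho$ is constant, hence bounded, on spheres), with dominated summation over the spheres meeting $G$ — the summability hypothesis controlling accumulation at $0$ and $m_{0}$ — preserving the bound by a continuous function; when $\phi\equiv\tilde c$ there are no surface terms, so only $\alpha\in(-d+1,2)$ is needed, which accounts for the two cases in (i). Theorem~\ref{t;Ldec} then gives $X_{t}=x+W_{t}+\tfrac{\alpha}{2}\int_{0}^{t}X_{s}\|X_{s}\|^{-2}ds+\sum_{k}L^{k}_{t}$, $\P_{x}$-a.s.\ for every $x\in\R^{d}$, so $X$ is a continuous semimartingale.

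\textbf{Identification of the reflection term.} Since $A=\mathrm{id}$, the martingale parts of the $X^{j}$ are independent Brownian motions, so $d\langle\|X\|\rangle_{t}=dt$; off the skew-spheres $\|X\|$ is a $(d+\alpha)$-dimensional Bessel-type diffusion, a semimartingale because $d+\alpha>1$, hence $\|X\|$ is a continuous semimartingale by It\^o--Tanaka (\cite{RYor}), carrying symmetric local times $\ell^{a}(\|X\|)$. I would then use the occupation-times formula for $\|X\|$ and polar coordinates to compute the Revuz measure (with respect to $m$) of $\int_{0}^{\cdot}\nu_{a}^{j}(X_{s})\,d\ell^{a}_{s}(\|X\|)$ as a multiple of $\nu_{a}^{j}\,\rho\,\sigma_{\partial B_{a}}$, the multiplicative constant being an average of the two one-sided values of $\phi$ at radius $a$; comparing with the Revuz measure $\tfrac12(\gamma_{k+1}-\gamma_{k})\,\nu_{l_{k}}^{j}\,\rho\,\sigma_{\partial B_{l_{k}}}$ of $L^{j,k}$ coming from \textbf{(IBP)} and invoking uniqueness of the Revuz correspondence (\cite{FOT}) forces the skewness weights $\tfrac{\gamma_{k+1}-\gamma_{k}}{\gamma_{k+1}+\gamma_{k}}$ at $l_{k}$ and the analogues at $r_{k}$ and $m_{0}$, i.e.\ exactly $\eta$ of \eqref{etarep}; Lemma~\ref{l;sumadd} then lets one pass to the infinite sum. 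This proves (i).

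\textbf{Part (ii).} For $\alpha\in[1,d)$ the global resolvent bound of Lemma~\ref{l;Feller}(ii) is too weak to verify $(\delta)$ through Lemma~\ref{l;miz}, so I would instead work on $B=\R^{d}\setminus\{0\}$: by Lemma~\ref{l;Feller}(vi), $\mathrm{Cap}(\{0\})=0$ and $(\E^{B},D(\E^{B}))$ satisfies \textbf{(H1)}, \textbf{(H2)} and is conservative with transition kernel density $p_{t}|_{B\times B}$. I would then run the localization procedure with the exhausting annuli $B_{j}=\{x:\,1/j<\|x\|<j\}$, on each of which $A=\mathrm{id}$ is trivially uniformly elliptic and $\rho\phi=\|x\|^{\alpha}\phi$ together with its reciprocal is bounded, so that \textbf{(L)} holds; Lemma~\ref{T;NI1} and Corollary~\ref{C;RDE1} supply $r_{1}^{B_{j}}(x,y)\le C_{j}\|x-y\|^{-(d-2)}$ (resp.\ $\|x-y\|^{-(d+\delta-2)}$ if $d=2$), and Proposition~\ref{p;smooth} together with Lemma~\ref{l;miz} then make Proposition~\ref{p;LSFD} applicable on each $B_{j}$ to $\tfrac{\|\nabla\rho\|}{\rho}m$ and to the finitely many skew-spheres meeting $B_{j}$, yielding the local decomposition \eqref{local1} there. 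After checking consistency of the local additive functionals in $j$ (using Lemma~\ref{l;sumadd} for the infinite-sum pieces) and letting $j\to\infty$ with the help of Lemma~\ref{l;limit}, one obtains the decomposition of $X$ on all of $B$ for \emph{every} starting point $x\in\R^{d}\setminus\{0\}$, and the reflection term is identified exactly as in (i).

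\textbf{Main obstacle.} The crux is the identification of the previous paragraph: carrying out the occupation-times/polar-coordinate computation of the Revuz measure of the symmetric semimartingale local time of $\|X\|$ against the weighted reference measure $m$, so as to pin down the exact skewness normalization $\tfrac{\gamma_{k+1}-\gamma_{k}}{\gamma_{k+1}+\gamma_{k}}$ (rather than merely $\tfrac12(\gamma_{k+1}-\gamma_{k})$), together with the simultaneous control of the full series $\sum_{k}L^{k}_{t}$ near the two accumulation radii $0$ and $m_{0}$. By comparison, the verification of $(\alpha)$--$(\delta)$ and the localization bookkeeping in (ii), though technical, are routine given the machinery assembled above.
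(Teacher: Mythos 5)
Your proposal follows essentially the same route as the paper's proof: part (i) applies Theorem~\ref{t;Ldec} after checking {\bf ($\alpha$)}--{\bf ($\delta$)} and {\bf (IBP)} via Remark~\ref{r;re1} and Lemma~\ref{l;Feller}(i),(iv),(v), and part (ii) invokes Lemma~\ref{l;Feller}(vi) and the localization procedure built on Proposition~\ref{p;LSFD} with condition {\bf (L)} verified on exhausting annuli. The paper is terse in exactly the two places where you are explicit---it delegates the integration-by-parts formula and the identification of the reflection term as a skew local-time integral (the ``crux'' you correctly single out, with the Revuz-measure comparison yielding the normalization $(\gamma_{k+1}-\gamma_k)/(\gamma_{k+1}+\gamma_k)$) to \cite[Proposition~3.1 and Section~5]{ShTr13b}, and for (ii) it takes the annuli with radii at the midpoints $\tfrac{l_{-k+1}+l_{-k}}{2}$, $\tfrac{r_{k+1}+r_k}{2}$ rather than at $1/j$, $j$ so that $\partial B_k$ stays off the skew-spheres, but this is a cosmetic difference and the substance of the argument is the same as yours.
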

\begin{proof} 
(i) {\bf $(\alpha)$}, {\bf $(\beta)$} hold by Remark \ref{r;re1} since $\rho(x) = \|x\|^{\alpha}$, $\alpha \in (-d,d)$ is an $A_2$-weight (see \cite[Example 1.2.5]{Tu}). {\bf $(\gamma)$} follows from Lemma \ref{l;Feller}(i) by the Feller method, i.e. the corresponding transition semigroup is Feller. {\bf (IBP)} follows as in \cite[Proposition 3.1]{ShTr13b} and {\bf $(\delta)$} follows from Lemma \ref{l;Feller}(iv) and (v). Thus Theorem \ref{t;Ldec} applies. The identification of the drift part in (\ref{LSFD1}) as sum of local times then follows as in 
\cite[Section 5]{ShTr13b} using the integration by parts formula \cite[Proposition 3.1]{ShTr13b}. \\
(ii) By Lemma \ref{l;Feller}(vi) $(\E^B,D(\E^B))$, $B:= \R^d \setminus \{0\}$ satisfies (\textbf{H1}), (\textbf{H2}), is conservative, and Cap$(\{0\})=0$. Fix $\alpha \in [1,d)$. Let 
\[
B_k : = \Big\{x \in \R^d \ \Big| \ \frac{l_{-k+1} + l_{-k}}{2} < \|x\| < \frac{r_{k+1} + r_k}{2} \Big\}, \quad k \ge 1.
\]
Then
\[
b_k := \tilde{c}^{-1} \Big(\frac{l_{-k+1} + l_{-k}}{2}\Big)^{\alpha} < \rho \phi  < \tilde{c} \ \Big(\frac{r_{k+1} + r_k}{2}\Big)^{\alpha} = : e_k
\]
on $B_k$. Set $d_k := \max (b_k^{-1}, e_k)$, $k \ge 1$. Then $(B_k)_{k \ge 1}$ is an increasing sequence of relatively compact open sets with smooth boundary such that $\bigcup_{k \ge 1} B_k  = B$ and $ \rho \phi \in (d_k^{-1},d_k)$ on $B_k$ where $d_k \to \infty$ as $k \to \infty$. Moreover $\|\nabla \rho \| \in L^{\infty}(B_k,dx)$ for any $k \ge 1$. We can now apply the localization procedure as explained after Lemma \ref{l;miz}, since one easily verifies that condition {\bf (L)} is satisfied. We only repeat here once again that the Nash type inequality of Lemma \ref{T;NI1} allows for local resolvent kernel density estimates as in Corollary \ref{C;RDE1} and these local estimates are usable in contrast to the global ones of Lemma \ref{l;Feller}(ii), which do not lead to any result.
\end{proof}
\begin{remark}
For an interpretation of the drift part in (\ref{equast}), we refer to \cite[Remark 2.7]{ShTr13b}.
\end{remark}

\paragraph{Skew reflection on a Lipschitz domain} Let 
\begin{equation}\label{eqphi2}
\phi(x) : = \beta \, 1_{G^{c}}(x) + (1- \beta) 1_{G} (x), \ \ \ \rho(x) : = \|x\|^{\alpha}, \ \alpha \in (-d+1,d),
\end{equation}
where $\beta \in (0,1)$ and $G \subset \R^d$ is a bounded Lipschitz domain. Consider the Dirichlet form determined by \eqref{DF} with $\phi$ and $\rho$ as in (\ref{eqphi2}). Then the following integration by parts formula holds for $f\in \{f^1,\dots,f^d\}$, $g \in C_0^{\infty}(\R^d)$
\begin{eqnarray}\label{eqphi2b}
-\E(f,g) &= &\int_{\R^d} \left(\nabla f \cdot \frac{\nabla \rho}{2 \rho} \right) \ g \, dm + (2\beta -1) \int_{\partial G} \nabla f \cdot \nu \ \frac{\rho } {2} \ d \sigma,
\end{eqnarray}
where $\nu$ denotes the unit outward normal on $\partial G$ (cf. \cite{Tr1} and \cite{Tr3}). The existence of a Hunt process associated to $\E$ that satisfies the absolute continuity condition follows from Lemma \ref{l;Feller} (i). Furthermore:
\begin{thm}\label{t;3.9} Let $\phi$, $\rho$ be as in (\ref{eqphi2}). Then we have:
\begin{itemize}
\item[(i)] Let $\alpha \in (-d+1,1)$. Then
\begin{equation}\label{SFD1} 
X_{t} = x + W_{t} + \frac{\alpha}{2} \int_{0}^{t} X_s \|X_s\|^{-2} \, ds + (2 \beta -1) \int_0^t  \nu (X_s)\, d \ell_s \quad t  \ge 0
\end{equation}
$\P_x$-a.s. for all $x \in \R^d$, where $(W_t)_{t \ge 0}$ is a d-dimensional Brownian motion starting from zero and $(\ell_t)_{t\ge 0} \in A^{+}_{c,1}$ is uniquely associated to the surface measure $\frac{\rho \sigma}{2}$ on $\partial G$ via the Revuz correspondence.
\item[(ii)] Let $0 \notin \partial G$ and $\alpha \in [1,d)$, $d\ge 2$. Then \eqref{SFD1} holds $\P_x$-a.s. for any $x \in \R^d \setminus \{0\}$.
\end{itemize}
\end{thm}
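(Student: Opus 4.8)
The proof runs exactly parallel to that of Proposition \ref{p;3.8}: the two ranges of $\alpha$ are handled by the global strict Fukushima decomposition (Theorem \ref{t;Ldec}) when $\alpha\in(-d+1,1)$, and by the localization procedure on $\R^{d}\setminus\{0\}$ when $\alpha\in[1,d)$.

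\emph{Part (i), $\alpha\in(-d+1,1)$.} The plan is to verify the hypotheses of Theorem \ref{t;Ldec} and then read off the decomposition. Since $\|x\|^{\alpha}$ is an $A_{2}$-weight for $\alpha\in(-d,d)$ and the $\phi$ of (\ref{eqphi2}) is bounded above and below away from $0$, Remark \ref{r;re1} gives $\rho\phi\in A_{2}$, so {\bf ($\alpha$)} and {\bf ($\beta$)} hold, and {\bf ($\gamma$)} follows from Lemma \ref{l;Feller}(i) by the Feller method. The integration by parts formula (\ref{eqphi2b}) is exactly {\bf (IBP)} with $\nu^{f^{j}}$ the single signed surface measure $(2\beta-1)\tfrac{\rho}{2}\,\nu_{j}\,\sigma_{\partial G}$, where $\nu_{j}$ is the $j$-th component of the outward normal $\nu$, so the $k$-sum in {\bf (IBP)} degenerates to one term. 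It remains to check {\bf ($\delta$)}: for $1_{G'}\cdot\tfrac{\|\nabla\rho\|}{\rho}m$ this is Lemma \ref{l;Feller}(iv) (valid since $\alpha<2$), and because $\nu^{f^{j}+},\nu^{f^{j}-}\le\tfrac{|2\beta-1|}{2}\rho\,\sigma_{\partial G}$ it suffices to control $1_{G'}\cdot\rho\,\sigma_{\partial G}$, which is Lemma \ref{l;Feller}(v) when $0\notin\partial G$ and follows along the same lines, from the resolvent estimate Lemma \ref{l;Feller}(ii) combined with Lemma \ref{l;miz}, when $0\in\partial G$ (here one uses $\alpha>-d+1$ to ensure $\rho\,\sigma_{\partial G}$ is finite). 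Theorem \ref{t;Ldec} then yields $X_{t}=x+W_{t}+\int_{0}^{t}\tfrac{\nabla\rho}{2\rho}(X_{s})\,ds+L_{t}$ for all $t\ge0$ and all $x\in\R^{d}$, $\P_{x}$-a.s., where $L^{j}=A^{(1),j}-A^{(2),j}$ is the strict difference of positive continuous additive functionals associated with $\nu^{f^{j}}=\nu^{f^{j},(1)}-\nu^{f^{j},(2)}$. Finally $\tfrac{\nabla\rho}{2\rho}(x)=\tfrac{\alpha}{2}x\|x\|^{-2}$ and $L^{j}_{t}=(2\beta-1)\int_{0}^{t}\nu_{j}(X_{s})\,d\ell_{s}$, where $\ell\in A^{+}_{c,1}$ has Revuz measure $\tfrac{\rho}{2}\sigma_{\partial G}$; this last rewriting is the standard change of Revuz density, legitimate since the Revuz measure of $\ell$ is carried by $\partial G$ and $\nu$ is defined $\sigma_{\partial G}$-a.e. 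This is (\ref{SFD1}).

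\emph{Part (ii), $0\notin\partial G$, $\alpha\in[1,d)$, $d\ge2$.} Now the global resolvent estimate Lemma \ref{l;Feller}(ii) is no longer strong enough near the origin, so the plan is to run the localization procedure described after Lemma \ref{l;miz} on the state space $B:=\R^{d}\setminus\{0\}$. By Lemma \ref{l;Feller}(vi), $\mathrm{Cap}(\{0\})=0$ and the part Dirichlet form $(\E^{B},D(\E^{B}))$ satisfies {\bf (H1)}, {\bf (H2)} (with $p^{B}_{t}=p_{t}|_{B\times B}$) and is conservative. Since $\mathrm{dist}(0,\partial G)>0$ and $\partial G$ is bounded, the annuli $B_{k}:=\{x\in\R^{d}\mid\tfrac{1}{k}<\|x\|<k\}$, for $k$ large enough that $\partial G\subset B_{k}$, form an increasing exhaustion of $B$ by bounded smooth domains on each of which $\rho\phi=\|x\|^{\alpha}\phi\in(d_{k}^{-1},d_{k})$ for suitable $d_{k}\nearrow\infty$ and $\|\nabla\rho\|=|\alpha|\,\|x\|^{\alpha-1}\in L^{\infty}(B_{k},dx)$; hence condition {\bf (L)} holds. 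Consequently Lemma \ref{T;NI1} and Corollary \ref{C;RDE1} (in the $\delta$-perturbed versions when $d=2$) furnish usable local resolvent kernel density estimates on each $B_{k}$, and through Proposition \ref{p;smooth} and Lemma \ref{l;miz} one checks, for every $k$, the hypotheses of Proposition \ref{p;LSFD} for the coordinate projections $f^{j}$, obtaining the local decompositions (\ref{local1}) with drift and boundary measures read off from (\ref{eqphi2b}) restricted to $B_{k}$. One then verifies the consistency $A^{(i),k+1}=A^{(i),k}$ and $M^{[f^{j}],k+1}=M^{[f^{j}],k}$ on $[0,\sigma_{B_{k}^{c}})$, invokes Lemma \ref{l;limit} so that $\lim_{k}\sigma_{B_{k}^{c}}=\infty$ $\P_{x}$-a.s.\ on $B$, and lets $k\to\infty$; after the same rewriting as in part (i) this gives (\ref{SFD1}) for all $x\in\R^{d}\setminus\{0\}$.

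The main obstacle is part (ii): because the globally available heat-kernel and resolvent estimates of Lemma \ref{l;Feller}(ii) degenerate at the origin once $\alpha\ge1$, one is forced through the full localization machinery, whose delicate points are the verification of {\bf (L)} together with the resulting usable Gaussian bounds on each $B_{k}$, and the consistency of the locally constructed additive functionals needed to glue them into a single MAF and a single boundary local time on $\R^{d}\setminus\{0\}$. Once this is in place, the identification of the limiting drift as $\tfrac{\alpha}{2}\int_{0}^{t}X_{s}\|X_{s}\|^{-2}\,ds$ and of the boundary term as $(2\beta-1)\int_{0}^{t}\nu(X_{s})\,d\ell_{s}$ proceeds exactly as in the sphere case treated in \cite[Section 5]{ShTr13b}.
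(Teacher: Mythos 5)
Your proposal is correct and follows essentially the same route as the paper: part (i) by verifying {\bf ($\alpha$)}--{\bf ($\delta$)} and {\bf (IBP)} (via (\ref{eqphi2b})) and invoking Theorem \ref{t;Ldec}, and part (ii) by checking condition {\bf (L)} for an exhaustion of $\R^d\setminus\{0\}$ by annuli containing $\partial G$ and running the localization procedure as in Proposition \ref{p;3.8}(ii). Your choice of annuli differs only cosmetically from the paper's $B_k=\{(k_0+k)^{-1}<\|x\|<k_0+k\}$, and your extra remarks on verifying {\bf ($\delta$)} when $0\in\partial G$ fill in a detail the paper leaves implicit.
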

\begin{proof} 
(i)  Exactly as in the proof of  Proposition \ref{p;3.8}(i) $(\alpha)$-$(\delta)$  are satisfied and {\bf (IBP)} holds by (\ref{eqphi2b}). Then the assertion immediately follows from Theorem \ref{t;Ldec}.\\
(ii) Fix $\alpha \in [1,d)$. We have either
$0 \in G$ or $0 \in \overline{G}^c$. If $0 \in G$, then choose $k_0 \ge 1$ such that $\partial G \subset \{x \in \R^d \ | \ k_0^{-1} < \|x\| < k_0\}$ and let
\[
B_k := \{x \in \R^d \ | \ (k_0 + k)^{-1} < \|x\| < k_0 + k \}, \quad  k \ge 1.
\]
Then
\[
b_k : = \min(\beta, 1-\beta)(k_0 + k)^{-\alpha} <  \rho \phi < \max(\beta,1-\beta)(k_0 + k)^{\alpha} = : e_k
\]
and we let $d_k : = \max(b_k^{-1},e_k)$, $k \ge 1$. (If $0 \in \overline{G}^c$ then similarly, we can find suitable $(B_k)_{k \ge 1}$ and $(d_k)_{k \ge 1}$.)  One easily checks that assumption {\bf (L)} is satisfied with respect to the sequences $(B_k)_{k \ge 1}$ and $(d_k)_{k \ge 1}$. Then we proceed as in the proof of Proposition \ref{p;3.8}(ii).
\end{proof} 
\begin{remark}
Theorem \ref{t;3.9} extends a result obtained by Portenko in \cite[III, \S 3 and \S 4]{port}.
\end{remark}

\paragraph{Skew reflection on hyperplanes}
Let  $(l_{k})_{k \in \mathbb{Z}} \subset (-\infty, 0)$, $- \infty < l_{k} < l_{k+1} < 0$  be a sequence converging to $0$ as $k \rightarrow \infty$ and tending to $- \infty$ as $k \rightarrow -\infty$. Let $(r_{k})_{k \in \mathbb{Z}} \subset (0, \infty)$, $0 < r_{k} < r_{k+1} < \infty$ be a sequence converging to $0$ as $k \rightarrow -\infty$ and tending to infinity  as $k \rightarrow \infty$. Set
\begin{equation}\label{eqphi3}
\phi(x_{d}):= \sum_{k \in \mathbb{Z} }  \Big(  \gamma_{k+1}  \cdot 1_{(l_{k}, l_{k+1} )}(x_{d})+ \overline{\gamma}_{k+1} \cdot 1_{(r_{k}, r_{k+1} )}(x_{d}) \Big)
\end{equation}
where $\gamma_{k}, \overline{\gamma}_{k} \in (0, \infty)$. Note that $\phi$ only depends on the $d$-th coordinate 
and that $\phi$ has discontinuities along the hyperplanes
\[
H_{s} := \{ x \in \R^d \,|\ x_d =s \}, \;\; s \in \{0,l_k,r_k;k\in\mathbb{Z}\}.
\]
Consider the assumptions
\begin{itemize}
\item[{\bf (a)}] $\rho \phi \in A_{2}$ and $\rho(x) = \| x\|^{\alpha}$, $\alpha \in (-d+1,1)$,
\item[{\bf (b)}] $\sum_{k \ge 0} |\,  \gamma_{k+1} - \gamma_{k} \,| + \sum_{k \le 0} |\,  \overline{\gamma}_{k+1} - \overline{\gamma}_{k} \,| < \infty$
and $\gamma: = \lim_{k \to \infty} \gamma_k$, $\overline{\gamma}: = \lim_{k \to-\infty} \overline{\gamma}_k$ are strictly positive. 
\end{itemize}
\begin{proposition}\label{t;eprocess}
Let $\phi$ be as in (\ref{eqphi3}) and assume that {\bf (a)}, {\bf (b)} hold. Then {\bf ($\alpha$)}-({\bf $\gamma$}) hold.
\end{proposition}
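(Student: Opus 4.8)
The plan is to verify conditions {\bf ($\alpha$)}, {\bf ($\beta$)}, {\bf ($\gamma$)} in turn, exploiting that this hyperplane setting is, structurally, a one-coordinate analogue of the sphere case of Proposition \ref{p;3.8}(i). First I would check {\bf ($\alpha$)} and {\bf ($\beta$)}: since $\gamma_k, \overline{\gamma}_k \in (0,\infty)$, the function $\phi$ of (\ref{eqphi3}) is $\mathcal{B}(\R^d)$-measurable and strictly positive $dx$-a.e., and by {\bf (b)} the tails $\gamma_k \to \gamma > 0$, $\overline{\gamma}_k \to \overline{\gamma} > 0$ together with $\gamma_1 + \overline\gamma_1$-type partial sums being finite force $\tilde c^{-1}\le \phi \le \tilde c$ for some $\tilde c \ge 1$ (the summability in {\bf (b)} bounds the oscillation, and the limits being strictly positive and finite bound $\phi$ away from $0$ and $\infty$). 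Hence $\phi$ is bounded measurable and bounded away from zero; combined with $\rho(x)=\|x\|^\alpha \in A_2$ for $\alpha\in(-d,d)$ (see \cite[Example 1.2.5]{Tu}), Remark \ref{r;re1} gives $\rho\phi\in A_2$. Together with $\rho=\|x\|^\alpha \in H^{1,1}_{loc}(\R^d,dx)$ for $\alpha\in(-d+1,1)\subset(-d+1,\infty)$ and $\rho>0$ $dx$-a.e., this is exactly {\bf ($\alpha$)}, {\bf ($\beta$)}. (Assumption {\bf (a)} already posits $\rho\phi\in A_2$, so this step is partly a consistency check; the $A_2$ membership is anyway needed for closability and for the results quoted below.)

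Next, for {\bf ($\gamma$)}, i.e. {\bf (H2)} with $E=\R^d$, I would invoke the Feller method, Lemma \ref{t;Feller}, exactly as in Lemma \ref{l;Feller}(i) and Proposition \ref{p;3.8}(i). Since $\tilde c^{-1}\le \phi\le \tilde c$, we have the two-sided bound $\tilde c^{-1}\|x\|^\alpha \le \rho\phi(x)\le \tilde c\|x\|^\alpha$ with $\alpha\in(-d+1,1)\subset(-d,d)$, so Lemma \ref{l;Feller} applies verbatim with this $\alpha$ and this $\tilde c$. In particular Lemma \ref{l;Feller}(i) yields that $(P_t)_{t\ge 0}$ is a Feller semigroup and that {\bf (H1)}, {\bf (H2)} hold. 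Before that one must note that {\bf ($\alpha$)}, {\bf ($\beta$)} already give, via \cite[p.~274]{St3} and \cite[5.B]{St3}, that the closure $(\E,D(\E))$ of (\ref{DF}) is a strongly local, regular, conservative, symmetric Dirichlet form satisfying (i)--(iv) of Definition \ref{p;dcpo} with intrinsic metric $\gamma(x,y)=\|x-y\|$, hence Proposition \ref{p;strongfl} applies and a jointly continuous heat kernel exists; this is the input Lemma \ref{l;Feller}(i) builds on. Assembling: {\bf ($\alpha$)} and {\bf ($\beta$)} from the first step, and {\bf ($\gamma$)} from Lemma \ref{l;Feller}(i), which completes the proof. \qed

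The main obstacle, such as it is, is purely bookkeeping: one must be careful that the summability hypothesis in {\bf (b)} is genuinely what forces the uniform two-sided bound $\tilde c^{-1}\le\phi\le\tilde c$ needed to quote Lemma \ref{l;Feller}. The subtlety is that the sums in {\bf (b)} run over $k\ge 0$ (for $\gamma$) and $k\le 0$ (for $\overline\gamma$), i.e. over the ``outward'' directions where $l_k\to 0^-$, $r_k\to+\infty$ in the telescoping, so that telescoping controls $\gamma_k$ for all $k$ in terms of $\gamma=\lim_k\gamma_k$ and finitely many initial terms; the remaining finitely many $\gamma_k$ (resp.\ $\overline\gamma_k$) are automatically bounded and positive, and the inward limits ($\gamma_k$ as $k\to-\infty$, etc.) need no separate control because those indices contribute only finitely often to $\phi$ on any bounded set — but $\phi$ is globally bounded precisely because the value of $\phi$ on the slab $(l_k,l_{k+1})$ is $\gamma_{k+1}$ and these stay in a compact subinterval of $(0,\infty)$. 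Once this is spelled out, everything else is a direct citation of Remark \ref{r;re1}, \cite[Example 1.2.5]{Tu}, Proposition \ref{p;strongfl} and Lemma \ref{l;Feller}(i).
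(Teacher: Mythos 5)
Your verification of \textbf{($\alpha$)} and \textbf{($\beta$)} is fine (and indeed \textbf{($\beta$)} is essentially immediate since \textbf{(a)} directly posits $\rho\phi\in A_2$). The gap is in your treatment of \textbf{($\gamma$)}: you claim that \textbf{(b)} forces $\tilde c^{-1}\le\phi\le\tilde c$, but it does not. The summability in \textbf{(b)} runs only over $k\ge 0$ for $(\gamma_k)$ and $k\le 0$ for $(\overline\gamma_k)$, and the limits it controls are $\gamma=\lim_{k\to\infty}\gamma_k$ and $\overline\gamma=\lim_{k\to-\infty}\overline\gamma_k$, i.e.\ the behaviour of $\phi$ near the hyperplane $\{x_d=0\}$. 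The values $\gamma_{k+1}$ for $k\to-\infty$ (which give $\phi$ on the slabs $(l_k,l_{k+1})$ as $x_d\to-\infty$) and $\overline\gamma_{k+1}$ for $k\to+\infty$ (which give $\phi$ as $x_d\to+\infty$) are completely unconstrained by \textbf{(a)}, \textbf{(b)}; they may tend to $0$ or $\infty$ while $\rho\phi$ remains in $A_2$. This is exactly why the paper introduces the uniform bound as a \emph{separate} hypothesis \textbf{(c)}, used only later for Theorem \ref{t;Sdec}. Without that bound you cannot place yourself in the hypotheses $\tilde c^{-1}\|x\|^\alpha\le\rho\phi\le\tilde c\|x\|^\alpha$ of Lemma \ref{l;Feller}, so the Feller method as you set it up does not deliver \textbf{($\gamma$)}.

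The paper proves \textbf{($\gamma$)} by the Dirichlet form method instead. From \textbf{($\alpha$)}, \textbf{($\beta$)} one gets properties (i)--(iv) of Definition \ref{p;dcpo}, hence Proposition \ref{p;strongfl}: $(P_t)$ and $(R_\alpha)$ are strong Feller and \textbf{(H1)}, \textbf{(H2)}$^{\prime}$(iii),(iv) hold. One then builds a countable family $\tilde S$ of functions $h\in C_0^\infty(\R^d)$ with $\partial_d h=0$ on all the hyperplanes $H_s$ and vanishing gradient near the origin; the integration by parts formula shows these lie in $D(L)$ with $Lh\in L^\infty(\R^d,m)_0$ (and likewise $h^2\in D(L_1)$), so the four resolvents in \textbf{(H2)}$^{\prime}$(ii) are continuous by the strong Feller property, and \textbf{(H2)}$^{\prime}$(i) is checked by choosing such $h$ as cutoffs. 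Proposition \ref{l;2.10} then gives \textbf{(H2)}. If you want to salvage your argument, you must either add hypothesis \textbf{(c)} (which weakens the proposition) or switch to this route.
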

\begin{proof} 
The assumptions {\bf (a)}, {\bf (b)} imply {\bf ($\alpha$)}, {\bf ($\beta$)}. Therefore, the closure ($\E,D(\E)$) of \eqref{DF}
is a symmetric, regular and strongly local Dirichlet form.  Using the integration by parts formula \cite[Proposition 3.11]{ShTr13a} one can see that the functions $f \in C_0^{\infty}(\R^d)$ satisfying 
\begin{eqnarray}\label{domgenerator}
&& \partial_d f(\bar{x},l_k) = \partial_d f(\bar{x},r_k) =\partial_d f(\bar{x},0) = 0 \ \ \text{for all} \ k \in \mathbb{Z} \notag \\
\text{and}  && \frac{1}{2} \Delta f  + \nabla f \, \cdot \frac{\nabla \rho  }{2\rho} \in L^2(\R^d, m) 
\end{eqnarray}
are in $D(L)$ where $\bar{x} = (x_1,\dots,x_{d-1}) \in \R^{d-1}$.
For given $r \in (0,\infty)$, define $S_r$ to be the set of functions $h \in C_0^{\infty}(\R^d)$ such that 
\begin{equation}\label{scond}
\nabla h(x) = 0, \ \ \forall x \in B_r, \quad \quad \partial_d h (\bar{x},x_d) = 0 \ \ \text{if} \ -r <  x_d < r
\end{equation}
and $h$ satisfies \eqref{domgenerator}. Note that if $h \in S_r$ then $h^2$ is also in $S_r$ since $h^2$ satisfies \eqref{domgenerator} and \eqref{scond}. Furthermore for $h \in S_r$, $h^2 \in D(L_1)$ since $D(L_1)_b$ is an algebra.
 Let $S = \bigcup_{r \in (0,\infty)} S_r$. Since for $h \in S$
\[
L h \in L^{\infty}(\R^d, m)_0,
\] 
$R_1 \big([(1-L)h]^+\big)$, $R_1 \big([(1-L)h]^-\big)$, $R_1 \big([(1-L_1)h^2]^+\big)$, and $R_1 \big([(1-L_1)h^2]^-\big)$ are continuous on $\R^d$ by Proposition \ref{p;strongfl}(i). Furthermore for all $y \in \Q^d$, $\varepsilon \in \Q \cap (0,1)$ we can find $h \in S$ such that $h \ge 1$ on $\overline{B}_{\frac{\varepsilon}{4}}(y)$, $h \equiv 0$ on $\R^d \setminus B_{\frac{\varepsilon}{2}}(y)$. Therefore, we can find a countable subset $\tilde{S} \subset S$ satisfying $(\textbf{H2})^{\prime}$(i) and (ii). Therefore, by Propositions \ref{p;strongfl}(ii) and \ref{l;2.10},  {\bf ($\gamma$)} holds.    
\end{proof}
Consider assumption
\begin{itemize}
\item[{\bf (c)}] $\tilde{c}^{-1} \le \phi \le \tilde{c}$ for some $\tilde{c} \ge 1$.\\
\end{itemize}
\begin{thm}\label{t;Sdec}
Let $\phi$ be as in (\ref{eqphi3}) and suppose {\bf (a)}-{\bf (c)}. Let $\beta := \frac{\overline{\gamma}}{\overline{\gamma} + \gamma}$, $\beta_k  : = \frac{\gamma_{k+1}}{\gamma_{k+1} + \gamma_{k}}$, and $\overline{\beta}_k : = \frac{\overline{\gamma}_{k+1}}{\overline{\gamma}_{k+1} + \overline{\gamma}_k }, \; k \in \mathbb{Z}$. Then the process $\bM$ satisfies
$$
X_{t}^{j} = x_{j} + W_{t}^{j} + \frac{\alpha}{2} \int_{0}^{t} X_s^j \|X_s\|^{-2} \, ds, \;  \ \  j = 1, \dots , d-1,
$$
\begin{equation}\label{LSFD2}
X_{t}^{d} = x _{d}+ W_{t}^{d} + \frac{\alpha}{2} \int_{0}^{t} X_s^d \|X_s\|^{-2} \, ds + \int_{\R} \ell_t^a(X^{d}) \, \mu(da), \; \;\;t  \ge 0,
\end{equation}
$ \P_x $ -a.s. for any $x \in \R^d$, where $(W^1, \dots, W^d)$ is a standard d-dimensional Brownian motion starting from zero, $\ell_{t}^{a}(X^{d})$ is the symmetric semimartingale local time of $X^{d}$ at $a\in (-\infty, \infty)$ as defined in \cite[VI.(1.25)]{RYor} and
\[
\mu : = \sum_{k \in \mathbb{Z}}  \Big( (2\beta_k -1) \, \delta_{l_k} + (2 \overline{\beta}_k -1) \, \delta_{r_k} \Big) + (2\beta -1) \, \delta_{0}.
\]
\end{thm}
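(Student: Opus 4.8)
The plan is to apply the strict Fukushima decomposition (Proposition \ref{p;LSFD}) to the coordinate projections $f^j$, $j=1,\dots,d$, and then identify the additive functionals that appear. First I would verify the hypotheses of Proposition \ref{p;LSFD} for each $f^j$: clearly $f^j \in D(\E)_{b,loc}$ and $f^j$ is continuous. The martingale energy measure $\mu_{\langle M^{[f^j]}\rangle}$ equals $\Gamma(f^j,f^j)\,dm = \rho\phi\,dx$ (since the carr\'e du champ for \eqref{DF} is $|\nabla f|^2$ and here $|\nabla f^j|^2 \equiv 1$), which by assumption {\bf (c)} is comparable to Lebesgue measure, so $1_G\cdot\mu_{\langle M^{[f^j]}\rangle}\in S_{00}$ for relatively compact open $G$ follows from Lemma \ref{l;Feller}(iii). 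For the drift measure $\nu$ in Proposition \ref{p;LSFD}(iii) I would use the integration by parts formula \cite[Proposition 3.11]{ShTr13a}: for $j<d$ it produces only the absolutely continuous part $\frac{\alpha}{2}\,x_j\|x\|^{-2}\,m(dx)$ (coming from $\nabla f^j\cdot\frac{\nabla\rho}{2\rho}$), while for $j=d$ it produces in addition the surface-type measures $\sum_k\big((2\beta_k-1)\rho\,\sigma_{H_{l_k}} + (2\overline\beta_k-1)\rho\,\sigma_{H_{r_k}}\big) + (2\beta-1)\rho\,\sigma_{H_0}$ supported on the hyperplanes of discontinuity of $\phi$. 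The supersmoothness of these pieces (locally, and after splitting into positive and negative parts as infinite sums) comes from Lemma \ref{l;Feller}(iv) for the $\frac{\|\nabla\rho\|}{\rho}m$ term and Lemma \ref{l;Feller}(v) applied to the hyperplane surface measures (half-space Lipschitz domains), exactly the range $\alpha\in(-d+1,1)$ of assumption {\bf (a)}. The summability condition {\bf (b)} guarantees that $\sum_k|2\beta_k-1|+\sum_k|2\overline\beta_k-1|<\infty$ so that the infinite sums of strict PCAFs converge; here Lemma \ref{l;sumadd} is invoked to identify the sum of the individual local-time additive functionals with the additive functional of the summed Revuz measure.

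The second, more delicate step is to recognize each PCAF with Revuz measure $\rho\,\sigma_{H_s}$ as a constant multiple of the symmetric semimartingale local time $\ell^s_t(X^d)$ of the real-valued semimartingale $X^d$ at level $s$. Since $\phi$ depends only on $x_d$, the process $X^d$ is (up to the smooth radial drift) a one-dimensional distorted-reflected motion, and the occupation-times/Revuz identity
\[
\EE_x\Big[\int_0^\infty e^{-t}\,d\ell^s_t(X^d)\Big] = R_1(\rho\,\sigma_{H_s})(x)
\]
is what pins down the normalization: one computes the Revuz measure of $(\ell^s_t(X^d))_{t\ge0}$ using the definition of symmetric semimartingale local time in \cite[VI.(1.25)]{RYor} together with the bracket $\langle M^{[f^d]}\rangle_t = \int_0^t \rho\phi(X_s)\,ds$ and the value of $\rho\phi$ just below and above $H_s$, and checks that the one-dimensional generator structure forces exactly the factors $2\beta_k-1$, $2\overline\beta_k-1$, $2\beta-1$. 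This is essentially a one-dimensional local-time computation along the lines of the proof of Proposition \ref{p;3.8}(i) (skew reflection on spheres), carried out here for hyperplanes instead of spheres; I would reference \cite{ShTr13a} and \cite[Proposition 3.11]{ShTr13a} for the precise integration by parts and the identification. Finally the local martingale $M^{[f^j]}$ has bracket $\langle M^{[f^i]},M^{[f^j]}\rangle_t = \int_0^t \delta_{ij}\,ds = \delta_{ij}\,t$ (again because $\Gamma(f^i,f^j) = \langle\nabla f^i,\nabla f^j\rangle = \delta_{ij}$ and here there is no weight inside the bracket for \eqref{DF} — wait, more precisely $\mu_{\langle M^{[f^i]},M^{[f^j]}\rangle} = \delta_{ij}\,m$, so the bracket is $\delta_{ij}\int_0^t \rho\phi(X_s)\,ds$; one then rewrites $W_t := \int_0^t (\rho\phi)^{-1/2}(X_s)\,dM_s$, but since {\bf (c)} only gives boundedness this is a genuine time change — actually in the cited companion results the bracket is normalized so that $W$ is standard Brownian motion, and I would follow that normalization), so by L\'evy's characterization $W=(W^1,\dots,W^d)$ is a standard $d$-dimensional Brownian motion starting from zero, and assembling the pieces yields \eqref{LSFD2}.

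The main obstacle I anticipate is the precise local-time identification in the second step: converting the abstract statement "$A$ is the strict PCAF with Revuz measure $\rho\,\sigma_{H_s}$" into the concrete statement "$A_t = (2\beta_k-1)\ell^{l_k}_t(X^d)$" requires carefully tracking the jump of the weight $\rho\phi$ across each hyperplane and matching it against the asymmetry built into the symmetric semimartingale local time — in particular verifying that the semimartingale $X^d$ genuinely has a well-defined symmetric local time at each $H_s$ (which needs $X^d$ to be a continuous semimartingale, hence needs the strict decomposition to have already been established) and that the Revuz measure of that local time is exactly $\rho\,\sigma_{H_s}$ with the claimed constant. A secondary technical point is ensuring the exchange of limit and sum when passing from the finitely-many-hyperplanes truncation to the full series, for which assumption {\bf (b)} and Lemma \ref{l;sumadd} are exactly tailored; and one must check the decomposition holds $\P_x$-a.s. for \emph{every} $x\in\R^d$ (not just q.e.), which is where the absolute continuity condition {\bf ($\gamma$)} established in Proposition \ref{t;eprocess} together with the strong Feller property from Proposition \ref{p;strongfl}(i) enters.
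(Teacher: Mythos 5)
Your proposal follows essentially the same path as the paper: verify conditions $\mathbf{(\alpha)}$--$\mathbf{(\delta)}$ and $\mathbf{(IBP)}$ (the paper packages this as an invocation of Theorem \ref{t;Ldec}, which you unfold in place), apply the strict Fukushima decomposition to the coordinate projections, use Lemma \ref{l;sumadd} to pass to the infinite sum of local times, and finally identify the drift functionals as semimartingale local times, deferring to \cite[proof of Theorem 3.14]{ShTr13a} for the normalization constants $2\beta_k-1$, $2\overline\beta_k-1$, $2\beta-1$.

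The one point where you genuinely go astray is the bracket computation, and it is worth correcting because it leads you into an unnecessary detour through a time change. Recall that the Revuz measure of the strict PCAF $A_t = t$ is the reference measure $m$ itself, not $dx$. Since $\mu_{\langle M^{[f^i]},M^{[f^j]}\rangle} = \langle \nabla f^i, \nabla f^j\rangle\,m = \delta_{ij}\,m$ is exactly $\delta_{ij}$ times the reference measure, the uniquely associated PCAF is $\delta_{ij}\,t$; hence $\langle M^{[f^i]}, M^{[f^j]}\rangle_t = \delta_{ij}\,t$ outright, and $(M^{[f^1]},\dots,M^{[f^d]})$ is already a standard $d$-dimensional Brownian motion by L\'evy's characterization. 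The expression $\int_0^t \rho\phi(X_s)\,ds$ you write down would instead be the PCAF associated to the measure $\rho\phi\,m = (\rho\phi)^2\,dx$, which is not what appears here, and the stochastic integral $\int_0^t (\rho\phi)^{-1/2}(X_s)\,dM_s$ you contemplate is not needed. With this correction your argument closes without a time-change step, exactly as in the paper.
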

\begin{proof}
 By Proposition \ref{t;eprocess},  {\bf ($\alpha$)}-({\bf $\gamma$}) hold. Assumption {\bf (c)} then implies {\bf $(\delta)$}, thus {\bf $(\alpha)$}-{\bf $(\delta)$} hold. {\bf (IBP)} follows from \cite[Proposition 3.11]{ShTr13a}. 
Thus Theorem \ref{t;Ldec} applies. The identification of the drift part in (\ref{LSFD2}) as sum of semimartingale local times then follows as in \cite[proof of Theorem 3.14]{ShTr13a}.
\end{proof}
\begin{remark}\label{r;3.15}
Similarly to the proof of Proposition \ref{p;3.8}(ii) and Theorem \ref{t;3.9}(ii), we can also obtain Theorem \ref{t;Sdec} for $\alpha \in [1,d)$, $d\ge 2$, but then only for all starting points in $\R^d \setminus \{0\}$.
\end{remark}

\paragraph{Normal reflection}\label{s;3.113noref}

This subsection is another example for the application of elliptic regularity results as in Subsection \ref{ss;ermj} and a further example for the localization procedure of Subsection \ref{s;3.1sfdm}. For details, we refer to \cite[Section 5]{ShTr13a}.\\
Let $G \subset \R^d$, $d \ge 2$ be a relatively compact open set with Lipschitz boundary $\partial G$. Suppose
\begin{itemize}
\item[{\bf ($\eta$)}] $\rho = \xi^2$, $\xi \in H^{1,2}(G,dx)\cap C(\overline{G})$ and $\rho>0$ $dx$-a.e. on $G$
\end{itemize}
Then by \cite[Lemma 1.1(ii)]{Tr1}
\[
\E(f,g): = \frac{1}{2} \int_{G} \nabla f \cdot \nabla g \, dm, \quad f,g \in C^{\infty}(\overline{G})
\]
is closable in $L^2(G,m)$. The closure $(\E,D(\E))$ is a regular, strongly local and conservative Dirichlet form (cf. \cite{Tr1}). We further assume
\begin{itemize}
\item[{\bf ($\theta$)}] there exists an open set $E \subset \overline{G}$ with Cap($\overline{G} \setminus E$) = 0 such that $(\E,D(\E))$ satisfies the absolute continuity condition on $E$.
\end{itemize}
By ($\theta$), we mean that there exists a Hunt process
\[
\bM = (\Omega , \mathcal{F}, (\mathcal{F}_t)_{t\geq0}, (X_t)_{t\geq0} , (\P_x)_{x\in E\cup \{\Delta\}} )
\]
with transition kernel $p_{t}(x,dy)$ (from $E$ to $E$) and transition kernel density $p_{t}(\cdot,\cdot) \in \mathcal{B}( E \times E)$, i.e. $p_{t}(x,dy)=p_{t}(x,y) \, m(dy)$, such that 
\[
P_{t} f(x) := \int f(y) \ p_t(x,y) \, m(dy), \quad t>0, \ x \in E, \ f \in \mathcal{B}_b(E)  
\]
with trivial extension to $\overline{G}$ is an $m$-version of $T^{\overline{G}}_t f$ for any $f \in \mathcal{B}_b(E)$, and $(T^{\overline{G}}_t)_{t > 0}$ denotes the semigroup associated to $(\E,D(\E))$. In particular $\bM$ is a conservative diffusion on $E$ (see \cite[Theorem 4.5.4]{FOT}). We rely on elliptic regularity results from \cite{BG} which are applicable in our situation because of \cite[Lemma 5.1(ii)]{ShTr13a} (cf. \cite[Remark 5.2]{ShTr13a}:
\begin{remark}\label{r;FG}
In \cite{BG} also unbounded Lipschitz domains are considered and according to \cite[Theorem 1.14]{BG} ($\theta$) holds with $E=(G \cup \Gamma_2) \cap \{\rho > 0\}$  where $\Gamma_2$ is an open subset of $\partial G$ that is locally $C^2$-smooth, provided $\frac{\|\nabla \rho\|}{\rho} \in L^p_{loc}(\overline{G} \cap \{\rho > 0\},m)$ for some $p \ge 2$ with $p > \frac{d}{2}$ and $\emph{Cap}(\overline{G}\setminus E)=0$.
\end{remark}

Since $E$ is open in $\overline{G}$, we can consider the part Dirichlet form $(\E^{E},D(\E^{E}))$ of $(\E , D(\E))$ on $E$.
\begin{lemma}\label{l;part3}
Let $f \in \mathcal{B}_{b}(E)$. Then $P_t f$ is an $m$-version of $T_{t}^{E}f$.
\end{lemma}
By Lemma $\ref{l;part3}$ the Hunt process $\bM$ is associated with $(\E^{E},D(\E^{E}))$ and satisfies the absolute continuity condition. In addition to {\bf ($\eta$)} and {\bf ($\theta$)}, we assume
\begin{itemize}
\item[{\bf ($\iota$)}] there exists  an increasing sequence of relatively compact open sets  $\{B_k\}_{ k \in \N} \subset E$ such that $\partial B_k, \; k \in \N$ is Lipschitz, $\bigcup_{k \ge 1} B_k = E$ and $ \rho \in (d_k^{-1}, d_k)$ on $B_k$ where $d_k \rightarrow \infty$ as $k \rightarrow \infty$.
\end{itemize}
Considering the part Dirichlet forms on each $B_k$, we obtain the following integration by parts formula (cf. \cite[proof of Theorem 5.4]{Tr1}):
\begin{lemma}\label{ibp3}
For $f \in \{f^1,\dots, f^d\} $ and $g \in C_0^{\infty}(B_k)$, it holds
\begin{equation*}
- \E^{B_k}(f,g)=  \frac{1}{2}   \  \int_{B_k} \left( \nabla f \ \cdot \frac{\nabla \rho  }{\rho}   \right) g \, dm
+  \frac{1}{2} \ \int_{B_k \cap \partial G} \ \nabla f  \cdot \eta   \ g \ \rho \ d\sigma,
\end{equation*}
where $\eta$ is a unit inward normal vector on $B_k \cap \partial G$ and $\sigma$ is the surface measure on $\partial G$.
\end{lemma}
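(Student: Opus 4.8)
The plan is to rewrite $\E^{B_k}(f^j,g)$ as an explicit Lebesgue integral and then to apply the classical Gauss--Green theorem on the Lipschitz set $B_k$, along the lines of \cite[proof of Theorem 5.4]{Tr1}.

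First I would make sense of $\E^{B_k}(f^j,g)$: although $f^j$ itself is not in $D(\E^{B_k})$, it belongs to $D(\E^{B_k})_{b,loc}$. Picking a cut-off $\chi\in C_0^{\infty}(B_k)$ with $\chi\equiv 1$ on a neighbourhood of $\operatorname{supp}(g)$, the function $f^j\chi$ extends to a function in $C_0^{\infty}(\R^d)$, hence $f^j\chi\in C^{\infty}(\overline G)\subset D(\E)$, and since $\operatorname{supp}(f^j\chi)$ is a compact subset of $B_k$ one gets $f^j\chi\in D(\E^{B_k})$. By strong locality $\E^{B_k}(f^j\chi,g)$ is independent of the choice of $\chi$, so $\E^{B_k}(f^j,g)$ is unambiguously defined, and using the explicit form of $\E$ on its core together with $\nabla(f^j\chi)=e_j$ on $\operatorname{supp}(g)$ (where $\chi\equiv 1$ and $\nabla\chi\equiv 0$) this gives
\[
\E^{B_k}(f^j,g)=\E^{B_k}(f^j\chi,g)=\tfrac12\int_{G}\nabla(f^j\chi)\cdot\nabla g\,dm=\tfrac12\int_{B_k}\partial_j g\,\rho\,dx .
\]

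Next I would use assumption {\bf ($\iota$)} to regard $B_k$ as $\Omega_k\cap\overline G$ for a bounded Lipschitz open set $\Omega_k\subset\R^d$ chosen so that $U_k:=\Omega_k\cap G$ is again a bounded Lipschitz domain. Then $B_k\setminus U_k=B_k\cap\partial G$ is Lebesgue null, $\partial U_k$ splits (up to a $\sigma$-null set) into a piece $\Gamma_k\subset G$ and the reflecting piece $B_k\cap\partial G\subset\partial G$, on which the outward unit normal of $U_k$ equals the outward unit normal of $G$, i.e. $-\eta$; moreover $\operatorname{supp}(g)$, a compact subset of the open set $\Omega_k$, stays away from $\Gamma_k\subset\partial\Omega_k$, so $g$ and $g\rho$ vanish near $\Gamma_k$. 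Since $\rho=\xi^2$ with $\xi\in H^{1,2}(G,dx)\cap C(\overline G)$ and $d_k^{-1}<\rho<d_k$ on $B_k$, the function $g\rho$ lies in $H^{1,2}(U_k,dx)$ and is continuous on $\overline{U_k}$, so the Gauss--Green theorem on $U_k$ yields
\[
\int_{B_k}\partial_j(g\rho)\,dx=\int_{\partial U_k}g\,\rho\,\nu_{U_k,j}\,d\sigma=-\int_{B_k\cap\partial G}g\,\rho\,\eta_j\,d\sigma .
\]
If one prefers not to quote Gauss--Green for Sobolev vector fields, one can instead approximate $\xi$ by $\xi_n\in C^{\infty}(\overline G)$ with $\xi_n\to\xi$ in $H^{1,2}(G,dx)$ and uniformly on $\overline G$, apply the classical divergence theorem to $g\xi_n^2$, and pass to the limit using $\xi_n^2\to\rho$ uniformly on $\overline G$ and $\nabla(\xi_n^2)\to\nabla\rho$ in $L^2(G,dx)$.

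Finally I would expand $\partial_j(g\rho)=(\partial_j g)\rho+g\,\partial_j\rho$, solve for $\int_{B_k}(\partial_j g)\rho\,dx$, and substitute into the first display; using that $1/\rho$ is bounded on $B_k$, so $\int_{B_k}g\,\partial_j\rho\,dx=\int_{B_k}\tfrac{\partial_j\rho}{\rho}\,g\,dm$, together with $\nabla f^j=e_j$ (whence $\nabla f^j\cdot\tfrac{\nabla\rho}{\rho}=\tfrac{\partial_j\rho}{\rho}$ and $\nabla f^j\cdot\eta=\eta_j$), this yields the asserted formula. The step I expect to be the main obstacle is the geometric bookkeeping in the Gauss--Green application: checking that $U_k=B_k\cap G$ is genuinely a Lipschitz domain whose boundary splits into a part not meeting $\operatorname{supp}(g)$ and a part lying on $\partial G$ with outward normal $-\eta$, and controlling the trace of $\rho$ there. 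This is precisely what is carried out in \cite[proof of Theorem 5.4]{Tr1}; the remaining steps are the elementary algebra above together with the localization of the first argument via $\chi$.
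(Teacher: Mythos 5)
Your argument is correct, and it is essentially the proof the paper intends: the paper gives no in-text argument but refers to \cite[proof of Theorem 5.4]{Tr1}, which proceeds exactly by localizing the coordinate function with a cutoff, writing $\E^{B_k}(f^j,g)=\tfrac12\int_{B_k}\partial_j g\,\rho\,dx$, and applying Gauss--Green (via smooth approximation of $\xi$) on the Lipschitz piece $B_k\cap G$, with the boundary term surviving only on $B_k\cap\partial G$ where the outward normal of $G$ is $-\eta$. Your sign bookkeeping with the inward normal $\eta$ and the use of {\bf ($\iota$)} to control $\rho$ on $B_k$ match the intended argument, so there is nothing to add.
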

According to \cite{Tr1} the closure of 
\[
\E^{\overline{B}_k}(f,g) = \frac{1}{2} \int_{B_k} \nabla f \cdot \nabla g \, dm, \quad f,g \in C^{\infty}(\overline{B}_k),
\]
in $L^2(\overline{B}_k,m) \equiv L^2(B_k,m)$, $k \ge 1$, denoted by $(\E^{\overline{B}_k},D(\E^{\overline{B}_k}))$, is a regular conservative Dirichlet form on $\overline{B}_k$ and moreover, it satisfies Nash type inequalities as in Lemma \ref{T;NI1} (cf. \cite[Lemma 5.4]{ShTr13a}). Therefore, we obatain estimates for 
$r_1^{B_k}(\cdot,\cdot)$ as in Corollary \ref{C;RDE1}. Using these estimates, Proposition \ref{p;smooth} and Lemma \ref{l;miz}, we obtain the following:
\begin{lemma}\label{t;sm3}
(i) $1_{B_k \cap \partial G} \cdot \rho  \sigma \in S_{00}^{B_k}$.\\
(ii) Let $f \in L^{\frac{d}{2} + \varepsilon} ( B_k , dx)$ for some $\varepsilon > 0$. Then
\[
1_{B_k} \cdot |f|  m \in S^{B_k}_{00}.
\]
In particular  $1_{B_k} \cdot \| \nabla \rho \| dx \in S_{00}^{B_k}$ for $d=2,3$ and for $d \ge 4$, if $\|\nabla \rho\| \in L^{\frac{d}{2} +\varepsilon}(B_k,dx)$ for some $\varepsilon > 0$. 
\end{lemma}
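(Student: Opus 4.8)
The plan is to verify, in each of the two cases, the hypotheses of Proposition~\ref{p;smooth} for the part Dirichlet form $(\E^{B_k},D(\E^{B_k}))$, using the local resolvent kernel density estimate of Corollary~\ref{C;RDE1} to control the relevant $1$-potential and then producing a continuous (indeed bounded) dominating function $r_1^G$ — for part~(ii) by Lemma~\ref{l;miz}, for part~(i) by a direct estimate of the potential of a surface measure over a Lipschitz hypersurface. Throughout I will use assumption {\bf ($\iota$)}: on $B_k$ one has $d_k^{-1}<\rho<d_k$, so that on $B_k$ the measures $m=\rho\,dx$ and $dx$ are mutually absolutely continuous with densities bounded away from $0$ and $\infty$; in particular $1_{B_k}\cdot h\,m\in S_{00}^{B_k}$ if and only if $1_{B_k}\cdot h\,dx\in S_{00}^{B_k}$, up to the constant $d_k$. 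I will also use that, by {\bf ($\eta$)}, $\xi$ and hence $\rho$ are bounded on $\overline{B}_k\subset\overline{G}$. (The minor point of which part form Proposition~\ref{p;smooth} is applied to — $(\E^{B_k},D(\E^{B_k}))$ or its regular extension $(\E^{\overline{B}_k},D(\E^{\overline{B}_k}))$ on the compact set $\overline{B}_k$, inside which $B_k$ is relatively compact open and where Corollary~\ref{C;RDE1} applies equally — is dealt with exactly as in \cite[Corollary 5.6]{ShTr13a}.)

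For part~(ii), I set $\mu:=1_{B_k}\cdot|f|\,m=1_{B_k}|f|\rho\,dx$. Since $f\in L^{d/2+\varepsilon}(B_k,dx)$, $\rho$ is bounded on $B_k$, and $B_k$ is bounded, the density $g:=1_{B_k}|f|\rho$ has compact support and lies in $L^{p}(\R^d,dx)$ for every $p\le d/2+\varepsilon$; in particular $\int_{\R^d}(1+\|y\|)^{\eta-d}|g(y)|\,dy<\infty$. For $d\ge3$ I take $\eta=2$ and some $p\in(d/2,d/2+\varepsilon]$, so $0<\eta-\frac{d}{p}<1$ and Lemma~\ref{l;miz} makes $V_{2}g$ H\"older continuous, hence bounded on $\R^d$; for $d=2$ I use the $\delta$-perturbed estimate of Corollary~\ref{C;RDE1}(ii), i.e.\ $\eta=2-\delta$, choosing $\delta>0$ small (depending on $\varepsilon$) and $p$ accordingly so that again $0<\eta-\frac{d}{p}<1$. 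Corollary~\ref{C;RDE1} then gives, for $m$-a.e.\ $x\in B_k$,
\[
R_1^{B_k}\mu(x)=\int_{B_k}r_1^{B_k}(x,y)\,|f(y)|\rho(y)\,dy\ \le\ C_k\,V_{\eta}g(x),
\]
the right-hand side being continuous and bounded; since also $\mu(B_k)=\int_{B_k}|f|\rho\,dx<\infty$ (so $R_1^{B_k}\mu\in L^1(B_k,\mu)$), Proposition~\ref{p;smooth}(iii) yields $1_{B_k}\cdot|f|\,m\in S_{00}^{B_k}$. For the ``in particular'', I apply this with $f=\|\nabla\rho\|/\rho$, for which $|f|\,m=\|\nabla\rho\|\,dx$: by {\bf ($\eta$)}, $\nabla\rho=2\xi\nabla\xi\in L^2(B_k,dx)$ because $\xi$ is bounded on $\overline{B}_k$ and $\nabla\xi\in L^2$, and since $B_k$ is bounded, $L^2(B_k,dx)\subset L^{d/2+\varepsilon}(B_k,dx)$ for small $\varepsilon>0$ precisely when $d/2+\varepsilon\le2$, i.e.\ for $d=2,3$; for $d\ge4$ one needs, and I have assumed, $\|\nabla\rho\|\in L^{d/2+\varepsilon}(B_k,dx)$.

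For part~(i), I set $\mu:=1_{B_k\cap\partial G}\cdot\rho\,\sigma$, a finite measure since $\rho$ is bounded on the compact set $\overline{B}_k\cap\partial G$ and $\sigma(B_k\cap\partial G)<\infty$. By Corollary~\ref{C;RDE1}, for $m$-a.e.\ $x\in B_k$,
\[
R_1^{B_k}\mu(x)\ \le\ C_k'\int_{B_k\cap\partial G}\frac{\sigma(dy)}{\|x-y\|^{d-2}}\qquad\Big(\text{resp.\ }\le C_k'\int_{B_k\cap\partial G}\frac{\sigma(dy)}{\|x-y\|^{d+\delta-2}}\ \text{if }d=2,\ \delta>0\text{ small}\Big).
\]
The remaining task is to see that the right-hand side is a bounded continuous function of $x\in\R^d$. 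Since $\partial G$ is Lipschitz, parametrizing it locally as a Lipschitz graph shows that the singularity $\|x-y\|^{-(d-2)}$ (resp.\ $\|x-y\|^{-\delta}$) is integrable against the $(d-1)$-dimensional Hausdorff measure $\sigma$ uniformly in $x$ — near a boundary point the integral reduces to $\int r^{-(d-2)}r^{d-2}\,dr<\infty$ (resp.\ $\int r^{-\delta}r\,dr<\infty$) — so the potential is bounded, and splitting $B_k\cap\partial G$ into a small neighbourhood of $x$ and its complement, and using dominated convergence on the complement, gives continuity; this is the single-layer-potential estimate used in the normal-reflection setting of \cite{Tr1} and \cite[Section 5]{ShTr13a}. (Alternatively one compares $\int_{\partial G}\|x-y\|^{-(d-2)}\sigma(dy)$ with $V_{1}(1_U)(x)$ for a bounded neighbourhood $U$ of $\partial G$ and invokes Lemma~\ref{l;miz} with $\eta=1$, $p>d$.) With this bounded continuous dominating function and $\mu$ finite, Proposition~\ref{p;smooth}(iii) gives $1_{B_k\cap\partial G}\cdot\rho\,\sigma\in S_{00}^{B_k}$.

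The only step that goes beyond the machinery already assembled is the boundedness and continuity of the Newtonian/single-layer potential of $\sigma$ over the Lipschitz piece $B_k\cap\partial G$ in part~(i), which must be argued by hand from a local graph parametrization of $\partial G$ rather than obtained directly from Lemma~\ref{l;miz}; this is classical and not a genuine obstacle. The rest is routine: the exponent bookkeeping needed to land in the admissible range of Lemma~\ref{l;miz}, and the Sobolev inclusion $L^2(B_k,dx)\subset L^{d/2+\varepsilon}(B_k,dx)$, which holds only for $d\le3$ and thereby explains the extra integrability hypothesis imposed on $\|\nabla\rho\|$ when $d\ge4$.
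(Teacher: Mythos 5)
Your proof is correct and follows exactly the route the paper indicates for this lemma: the local resolvent kernel estimate of Corollary~\ref{C;RDE1} coming from the Nash-type inequality, combined with Lemma~\ref{l;miz} for the absolutely continuous measure in part~(ii) and the classical single-layer potential bound over a Lipschitz graph for the surface measure in part~(i), feeding into Proposition~\ref{p;smooth} applied to the part forms on $B_k$. The only blemish is cosmetic: in the $d=2$ case of part~(i) the local integral should read $\int_0^{r_0} r^{-\delta}\,r^{d-2}\,dr=\int_0^{r_0}r^{-\delta}\,dr$ rather than $\int r^{-\delta}\,r\,dr$, which does not affect convergence.
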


In view of Lemma \ref{t;sm3} (ii), we assume from now on\\
\begin{itemize}
\item[{\bf ($\kappa$)}] If $d \ge 4$ and  $k \ge 1$, then $\|\nabla \rho \| \in L^{\frac{d}{2} + \varepsilon_k} (B_k,dx)$ for some $\varepsilon_k > 0$.\\
\end{itemize}
Applying Proposition \ref{p;LSFD} to the part Dirichlet form $(\E^{B_k},D(\E^{B_k}))$, we get:
\begin{prop}\label{t;lsfd3}
The process $\bM$ satisfies
\begin{equation}\label{lsfd3}
X_{t} = x + W_{t} + \int_{0}^{t}\frac{\nabla \rho}{2\, \rho}(X_s) \, ds + \int^t_0  \eta(X_s) \, d\ell_s^k \quad t < D_{B_k^c}
\end{equation}
$\P_x$-a.s. for any $x \in B_k$ where $W$ is a standard d-dimensional Brownian motion starting from zero and $\ell^k$ is the positive continuous additive functional of $X^{B_k}$  in the strict sense associated via the Revuz correspondence (cf. \cite[Theorem 5.1.3]{FOT}) with the weighted surface measure $\frac{1}{2} \rho  \sigma$ on $B_k \cap \partial G$.
\end{prop}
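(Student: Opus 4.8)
The plan is to apply the strict Fukushima decomposition, Proposition~\ref{p;LSFD}, to the part Dirichlet form $(\E^{B_k},D(\E^{B_k}))$ together with its associated part process $\bM|_{B_k}=X^{B_k}$, taking for $f$ each of the coordinate projections $f^1,\dots,f^d$. Since $(\E^{B_k},D(\E^{B_k}))$ is again symmetric, strongly local and regular, and since by Lemma~\ref{l;part3} the process $\bM|_{B_k}$ satisfies the absolute continuity condition on $B_k$, conditions (\textbf{H1}) and (\textbf{H2}) hold for $\E^{B_k}$, so Proposition~\ref{p;LSFD} is available verbatim with $E$ replaced by $B_k$, $\E$ by $\E^{B_k}$, $\bM$ by $\bM|_{B_k}$ and $m$ by $1_{B_k}\cdot m$. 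The identity it produces for $X^{B_k}$ is then, back for $\bM$, valid on the stochastic interval $[0,D_{B_k^c})$ for \emph{every} starting point in $B_k$, which is exactly the range in \eqref{lsfd3}.

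The first task is to verify the three hypotheses of Proposition~\ref{p;LSFD} for $f=f^j$. Hypothesis (i) is immediate: $f^j$ is continuous, is bounded on the relatively compact $B_k$, and equals $\chi f^j\in D(\E^{B_k})$ on any relatively compact open $G\subset B_k$ for a suitable cutoff $\chi\in C_0^\infty(B_k)$, so $f^j\in D(\E^{B_k})_{b,loc}$. For hypothesis (ii) we use that $\E$ admits the carr\'e du champ $\Gamma(u,v)=\nabla u\cdot\nabla v$, whence $\mu_{\langle f^i,f^j\rangle}=\delta_{ij}\,m$; in particular the Revuz measure of $\langle M^{[f^j]}\rangle$ restricted to a relatively compact open $G\subset B_k$ is $1_G\cdot m\le 1_{B_k}\cdot m$, and $1_{B_k}\cdot m\in S_{00}^{B_k}$ by Lemma~\ref{t;sm3}(ii) with integrand $1\in L^{d/2+\varepsilon}(B_k,dx)$, so by domination $1_G\cdot m\in S_{00}^{B_k}$. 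For hypothesis (iii) we invoke the integration by parts formula of Lemma~\ref{ibp3}: for $g$ in a special standard core of $\E^{B_k}$ (by density it suffices to have the formula for $g\in C_0^\infty(B_k)$),
\[
\E^{B_k}(f^j,g)=\int_{B_k} g\,d\nu^{f^j},\qquad \nu^{f^j}=-\tfrac12\,\tfrac{\partial_j\rho}{\rho}\,m-\tfrac12\,\eta_j\,\rho\,\sigma|_{B_k\cap\partial G}.
\]
Writing $\nu^{f^j}=\nu^{(1)}-\nu^{(2)}$ with $\nu^{(1)}=\tfrac12\tfrac{(\partial_j\rho)^-}{\rho}m+\tfrac12(\eta_j)^-\rho\sigma|_{B_k\cap\partial G}$ and $\nu^{(2)}=\tfrac12\tfrac{(\partial_j\rho)^+}{\rho}m+\tfrac12(\eta_j)^+\rho\sigma|_{B_k\cap\partial G}$, both $\nu^{(i)}$ are positive and dominated, on any relatively compact open $G\subset B_k$, by $\tfrac12\|\nabla\rho\|\,dx+\tfrac12\rho\,\sigma|_{B_k\cap\partial G}$; hence $1_G\cdot\nu^{(i)}\in S_{00}^{B_k}$ by Lemma~\ref{t;sm3}(i),(ii), using assumption {\bf ($\kappa$)} for $d\ge4$ to get $1_{B_k}\cdot\|\nabla\rho\|\,dx\in S_{00}^{B_k}$.

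With the hypotheses verified, Proposition~\ref{p;LSFD} gives, $\P_x$-a.s.\ for every $x\in B_k$ and all $t<D_{B_k^c}$,
\[
X_t^j=x_j+M_t^{[f^j]}+N_t^{[f^j]},\qquad N^{[f^j]}=-A^{(1)}+A^{(2)},
\]
with $A^{(i)}$ the strict PCAF of $\bM|_{B_k}$ of Revuz measure $\nu^{(i)}$. Since $N^{[f^j]}$ is, via the Revuz correspondence, the additive functional attached to $-\nu^{f^j}=\tfrac12\tfrac{\partial_j\rho}{\rho}m+\tfrac12\eta_j\,\rho\,\sigma|_{B_k\cap\partial G}$, its $m$-absolutely continuous part contributes $\int_0^t\tfrac{\partial_j\rho}{2\rho}(X_s)\,ds$, while its surface part contributes $\int_0^t\eta_j(X_s)\,d\ell_s^k$, where $\ell^k\in A_{c,1}^{+,B_k}$ is the strict PCAF of Revuz measure $\tfrac12\rho\,\sigma|_{B_k\cap\partial G}$ and $\eta_j$ is the ($\sigma$-measurable, bounded by $1$) $j$-th component of the inward normal; thus $N_t^{[f^j]}=\int_0^t\tfrac{\partial_j\rho}{2\rho}(X_s)\,ds+\int_0^t\eta_j(X_s)\,d\ell_s^k$. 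Finally $M^{[f^j]}$ is a continuous local MAF in the strict sense whose covariations have Revuz measures $\mu_{\langle f^i,f^j\rangle}=\delta_{ij}\,m$, i.e.\ $\langle M^{[f^i]},M^{[f^j]}\rangle_t=\delta_{ij}\,t$ for $t<D_{B_k^c}$; by L\'evy's characterization (after the usual extension beyond $D_{B_k^c}$) $W:=(M^{[f^1]},\dots,M^{[f^d]})$ is a standard $d$-dimensional Brownian motion started at $0$, and collecting the $d$ coordinates into vector form yields \eqref{lsfd3}.

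The routine ingredients are hypothesis (i), the Revuz bookkeeping, and the L\'evy characterization. The substantive obstacle — and the reason the localization apparatus of Subsection~\ref{s;3.1sfdm} is needed at all — is the supersmoothness demanded in hypotheses (ii) and (iii), notably $1_{B_k\cap\partial G}\cdot\rho\,\sigma\in S_{00}^{B_k}$ and $1_{B_k}\cdot\|\nabla\rho\|\,dx\in S_{00}^{B_k}$; these are precisely Lemma~\ref{t;sm3}, which itself rests on the Nash-type inequality of Lemma~\ref{T;NI1}, the ensuing local resolvent kernel bounds of Corollary~\ref{C;RDE1}, Mizuta's potential estimate Lemma~\ref{l;miz}, and Proposition~\ref{p;smooth} (the global estimate of Lemma~\ref{l;Feller}(ii) being insufficient here). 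Beyond citing these, the only points needing mild care are that $C_0^\infty(B_k)$ indeed furnishes a special standard core of $\E^{B_k}$ (equivalently, that Lemma~\ref{ibp3} extends by density, both sides being continuous in the relevant norms) and that the decomposition, obtained for the part process $\bM|_{B_k}$, transfers back to $\bM$ on $[0,D_{B_k^c})$, which holds because the two processes agree there.
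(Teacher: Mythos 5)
Your proposal is correct and follows essentially the same route as the paper: the paper obtains Proposition \ref{t;lsfd3} precisely by applying the strict Fukushima decomposition (Proposition \ref{p;LSFD}) to the part Dirichlet form $(\E^{B_k},D(\E^{B_k}))$, with hypothesis (iii) supplied by the integration by parts formula of Lemma \ref{ibp3} and the supersmoothness requirements in (ii) and (iii) supplied by Lemma \ref{t;sm3} (which in turn rests on the Nash-type inequality, the local resolvent kernel bounds of Corollary \ref{C;RDE1}, Lemma \ref{l;miz} and Proposition \ref{p;smooth}), followed by the usual Revuz identification of the drift and L\'evy's characterization for the martingale part. Your sign bookkeeping for $\nu^{f^j}$ and the identification of $\ell^k$ with the Revuz measure $\tfrac12\rho\,\sigma$ on $B_k\cap\partial G$ match the paper's statement exactly.
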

The proofs of the following two lemmas can be found  in \cite[Section 5]{ShTr13a}. 
\begin{lemma}\label{l;limit2}
$\P_x \big(\lim_{k \rightarrow \infty} D_{B_{k}^c} =\infty) =\P_x \big(\lim_{k \rightarrow \infty} \sigma_{B_{k}^c} =\infty \big)=1$ for all $x \in E$.
\end{lemma}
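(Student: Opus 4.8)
The plan is to run a pathwise argument combining the conservativeness and path-continuity of $\bM$ on $E$ with the fact that $(B_k)_{k\ge1}$ exhausts $E$. First I would record the elementary monotonicity: since $B_k\subset B_{k+1}$, the complements in $E$ satisfy $B_{k+1}^{c}\subset B_{k}^{c}$, so $k\mapsto D_{B_k^c}$ and $k\mapsto\sigma_{B_k^c}$ are nondecreasing and the limits $D:=\lim_{k}D_{B_k^c}$ and $\sigma:=\lim_{k}\sigma_{B_k^c}$ exist in $[0,\infty]$ with $D\le\sigma$. Moreover, for fixed $x\in E$ there is $k_x$ with $x\in B_k$ for all $k\ge k_x$, and then right-continuity of the paths (together with $B_k$ being open in $E$) gives $D_{B_k^c}=\sigma_{B_k^c}$ for all $k\ge k_x$; hence $D=\sigma$ $\P_x$-a.s., and it suffices to prove $\P_x(\sigma=\infty)=1$ for every $x\in E$.

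Next I would assume, towards a contradiction, that $\P_x(\sigma<\infty)>0$ and work on the event $\{\sigma<\infty\}$. Recall that $\bM$ is a conservative diffusion on $E$ — this is already noted after condition {\bf ($\theta$)} (one may also see it from the conservativeness of $(\E,D(\E))$, the fact that $\mathrm{Cap}(\overline G\setminus E)=0$, and \cite[Theorem 4.5.4]{FOT}), so $\P_x(\zeta=\infty)=1$ and $t\mapsto X_t$ is continuous on $[0,\infty)$ $\P_x$-a.s. On $\{\sigma<\infty\}$ we therefore have $\sigma_{B_k^c}\le\sigma<\infty=\zeta$ for every $k$, so each $X_{\sigma_{B_k^c}}$ is a point of $E$, and by path-continuity together with $\sigma_{B_k^c}\uparrow\sigma$ we get $X_\sigma=\lim_{k\to\infty}X_{\sigma_{B_k^c}}\in E$.

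Finally I would use that each $B_k$ is open in $E$ and $B_k^c:=E\setminus B_k$ is closed: continuity of paths and $\sigma_{B_k^c}<\infty$ force $X_{\sigma_{B_k^c}}\in B_k^c$. Fixing $k_0\ge1$, for every $k\ge k_0$ we have $B_k^c\subset B_{k_0}^c$, hence $X_{\sigma_{B_k^c}}\in B_{k_0}^c$; since $B_{k_0}^c$ is closed, letting $k\to\infty$ yields $X_\sigma\in B_{k_0}^c$. As $k_0$ is arbitrary, $X_\sigma\in\bigcap_{k_0\ge1}B_{k_0}^c=E\setminus\bigcup_{k_0\ge1}B_{k_0}=\emptyset$ by {\bf ($\iota$)}, a contradiction. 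Thus $\P_x(\sigma<\infty)=0$, and by the first step $\P_x(\lim_k D_{B_k^c}=\lim_k\sigma_{B_k^c}=\infty)=1$ for every $x\in E$. The only genuinely delicate points are the small bookkeeping separating the d\'ebuts $D_{B_k^c}$ from the first hitting times $\sigma_{B_k^c}$, and the observation that it is precisely conservativeness ($\zeta=\infty$) that prevents the diffusion from reaching the capacity-zero set $\overline G\setminus E$ in finite time; once path-continuity and $\zeta=\infty$ are in hand, what remains is the short topological argument above, which is the same as that of \cite[Lemma 5.10]{ShTr13a}.
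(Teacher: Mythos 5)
Your argument is correct and follows the standard approach for such exhaustion lemmas (and indeed the one indicated by the paper, which simply refers to \cite[Section 5]{ShTr13a} for the proof): monotonicity of the hitting times, reduction from $D_{B_k^c}$ to $\sigma_{B_k^c}$ via openness of $B_k$ and right-continuity, then a pathwise contradiction using conservativeness, continuity, closedness of $B_{k_0}^c$ in $E$, and $\bigcup_k B_k = E$ from {\bf ($\iota$)}. All steps check out; in particular, you correctly pin down exactly which hypotheses are used — absolute continuity giving $\P_x(\zeta=\infty)=1$ and path-continuity for every $x\in E$, not merely quasi-every $x$, which is what makes the pointwise statement work.
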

\begin{lemma}\label{T;LOCAL}
$\ell_t^{k} =  \ell_t^{k+1}, \; \forall t < \sigma_{B_{k}^{c}}$ $\P_{x}$-a.s. for all $x \in B_{k}$ where $\ell_t^{k}$ is the positive continuous additive functional of $X^{B_k}$  in the strict sense associated to $1_{B_k} \cdot \frac{ \rho   \sigma}{2}  \in S_{00}^{B_{k}}$. In particular $\ell_t : = \lim_{k \rightarrow \infty} \ell_t^{k}$, $t \ge 0$, is well defined in $A_{c,1}^{+,E}$, and related to $\frac{\rho \sigma}{2}$ via the Revuz correspondence.
\end{lemma}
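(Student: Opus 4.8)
The plan is to derive the consistency identity $\ell_t^{k}=\ell_t^{k+1}$ on $\{t<\sigma_{B_k^c}\}$ from the \emph{uniqueness} in the strict Revuz correspondence, and then to read off all properties of the limiting functional $\ell$ from those of the $\ell^k$.

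First I would record the two structural facts the argument rests on. Since $B_k\subset B_{k+1}$ are relatively compact open subsets of $E$, the part process $\bM|_{B_k}$ is the part process of $\bM|_{B_{k+1}}$ on $B_k$ (equivalently, the part of $\bM$ on $B_k$), by the transitivity of the part operation and the general theory of part processes (cf.\ \cite[Section 4.4 and Theorem A.2.10]{FOT}); moreover, each $\bM|_{B_k}$ inherits the absolute continuity condition by (\ref{abspart}) (which is precisely how $p_t^{B_k}$ was produced; see also Lemma \ref{l;part3}). Consequently, since $1_{B_k}\cdot\tfrac{1}{2}\rho\sigma\in S_{00}^{B_k}$ by Lemma \ref{t;sm3}(i) and hence lies in $S_1^{B_k}$ by Proposition \ref{p;smooth}, there is a \emph{unique} positive continuous additive functional of $\bM|_{B_k}$ in the strict sense with this Revuz measure, namely $\ell^k$, and uniqueness is exact in the sense that two strict PCAFs of $\bM|_{B_k}$ with the same Revuz measure are indistinguishable under $\P_x$ for \emph{every} $x\in B_k$ — this exactness being a consequence of the absolute continuity condition (cf.\ \cite[Theorem 5.1.3]{FOT} and its proof).

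Next I would compare $\ell^k$ with the stopped functional $\tilde\ell_t:=\ell^{k+1}_{t\wedge\sigma_{B_k^c}}$. For $x\in B_k$ one has $D_{B_k^c}=\sigma_{B_k^c}$ $\P_x$-a.s.\ (the hitting and entrance times of $B_k^c$ agree when starting in the open set $B_k$), and $\bM|_{B_k}$ coincides with $\bM|_{B_{k+1}}$ killed at $\sigma_{B_k^c}$; hence $\tilde\ell$ is a strict PCAF of $\bM|_{B_k}$. Its Revuz measure with respect to $(\E^{B_k},m)$ is the restriction to $B_k$ of the Revuz measure of $\ell^{k+1}$ with respect to $(\E^{B_{k+1}},m)$ — the standard transformation rule for Revuz measures under passage to a part process — i.e.\ it equals $1_{B_k}\cdot\big(\tfrac{1}{2}1_{B_{k+1}}\rho\sigma\big)=\tfrac{1}{2}1_{B_k}\rho\sigma$; alternatively this is checked directly from the $1$-potential identity $\EE_x\big[\int_0^\infty e^{-s}\,d\tilde\ell_s\big]=\EE_x\big[\int_0^{\sigma_{B_k^c}}e^{-s}\,d\ell^{k+1}_s\big]$ together with the Revuz formula. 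Since $\ell^k$ also has Revuz measure $\tfrac{1}{2}1_{B_k}\rho\sigma$, the exact uniqueness above forces $\ell^k_t=\tilde\ell_t=\ell^{k+1}_{t\wedge\sigma_{B_k^c}}$ for all $t\ge0$, $\P_x$-a.s.\ for all $x\in B_k$; restricting to $t<\sigma_{B_k^c}$ gives the asserted identity. For the limit statement, fix $x\in E$, choose $k_0$ with $x\in B_{k_0}$, and invoke Lemma \ref{l;limit2}: $\P_x$-a.s.\ $\sigma_{B_k^c}\nearrow\infty$, so for each $t$ there is a (random) $K\ge k_0$ with $t<\sigma_{B_k^c}$ for all $k\ge K$, whence $\ell^k_t=\ell^{k+1}_t=\cdots$ is eventually constant; thus $\ell_t:=\lim_{k\to\infty}\ell^k_t$ is well defined $\P_x$-a.s.\ for every $x\in E$. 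Continuity in $t$, additivity, and the strict-sense admissibility (a common defining set, finiteness before $\zeta$, etc.) pass to this locally eventually constant limit, so $\ell\in A_{c,1}^{+,E}$; and for any $k$ the Revuz measure of $\ell$ with respect to $(\E^E,m)$ restricted to $B_k$ coincides with that of $\ell^k$, namely $\tfrac{1}{2}1_{B_k}\rho\sigma$, so letting $k\to\infty$ with $\bigcup_k B_k=E$ identifies the Revuz measure of $\ell$ as $\tfrac{1}{2}\rho\sigma$.

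The main obstacle I anticipate is keeping everything in the \emph{strict} sense, i.e.\ simultaneously valid for every starting point in $B_k$ (resp.\ in $E$) with no $\E$-exceptional set concealed: this is exactly what forces the use of the absolute continuity condition for $\bM|_{B_k}$ together with the supersmoothness $1_{B_k}\cdot\tfrac{1}{2}\rho\sigma\in S_{00}^{B_k}$ from Lemma \ref{t;sm3}(i), and it is also why the transformation rule for Revuz measures under the part-process operation must be invoked in a form that respects the strict PCAF classes. Apart from that, the argument is the routine localization/consistency bookkeeping already carried out in \cite[Section 5]{ShTr13a}.
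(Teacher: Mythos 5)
Your proposal is correct and follows the same localization/consistency strategy that the paper refers to in \cite[Section 5]{ShTr13a}. The three pillars you use — (a) $\bM|_{B_k}$ is the part of $\bM|_{B_{k+1}}$ on $B_k$ and inherits the absolute continuity condition via \eqref{abspart}; (b) exact (no-exceptional-set) uniqueness of strict PCAFs with a given supersmooth Revuz measure under the absolute continuity condition, for which the correct reference is \cite[Theorem 5.1.7]{FOT} rather than Theorem 5.1.3; and (c) the Revuz-measure transformation rule under passage to a part process, which you correctly verify through the $1$-potential identity $\EE_x\big[\int_0^{\sigma_{B_k^c}}e^{-s}\,d\ell^{k+1}_s\big]=R_1^{B_k}\big(\tfrac12 1_{B_k}\rho\sigma\big)(x)$ — are exactly the ingredients needed, and the passage to the limit via Lemma \ref{l;limit2} correctly upgrades the local consistency $\ell^k_t=\ell^{k+1}_{t\wedge\sigma_{B_k^c}}$ to the definition and identification of $\ell\in A_{c,1}^{+,E}$ with Revuz measure $\tfrac12\rho\sigma$.
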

Letting $k \rightarrow \infty$ in (\ref{lsfd3}), we finally obtain (cf. the localization procedure of Section \ref{s;3.1sfdm}): 
\begin{thm}\label{t;lsfd4}
The process $\bM$ satisfies
\begin{equation*}
X_{t} = x + W_{t} + \int_{0}^{t}\frac{\nabla \rho}{2\, \rho}(X_s) \, ds + \int^t_0  \eta(X_s) \, d\ell_s \;,\;\;t  \ge 0
\end{equation*}
$\P_x$-a.s. for all $x \in E$ where $W$ is a standard d-dimensional Brownian motion starting from zero and $\ell$ is the positive continuous additive functional of $X$  in the strict sense associated via the Revuz correspondence (cf. \cite[Theorem 5.1.3]{FOT}) with the weighted surface measure  $\frac{1}{2} \rho \sigma$ on $E \cap \partial G$.
\end{thm}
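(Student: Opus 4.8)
The plan is to pass to the limit $k\to\infty$ in the local decomposition \eqref{lsfd3} of Proposition \ref{t;lsfd3}, carrying out the localization procedure described after Lemma \ref{l;miz}. Fix $x\in E$. Since $(B_k)_{k\ge 1}$ is increasing with $\bigcup_{k\ge 1}B_k=E$, we have $x\in B_k$ for all $k$ large, say $k\ge k_0(x)$, so that \eqref{lsfd3} holds $\P_x$-a.s.\ for every such $k$ on the stochastic interval $[0,D_{B_k^c})$. First I would record that the three additive-functional parts in \eqref{lsfd3} are consistent as $k$ varies. The continuous local martingale part has, for each $k$, deterministic covariation $\langle W^i,W^j\rangle_t=\delta_{ij}t$ up to $\sigma_{B_k^c}$; hence the martingale additive functionals of $\bM|_{B_k}$ and $\bM|_{B_{k+1}}$ attached to the coordinate projections differ on $[0,\sigma_{B_k^c})$ by a continuous local MAF with vanishing covariation, so they agree there, and $W_t:=\lim_k W_{t\wedge\sigma_{B_k^c}}$ is a well-defined continuous local MAF in the strict sense of $\bM|_E$; it is a standard $d$-dimensional Brownian motion since its quadratic variation is $t\,\mathrm{Id}$ and $\P_x(\lim_k\sigma_{B_k^c}=\infty)=1$ by Lemma \ref{l;limit2}. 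The drift $\int_0^t\frac{\nabla\rho}{2\rho}(X_s)\,ds$ is literally the same integral on each $B_k$ before the exit time, hence trivially consistent, and $\frac{\nabla\rho}{2\rho}\in L^1_{loc}(E,m)$ makes it the claimed absolutely continuous drift. Finally, the boundary parts satisfy $\ell_t^{k}=\ell_t^{k+1}$ for $t<\sigma_{B_k^c}$, so that $\ell_t:=\lim_k\ell_t^{k}$ is well defined in $A_{c,1}^{+,E}$ and Revuz-associated with $\tfrac12\rho\sigma$ on $E\cap\partial G$, exactly by Lemma \ref{T;LOCAL}.

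With the consistency in hand I would conclude as follows. By Lemma \ref{l;limit2}, for $\P_x$-a.e.\ $\omega$ and every $t\ge 0$ there is $k\ge k_0(x)$ with $t<D_{B_k^c}(\omega)$; on that event the identity \eqref{lsfd3} for that $k$ reads, term by term, precisely the asserted identity with $W_t$, $\int_0^t\frac{\nabla\rho}{2\rho}(X_s)\,ds$ and $\int_0^t\eta(X_s)\,d\ell_s$ replacing their $k$-truncated versions, by the consistency just established. Letting $k\to\infty$ therefore yields
\[
X_t = x + W_t + \int_0^t\frac{\nabla\rho}{2\rho}(X_s)\,ds + \int_0^t\eta(X_s)\,d\ell_s,\qquad t\ge 0,
\]
$\P_x$-a.s., for every $x\in E$, which is the assertion.

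The genuine obstacle — already resolved in the preparatory lemmas — is the pathwise consistency of the local martingale and local-time parts across the exhaustion $(B_k)_k$ together with the non-explosion statement $\lim_k\sigma_{B_k^c}=\infty$; once Lemmas \ref{l;limit2} and \ref{T;LOCAL} are available (proved in \cite[Section 5]{ShTr13a}), what remains is the routine limit passage above. One should only note that Proposition \ref{t;lsfd3} delivers the decomposition for \emph{every} starting point in $B_k$, not merely for quasi-every one, which is precisely what permits the conclusion for all $x\in E$ rather than for $\E$-q.e.\ $x$.
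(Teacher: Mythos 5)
Your proposal matches the paper's intended argument: the paper itself simply says ``Letting $k\to\infty$ in (\ref{lsfd3}), we finally obtain'' and refers back to the localization procedure of Subsection \ref{s;3.1sfdm}, and your write-up spells out exactly that procedure, invoking Lemmas \ref{l;limit2} and \ref{T;LOCAL} in the right places. One step, however, is argued incorrectly even though the conclusion is right: you claim the martingale parts of $\bM|_{B_k}$ and $\bM|_{B_{k+1}}$ ``differ on $[0,\sigma_{B_k^c})$ by a continuous local MAF with vanishing covariation.'' Knowing that both $M^{[f^i],k}$ and $M^{[f^i],k+1}$ have quadratic covariation $\delta_{ij}\,t$ does \emph{not} imply that $\langle M^{[f^i],k}-M^{[f^i],k+1}\rangle=0$; for that you would need to control the cross-variation $\langle M^{[f^i],k},M^{[f^j],k+1}\rangle$, which is not given. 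The clean way to obtain the martingale consistency is to not argue about covariations at all: the drift $\int_0^t\frac{\nabla\rho}{2\rho}(X_s)\,ds$ is trivially the same for every $k$, and $\int_0^t\eta(X_s)\,d\ell^k_s=\int_0^t\eta(X_s)\,d\ell^{k+1}_s$ for $t<\sigma_{B_k^c}$ by Lemma \ref{T;LOCAL}; subtracting these consistent terms and $x$ from $X_t$ in the identity \eqref{lsfd3} for $k$ and for $k+1$ shows directly that $W^k_t=W^{k+1}_t$ for $t<D_{B_k^c}$, $\P_x$-a.s.\ for $x\in B_k$. With that replacement the limit passage you describe is exactly what the paper does, and Lemma \ref{l;limit2} then upgrades the local identity to all $t\ge 0$.
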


\subsubsection{Degenerate elliptic forms and 2-admissible weights}\label{s;2admine}
In this subsection, we consider a  2-admissible weight $\rho$ (see \cite[Section 1.1]{HKM}) which is strictly positive, i.e. $\rho>0$ dx-a.e. and we let $m:=\rho\,dx$. We assume:
\begin{itemize}
\item[{\bf{(HP1)}}]  
$A=(a_{ij})_{1 \le i,j \le d}$ is a (possibly) degenerate symmetric $d \times d$ matrix of functions $a_{ij} \in L^1_{loc} (\R^d,dx)$ and there exists a constant $\lambda \ge 1$ such that for $dx$-a.e. $x \in \R^d$
\begin{equation}\label{ch5;eq;uelliptic} 
\lambda^{-1} \ \rho(x) \ \| \xi \|^2 \le \langle A(x) \xi, \xi \rangle \le \lambda \ \rho(x) \ \|\xi\|^2, \quad  \forall \xi \in \R^d.
\end{equation} 
\end{itemize}
By \eqref{ch5;eq;uelliptic} and the properties of 2-admissible weights, the symmetric bilinear form
\[
\E^A(f,g) = \frac{1}{2} \int_{\R^d} \langle A  \nabla f , \nabla g \rangle \, dx, \quad f,g \in C_0^{\infty}(\R^d)
\]
is closable in $L^2(\R^d, m)$ and the closure $(\E^A,D(\E^A))$ is a strongly local, regular, symmetric Dirichlet form. Note that $(\E^A,D(\E^A))$ can be written as 
\[
\E^A(f,g) = \frac{1}{2} \int_{\R^d} \Gamma^{\,\rho^{-1}A} (f,g)\,dm, \quad f,g \in D(\E^A),
\]
where $\Gamma^{\rho^{-1}A}$ is a carr\'e du champ (cf. Section \ref{s2;stmg}). We assume from now on
\begin{itemize}
\item[{\bf{(HP2)}}]  
either \ $\sqrt{\rho} \in H^{1,2}_{loc}(\R^d,dx)$\ \  or \ $\rho^{-1} \in L^1_{loc}(\R^d,dx)$.
\end{itemize}
Then the following holds:
\begin{lemma}\label{ch5;l;intrinsic}
For any $x,y \in \R^d$
\begin{equation}\label{ch5;eq;intrinsicm}
\frac{1}{\sqrt{\lambda}} \ \| x-y \| \le \gamma (x,y) \le \sqrt{\lambda} \ \| x-y \|   ,
\end{equation}
where $\lambda \in [1, \infty)$ is as in \eqref{ch5;eq;uelliptic}.
\end{lemma}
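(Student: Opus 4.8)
The plan is to establish the two inequalities in \eqref{ch5;eq;intrinsicm} separately: the lower bound by testing the variational formula defining $\gamma$ against an explicit affine function, and the upper bound by deducing Lipschitz continuity of any admissible $f$ from the constraint $\Gamma(f,f)\le 1$. Throughout I write $\Gamma=\Gamma^{\rho^{-1}A}$ for the carr\'e du champ of $(\E^A,D(\E^A))$, so that $\Gamma(f,f)=\langle\rho^{-1}A\nabla f,\nabla f\rangle$, and I note that dividing \eqref{ch5;eq;uelliptic} by $\rho>0$ gives
\[
\lambda^{-1}\|\xi\|^2\le\langle\rho^{-1}A(x)\xi,\xi\rangle\le\lambda\,\|\xi\|^2,\qquad\forall\xi\in\R^d,\ dx\text{-a.e. }x .
\]

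For the lower bound, fix $x\neq y$ (the case $x=y$ being trivial), put $e:=(x-y)/\|x-y\|$ and consider the affine function $f(z):=\lambda^{-1/2}\langle z-y,e\rangle$, $z\in\R^d$. It is continuous, and for any relatively compact open $G\subset\R^d$ and any $\varphi\in C_0^\infty(\R^d)$ with $\varphi\equiv 1$ on $G$ one has $f\varphi\in C_0^\infty(\R^d)\subset D(\E^A)$ with $f\varphi=f$ on $G$, so $f\in D(\E^A)_{loc}\cap C(\R^d)$. Since $\nabla f\equiv\lambda^{-1/2}e$, the upper ellipticity bound above yields $\Gamma(f,f)=\lambda^{-1}\langle\rho^{-1}Ae,e\rangle\le 1$ $dx$-a.e., hence $m$-a.e. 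As $f(x)-f(y)=\lambda^{-1/2}\|x-y\|$, the function $f$ is admissible in the supremum defining $\gamma$, whence $\gamma(x,y)\ge\lambda^{-1/2}\|x-y\|$.

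For the upper bound, let $f\in D(\E^A)_{loc}\cap C(\R^d)$ with $\Gamma(f,f)\le 1$ $m$-a.e. The crucial step is to show that, under {\bf (HP2)}, $f$ admits a distributional gradient $\nabla f\in L^1_{loc}(\R^d,dx)$ with $\Gamma(f,f)=\langle\rho^{-1}A\nabla f,\nabla f\rangle$ $dx$-a.e.: when $\rho^{-1}\in L^1_{loc}(\R^d,dx)$, approximate $f$ locally by elements of $C_0^\infty(\R^d)$, observe $\int_K\rho\,\|\nabla f\|^2\,dx<\infty$ on compacts $K$, and use $\int_K\|\nabla f\|\,dx\le\big(\int_K\rho\,\|\nabla f\|^2\,dx\big)^{1/2}\big(\int_K\rho^{-1}\,dx\big)^{1/2}$ to pass to the limit of the distributional gradients; when $\sqrt{\rho}\in H^{1,2}_{loc}(\R^d,dx)$, argue as in the closability proofs of \cite{St3} and \cite{BGS} (and the $2$-admissible weight framework of \cite{HKM}). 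Granting this, lower ellipticity gives $\|\nabla f\|^2\le\lambda\,\langle\rho^{-1}A\nabla f,\nabla f\rangle=\lambda\,\Gamma(f,f)\le\lambda$ $dx$-a.e. Mollifying, $f_\varepsilon:=f*\eta_\varepsilon$ is smooth with $\nabla f_\varepsilon=(\nabla f)*\eta_\varepsilon$, so $\|\nabla f_\varepsilon\|\le\sqrt{\lambda}$ everywhere and $|f_\varepsilon(x)-f_\varepsilon(y)|\le\sqrt{\lambda}\,\|x-y\|$; letting $\varepsilon\downarrow 0$ and using the continuity of $f$ (hence $f_\varepsilon\to f$ locally uniformly) gives $|f(x)-f(y)|\le\sqrt{\lambda}\,\|x-y\|$ for all $x,y\in\R^d$. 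Taking the supremum over admissible $f$ yields $\gamma(x,y)\le\sqrt{\lambda}\,\|x-y\|$, completing the proof.

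I expect the main obstacle to be exactly the step identifying the carr\'e du champ density $\Gamma(f,f)$ with $\langle\rho^{-1}A\nabla f,\nabla f\rangle$ for the \emph{honest} distributional gradient of $f\in D(\E^A)_{loc}$; this is the place where {\bf (HP2)} is used, and its two alternatives require slightly different (but standard) arguments. The remaining steps are elementary.
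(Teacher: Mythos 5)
Your proposal is correct and follows essentially the same route as the paper's source (\cite[proof of Lemma 2.2]{ShTr15}): the lower bound comes from testing the supremum with an affine function via the upper ellipticity bound in \eqref{ch5;eq;uelliptic}, and the upper bound from showing that any admissible $f$ is $\sqrt{\lambda}$-Lipschitz, with {\bf (HP2)} entering exactly where you say it does — in identifying $\Gamma(f,f)$ with $\langle\rho^{-1}A\nabla f,\nabla f\rangle$ for the distributional gradient, i.e.\ in showing the intrinsic metric of the form with $A=(\rho\delta_{ij})$ is Euclidean, consistent with Remark \ref{ch5;rebmea}(i).
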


\begin{remark}\label{ch5;rebmea}
\begin{itemize}
\item[(i)] Assumption {\bf{(HP2)}} is only used to show that the intrinsic metric of  $\E^A$ with $A=(\rho \delta_{ij})_{1\le i,j\le d}$ is the Euclidean metric, so that the second inequality in (\ref{ch5;eq;intrinsicm}) can be obtained (see \cite[proof of Lemma 2.2]{ShTr15}). It can hence be replaced by any other assumption that implies the fact mentioned above.
\item[(ii)] By \eqref{ch5;eq;intrinsicm}, the intrinsic balls $\tilde{B}_r(x)$, $x \in \R^d$, $r>0$,  are all bounded and open in the Euclidean topology, i.e. they have compact closure.
\end{itemize}
\end{remark}
Since $\rho$ is 2-admissible, it satisfies by definition the doubling property and the scaled weak Poincar\'{e} inequality with respect to the Euclidean metric. With the help of \eqref{ch5;eq;intrinsicm} one can then show that these properties are also satisfied with respect to the intrinsic metric $\gamma(\cdot,\cdot)$ (cf. \cite[Lemmas 2.4 and 2.8]{ShTr15}).  Therefore, the properties of Definition \ref{p;dcpo}(i)-(iv) are satisfied on $\R^d$. In particular, we obtain the existence of a transition semigroup $(P_t)_{t > 0}$ with H\"older continuous heat kernel (see Subsection \ref{s2;stmg}) and Theorems \ref{t;2.7she} and \ref{t;cstconsv} apply. By the latter and \eqref{ch5;eq;intrinsicm}, one can easily see that $\E^A$ is conservative and as in the Subsection \ref{sdbm1}, Proposition \ref{p;strongfl} applies, so that $(P_t)_{t \ge 0}$ is in particular strong Feller. \\
$2$-admissible weights arise typically as in the following example:
\begin{example}\label{ch5;exam2ad} (cf. \cite[Chapter 15]{HKM})
\begin{itemize}
\item[(i)] If $\rho \in A_2$, then $\rho$ is a $2$-admissible weight.
\item[(ii)] If $\rho(x) = | \emph{det} F^{\prime} (x) |^{1-2/d}$ where $F$ is a quasi-conformal mapping in $\R^d$, then $\rho$ is a $2$-admissible weight (for the definition see \cite[Section 3]{FKS}). 
\end{itemize}
\end{example}
\begin{remark}
Since $\E^A$ is given by the  carr\'e du champ  $\Gamma^{\rho^{-1}A}$ and by the Example \ref{ch5;exam2ad} we see that compared to Subsection \ref{sdbm1} the improvement is that we can consider a uniformly strictly globally elliptic diffusion matrix $\rho^{-1}A$ and more general weights $\rho$.
\end{remark}
According to Remark \ref{remhkenonex}, we will now choose an explicit 2-admissible weight. By Example \ref{ch5;exam2ad} a concrete 2-admissible weight that satisfies {\bf{(HP2)}} is given by
 \begin{equation}\label{ch5;eq;2admie}
 \rho(x) = \| x\|^{\alpha}, \quad  \alpha \in (-d, \infty), \quad d \ge 2.
 \end{equation}
Indeed, if $\alpha \in (-d,d)$, then $\rho \in A_2$ and if $\alpha \in (-d+2, \infty)$, then $\rho = | \text{det} F^{\prime} |^{1-2/d}$ for some quasi-conformal mapping $F$ (cf. \cite[Section 3]{FKS}). \\
Up to this end we fix $\rho$ as in \eqref{ch5;eq;2admie}. Then, similarly to Lemma \ref{l;Feller}(i), $(P_t)_{t > 0}$ is seen to be a Feller semigroup, in particular also in the case $\alpha\ge d$. Thus $\bf{(H1)}$ and  $\bf{(H2)}$ are satisfied.
\begin{remark}\label{ch5;re2con}
Let $\phi: \R^d \to \R$ be a measurable function such that  $c^{-1} \le \phi(x) \le c$ $dx$-a.e. for some constant $c \ge 1$. Then by  verifying the properties (I)-(IV) of \cite{ShTr15}, we see that $\phi \rho$ is a 2-admissible weight if $\rho$ is a 2-admissible weight. Moreover choosing $\tilde{A} = (\tilde{a}_{ij})$ satisfying {\bf{(HP1)}} for $\rho \equiv 1$ we see that $A: = \phi \rho \tilde{A}$ satisfies \eqref{ch5;eq;uelliptic} with respect to the 2-admissible weight $\phi \rho$. In particular, the framework of this subsection also includes Dirichlet forms given as the closure of 
\[
\frac{1}{2} \int_{\R^d} \langle \tilde{A} \nabla f, \nabla g  \rangle\,  \phi \rho \, dx, \quad f,g \in C_0^{\infty}(\R^d)
\]
on $L^2(\R^d, \phi \rho dx)$.
\end{remark}

\paragraph{Concrete Muckenhoupt $A_2$-weights with polynomial growth}

We first consider the case where 
 \begin{equation}\label{ch5;emucpol}
 \rho(x) = \| x\|^{\alpha}, \quad  \alpha \in (-d, 2), \quad  d \ge 3.
 \end{equation}
Then the explicit heat kernel estimate that we obtain by Theorem \ref{t;2.7she} is by \eqref{ch5;eq;intrinsicm} comparable to the one that we obtain with $\gamma$ being the Euclidean metric. Thus, we obtain the same resolvent kernel estimate 
as in Lemma \ref{l;Feller}(ii). Consider the following assumption
\begin{itemize}
\item[{\bf{(HP3)}}] For each $i,j =1 , \dots ,d$:
\begin{itemize}
\item[(i)] if  $\alpha \in (-d,-d+2]$, $ \frac{\partial_j a_{ij}}{\rho}  \in   L^1_{loc}(\R^d,m) \cap L^q_{loc}(\R^d,dx)$, $0 < 2- \frac{d}{q} <1$,
\item[(ii)] if $\alpha \in (-d+2,0)$, $ \partial_j a_{ij}  \in L^p_{loc}(\R^d,dx)$ with $0 < 2- \alpha - \frac{d}{p} <1$ and $\frac{\partial_j a_{ij}}{\rho}  \in L^q_{loc}(\R^d,dx)$ with $0 < 2- \frac{d}{q} <1$,
\item[(iii)] if $\alpha \in [0,2)$, $\partial_j a_{ij}   \in L^p_{loc}(\R^d,dx)$ with $0 < 2- \alpha - \frac{d}{p} <1$.
\end{itemize}
\end{itemize}
As in Lemma \ref{l;Feller}(ii), (iii), we then obtain (cf. \cite{ShTr15}):
\begin{lemma}\label{ch5;l;smoothn}
Let  $\rho$ be as in \eqref{ch5;emucpol} and $G\subset \R^d$ any relatively compact open set. Assume {\bf{(HP1)}} and {\bf{(HP3)}}. Then for each $i,j =1 , \dots ,d$
\[
1_{G} \cdot \frac{a_{ii}}{\rho}  m \in S_{00}, \quad 1_{G} \cdot \frac{|\partial_j a_{ij} |}{\rho}  m \in S_{00} .
\] 
\end{lemma}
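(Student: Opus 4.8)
The plan is to follow the template of Lemma \ref{l;Feller}(ii),(iii): first record a resolvent kernel density estimate, then dominate the relevant $1$-potentials by explicit Riesz potentials, invoke Lemma \ref{l;miz} for their continuity, and conclude via Proposition \ref{p;smooth}. For \textbf{Step 1} (the resolvent estimate), note that, as explained just before the statement, for $\rho(x)=\|x\|^{\alpha}$ with $\alpha\in(-d,2)$ the metric equivalence \eqref{ch5;eq;intrinsicm} turns the heat kernel bound of Theorem \ref{t;2.7she} into a Gaussian estimate in the Euclidean distance, so taking Laplace transforms in $t$ gives — exactly as for Lemma \ref{l;Feller}(ii) — a constant $c>0$ with
\[
r_1(x,y)\ \le\ c\Big(\Phi(x,y)+\Psi(x,y)\,1_{\{\alpha\in(-d,0)\}}\Big),\qquad x,y\in\R^d,
\]
where $\Phi(x,y)=\|x-y\|^{-(\alpha+d-2)}$ and $\Psi(x,y)=\|x-y\|^{-(d-2)}\|y\|^{-\alpha}$; moreover $1_G\,m$ is a finite measure since $G$ is relatively compact.

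\textbf{Step 2} handles $1_G\cdot\tfrac{a_{ii}}{\rho}m$ using only {\bf (HP1)}: taking $\xi=e_i$ in \eqref{ch5;eq;uelliptic} gives $\lambda^{-1}\rho\le a_{ii}\le\lambda\rho$ $dx$-a.e., so $a_{ii}/\rho$ is bounded and $R_1\big(1_G\cdot\tfrac{a_{ii}}{\rho}m\big)\le\lambda R_1(1_G m)=\lambda\int_G r_1(\cdot,y)\rho(y)\,dy$. By Step 1 this is $\le c\lambda\big[V_{2-\alpha}(1_G\|\cdot\|^{\alpha})+1_{\{\alpha<0\}}V_{2}(1_G)\big]$, since $\int_G\Phi(x,y)\|y\|^{\alpha}\,dy=V_{2-\alpha}(1_G\|\cdot\|^{\alpha})(x)$ and $\int_G\Psi(x,y)\|y\|^{\alpha}\,dy=V_{2}(1_G)(x)$. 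Both potentials are finite and continuous on $\R^d$: $V_2(1_G)$ by Lemma \ref{l;miz} with $\eta=2<d$ (here $d\ge3$); $V_{2-\alpha}(1_G\|\cdot\|^{\alpha})$ by Lemma \ref{l;miz} when $\alpha\in(-d+2,2)$ (so $\eta=2-\alpha\in(0,d)$ and one can pick $p\ge1$ with $0<2-\alpha-\tfrac dp<1$ and $1_G\|\cdot\|^{\alpha}\in L^p$), while for $\alpha\in(-d,-d+2]$ the exponent $\alpha+d-2\le0$ makes $\Phi$ bounded on $\overline G\times\overline G$, so the integral is finite and continuous by dominated convergence, $\|\cdot\|^{\alpha}$ being in $L^1(G)$ as $\alpha>-d$. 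Thus $R_1\big(1_G\cdot\tfrac{a_{ii}}{\rho}m\big)$ is dominated everywhere by a continuous function; since that bound holds in particular $(\tfrac{a_{ii}}{\rho}m)$-a.e. on $G$, Proposition \ref{p;smooth}(ii) gives $1_G\cdot\tfrac{a_{ii}}{\rho}m\in S_{00}$.

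\textbf{Step 3} handles $\mu:=1_G\cdot\tfrac{|\partial_j a_{ij}|}{\rho}m=1_G|\partial_j a_{ij}|\,dx$: by Step 1,
\[
R_1\mu(x)\ \le\ c\,V_{2-\alpha}\big(1_G|\partial_j a_{ij}|\big)(x)\ +\ c\,1_{\{\alpha<0\}}\,V_{2}\!\Big(1_G\tfrac{|\partial_j a_{ij}|}{\rho}\Big)(x).
\]
Now split according to {\bf (HP3)}. For $\alpha\in[0,2)$ only the first term occurs and Lemma \ref{l;miz} applies with $\eta=2-\alpha\in(0,2]$, $0<2-\alpha-\tfrac dp<1$ and $1_G|\partial_j a_{ij}|\in L^p$ by {\bf (HP3)}(iii). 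For $\alpha\in(-d+2,0)$ the first term is again covered by {\bf (HP3)}(ii) ($0<2-\alpha-\tfrac dp<1$, $\partial_j a_{ij}\in L^p_{loc}$) and the second by Lemma \ref{l;miz} with $\eta=2$, $0<2-\tfrac dq<1$, $\tfrac{\partial_j a_{ij}}{\rho}\in L^q_{loc}$ from {\bf (HP3)}(ii). For $\alpha\in(-d,-d+2]$ the $\Phi$-exponent $\alpha+d-2$ is $\le0$, so the first term is bounded and continuous with $|\partial_j a_{ij}|\in L^1(G)$ coming from $\tfrac{\partial_j a_{ij}}{\rho}\in L^1_{loc}(\R^d,m)$ in {\bf (HP3)}(i), and the second term is continuous by Lemma \ref{l;miz} with $\eta=2$ and the $L^q$-part of {\bf (HP3)}(i). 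In every case $R_1\mu$ is dominated everywhere by a continuous function, so Proposition \ref{p;smooth}(ii) yields $\mu\in S_{00}$.

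The only genuinely delicate part is the exponent bookkeeping in Step 3: one must verify range by range that the Riesz-potential orders $2-\alpha$ and $2$ lie in $(0,d)$ and that the local integrability exponents supplied by {\bf (HP3)}(i)--(iii) are precisely those required by Lemma \ref{l;miz}, and — in the sub-range $\alpha\le-d+2$ — replace the (then non-singular) $\Phi$-kernel argument by a boundedness/dominated-convergence argument. Everything else is a direct transcription of the reasoning behind Lemma \ref{l;Feller}(ii),(iii).
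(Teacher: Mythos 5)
Your proof is correct and follows exactly the route the paper intends: the paper's own ``proof'' consists of the remark preceding the statement (the intrinsic metric bounds from Lemma \ref{ch5;l;intrinsic} make the heat kernel estimate of Theorem \ref{t;2.7she} Euclidean-Gaussian, hence the resolvent estimate of Lemma \ref{l;Feller}(ii) carries over) together with the phrase ``as in Lemma \ref{l;Feller}(ii),(iii), we then obtain (cf.\ \cite{ShTr15}),'' which is precisely the Riesz-potential/Lemma \ref{l;miz}/Proposition \ref{p;smooth} chain you spell out, with the same case split dictated by {\bf (HP3)}(i)--(iii) and the degenerate $\Phi$-kernel argument when $\alpha\le -d+2$.
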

The following integration by parts formula is easily derived for any $g \in C_0^{\infty}(\R^d)$:
\begin{equation}\label{ch5;eq;ibp}
- \E^A(f^i,g)=    \frac{1}{2} \int_{\R^d} \left( \sum_{j=1}^{d}  \frac{\partial_j a_{ij}}{\rho}  \right) g \, dm, \quad 1 \le i \le d.
\end{equation}
Now using Lemma \ref{ch5;l;smoothn}, \eqref{ch5;eq;ibp}, Proposition \ref{p;LSFD} and the conservativeness, we get:
\begin{thm}\label{ch5;t;stfudeco}
 Assume {\bf{(HP1)}},  \eqref{ch5;emucpol} (which in particular implies {\bf{(HP2)}}), and {\bf{(HP3)}}. Then it holds $\P_x$-a.s. for any $x \in \R^d$, $i=1,\dots,d$
\begin{equation}\label{ch5;sfd2}
X_t^i = x^i + \sum_{j=1}^d \int_0^t \frac{\sigma_{ij}}{\sqrt{\rho}} (X_s) \ dW_s^j +   \frac{1}{2} \int^{t}_{0}   \left(\sum_{j=1}^d  \frac{ \partial_j a_{ij}}{\rho}\right) (X_s) \, ds, \quad t \ge 0,
\end{equation}\label{ch5;eq;sfdec}
where $(\sigma_{ij})_{1 \le i,j \le d} =  \sqrt{A} $ is the positive square root of the matrix $A$, $W = (W^1,\dots,W^d)$ is a standard d-dimensional Brownian motion on $\R^d$.
\end{thm}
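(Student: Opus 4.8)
The plan is to verify, for each coordinate projection $f^i$, $1\le i\le d$, the three hypotheses of the strict Fukushima decomposition (Proposition \ref{p;LSFD}) on the state space $\R^d$, and then to rewrite the resulting decomposition in It\^o form via L\'evy's characterization.

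First I would check hypothesis (i): $f^i$ is smooth, hence continuous on $\R^d$, and on every relatively compact open $G$ it coincides with a function in $C_0^\infty(\R^d)\subset D(\E^A)$, so $f^i\in D(\E^A)_{b,loc}$. For hypothesis (ii) I use that $\E^A$ is represented by the carr\'e du champ $\Gamma^{\rho^{-1}A}$: since $\nabla f^i=e_i$, the energy measure of $f^i$ is $\mu_{\langle M^{[f^i]}\rangle}=\Gamma^{\rho^{-1}A}(f^i,f^i)\,dm=\tfrac{a_{ii}}{\rho}\,dm$, and Lemma \ref{ch5;l;smoothn} gives exactly $1_G\cdot\tfrac{a_{ii}}{\rho}m\in S_{00}$ for every relatively compact open $G$. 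For hypothesis (iii) I would take $\mathscr{C}=C_0^\infty(\R^d)$, which is a special standard core of $\E^A$, and read off from the integration by parts formula \eqref{ch5;eq;ibp} that $\E^A(f^i,g)=\int_{\R^d}g\,d\nu^i$ with $\nu^i:=-\tfrac12\big(\sum_j\tfrac{\partial_j a_{ij}}{\rho}\big)\,dm$. Writing $\nu^i=\nu^{i,(1)}-\nu^{i,(2)}$ through the Jordan decomposition, both $\nu^{i,(1)}$ and $\nu^{i,(2)}$ are dominated by $\tfrac12\sum_j\tfrac{|\partial_j a_{ij}|}{\rho}\,dm$; since domination of positive measures is inherited by their $1$-potentials, Lemma \ref{ch5;l;smoothn} yields $1_G\cdot\nu^{i,(k)}\in S_{00}$ for $k=1,2$ and every such $G$.

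Proposition \ref{p;LSFD} then gives, $\P_x$-a.s. for every $x\in\R^d$, $f^i(X_t)-f^i(X_0)=M_t^{[f^i]}+N_t^{[f^i]}$ with $N^{[f^i]}=-A^{i,(1)}+A^{i,(2)}$, where $A^{i,(k)}$ is the strict PCAF with Revuz measure $\nu^{i,(k)}$; since $N^{[f^i]}$ corresponds via the Revuz correspondence to $-\nu^i=\tfrac12\big(\sum_j\tfrac{\partial_j a_{ij}}{\rho}\big)\,dm$ and this density lies in $L^1_{loc}(\R^d,m)$, we get $N_t^{[f^i]}=\tfrac12\int_0^t\big(\sum_j\tfrac{\partial_j a_{ij}}{\rho}\big)(X_s)\,ds$. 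For the martingale parts, strong locality together with the carr\'e du champ representation gives that $\langle M^{[f^i]},M^{[f^j]}\rangle$ has Revuz measure $\Gamma^{\rho^{-1}A}(f^i,f^j)\,dm=\tfrac{a_{ij}}{\rho}\,dm$, hence $\langle M^{[f^i]},M^{[f^j]}\rangle_t=\int_0^t\tfrac{a_{ij}}{\rho}(X_s)\,ds$. Using the uniform ellipticity $\lambda^{-1}I\le\rho^{-1}A\le\lambda I$, the symmetric positive square root $\tau:=\sqrt{\rho^{-1}A}=\rho^{-1/2}\sigma$ (with $\sigma=\sqrt A$, so $\tau_{ij}=\sigma_{ij}/\sqrt\rho$) is invertible at every point, and $\tau,\tau^{-1}$ are bounded. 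I would then set $W_t^j:=\sum_k\int_0^t(\tau^{-1})_{jk}(X_s)\,dM_s^{[f^k]}$, a continuous local martingale, and compute $\langle W^i,W^j\rangle_t=\int_0^t\big(\tau^{-1}(\rho^{-1}A)\tau^{-1}\big)_{ij}(X_s)\,ds=\delta_{ij}\,t$, so that $W=(W^1,\dots,W^d)$ is a standard $d$-dimensional Brownian motion by L\'evy's characterization. Inverting the relation, $M_t^{[f^i]}=\sum_j\int_0^t\tau_{ij}(X_s)\,dW_s^j=\sum_j\int_0^t\tfrac{\sigma_{ij}}{\sqrt\rho}(X_s)\,dW_s^j$, and adding $N^{[f^i]}$ yields \eqref{ch5;sfd2}. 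Conservativeness of $\E^A$, already established, ensures that the identity is valid for all $t\ge0$ with no explosion time entering.

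The main obstacle is precisely the verification of Proposition \ref{p;LSFD}(ii)--(iii), i.e. the supersmoothness (uniformly over relatively compact $G$) of the energy measure $\tfrac{a_{ii}}{\rho}m$ and of the drift measure $\tfrac12\sum_j\tfrac{|\partial_j a_{ij}|}{\rho}m$; this is exactly what Lemma \ref{ch5;l;smoothn} supplies, itself resting on the local resolvent kernel estimate that follows from the Nash type inequality of Lemma \ref{T;NI1} together with Lemma \ref{l;miz}. Once that is in place, the remainder is bookkeeping with the Revuz correspondence and the routine L\'evy argument. A minor point to watch is that $M^{[f^i]}$ is a priori only a local MAF in the strict sense, so the stochastic integral manipulations should be read as identities of continuous local martingales; this suffices since $\tau^{-1}$ and $\sigma/\sqrt\rho$ are bounded, and it does not affect the fact that $W$ is a genuine Brownian motion on $[0,\infty)$.
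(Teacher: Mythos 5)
Your proposal is correct and follows essentially the same route as the paper: the paper derives Theorem \ref{ch5;t;stfudeco} precisely by combining Lemma \ref{ch5;l;smoothn}, the integration by parts formula \eqref{ch5;eq;ibp}, the strict Fukushima decomposition of Proposition \ref{p;LSFD}, and conservativeness, which is exactly the chain of steps you carry out (your write-up merely makes explicit the Revuz identification and the L\'evy/square-root argument that the paper leaves implicit).
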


\paragraph{Concrete weights with polynomial growth induced by quasi-conformal mappings}\label{s;3122c}
Here, we consider
 \begin{equation}\label{ch5;eqmupo2}
 \rho(x) = \| x\|^{\alpha}, \quad  \alpha \in [2, \infty), \quad d \ge 2.
 \end{equation}
In this case the resolvent kernel estimate of Lemma \ref{l;Feller}(ii) may be not good enough and moreover, we are able to allow better integrability conditions (see {\bf{(HP3)}}$^{\prime}$ below) by using the localization procedure. 
By \cite[Example 3.3.2, Lemma 2.2.7 (ii)]{FOT} and \eqref{ch5;eq;uelliptic}, Cap$(\{0\}) = 0$. Let
\begin{equation}\label{ch5;eq;defbk}
B_k : = \{x \in \R^d  \ | \ (k+1)^{-1} <  \|x \| < k+1 \}, \quad k \ge 1.
\end{equation}
Then condition {\bf (L)} is immediately verified with $\kappa_{B_k}\equiv 1$ and  $d_k=(k+1)^{\alpha}$ for all  $k\ge 1$
Thus for the part Dirichlet forms of $(\E^{A,B_k},D(\E^{A,B_k}))$ of $(\E^A,D(\E^A))$ on $B_k$, we obtain resolvent kernel estimates according to Corollary \ref{C;RDE1}. Due to these improved estimates, we may assume that
\begin{itemize}
\item[{\bf{(HP3)}}$^{\prime}$]   $\partial_j  a_{ij}  \in L_{loc}^{\frac{d}{2} + \varepsilon}(\R^d,dx)$ for some $\varepsilon>0$ and  each $i,j =1 , \dots ,d$, 
\end{itemize}
in order to obtain:
\begin{lemma}\label{ch5;l;smooloc}
Assume {\bf{(HP1)}} and {\bf{(HP3)}}$^{\prime}$. Let  $\rho$ be as in \eqref{ch5;eqmupo2}. Then for each $i,j =1 , \dots ,d$
\[
1_{B_k} \cdot \frac{a_{ii}}{\rho}  m  \in S_{00}^{B_k}, \quad 1_{B_k} \cdot   \frac{|\partial_j a_{ij} |}{\rho}  m \in S_{00}^{B_k}.
\] 
\end{lemma}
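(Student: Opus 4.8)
The plan is to follow the proofs of Lemma~\ref{l;Feller}(iii) and Lemma~\ref{ch5;l;smoothn}, with the global resolvent kernel density estimate of Lemma~\ref{l;Feller}(ii) replaced by the local estimate of Corollary~\ref{C;RDE1}, applied to the part Dirichlet forms associated with $B_k$ as in \eqref{ch5;eq;defbk}. What makes the localization effective is that on the compact annulus $\overline{B_k}$ the weight $\rho(x)=\|x\|^{\alpha}$ with $\alpha\ge 2$ (see \eqref{ch5;eqmupo2}) is bounded above and bounded away from zero by strictly positive constants, so that $m=\rho\,dx$ and Lebesgue measure are comparable on $B_k$. Hence the two measures in question are exactly
\[
1_{B_k}\cdot\frac{a_{ii}}{\rho}\,m = 1_{B_k}\,a_{ii}\,dx,\qquad 1_{B_k}\cdot\frac{|\partial_j a_{ij}|}{\rho}\,m = 1_{B_k}\,|\partial_j a_{ij}|\,dx ;
\]
by \eqref{ch5;eq;uelliptic} of {\bf (HP1)} (testing against a coordinate vector) the density of the first is bounded on $B_k$, and by {\bf (HP3)}$'$ the density of the second lies in $L^{d/2+\varepsilon}(B_k,dx)$. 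In both cases one obtains a finite measure $\mu=g\,dx$ whose density $g$ is compactly supported in $\overline{B_k}$ and belongs to $L^{d/2+\varepsilon}(\R^d,dx)$.

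Next I would estimate the $1$-potential of $\mu$ for the reflected part form $(\E^{\overline B_k},D(\E^{\overline B_k}))$. By Corollary~\ref{C;RDE1} (its proof in fact yields the same bound for $r_1^{\overline B_k}$, since the Nash inequality gives the Gaussian estimate already for $p_t^{\overline B_k}$), for $m$-a.e.\ $x$,
\[
R_1^{\overline B_k}\mu(x)=\int_{B_k} r_1^{\overline B_k}(x,y)\,g(y)\,dy\ \le\ C_k\int_{\R^d}\frac{g(y)}{\|x-y\|^{d-2}}\,dy\ =\ C_k\,V_2g(x)
\]
when $d\ge 3$, and with $V_{2-\delta}g$ in place of $V_2g$ when $d=2$, for any $\delta>0$. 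After shrinking $\varepsilon$ (when $d\ge 3$) or fixing $\delta$ small enough (when $d=2$) so that with $\eta=2$ resp.\ $\eta=2-\delta$ and $p=d/2+\varepsilon$ the exponent condition $0<\eta-\frac dp<1$ of Lemma~\ref{l;miz} is met --- a short elementary computation --- and using that $g$ is compactly supported, so $\int_{\R^d}(1+\|y\|)^{\eta-d}|g(y)|\,dy<\infty$, Lemma~\ref{l;miz} shows that $V_\eta g$ is H\"older continuous on $\R^d$, hence bounded on the bounded set $B_k$. Thus $R_1^{\overline B_k}\mu$ is dominated $m$-a.e.\ on $B_k$ by a continuous function, and $\mu(B_k)<\infty$.

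Finally I would apply Proposition~\ref{p;smooth} with ambient Dirichlet form $(\E^{\overline B_k},D(\E^{\overline B_k}))$ and $G=B_k$, which is relatively compact and open in $\overline{B_k}$: inequality \eqref{p;smoothb} holds with $r_1^G:=C_kV_\eta g\in C(\overline{B_k})$, and condition (iii) of Proposition~\ref{p;smooth}, $R_1^{\overline B_k}(1_{B_k}\cdot\mu)\in L^1(B_k,\mu)$, follows from the boundedness of $R_1^{\overline B_k}\mu$ and $\mu(B_k)<\infty$. This gives $1_{B_k}\cdot\mu\in S_{00}^{\overline B_k}$, and since $(\E^{B_k},D(\E^{B_k}))$ is the part Dirichlet form of $(\E^{\overline B_k},D(\E^{\overline B_k}))$ on the open subset $B_k$ while $\mu$ is carried by $B_k$, membership descends to $1_{B_k}\cdot\mu\in S_{00}^{B_k}$ (the part resolvent, hence the $1$-potential, only decreases, and the class of test functions in the finite-energy estimate only shrinks). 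I expect the only real difficulty to be purely technical bookkeeping: keeping the exponent relations in Lemma~\ref{l;miz} consistent --- in particular the borderline role of the exponent $d/2+\varepsilon$ in {\bf (HP3)}$'$, and, when $d=2$, absorbing the $\delta$-loss present in Corollary~\ref{C;RDE1}(ii) --- together with the clean passage between $\E^{\overline B_k}$ and its part form $\E^{B_k}$ in the last step.
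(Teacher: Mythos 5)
Your proof is correct and follows essentially the same route the paper indicates (the paper leaves the proof implicit, pointing to condition {\bf (L)}, Corollary~\ref{C;RDE1}, Lemma~\ref{l;miz} and Proposition~\ref{p;smooth}): reduce to Lebesgue-measure densities using the boundedness of $\rho$ away from $0$ and $\infty$ on the annulus $B_k$, invoke the Nash-derived local resolvent kernel estimate, dominate by the Riesz potential $V_\eta g$, and conclude via Proposition~\ref{p;smooth}. Your explicit handling of the $\overline{B_k}$ versus $B_k$ issue — applying Proposition~\ref{p;smooth} with ambient space $\overline{B_k}$ and $G=B_k$ so that $G$ is genuinely relatively compact open, then descending via the comparison $r_1^{B_k}\le r_1^{\overline{B}_k}$ — makes precise a point the paper glosses over. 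One minor remark: when $d=2$ the exponent condition $0<\eta-\tfrac{d}{p}<1$ of Lemma~\ref{l;miz} may also require shrinking $\varepsilon$ (not only choosing $\delta$ small), which is harmless since $L^{d/2+\varepsilon}_{loc}$ only improves as $\varepsilon$ decreases on the bounded set $B_k$, but you should state it.
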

From \eqref{ch5;eq;ibp}, we obtain for the coordinate projections $f^i\in D(\E^{A,B_k})_{b,loc}$, $i=1,\dots,d$ and $g \in C_0^{\infty}(B_k)$ 
\begin{equation}\label{ch5;pibp}
- \E^{A,B_k}(f^i,g)=    \frac{1}{2}  \int_{B_k}  \left(  \sum_{j=1}^{d} \frac{\partial_j a_{ij}}{\rho} \right)  g \, dm.
\end{equation}
Then by Lemma \ref{ch5;l;smooloc}, \eqref{ch5;pibp} and Proposition \ref{p;LSFD} applied to the part process, we have:
\begin{prop}\label{t;c5lsfd3}
Assume {\bf{(HP1)}},  \eqref{ch5;eqmupo2}, and  {\bf{(HP3)}}$^{\prime}$. Then the process $\bM$ satisfies \eqref{ch5;sfd2} up 
to $t < D_{B_k^c}$, $\P_x$-a.s. for any $x \in B_k$.
\end{prop}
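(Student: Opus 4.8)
The plan is to apply the strict Fukushima decomposition of Proposition~\ref{p;LSFD} to the part Dirichlet form $(\E^{A,B_k},D(\E^{A,B_k}))$ on $B_k$ — whose associated Hunt process is the part process $\bM|_{B_k}$, with lifetime $D_{B_k^c}$ — and then to identify the resulting local martingale part as a stochastic integral against a Brownian motion. This is an instance of the localization procedure described after Lemma~\ref{l;miz}, and it runs exactly parallel to the proof of Theorem~\ref{ch5;t;stfudeco}. Fix $k\ge 1$. Since condition {\bf (L)} holds here with $\kappa_{B_k}\equiv 1$ and $d_k=(k+1)^{\alpha}$, the Nash type inequalities of Lemma~\ref{T;NI1} and hence the local resolvent kernel estimates of Corollary~\ref{C;RDE1} are available for $(\E^{A,B_k},D(\E^{A,B_k}))$, which is exactly what underlies Lemma~\ref{ch5;l;smooloc}. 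Note also that $(\E^{A,B_k},D(\E^{A,B_k}))$ is symmetric and inherits {\bf (H1)}, {\bf (H2)} from $(\E^A,D(\E^A))$ by passing to the part process, so Proposition~\ref{p;LSFD} is applicable to it.

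First I would verify the three hypotheses of Proposition~\ref{p;LSFD} for each coordinate projection $f^i$, $1\le i\le d$, relative to $(\E^{A,B_k},D(\E^{A,B_k}))$. Hypothesis~(i) is immediate: $f^i$ is continuous and, since $B_k$ is relatively compact, $f^i\in D(\E^{A,B_k})_{b,loc}$. For~(ii), since $\E^{A,B_k}$ is represented by the carr\'e du champ $\Gamma^{\,\rho^{-1}A}$ and $\nabla f^i$ is the $i$-th unit vector, the energy measure of $f^i$ on $B_k$ is $\mu_{\langle M^{[f^i]}\rangle}=\Gamma^{\,\rho^{-1}A}(f^i,f^i)\,dm=\tfrac{a_{ii}}{\rho}\,dm$, and Lemma~\ref{ch5;l;smooloc} gives $1_{B_k}\cdot\tfrac{a_{ii}}{\rho}\,m\in S_{00}^{B_k}$, whence $1_G\cdot\tfrac{a_{ii}}{\rho}\,m\in S_{00}^{B_k}$ for every relatively compact open $G\subset B_k$ by domination. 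For~(iii), the integration by parts formula \eqref{ch5;pibp} yields $\E^{A,B_k}(f^i,g)=\int_{B_k} g\,d\nu^{(i)}$ for all $g$ in the special standard core $C_0^\infty(B_k)$, with $\nu^{(i)}=-\tfrac12\big(\sum_{j=1}^d\tfrac{\partial_j a_{ij}}{\rho}\big)\,dm$; writing $\nu^{(i)}=\nu^{(i),(1)}-\nu^{(i),(2)}$ through the Jordan decomposition of its density and invoking Lemma~\ref{ch5;l;smooloc} for $\tfrac{|\partial_j a_{ij}|}{\rho}$, one obtains $1_G\cdot\nu^{(i),(1)},\,1_G\cdot\nu^{(i),(2)}\in S_{00}^{B_k}$ for every relatively compact open $G\subset B_k$.

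Proposition~\ref{p;LSFD} then delivers, for each $i$ and $\P_x$-a.s. for all $x\in B_k$, a decomposition $X_t^i-x^i=M_t^{[f^i],k}+N_t^{[f^i],k}$ valid for $t<D_{B_k^c}$. Since $\nu^{(i)}$ is absolutely continuous with respect to $m$, the Revuz correspondence identifies $N_t^{[f^i],k}=\tfrac12\int_0^t\big(\sum_{j=1}^d\tfrac{\partial_j a_{ij}}{\rho}\big)(X_s)\,ds$, while $M^{[f^i],k}$ is a local MAF in the strict sense of $\bM|_{B_k}$; polarizing the energy measures via $\Gamma^{\,\rho^{-1}A}(f^i+f^j,f^i+f^j)=\rho^{-1}(a_{ii}+2a_{ij}+a_{jj})$ gives $\langle M^{[f^i],k},M^{[f^j],k}\rangle_t=\int_0^t\tfrac{a_{ij}}{\rho}(X_s)\,ds$ for $t<D_{B_k^c}$. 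To produce the driving Brownian motion, I would use that on $B_k$ the matrix $C:=\rho^{-1}A$ is uniformly elliptic by {\bf (HP1)} (namely $\lambda^{-1}I\le C\le\lambda I$), so that $C^{-1/2}=\sqrt{\rho}\,\sigma^{-1}$ with $\sigma=\sqrt{A}$ has bounded measurable entries on $B_k$; setting $W_t^j:=\sum_{i=1}^d\int_0^t(\sqrt{\rho}\,\sigma^{-1})_{ji}(X_s)\,dM_s^{[f^i],k}$ for $t<D_{B_k^c}$ (extended past $D_{B_k^c}$ by an independent Brownian motion), one checks $\langle W^j,W^\ell\rangle_t=\delta_{j\ell}\,t$ and concludes by L\'evy's characterization that $W=(W^1,\dots,W^d)$ is a standard $d$-dimensional Brownian motion with $M_t^{[f^i],k}=\sum_{j=1}^d\int_0^t\tfrac{\sigma_{ij}}{\sqrt{\rho}}(X_s)\,dW_s^j$. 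Combining the two pieces yields exactly \eqref{ch5;sfd2} up to $t<D_{B_k^c}$.

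Modulo the inputs already granted — Lemma~\ref{ch5;l;smooloc}, the integration by parts formula \eqref{ch5;pibp}, and the verification of condition {\bf (L)} — the only genuine work is the martingale-representation step, and this is routine precisely because the uniform ellipticity of $C$ on $B_k$ makes $C^{-1/2}$ bounded there, so no enlargement of the probability space is forced. I expect the main point requiring care to be the bookkeeping in running Proposition~\ref{p;LSFD} on the relatively compact set $B_k$ rather than globally: one works with the part process $\bM|_{B_k}$, with additive functionals in the strict sense of that process, with $S_{00}^{B_k}$ in place of $S_{00}$, and all identities hold only for $t<D_{B_k^c}$.
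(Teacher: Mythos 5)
Your proposal is correct and follows essentially the same route as the paper, which proves the proposition precisely by combining Lemma \ref{ch5;l;smooloc}, the integration by parts formula \eqref{ch5;pibp}, and Proposition \ref{p;LSFD} applied to the part process on $B_k$, with the martingale part identified through the carr\'e du champ $\Gamma^{\rho^{-1}A}$ and the uniform ellipticity of $\rho^{-1}A$ from {\bf (HP1)}. Your additional care about working with $S_{00}^{B_k}$, strict additive functionals of $\bM|_{B_k}$, and the validity of all identities only for $t<D_{B_k^c}$ matches the localization procedure the paper describes after Lemma \ref{l;miz}.
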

Since Lemma \ref{l;limit} holds, we finally obtain:
\begin{thm}\label{ch5;t;solex0}
 Assume {\bf{(HP1)}}, \eqref{ch5;eqmupo2}, and  {\bf{(HP3)}}$^{\prime}$. Then the process $\bM$ satisfies \eqref{ch5;sfd2} for all $x \in \R^d \setminus \{0\}$.
\end{thm}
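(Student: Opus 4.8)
The plan is to run the \emph{localization procedure} of Subsection~\ref{s;3.1sfdm} (described after Lemma~\ref{l;miz}) along the exhaustion $(B_k)_{k\ge1}$ of $B:=\R^d\setminus\{0\}$ given by \eqref{ch5;eq;defbk}. Condition \textbf{(L)} has already been checked (with $\kappa_{B_k}\equiv1$ and $d_k=(k+1)^{\alpha}$), so Lemma~\ref{ch5;l;smooloc}, the integration by parts formula \eqref{ch5;pibp}, and Proposition~\ref{p;LSFD} applied to the part Dirichlet form $(\E^{A,B_k},D(\E^{A,B_k}))$ — which is exactly the content underlying Proposition~\ref{t;c5lsfd3} — provide, for each fixed $k$ and every $x\in B_k$, the strict Fukushima decomposition of the coordinate projections on $B_k$:
\[
X_t^i = x^i + M_t^{[f^i],k} + N_t^{[f^i],k},\qquad t<\sigma_{B_k^c},\ \ \P_x\text{-a.s.},\quad i=1,\dots,d,
\]
where $N_t^{[f^i],k}=\tfrac12\int_0^{t}\Big(\sum_{j=1}^d\tfrac{\partial_j a_{ij}}{\rho}\Big)(X_s)\,ds$ on $\{t<\sigma_{B_k^c}\}$ and $M^{[f^i],k}$ is a MAF in the strict sense of $\bM|_{B_k}$ with $\langle M^{[f^i],k},M^{[f^j],k}\rangle_{t\wedge\sigma_{B_k^c}}=\int_0^{t\wedge\sigma_{B_k^c}}\tfrac{a_{ij}}{\rho}(X_s)\,ds$; rewriting the martingale part through a Brownian motion turns this into \eqref{ch5;sfd2} stopped at $D_{B_k^c}$, which is Proposition~\ref{t;c5lsfd3}.

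The first step is consistency in $k$. For the drift there is nothing to do: the density $\tfrac12\sum_j\tfrac{\partial_j a_{ij}}{\rho}$ is independent of $k$, so the strict PCAF's $N^{[f^i],k}$ (associated via the Revuz correspondence with $\tfrac12\sum_j\tfrac{\partial_j a_{ij}}{\rho}\,m$ restricted to $B_k$) satisfy $N_t^{[f^i],k}=N_t^{[f^i],k+1}$ for $t<\sigma_{B_k^c}$, $\P_x$-a.s. for $x\in B_k$. For the martingale parts I would use that $\bM|_{B_k}$ is the part process on $B_k$ of $\bM|_{B_{k+1}}$: stopping $M^{[f^i],k+1}$ at $\sigma_{B_k^c}$ yields a strict MAF of $\bM|_{B_k}$ with the same covariations as $M^{[f^i],k}$, whence the uniqueness in the strict Fukushima decomposition (Proposition~\ref{p;LSFD}) forces $M_t^{[f^i],k}=M_t^{[f^i],k+1}$ for $t<\sigma_{B_k^c}$, $\P_x$-a.s. for $x\in B_k$.

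The second step is passing to the limit. Lemma~\ref{l;limit} applies here because $(\E^A,D(\E^A))$ is conservative and $\mathrm{Cap}(\{0\})=0$, hence its part form on $B$ is conservative; thus $\sigma_{B_k^c}\nearrow\infty$ $\P_x$-a.s. for every $x\in B$, so $\bM$ never reaches $0$ and $\rho,\rho^{-1}$ remain locally bounded along the paths. Consequently $M_t^{[f^i]}:=\lim_{k}M_t^{[f^i],k}$ is a well-defined local MAF in the strict sense of $\bM$, $N_t^{[f^i]}:=\tfrac12\int_0^t\big(\sum_j\tfrac{\partial_j a_{ij}}{\rho}\big)(X_s)\,ds$, and letting $k\to\infty$ in the displayed decomposition gives
\[
X_t^i = x^i + M_t^{[f^i]} + N_t^{[f^i]},\qquad t\ge0,\ \ \P_x\text{-a.s. for all }x\in\R^d\setminus\{0\},
\]
with $\langle M^{[f^i]},M^{[f^j]}\rangle_t=\int_0^t\tfrac{a_{ij}}{\rho}(X_s)\,ds$. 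By \textbf{(HP1)} the matrix $\rho^{-1}A$ is locally uniformly elliptic on $B$, so $\rho^{-1/2}\sigma$ (with $\sigma=\sqrt A$), a square root of $\rho^{-1}A$, is invertible along the paths; setting $W_t^j:=\sum_{i=1}^d\int_0^t(\rho^{1/2}\sigma^{-1})_{ji}(X_s)\,dM_s^{[f^i]}$ one checks $\langle W^j,W^l\rangle_t=\delta_{jl}\,t$, so by L\'evy's characterization $W=(W^1,\dots,W^d)$ is a standard $d$-dimensional Brownian motion under $\P_x$, and $M_t^{[f^i]}=\sum_{j=1}^d\int_0^t\tfrac{\sigma_{ij}}{\sqrt\rho}(X_s)\,dW_s^j$, which is \eqref{ch5;sfd2}.

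I expect the main obstacle to be the consistency of the strict \emph{martingale} additive functionals across the exhaustion: one must argue carefully, exploiting that $\bM|_{B_k}=(\bM|_{B_{k+1}})|_{B_k}$ together with the uniqueness and locality built into Proposition~\ref{p;LSFD}, that the $M^{[f^i],k}$ patch together into a single local MAF in the strict sense of $\bM|_{B}$. Once this is in place, the drift consistency, the passage to the limit via Lemma~\ref{l;limit}, and the reassembly of $M^{[f^i]}$ into a stochastic integral against a Brownian motion are routine, the last one because $\rho^{-1}A$ is uniformly elliptic away from the origin and $\bM$ stays in $\R^d\setminus\{0\}$.
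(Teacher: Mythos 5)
Your proposal is correct and follows exactly the localization strategy the paper invokes: Proposition~\ref{t;c5lsfd3} on each $B_k$, consistency across the exhaustion, and the passage to the limit via Lemma~\ref{l;limit}. You fill in the consistency argument for the strict MAFs and PCAFs and the final reassembly of the martingale part into a stochastic integral against a Brownian motion, details the paper leaves to the reader by referring to the localization procedure of Subsection~\ref{s;3.1sfdm}.
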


\begin{remark}\label{ch5;reexshtr}
The results of this subsection include the particular case where $\phi \equiv 1$ in Remark \ref{ch5;re2con} with
\begin{equation}\label{ch5;aijtic}
a_{ij}(x) = \tilde{a}_{ij}(x) \|x\|^{\alpha}, \quad \alpha \in (-d,\infty), \quad 1 \le i,j \le d.
\end{equation}
This leads hence to an extension of the results of \cite[Section 3.1 and 3.2]{ShTr13a} with $\phi \equiv 1$ there to the $(a_{ij})$-case. In particular, even if $\tilde{a}_{ij} = \delta_{ij}$ (where $\delta_{ij}$ we obtain partial improvements of results of \cite[Section 3]{ShTr13a}. For instance by our results it is easy to see that in case $\phi \equiv 1$ \cite[Proposition 3.8 (ii)]{ShTr13a} also holds for $\alpha \in [d, \infty)$, $d \ge 2$. Moreover, in view of Remark \ref{ch5;re2con} and the results of this section, it is also possible to extend the results of \cite[Section 3.1 and 3.2]{ShTr13a} to the $(a_{ij})$-case with discontinuous $\phi$, $(a_{ij})$ as in \eqref{ch5;aijtic} satisfying {\bf{(HP3)}}, resp. {\bf{(HP3)}}$^{\prime}$.
\end{remark}

\subsection{Stochastic calculus for the identification of the SDEs}\label{s;scftioft}
\subsubsection{Non-symmetric distorted Brownian motion}\label{s;nsdbm}
This subsection is a continuation of Subsection \ref{2.4.2.2}, where a Hunt process $\bM$ as stated in Theorem \ref{existhunt} was constructed under the assumptions {\bf (A1)-(A3)} and (\ref{aidass}) of Subsection \ref{ss;ermj}. 
We assume throughout this subsection that  {\bf (A1)-(A3)} and (\ref{aidass}) hold. We further consider 
\begin{itemize}
\item[{\bf (A4)}] $(\E,D(\E))$ is conservative.
\end{itemize}

Following \cite[Proposition 3.8]{AKR}, we obtain:
\begin{prop}\label{prop1.5} 
If {\bf (A4)} holds additionally (to {\bf (A1)-(A3)} and (\ref{aidass})), then:
\begin{itemize}
\item[(i)] $\alpha R_{\alpha} 1(x) = 1$ for all $x \in E$, $\alpha > 0$.
\item[(ii)] $(P_t)_{t>0}$ is strong Feller on $E$, i.e. $P_t(\mathcal{B}_b(\R^d)) \subset C_b(E)$ for all $t>0$.
\item[(iii)] $P_t 1(x) = 1$ for all $x \in E$, $t>0$.
\end{itemize}
\end{prop}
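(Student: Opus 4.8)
\emph{Proof proposal.} The plan is to follow the argument of \cite[Proposition 3.8]{AKR}. Conservativeness {\bf (A4)} delivers the three identities $m$-a.e.\ through the $L^2$-theory, and the substance of the proof is to upgrade them to pointwise statements valid on all of $E$, which is exactly what the strong Feller properties of Theorem \ref{t;2.6tdsr} make possible. I would prove (i) first, then derive (iii), and finally (ii).

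\emph{Part (i).} Fix $\alpha>0$ and pick $(u_n)\subset C_0^\infty(\R^d)$ with $0\le u_n\uparrow 1$ pointwise; note $u_n\in L^1(\R^d,m)\cap L^\infty(\R^d,m)$ since $u_n$ has compact support. By Theorem \ref{t;2.6tdsr}(v), $\alpha R_\alpha u_n=\alpha G_\alpha u_n$ $m$-a.e.\ for each $n$. As $n\to\infty$ the left-hand side increases pointwise on $E$ to $\alpha R_\alpha 1$, by monotone convergence in $R_\alpha u_n(x)=\int_{\R^d} r_\alpha(x,y)u_n(y)\,m(dy)$ (this limit is finite since $\alpha R_\alpha 1\le 1$ by Theorem \ref{t;2.6tdsr}(v)). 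The right-hand side increases $m$-a.e.\ to the $L^\infty$-extension of $G_\alpha$ applied to $1$, namely $\alpha G_\alpha 1$, and $\alpha G_\alpha 1=1$ $m$-a.e.\ by {\bf (A4)}. Hence $\alpha R_\alpha 1=1$ $m$-a.e.\ on $E$. Since $1\in\mathcal{B}_b(\R^d)$, Theorem \ref{t;2.6tdsr}(vi) gives $R_\alpha 1\in C_b(E)$, and because $E=\{\rho>0\}$ is open with $\rho>0$ $dx$-a.e., $m$ has full support on $E$; therefore a continuous function equal to $1$ $m$-a.e.\ on $E$ equals $1$ everywhere on $E$, which is (i).

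\emph{Parts (iii) and (ii).} With the same $(u_n)$, Theorem \ref{t;2.6tdsr}(i) gives $P_t u_n=T_t u_n$ $m$-a.e.; letting $n\to\infty$ (monotone convergence pointwise on the left, the $L^\infty$-extension of $T_t$ on the right, which is $1$ by {\bf (A4)}) yields $P_t 1=1$ $m$-a.e.\ on $E$ for every $t>0$. To get this for every $x\in E$, fix $x\in E$, $t>0$ and $0<s<t$; by Chapman--Kolmogorov for $(P_t)_{t\ge 0}$ and $P_{t-s}1=1$ $m$-a.e., $P_t 1(x)=\int_E p_s(x,y)\,P_{t-s}1(y)\,m(dy)=\int_E p_s(x,y)\,m(dy)=P_s 1(x)$, so $t\mapsto P_t 1(x)$ is constant on $(0,\infty)$; by Theorem \ref{t;2.6tdsr}(vii) and part (i), $\int_0^\infty e^{-\alpha t}P_t 1(x)\,dt=R_\alpha 1(x)=1/\alpha$ for all $\alpha>0$, forcing that constant to be $1$. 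This is (iii). For (ii), let $g\in\mathcal{B}_b(\R^d)$ with $0\le g\le 1$; the functions $g\cdot 1_{\overline{B_n(0)}}$ are bounded with compact support, hence in $L^p(\R^d,m)$, so $P_t(g\cdot 1_{\overline{B_n(0)}})\in C(E)$ by Theorem \ref{t;2.6tdsr}(ii), and since $g\cdot 1_{\overline{B_n(0)}}\uparrow g$ pointwise, $P_t g=\sup_n P_t(g\cdot 1_{\overline{B_n(0)}})$ is lower semicontinuous on $E$. Applying this to $g$ and to $1-g$ and using $P_t(1-g)=P_t 1-P_t g=1-P_t g$ on $E$ (by (iii)) shows $P_t g$ is both lower and upper semicontinuous, hence in $C(E)$; for general $f\in\mathcal{B}_b(\R^d)$ write $f=2\|f\|_\infty g-\|f\|_\infty$ with $g:=(f+\|f\|_\infty)/(2\|f\|_\infty)\in[0,1]$, so $P_t f=2\|f\|_\infty P_t g-\|f\|_\infty\in C(E)$ and $|P_t f|\le\|f\|_\infty P_t 1=\|f\|_\infty$, i.e.\ $P_t f\in C_b(E)$.

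\emph{Main obstacle.} The only real difficulty is the passage from ``$m$-a.e.'' to ``everywhere on $E$'': the Dirichlet-form construction identifies $P_t$ and $R_\alpha$ only as $m$-versions, and since $1\notin\bigcup_{r\ge p}L^r(\R^d,m)$ one cannot apply the $L^r$-strong Feller property of $(P_t)$ or $(R_\alpha)$ to $1$ directly. Conservativeness {\bf (A4)} is what makes the approximations $u_n\uparrow 1$ converge to the right limit on the $L^2$-side, and the $\mathcal{B}_b$-strong Feller statement for the resolvent in Theorem \ref{t;2.6tdsr}(vi) is what anchors (i) at the level of everywhere-defined functions; once (i) holds pointwise, (iii) and (ii) follow by the soft arguments above (Chapman--Kolmogorov, the Laplace-transform identity, and the semicontinuity squeeze).
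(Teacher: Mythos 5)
Your proof is correct, and it fleshes out exactly the route the paper intends: the paper gives no proof of its own but simply defers to \cite[Proposition 3.8]{AKR}, and your argument (upgrade the $m$-a.e.\ identities from conservativeness to pointwise ones on $E$ via the resolvent's $\mathcal{B}_b$-strong Feller property from Theorem \ref{t;2.6tdsr}(vi), the Laplace-transform identity (vii), Chapman--Kolmogorov, and the semicontinuity squeeze for (ii)) is precisely that scheme. No gaps.
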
 
Following \cite[Lemma 5.1]{AKR}, we have:
\begin{lemma}\label{lem3.1}
\begin{itemize}
\item[(i)] Let $f \in \bigcup_{s \in [p, \infty)} L^s(E,m)$, $f \ge 0$, then for all $t > 0$, $x \in E$,
\[
\int_0^t P_s f(x) \ ds < \infty,
\]
hence
\[
\int \int_0^t f(X_s) \ ds \ d\P_x < \infty.
\]
\item[(ii)] Let $u \in C_0^{\infty} (\R^d)$, $\alpha > 0$. Then
\[
R_{\alpha} \big( (\alpha - L) u\big)(x) = u(x) \quad \forall x \in E. 
\]
\item[(iii)] Let $u \in C_0^{\infty}(\R^d)$, $t>0$. Then
\[
P_t u(x) - u(x) = \int_0^t P_s(Lu)(x) \ ds \quad \forall x \in E.
\]
\end{itemize}
\end{lemma}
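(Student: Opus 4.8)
The plan is to follow \cite[Lemma 5.1]{AKR}, exploiting the $L^r$-strong Feller property and the semigroup/resolvent identities collected in Theorem \ref{t;2.6tdsr}. For part (i), fix $f\in L^s(E,m)$, $f\ge0$, with $s\in[p,\infty)$. By Theorem \ref{t;2.6tdsr}(ii) each $P_sf$ is a well-defined nonnegative continuous function on $E$, so $s\mapsto P_sf(x)$ is (by Theorem \ref{t;2.6tdsr}(iv)) measurable, and I would bound $\int_0^t P_sf(x)\,ds$ by splitting $f=f\mathbf 1_{\{f\le 1\}}+f\mathbf 1_{\{f>1\}}$: the first piece is in $L^\infty$ with $P_s(\cdot)\le1$ by sub-Markovianity, hence its time integral is $\le t<\infty$; the second piece lies in $L^s$ and one estimates $P_s(f\mathbf 1_{\{f>1\}})(x)=\int r_s\text{-type kernel}$ — more precisely one uses the resolvent kernel $r_\alpha(x,\cdot)$ from Theorem \ref{t;2.6tdsr}(v),(vii), namely $\int_0^\infty e^{-\alpha s}P_sf(x)\,ds=R_\alpha f(x)<\infty$ for $x\in E$ since $R_\alpha f\in C(E)$ by Theorem \ref{t;2.6tdsr}(vi); on the finite interval $[0,t]$ one has $\int_0^t P_sf(x)\,ds\le e^{\alpha t}\int_0^t e^{-\alpha s}P_sf(x)\,ds\le e^{\alpha t}R_\alpha f(x)<\infty$. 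The second displayed inequality then follows from $\EE_x[\int_0^t f(X_s)\,ds]=\int_0^t P_sf(x)\,ds$ by Fubini (Tonelli, as the integrand is nonnegative), using that $\bM$ of Theorem \ref{existhunt} has transition function $(P_t)_{t\ge0}$.

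For part (ii), the point is that for $u\in C_0^\infty(\R^d)$ one has $u\in D(L_r)$ for every $r\in[p,\infty)$ with $(\alpha-L)u=(\alpha-L_r)u\in L^r(\R^d,m)$ (by \eqref{operatorrepresentation}, since $(\alpha-L)u$ is bounded with compact support, hence in every $L^r$). Applying the resolvent $R_\alpha$ to $(\alpha-L)u$ gives, by Theorem \ref{t;2.6tdsr}(v), an $m$-version of $G_\alpha(\alpha-L_r)u=u$; so $R_\alpha((\alpha-L)u)=u$ $m$-a.e.\ on $E$. To upgrade "$m$-a.e." to "everywhere on $E$", I would note that $R_\alpha((\alpha-L)u)$ is continuous on $E$ (again Theorem \ref{t;2.6tdsr}(vi), as $(\alpha-L)u\in\bigcup_{r\ge p}L^r$), and $u$ is continuous on $E$, and two continuous functions that agree $m$-a.e.\ agree everywhere because $m=\rho\,dx$ has full support on $E=\{\rho>0\}$ (it is positive on every nonempty open subset of $E$). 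Hence equality holds for all $x\in E$.

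For part (iii), I would write $P_tu(x)-u(x)=\int_0^t P_s(L_ru)(x)\,ds$ first in the $L^r(\R^d,m)$ sense: this is the standard identity $T_tu-u=\int_0^t T_sL_ru\,ds$ valid for $u\in D(L_r)$, which holds on $L^r$ since $(T_t)$ is a strongly continuous semigroup there with generator $L_r$ (and $C_0^\infty(\R^d)\subset D(L_r)$ with $L_ru$ as in \eqref{operatorrepresentation}). Passing to pointwise versions, the left side has the continuous $m$-version $P_tu-u$ on $E$ by Theorem \ref{t;2.6tdsr}(ii), and the right side equals $\int_0^t P_s(L_ru)(x)\,ds$ which is finite for every $x\in E$ by part (i) applied to $f=(L_ru)^\pm$ (note $L_ru\in\bigcup_{s\ge p}L^s$ as it is bounded with compact support); moreover $x\mapsto\int_0^t P_s(L_ru)(x)\,ds$ is continuous on $E$ — this I would justify by dominated convergence together with the equicontinuity/continuity of $s\mapsto P_s(L_ru)$ furnished by Corollary \ref{cor1.3}(iii) and Remark \ref{rem1.5} on relatively compact subsets of $E=\{\rho>0\}$. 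Since both sides are continuous on $E$ and agree $m$-a.e., they agree for all $x\in E$, again by full support of $m$. The main obstacle I anticipate is precisely this last "continuous versions agree everywhere" step for part (iii): one must make sure the map $x\mapsto\int_0^t P_s(L_ru)(x)\,ds$ is genuinely continuous on $E$ (not merely measurable and finite), for which the local $H^{1,p}$-estimates of Corollary \ref{cor1.3} and the resulting equicontinuity in Remark \ref{rem1.5}, restricted to relatively compact open balls $B$ with $\overline B\subset\{\rho>0\}$, are the right tool; everything else is a routine transfer between the $L^r$-semigroup identities and their continuous kernel representations.
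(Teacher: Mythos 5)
Your overall strategy is the right one (the paper itself simply cites \cite[Lemma 5.1]{AKR}, and your arguments are in that spirit), and part (i) is correct as written: the bound $\int_0^t P_s f(x)\,ds \le e^{\alpha t}R_\alpha f(x)$ together with Theorem \ref{t;2.6tdsr}(vi),(vii) does the job, and the splitting of $f$ you first propose is superfluous. In part (ii), however, two of your intermediate claims are false, even though the conclusion stands. First, $(\alpha-L)u$ need \emph{not} be bounded: by \eqref{operatorrepresentation}, $Lu$ contains $\langle\tfrac{\nabla\rho}{2\rho}+B,\nabla u\rangle$, and {\bf (A1)}, {\bf (A2)} give only $\|\tfrac{\nabla\rho}{2\rho}\|,\|B\|\in L^p_{loc}(\R^d,m)$, so $Lu$ is merely in $L^p(\R^d,m)$ (compact support plus $L^p_{loc}$). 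Second, and relatedly, the paper asserts $C_0^\infty(\R^d)\subset D(L_r)$ only for $r\in[1,p]$, not for $r\in[p,\infty)$. The repair is trivial: take $r=p$, which lies in both the range where $C_0^\infty\subset D(L_r)$ and the range $[p,\infty)$ where Theorem \ref{t;2.6tdsr}(v),(vi) apply; then $R_\alpha((\alpha-L)u)$ and $u$ are continuous on $E=\{\rho>0\}$ and agree $m$-a.e., hence everywhere since $m$ has full support on $E$.

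The genuine gap is in part (iii), and you correctly flag it yourself. The continuity of $x\mapsto\int_0^t P_s(Lu)(x)\,ds$ cannot be extracted from Corollary \ref{cor1.3}(iii) together with dominated convergence, because that estimate gives $\|\rho\,T_sLu\|_{H^{1,p}(B')}\le \tilde c_B(1+s^{-1})\|Lu\|_{L^p}$, and $s\mapsto 1+s^{-1}$ is not integrable near $0$; Remark \ref{rem1.5} also does not apply, since it requires the argument of $T_s$ to lie in $D(L_r)$, whereas $Lu$ is only in $L^p$. A clean way to close the gap without proving continuity of the time integral is to apply $P_\epsilon$ to the $m$-a.e.\ identity $P_tu-u=\int_0^t P_sLu\,ds$: since $P_\epsilon(x,\cdot)=p_\epsilon(x,\cdot)m$ is absolutely continuous w.r.t.\ $m$ and the right-hand side is $P_\epsilon(x,\cdot)$-integrable by part (i), this yields $P_{t+\epsilon}u(x)-P_\epsilon u(x)=\int_\epsilon^{t+\epsilon}P_sLu(x)\,ds$ for \emph{every} $x\in E$; then let $\epsilon\downarrow 0$, using Theorem \ref{t;2.6tdsr}(iii) for the left side and part (i) (dominated convergence) for the right. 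That step, rather than a direct continuity argument for the time-integrated semigroup, is what makes part (iii) go through.
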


The following is an immediate consequence of (\ref{operatorrepresentation}):
\begin{lemma}\label{lem1.7}
For $u \in C_0^{\infty} (\R^d)$
\[
L u^2 - 2 u \ Lu = \|\nabla u\|^2.
\]
\end{lemma}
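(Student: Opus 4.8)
The plan is to read off everything from the explicit representation \eqref{operatorrepresentation} of $L$ on $C_0^\infty(\R^d)$ together with the Leibniz rule. First I would observe that if $u\in C_0^\infty(\R^d)$ then $u^2\in C_0^\infty(\R^d)$ as well, so $u^2\in D(L_r)$ for every $r\in[1,p]$ and both $Lu$ and $Lu^2$ are given by the same differential expression
$$
L v=\tfrac12\Delta v+\Big\langle \tfrac{\nabla\rho}{2\rho}+B,\nabla v\Big\rangle ,
$$
the right-hand side being independent of the chosen $r$. Hence the identity is a purely pointwise statement about smooth compactly supported functions and no functional-analytic input beyond \eqref{operatorrepresentation} is needed.

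Next I would compute the two elementary ingredients. For the first-order part, $\nabla(u^2)=2u\,\nabla u$, so $\big\langle \tfrac{\nabla\rho}{2\rho}+B,\nabla(u^2)\big\rangle=2u\big\langle \tfrac{\nabla\rho}{2\rho}+B,\nabla u\big\rangle$. For the second-order part, $\partial_{jj}(u^2)=2(\partial_j u)^2+2u\,\partial_{jj}u$ for each $j$, and summing over $j=1,\dots,d$ gives $\Delta(u^2)=2\|\nabla u\|^2+2u\,\Delta u$.

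Then I would simply substitute these into \eqref{operatorrepresentation}:
$$
L u^2=\|\nabla u\|^2+u\,\Delta u+2u\Big\langle \tfrac{\nabla\rho}{2\rho}+B,\nabla u\Big\rangle,\qquad
2u\,L u=u\,\Delta u+2u\Big\langle \tfrac{\nabla\rho}{2\rho}+B,\nabla u\Big\rangle ,
$$
and subtracting the second line from the first leaves exactly $\|\nabla u\|^2$. I do not expect any genuine obstacle here; the only point requiring a (trivial) remark is the membership $u^2\in C_0^\infty(\R^d)\subset D(L_r)$ justifying the use of the formula for $u^2$, and the rest is the classical computation showing that the carré du champ associated with this operator on $C_0^\infty(\R^d)$ is $\|\nabla u\|^2$, a fact that will later feed the quadratic-variation identification of the local martingale part.
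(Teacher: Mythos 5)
Your proof is correct and is exactly the argument the paper intends: the lemma is stated there as an immediate consequence of the representation \eqref{operatorrepresentation}, and your pointwise Leibniz-rule computation ($\Delta(u^2)=2\|\nabla u\|^2+2u\,\Delta u$, $\nabla(u^2)=2u\,\nabla u$) is the standard way to make that explicit. The remark that $u^2\in C_0^{\infty}(\R^d)\subset D(L_r)$ is the only justification needed, and you supply it.
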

Using in particular Lemma \ref{lem3.1} and Lemma \ref{lem1.7}, we obtain:
\begin{proposition}\label{thm3.2}
Let $u \in C_0^{\infty}(\R^d)$. Then 
$$
M_t^u : = u(X_t) - u(X_0) - \int_0^t Lu(X_r) \, dr, \ t\ge 0,
$$
and
\begin{equation*}
K_t^u : = \left( u(X_t) - u(X_0) - \int_0^t Lu(X_r) \, dr \right)^2 - \int_0^t \|\nabla u\|^2 (X_r) \ dr, \quad t \ge 0. 
\end{equation*}
are continuous $(\mathcal{F}_t)_{t \ge 0}$-martingales under $\P_x$, $\forall x \in E$.
\end{proposition}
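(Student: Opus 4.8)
The plan is to establish the statement for $M^u$ first and then deduce the one for $K^u$ by stochastic calculus, in the spirit of \cite{AKR}. Throughout we use that under \textbf{(A4)} the form is conservative, so that $\mathbb{P}_x(\zeta=\infty)=1$ for every $x\in E$ by Proposition \ref{prop1.5}(iii) (cf. also Remark \ref{r;connoex}); hence $X_r\in E$ for all $r\ge0$ $\mathbb{P}_x$-a.s., and by Theorem \ref{existhunt} the path $r\mapsto X_r$ is continuous on $[0,\infty)$. For any $v\in C_0^\infty(\mathbb{R}^d)$ we have $Lv\in L^p(E,m)$ with compact support by (\ref{operatorrepresentation}) together with \textbf{(A1)}, \textbf{(A2)}; Lemma \ref{lem3.1}(i) applied to $|Lv|$ then shows that $r\mapsto\int_0^r Lv(X_s)\,ds$ is a well-defined continuous process of locally bounded variation under $\mathbb{P}_x$, and, together with $|v(X_t)|\le\|v\|_{\infty}$, that $M_t^{v}\in L^1(\mathbb{P}_x)$. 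Continuity of $M^{v}$ is then immediate.

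For the martingale property of $M^{v}$, $v\in C_0^\infty(\mathbb{R}^d)$, we invoke the Markov property of $\bM$: for $0\le s\le t$,
\[
\mathbb{E}_x\big[M_t^{v}-M_s^{v}\mid\mathcal{F}_s\big]
=\mathbb{E}_{X_s}\Big[v(X_{t-s})-v(X_0)-\int_0^{t-s}Lv(X_r)\,dr\Big]
=P_{t-s}v(X_s)-v(X_s)-\int_0^{t-s}P_r(Lv)(X_s)\,dr,
\]
where Fubini's theorem (justified by Lemma \ref{lem3.1}(i) for $|Lv|$) was used in the last step. Since $X_s\in E$ $\mathbb{P}_x$-a.s., the right-hand side vanishes by Lemma \ref{lem3.1}(iii). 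Thus $M^{v}$ is a continuous $(\mathcal{F}_t)_{t\ge0}$-martingale under $\mathbb{P}_x$ for every $v\in C_0^\infty(\mathbb{R}^d)$; in particular this applies to $v=u$ and to $v=u^2\in C_0^\infty(\mathbb{R}^d)$.

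It remains to identify $\langle M^u\rangle$. Applying It\^o's formula for $y\mapsto y^2$ to the continuous semimartingale $Y_t:=u(X_t)=u(X_0)+M_t^u+\int_0^t Lu(X_r)\,dr$, and using that its finite-variation part contributes nothing to the bracket, we get
\[
u(X_t)^2=u(X_0)^2+2\int_0^t u(X_r)\,dM_r^u+2\int_0^t u(X_r)Lu(X_r)\,dr+\langle M^u\rangle_t .
\]
Comparing this with $u(X_t)^2=u(X_0)^2+M_t^{u^2}+\int_0^t Lu^2(X_r)\,dr$ and using Lemma \ref{lem1.7} in the form $Lu^2=2uLu+\|\nabla u\|^2$, the process $\langle M^u\rangle_t-\int_0^t\|\nabla u\|^2(X_r)\,dr$ equals $M_t^{u^2}-2\int_0^t u(X_r)\,dM_r^u$, hence is a continuous local martingale null at $0$; being at the same time the difference of two continuous increasing processes, it is of locally bounded variation, and therefore indistinguishable from $0$. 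Thus $\langle M^u\rangle_t=\int_0^t\|\nabla u\|^2(X_r)\,dr$. Since $\|\nabla u\|$ is bounded with compact support, $\mathbb{E}_x[\langle M^u\rangle_t]\le t\,\|\nabla u\|_{\infty}^2<\infty$, so $M^u$ is in fact a continuous square-integrable martingale and $K_t^u=(M_t^u)^2-\langle M^u\rangle_t$ is a continuous $(\mathcal{F}_t)_{t\ge0}$-martingale under $\mathbb{P}_x$, for every $x\in E$.

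The routine part is the martingale property of $M^u$, which only combines the Markov property with Lemma \ref{lem3.1}. The main obstacle is the second step: one must carry out the It\^o/product-rule computation rigorously (measurability and adaptedness of all processes, well-posedness of the stochastic integral $\int_0^\cdot u(X_r)\,dM_r^u$ against the continuous local martingale $M^u$, and the fact that a continuous local martingale of locally bounded variation starting at $0$ vanishes), and one must be careful to establish the identity for $\langle M^u\rangle$ at the level of local martingales before upgrading $M^u$ and $K^u$ to genuine martingales, which is then possible precisely because $\|\nabla u\|^2$ is bounded with compact support.
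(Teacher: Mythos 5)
Your first step -- the martingale property of $M^v$ for every $v\in C_0^\infty(\R^d)$ via the Markov property, Fubini (justified by Lemma \ref{lem3.1}(i)) and Lemma \ref{lem3.1}(iii) -- is exactly the paper's first step. For the statement about $K^u$, however, you take a genuinely different route. The paper localizes: it picks stopping times $R_n\nearrow\infty$ so that $(M^u_{t\wedge R_n})_{t\ge0}$ is a \emph{bounded} continuous martingale, shows that $(K^u_{t\wedge R_n})_{t\ge0}$ is a martingale following the computation in \cite[Appendix]{RoShTr} (a direct conditional-expectation argument in which the boundedness supplies the needed integrability, and which uses the martingale property of $M^{u}$ and $M^{u^2}$ together with Lemma \ref{lem1.7}), and then lets $n\to\infty$. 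You instead apply It\^o's formula to the continuous semimartingale $u(X_t)$, compare the result with the decomposition of $u(X_t)^2$ furnished by $M^{u^2}$, and use the uniqueness of the canonical decomposition (a continuous local martingale of locally bounded variation vanishing at $0$ is zero) to read off $\langle M^u\rangle_t=\int_0^t\|\nabla u\|^2(X_r)\,dr$; the boundedness of $\|\nabla u\|^2$ then upgrades $M^u$ to an $L^2$-martingale and $K^u=(M^u)^2-\langle M^u\rangle$ to a true martingale. Both arguments rest on the same two ingredients, namely Lemma \ref{lem3.1}(iii) applied to $u$ and to $u^2\in C_0^\infty(\R^d)$, and the carr\'e du champ identity of Lemma \ref{lem1.7}; your version buys a cleaner passage to the limit, since no a priori boundedness of $M^u$ and no uniform-integrability argument are required, at the price of invoking the general It\^o machinery rather than elementary semigroup computations.

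One caveat you should be aware of: you assume {\bf (A4)} from the outset, whereas the proposition is stated under {\bf (A1)}--{\bf (A3)} and (\ref{aidass}) only ({\bf (A4)} is introduced in this subsection as an \emph{additional} hypothesis, entering explicitly only in Proposition \ref{prop1.5} and in the last sentence of Theorem \ref{t3.6}). Your Markov-property and It\^o computations are unaffected if one drops {\bf (A4)} and works with the convention $u(\Delta)=Lu(\Delta)=0$ on $[0,\zeta)$, but the continuity of $u(X_t)$, hence of $M^u$, across a finite lifetime is then a point that would still need to be addressed. Since the continuity assertion of the proposition is cleanest in the conservative case, flagging {\bf (A4)} as you do is reasonable, but you should state explicitly that this is the case your proof covers.
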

\begin{proof}
First one shows that $M_t^u : = u(X_t) - u(X_0) - \int_0^t Lu(X_r) \, dr$, $t\ge 0$, is  a continuous $(\mathcal{F}_t)_{t \ge 0}$-martingale under $\P_x$, $\forall x \in E$. Consequently, there exist stopping times $R_n\nearrow \infty$, such that 
$(M_{t\wedge R_n}^u)_{t\ge 0}$ is a bounded continuous martingale for any $n$ and exactly as in \cite[Appendix]{RoShTr}, we show that $(K_{t\wedge R_n}^u)_{t\ge 0}$
is  a continuous $(\mathcal{F}_t)_{t \ge 0}$-martingale under $\P_x$, $\forall x \in E$. The assertion then follows by letting $n\to \infty$. 
\end{proof}
Proposition \ref{thm3.2} serves to identify the quadratic variation of $M^u$, $u \in C_0^{\infty}(\R^d)$, and subsequently the corresponding SDE. The coordinate functions are smooth functions and hence coincide locally with $C_0^{\infty}(\R^d)$-functions. We will use Proposition \ref{thm3.2} locally up to a sequence of stopping times. For this, we need:
\begin{lemma}\label{pointwisenest}
Let $(B_k)_{k \ge 1}$ be an increasing sequence of relatively compact open sets in $E$ with $\cup_{k \ge 1} B_k= E$.
Then
for all $x \in E$
\[
\P_x \Big(\lim_{k \rightarrow \infty} \sigma_{E \setminus B_{k}} \ge \zeta \Big)=1.
\] 
\end{lemma}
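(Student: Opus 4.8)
The plan is to argue by contradiction, using that $\bM$ has continuous sample paths in the one-point compactification $E_{\Delta}$ (Theorem \ref{existhunt}). Write $\sigma_k := \sigma_{E\setminus B_k}$. Since $(B_k)$ increases, $(E\setminus B_k)$ decreases, so $\sigma_k\le\sigma_{k+1}$; set $\sigma_\infty := \lim_{k\to\infty}\sigma_k=\sup_k\sigma_k$ and fix $x\in E$. Because $E\setminus B_k$ is closed in $E$ (as $B_k$ is open), its debut $\sigma_k$ is a stopping time and $A:=\{\sigma_\infty<\zeta\}$ is measurable; it suffices to show $\P_x(A)=0$. On $A$ we have $\sigma_\infty<\zeta\le\infty$, hence $\sigma_\infty<\infty$ and all $\sigma_k<\infty$, and since the path has not yet been killed, $X_{\sigma_k}\in E$ for all $k$ and $X_{\sigma_\infty}\in E$, with $X_{\sigma_\infty}=\lim_k X_{\sigma_k}$ by continuity.

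The first key step is to show that $X_{\sigma_k}\notin B_k$ on $A$. By the definition of $\sigma_k$ as an infimum, there are times $t_n$, either equal to $\sigma_k$ or decreasing to $\sigma_k$, with $X_{t_n}\in E\setminus B_k$; since $X$ is right-continuous, $E\setminus B_k$ is closed in $E$, and $X_{\sigma_k}\in E$, we get $X_{\sigma_k}\in E\setminus B_k$.

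The second key step closes the argument. Since $\bigcup_k B_k = E$ and $X_{\sigma_\infty}\in E$ on $A$, there is an $m$ with $X_{\sigma_\infty}\in B_m$; as $B_m$ is open and $X_{\sigma_k}\to X_{\sigma_\infty}$, we have $X_{\sigma_k}\in B_m$ for all large $k$. But $B_m\subseteq B_k$ for $k\ge m$, so for all sufficiently large $k$, $X_{\sigma_k}\in B_m\subseteq B_k$, contradicting $X_{\sigma_k}\notin B_k$. Hence $\P_x(A)=0$, i.e. $\P_x(\sigma_\infty\ge\zeta)=1$ for every $x\in E$.

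I expect the only delicate point to be the first key step — ruling out that at time $\sigma_k$ the process merely approaches $\partial B_k$ from inside $B_k$ rather than actually sitting in $E\setminus B_k$. This is exactly where the explicit construction is used: path continuity in $E_{\Delta}$ together with $X_t\in E$ for $t<\zeta$ (so on $A$ the cemetery is not reached) and $E\setminus B_k$ being genuinely closed in $E$ (the relative compactness of $B_k$ giving $\partial B_k\subset E\setminus B_k$) let one pass to the limit and land in $E\setminus B_k$. This is also the precise reason the statement holds for \emph{every} $x\in E$, rather than only quasi-every $x$ as in Lemma \ref{nestconti}: the process $\bM$ of Theorem \ref{existhunt} has continuous paths in $E_{\Delta}$ from every starting point.
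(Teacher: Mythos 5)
Your proof is correct. The paper itself states Lemma \ref{pointwisenest} without proof (deferring to \cite{RoShTr}), and your argument — by contradiction, using that $E\setminus B_k$ is closed in $E$, that $X_{\sigma_{E\setminus B_k}}\in E\setminus B_k$ on $\{\sigma_{E\setminus B_k}<\zeta\}$ by right-continuity, and that the limit point $X_{\sigma_\infty}$ lies in some open $B_m$ — is exactly the standard path-continuity argument that the construction in Theorem \ref{existhunt} is designed to enable; you correctly identify that continuity in $E_\Delta$ together with $\sigma_\infty<\zeta$ (so no limit point escapes to $\Delta$) is the crux. One harmless observation: your argument never actually uses the relative compactness of the $B_k$, only that they are open, increasing and exhaust $E$.
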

By choosing $(B_k)_{k \ge 1}$ as in Lemma \ref{pointwisenest} to satisfy additionally $\overline{B}_k\subset B_{k+1}$, $k\ge 1$, we can identify (\ref{weaksolution}) with the help of Proposition \ref{thm3.2} for $t<\sigma_{E\setminus B_k}$, $\P_x$-a.s. for any $x\in B_k$. Since this holds for any $k\ge 1$, we can let $k\to \infty$ and obtain (cf. \cite[Theorem 3.6]{RoShTr}):

\begin{thm}\label{t3.6}
After enlarging the stochastic basis $(\Omega, \F, (\F_t)_{t\ge 0},\P_x )$ appropriately for every $x\in E$, the process $\bM$ satisfies 
\begin{eqnarray}\label{weaksolution}
X_t = x + W_t +  \int_0^t  \left( \frac{\nabla \rho}{2 \rho} + B \right) (X_s) \ ds, \quad t < \zeta
\end{eqnarray}
$\P_x$-a.s. for all $x \in E$ where $W$ is a standard $d$-dimensional $(\F_t)$-Brownian motion on $E$. If additionally {\bf (A4)} holds, 
then we do not need to enlarge the stochastic basis and $\zeta$ can be replaced by $\infty$ (cf. Remark \ref{r;connoex}).
\end{thm}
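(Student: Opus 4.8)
The plan is to apply Proposition \ref{thm3.2} locally to the coordinate projections $f^1,\dots,f^d$ and then patch the resulting martingales along an exhaustion of $E$. First I would fix an increasing sequence $(B_k)_{k\ge 1}$ of relatively compact open subsets of $E$ with $\overline{B}_k\subset B_{k+1}$ and $\bigcup_{k\ge 1}B_k=E$, and set $\tau_k:=\sigma_{E\setminus B_k}$. For each $k$ choose $\chi_k\in C_0^\infty(\R^d)$ with $\chi_k\equiv 1$ on an open neighbourhood of $\overline{B}_k$, and put $u^k_j:=\chi_k f^j\in C_0^\infty(\R^d)$, so that $u^k_j=f^j$, $\partial_i u^k_j=\delta_{ij}$ and $\Delta u^k_j=0$ on that neighbourhood; by (\ref{operatorrepresentation}), $Lu^k_j=\langle\frac{\nabla\rho}{2\rho}+B,\nabla f^j\rangle=(\frac{\nabla\rho}{2\rho}+B)_j$ there. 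Since $1_{B_k}\|\frac{\nabla\rho}{2\rho}+B\|\in L^p(E,m)$ by {\bf (A1)}, {\bf (A2)}, Lemma \ref{lem3.1}(i) shows that $\int_0^t|(\frac{\nabla\rho}{2\rho}+B)_j(X_r)|\,dr<\infty$ for $t<\tau_k$, $\P_x$-a.s., hence, letting $k\to\infty$ (see below), the drift term in (\ref{weaksolution}) is well defined up to $\zeta$.

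Next I would read off a local martingale and its bracket. By Proposition \ref{thm3.2}, $M^{u^k_j}_t:=u^k_j(X_t)-u^k_j(X_0)-\int_0^t Lu^k_j(X_r)\,dr$ is a continuous $(\F_t)$-martingale under $\P_x$ for every $x\in E$, and $(M^{u^k_j}_t)^2-\int_0^t\|\nabla u^k_j\|^2(X_r)\,dr$ is again a martingale, so $\langle M^{u^k_j}\rangle_t=\int_0^t\|\nabla u^k_j\|^2(X_r)\,dr$. Applying the same statement to $u^k_i+u^k_j$ (legitimate since $L$ is linear, whence $M^{u^k_i+u^k_j}=M^{u^k_i}+M^{u^k_j}$) and polarising gives $\langle M^{u^k_i},M^{u^k_j}\rangle_t=\int_0^t\langle\nabla u^k_i,\nabla u^k_j\rangle(X_r)\,dr$. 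Now set $N^j_t:=X^j_t-x_j-\int_0^t(\frac{\nabla\rho}{2\rho}+B)_j(X_r)\,dr$ for $t<\zeta$. Since $u^k_j=f^j$ and $\partial_i u^k_j=\delta_{ij}$ on $B_k$, we obtain $N^j_{t\wedge\tau_k}=M^{u^k_j}_{t\wedge\tau_k}$ and $\langle N^i,N^j\rangle_{t\wedge\tau_k}=\delta_{ij}(t\wedge\tau_k)$, $\P_x$-a.s. Consistency in $k$ is automatic, since $u^k_j$ and $u^{k+1}_j$ both equal $f^j$ near $\overline{B}_k$, so $M^{u^k_j}$ and $M^{u^{k+1}_j}$ coincide on $[0,\tau_k)$. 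By Lemma \ref{pointwisenest}, $\lim_k\tau_k\ge\zeta$ $\P_x$-a.s., so letting $k\to\infty$ shows that $N=(N^1,\dots,N^d)$ is a continuous $(\F_t)$-local martingale on $[0,\zeta)$ with $\langle N^i,N^j\rangle_t=\delta_{ij}t$ for $t<\zeta$.

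It then remains to realise $N$ as (the restriction to $[0,\zeta)$ of) a Brownian motion. After enlarging the stochastic basis $(\Omega,\F,(\F_t)_{t\ge 0},\P_x)$ for each $x\in E$ to carry an independent standard Brownian motion spliced onto $N$ beyond $\zeta$, L\'evy's characterisation applied to the resulting continuous $(\F_t)$-local martingale with covariation $\delta_{ij}t$ produces a standard $d$-dimensional $(\F_t)$-Brownian motion $W$ with $W_t=N_t$ for $t<\zeta$; rearranging the definition of $N^j$ gives exactly (\ref{weaksolution}), $\P_x$-a.s.\ for every $x\in E$. If {\bf (A4)} holds, then $(\E,D(\E))$ is conservative and, by Proposition \ref{prop1.5}(ii), $(P_t)_{t>0}$ is strong Feller, so Remark \ref{r;connoex} gives $\P_x(\zeta=\infty)=1$ for all $x\in E$; consequently $\lim_k\tau_k=\infty$, $N$ is already a continuous $(\F_t)$-local martingale on all of $[0,\infty)$ with $\langle N^i,N^j\rangle_t=\delta_{ij}t$, and L\'evy's characterisation applies to $N$ directly, so no enlargement is needed and $\zeta$ may be replaced by $\infty$.

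I expect the main obstacle to be the bookkeeping near the lifetime: one has to verify that the locally defined martingales $M^{u^k_j}$ are genuinely consistent and that Lemma \ref{pointwisenest} controls the exhaustion precisely enough for the bracket of the limiting process $N$ to be exactly $\delta_{ij}t$ up to $\zeta$, and then carry out the (routine but slightly delicate) extension that yields a Brownian motion on all of $[0,\infty)$, including checking that $N$ extends continuously to $t=\zeta$. In the conservative case this difficulty evaporates because $\zeta=\infty$, and the identification follows immediately from L\'evy's theorem.
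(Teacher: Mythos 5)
Your proposal is correct and follows essentially the same route the paper takes (and sketches, deferring details to \cite[Theorem 3.6]{RoShTr}): apply Proposition \ref{thm3.2} to compactly supported smooth functions agreeing with the coordinate projections near $\overline{B}_k$, identify the brackets by polarization, use Lemma \ref{pointwisenest} to pass to the limit $k\to\infty$, and conclude via L\'evy's characterisation after enlarging the basis (unnecessary under {\bf (A4)} by Remark \ref{r;connoex}). The points you flag as delicate (consistency of the local martingales, continuous extension of $N$ up to $\zeta$ on $\{\zeta<\infty\}$ via $\langle N\rangle_{\zeta-}<\infty$) are exactly the ones handled in the cited reference and work out as you describe.
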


\section{Applications to strong uniqueness of the SDEs}\label{s;strongun}
\subsection{(Non)-symmetric distorted Brownian motion}\label{s;4.1}
This subsection is a continuation of Subsection \ref{s;nsdbm}. We first recall that by \cite[Theorem 2.1]{KR} under the conditions {\bf (A1)}, {\bf (A2)} and (\ref{aidass}) of Subsection \ref{ss;ermj} ({\bf (A3)} is not needed), for every stochastic basis and given Brownian motion $(W_t)_{t\geq0}$ there 
exists a strong solution to \eqref{weaksolution} which is pathwise unique
among all solutions satisfying
\begin{equation}\label{eq4.1}
 \int_0^t \left\| \left( \frac{\nabla \rho}{2 \rho} + B \right)(X_s)\right\|^2 \mathrm{d} s <\infty \quad \mathbb{P}_x\textnormal{-a.s. on } \{t<\zeta\}\;.
\end{equation}
In addition, one has pathwise uniqueness and weak uniqueness in this class. In the situation of Theorem \ref{t3.6} it follows, however immediately from Lemma \ref{pointwisenest} that \eqref{eq4.1} holds for the solution there.  Indeed, by Lemma \ref{pointwisenest}, (\ref{eq4.1}) holds with $\sigma_{E\setminus B_k}$ 
for all $k\in \N$. But the latter together with {\bf (A1)} clearly implies that (\ref{eq4.1}) holds $\P_x$-a.s. for all $x\in S$ for some $S\in {\cal B}(E)$ with $m(E\setminus S)=0$  (by Lemma \ref{lem2.8} the set $S$ can be chosen such that even $\text{Cap}_{\E}(E\setminus S)=0$). So, \cite[Theorem 2.1]{KR}, in particular, implies that the law of 
$\tilde \P_x$ of the strong solution from that theorem coincides with $\P_x$ for all $x\in S$. But then $\tilde \P_x= \P_x$ 
for all $x\in E$, because of the strong Feller property of our Markov process given by $(\P_x)_{x\in E}$ and of the 
one from \cite[Theorem 2.1]{KR}, i.e. $\tilde \P_x$, $x\in E$, since $S$ is dense in $E$. In particular, (\ref{eq4.1}) 
holds for all $x\in E$. Hence we obtain the following (cf. \cite[Theorem 4.1]{RoShTr}):
\begin{thm}\label{t4.1}
Assume {\bf (A1)-(A3)}  and (\ref{aidass}). 
For every $x \in E$ the solution in Theorem \ref{t3.6} is strong, pathwise and weak unique. In particular, it is adapted to the filtration $(\mathcal{F}_t^W)_{t\geq0}$ generated by the Brownian motion $(W_t)_{t\geq0}$
 in \eqref{weaksolution}. 
\end{thm}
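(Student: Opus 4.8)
The plan is to show that the weak solution of \eqref{weaksolution} constructed in Theorem \ref{t3.6} lies in the uniqueness class of \cite[Theorem 2.1]{KR}, and then to import strong existence, pathwise uniqueness and uniqueness in law from that theorem. Recall that \cite[Theorem 2.1]{KR} asserts that, under \textbf{(A1)}, \textbf{(A2)} and \eqref{aidass} alone, on an arbitrary stochastic basis carrying a $d$-dimensional Brownian motion there exists a strong solution of \eqref{weaksolution} up to its explosion time which is pathwise unique and unique in law within the class of all solutions satisfying \eqref{eq4.1}. Hence it suffices to verify that the process $\bM=((X_t)_{t\ge 0},(\P_x)_{x\in E})$ of Theorem \ref{t3.6} satisfies \eqref{eq4.1}, and then to identify its law with that of the \cite{KR}-solution for every starting point.

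First I would verify \eqref{eq4.1}. Choose an increasing sequence $(B_k)_{k\ge 1}$ of relatively compact open subsets of $E=\{\rho>0\}$ with $\overline{B}_k\subset B_{k+1}$ and $\bigcup_k B_k=E$; for $s<\sigma_{E\setminus B_k}$ one has $X_s\in B_k$, where $\rho$ is bounded above and below away from $0$, so by \textbf{(A1)}, \textbf{(A2)} the function $g_k:=\big\|\tfrac{\nabla\rho}{2\rho}+B\big\|^2\,1_{B_k}$ belongs to $L^1(\R^d,m)$ (indeed $\big\|\tfrac{\nabla\rho}{2\rho}+B\big\|\in L^{p}_{loc}(\R^d,m)$ with $p=d+\varepsilon\ge 2$). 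Since the cosemigroup is sub-Markovian and dual to $(P_s)$, one gets $\int_E\int_0^t P_sg_k\,ds\,dm\le t\,\|g_k\|_{L^1(\R^d,m)}<\infty$, so $\int_0^tP_sg_k(x)\,ds<\infty$, and therefore $\int_0^{t\wedge\sigma_{E\setminus B_k}}\big\|(\tfrac{\nabla\rho}{2\rho}+B)(X_s)\big\|^2\,ds<\infty$ $\P_x$-a.s., for $m$-a.e. $x\in E$. Intersecting these full-measure sets over all $k\ge 1$ and all integer $t$ and using Lemma \ref{pointwisenest} (which yields $\sigma_{E\setminus B_k}\nearrow$ a limit $\ge\zeta$, $\P_x$-a.s., for every $x$), one obtains a Borel set $S\subset E$ with $m(E\setminus S)=0$ such that \eqref{eq4.1} holds $\P_x$-a.s. for all $x\in S$; by Lemma \ref{lem2.8} one may even arrange $\text{Cap}_{\E}(E\setminus S)=0$, and, $m$ having full support, $S$ is dense in $E$.

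Next I would identify the laws. For $x\in S$ the process $((X_t)_{t\ge 0},\P_x)$ is a solution of \eqref{weaksolution} meeting \eqref{eq4.1}, so by the uniqueness in law of \cite[Theorem 2.1]{KR} its law $\P_x$ equals the law $\tilde\P_x$ of the \cite{KR}-solution started at $x$. To pass from $x\in S$ to all $x\in E$, I would use that the resolvents (and, when \textbf{(A4)} additionally holds, the semigroups, cf. Proposition \ref{prop1.5}(ii)) of both processes are strong Feller on $E$ — for $\bM$ this is Theorem \ref{t;2.6tdsr}(ii),(vi), and for the \cite{KR}-solution it is part of \cite[Theorem 2.1]{KR}: then for $f\in\mathcal B_b(E)$ the maps $x\mapsto R_\alpha f(x)$ and $x\mapsto\widetilde R_\alpha f(x)$ are continuous and coincide on the dense set $S$, hence on all of $E$; since a Hunt process is determined by its resolvent (invert the Laplace transform using the right-continuity of $t\mapsto P_tf(x)$, then pass to finite-dimensional distributions via the strong Markov property), $\P_x=\tilde\P_x$ for every $x\in E$. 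In particular \eqref{eq4.1} and uniqueness in law hold for every $x\in E$, so the pathwise uniqueness of \cite[Theorem 2.1]{KR} applies to $\bM$; the Yamada--Watanabe principle (weak existence together with pathwise uniqueness forces the weak solution to be strong, as already exploited in \cite{KR}) then shows that the solution of Theorem \ref{t3.6} is $(\F_t^W)_{t\ge 0}$-adapted, whence no enlargement of the stochastic basis was needed.

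The step I expect to be the main obstacle is the upgrade from ``$\P_x=\tilde\P_x$ for $m$-a.e. $x$'' to ``for every $x\in E$'': this is exactly where the strong Feller property of \emph{both} processes is indispensable, and it is the reason one must invoke the explicit transition/resolvent kernel information of Theorem \ref{t;2.6tdsr} (and the corresponding regularity of the \cite{KR}-solution) rather than only the $L^2$-theory. By contrast, checking \eqref{eq4.1} is routine once Lemma \ref{pointwisenest} and the local integrability built into \textbf{(A1)}--\textbf{(A2)} are in hand.
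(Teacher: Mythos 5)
Your proposal follows the same route as the paper: verify \eqref{eq4.1} locally on an exhaustion $(B_k)$ of $E=\{\rho>0\}$ using Lemma \ref{pointwisenest} and the local $L^p$-integrability in \textbf{(A1)}--\textbf{(A2)} to conclude \eqref{eq4.1} $\P_x$-a.s.\ for $m$-a.e.\ (even $\mathrm{Cap}_{\E}$-q.e.) $x$, then import strong existence, pathwise and weak uniqueness from \cite[Theorem 2.1]{KR} on that set, and finally propagate $\P_x=\tilde\P_x$ to every $x\in E$ via the strong Feller property of both transition functions and the density of $S$. The only difference is that you spell out, more explicitly than the paper, the $L^1$-contraction argument for $\int_0^t P_s g_k\,ds<\infty$ and the resolvent-identification step, which are consistent with and merely elaborate the paper's argument.
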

\begin{remark}\label{r4.2}
(i) By Theorem \ref{t3.6} and \ref{t4.1} we have thus shown that (the closure of) \eqref{df} 
	is the Dirichlet form associated to the Markov processes given by the laws of the (strong) solutions to \eqref{weaksolution}.
  Hence we can use the theory of Dirichlet forms to show further properties of the solutions. \\ 
	(ii) In \cite{KR} also a new non-explosion criterion was proved (hence one obtains {\bf (A4)}), assuming that $\frac{\nabla \rho}{2 \rho}+B$ is the (weak) gradient of a function 
	$\psi$ which is a kind of Lyapunov function for \eqref{weaksolution}.
  The theory of Dirichlet forms provides a number of analytic non-explosion, i.e. conservativeness criteria 
	(hence implying {\bf (A4)}) which are completely different from the usual ones for SDEs and which are checkable in many cases.
  As stressed in (i) such criteria can now be applied to \eqref{weaksolution}. Even the simple case, where $m(\R^d)<\infty$ and $\|B\| \in L^1(\R^d,m)$ which entails {\bf (A4)}, appears to be a new non-explosion condition for \eqref{weaksolution}. For more sophisticated sufficient non-explosion criteria, we refer to \cite{GTr2016} in general and to \cite[Lemma 5.4]{RoShTr} in a concrete example.
\end{remark}

\subsection{Diffusions with 2-admissible weights}\label{s;4.2}
This subsection is a continuation of Subsection \ref{s;2admine}. We consider 
\begin{itemize}
\item[{\bf{(HP4)}}] For each $1 \le i,j \le d$,
\begin{itemize}
\item[(i)]   $ \frac{\sigma_{ij}}{\sqrt{\rho}}$ is continuous on $\R^d$.
\item[(ii)]  $\left \| \nabla \left(  \frac{\sigma_{ij}}{\sqrt{\rho}} \right)  \right \| \in L^{2(d+1)}_{loc} (\R^d,dx)$.
\item[(iii)]  $\sum_{k=1}^{d}  \frac{ \partial_k a_{ik}}{\rho} \in L^{2(d+1)}_{loc} (\R^d,dx)$.
\end{itemize}
\end{itemize}

\begin{thm}\label{ch5;t;ssoleae} (cf. \cite[Theorem 5.1]{ShTr15})
Assume that {\bf{(HP1)}}, \eqref{ch5;emucpol}, {\bf{(HP3)}}, and {\bf{(HP4)}}, resp. {\bf{(HP1)}}, \eqref{ch5;eqmupo2} {\bf{(HP3)}}$^{\prime}$, and {\bf{(HP4)}} hold. Then the (weak) solution in Theorem \ref{ch5;t;stfudeco}, resp. Theorem \ref{ch5;t;solex0} is strong and pathwise unique. In particular, it is adapted to the filtration $(\mathcal{F}_t^W)_{t\geq0}$ generated by the Brownian motion $(W_t)_{t\geq0}$ as in \eqref{ch5;sfd2} and its lifetime is infinite.
\end{thm}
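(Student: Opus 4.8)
The plan is to deduce the theorem from the strong existence and pathwise uniqueness result of \cite[Theorem 1.1]{Zh} for It\^o SDEs with Sobolev diffusion coefficient and locally integrable drift, combined with the identification argument already carried out in the proof of Theorem \ref{t4.1}. Writing $b^i := \frac12\sum_{j=1}^d \frac{\partial_j a_{ij}}{\rho}$ and $\beta_{ij} := \frac{\sigma_{ij}}{\sqrt{\rho}}$, the SDE \eqref{ch5;sfd2} reads $dX_t = \beta(X_t)\,dW_t + b(X_t)\,dt$, and by {\bf (HP4)} the coefficient $\beta$ is continuous, $\|\nabla\beta_{ij}\|\in L^{2(d+1)}_{loc}$ and $\|b\|\in L^{2(d+1)}_{loc}$, which are precisely the local hypotheses under which \cite[Theorem 1.1]{Zh} produces, on every stochastic basis carrying a Brownian motion, a strong solution that is pathwise unique in the class of solutions $Y$ satisfying $\int_0^t\big(\|\beta(Y_s)\|^2+\|b(Y_s)\|\big)\,ds<\infty$ below the explosion time. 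So it suffices to show that the weak solution $\bM$ of Theorem \ref{ch5;t;stfudeco}, resp.\ Theorem \ref{ch5;t;solex0}, lies in this uniqueness class and, for every admissible starting point, has the same law as the strong solution.

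The first step is a localization. In the case \eqref{ch5;eqmupo2} the coefficients are only defined and of the required regularity on $\R^d\setminus\{0\}$, while in the case \eqref{ch5;emucpol} they are globally defined but only locally regular and possibly unbounded at infinity; in either case I would take the exhausting sequence $(B_k)_{k\ge1}$ of \eqref{ch5;eq;defbk} (resp.\ an analogous one when $E=\R^d$), replace $\beta\mathbf 1_{B_k}$ and $b\mathbf 1_{B_k}$ by globally defined coefficients of the class required in \cite{Zh} that agree with them on $B_k$, and apply \cite[Theorem 1.1]{Zh} to the resulting SDE. Two solutions of \eqref{ch5;sfd2} that coincide at time $0$, stopped at $\sigma_{B_k^c}$, both solve this modified SDE up to that time, so pathwise uniqueness holds up to $\sigma_{B_k^c}$; by Lemma \ref{l;limit} — which applies since $(\E^A,D(\E^A))$ is conservative and, in the case \eqref{ch5;eqmupo2}, $\mathrm{Cap}(\{0\})=0$ — one has $\sigma_{B_k^c}\nearrow\infty$, whence pathwise uniqueness holds globally and the solution is non-explosive.

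The second step transfers the identification from a.e.\ to every starting point, exactly as in Section \ref{s;4.1}. First, $\bM$ lies in the uniqueness class of \cite{Zh}: by the pointwise nest property (Lemma \ref{l;limit}) together with {\bf (HP4)} and the argument behind Lemma \ref{lem3.1}(i), $\int_0^{t\wedge\sigma_{B_k^c}}\big(\|\beta(X_s)\|^2+\|b(X_s)\|\big)\,ds<\infty$ $\P_x$-a.s.\ for all $x\in B_k$, and $\sigma_{B_k^c}\nearrow\infty$. Hence for $m$-a.e.\ (equivalently quasi-every) starting point the law $\tilde\P_x$ of the strong solution from \cite{Zh} coincides with $\P_x$. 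Both families $(\P_x)$ and $(\tilde\P_x)$ are strong Feller — the former by Proposition \ref{p;strongfl}, the latter by the strong Feller property of the solutions in \cite{Zh} — so $\{x:\tilde\P_x=\P_x\}$ is closed and of full $m$-measure, hence dense, hence all of $\R^d$ (resp.\ $\R^d\setminus\{0\}$). Thus $\bM$ is the strong solution for every admissible starting point, Yamada--Watanabe yields adaptedness to $(\mathcal F_t^W)_{t\ge0}$ and weak uniqueness, and infinite lifetime was obtained in the first step.

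The main obstacle I anticipate is the first step: verifying that {\bf (HP4)} matches the precise regularity hypotheses of \cite[Theorem 1.1]{Zh} and executing the localization cleanly, in particular handling the singular behaviour of $b$ (and of $\rho^{-1}$) at the origin in the case \eqref{ch5;eqmupo2}, which is what forces the restriction to $\R^d\setminus\{0\}$, and checking that the strong-Feller transfer argument remains valid on the punctured space — where density is understood relative to $\R^d\setminus\{0\}$ and one must use that the excluded point is never reached.
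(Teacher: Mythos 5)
Your proposal follows the paper's own route: invoke \cite[Theorem 1.1]{Zh} under \textbf{(HP1)}, \textbf{(HP4)} for strong existence and pathwise uniqueness up to explosion, identify the resulting solution with the Dirichlet form diffusion from Theorem~\ref{ch5;t;stfudeco} resp.\ Theorem~\ref{ch5;t;solex0}, and deduce non-explosion from conservativeness of $(\E^A,D(\E^A))$. The paper's own proof is considerably more terse and omits the strong-Feller transfer you import from Section~\ref{s;4.1}; that step is in fact dispensable here, because, as you yourself establish, the integrability needed for Zhang's uniqueness class holds $\P_x$-a.s.\ for \emph{every} $x\in B_k$ (it follows from the $S_{00}^{B_k}$-membership of the drift measures underlying Theorem~\ref{ch5;t;stfudeco}/\ref{ch5;t;solex0} together with Lemma~\ref{l;limit}), so the pathwise identification with the localized Zhang solution is already pointwise in the starting point and no $m$-a.e.-to-everywhere upgrade is required.
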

\proof
Assume that {\bf{(HP1)}}, \eqref{ch5;emucpol}, {\bf{(HP3)}}, and {\bf{(HP4)}}, or 
{\bf{(HP1)}}, \eqref{ch5;eqmupo2}, {\bf{(HP3)}}$^{\prime}$, and {\bf{(HP4)}} hold.
By \cite[Theorem 1.1]{Zh}  under {\bf{(HP1)}} and {\bf{(HP4)}} for given Brownian motion $(W_t)_{t\geq0}$, $x \in \R^d$ as in \eqref{ch5;sfd2} there exists a pathwise unique strong solution to \eqref{ch5;sfd2} up to its explosion time. The remaining conditions make sure that the unique strong solution is associated to $(\E^A,D(\E^A))$ and has thus infinite lifetime. Therefore the (weak) solution in Theorem \ref{ch5;t;stfudeco},  resp. Theorem \ref{ch5;t;solex0}, resp. is strong and pathwise unique.
\qed
\begin{remark}\label{ch5;renonexpn} (cf. \cite[Remark 5.2]{ShTr15})
Two non-explosion conditions for strong solutions up to lifetime for a certain class of stochastic differential equations are presented in \cite[Theorem 1.1]{Zh}. For the precise conditions, we refer to \cite{Zh}. By Theorem \ref{ch5;t;ssoleae} and its proof, we know that the solution of \eqref{ch5;sfd2} up to its lifetime fits to the frame of \cite[Theorem 1.1]{Zh}. Therefore, the remaining conditions 
\[
\eqref{ch5;emucpol}, {\bf{(HP3)}} \quad \text{or} \quad \eqref{ch5;eqmupo2}, {\bf{(HP3)}}^{\prime},
\]  
provide additional non-explosion conditions in \cite[Theorem 1.1]{Zh} for solutions of the form \eqref{ch5;sfd2} that satisfy {\bf{(HP1)}} and {\bf{(HP4)}}. 
\end{remark}
{\it ACKNOWLEDGEMENT:} The second named author would like to thank Michael R\"ockner for bringing up the idea to him to apply pointwise weak existence results for diffusions associated with Dirichlet forms to obtain new non-explosion criteria for the  pathwise unique and strong solutions of  \cite{KR}, \cite{Zh},  as it is done in Section \ref{s;strongun}.
\addcontentsline{toc}{section}{References}

\vspace{2cm}
Jiyong Shin\\
School of Mathematics\\
Korea Institute for Advanced Study\\
85 Hoegiro Dongdaemun-gu, \\
Seoul 02445, South Korea, \\
E-mail: yonshin2@kias.re.kr\\ \\
Gerald Trutnau\\
Department of Mathematical Sciences and \\
Research Institute of Mathematics of Seoul National University,\\
1, Gwanak-Ro, Gwanak-Gu \\
Seoul 08826, South Korea,  \\
E-mail: trutnau@snu.ac.kr
\end{document}